\documentclass[reqno]{amsart}

\usepackage{amssymb}
\usepackage{amsmath}
\usepackage{IEEEtrantools}
\usepackage{amsthm}
\usepackage{mathtools}
\usepackage{mltex}
\usepackage{parskip}
\usepackage{enumerate}
\usepackage{hyperref}
\usepackage{tcolorbox}
\usepackage{graphicx}
\graphicspath{ {images/} }
\usepackage{amsmath,microtype}
\usepackage{multicol}
\usepackage{todonotes}
\usepackage{wrapfig}
\usepackage{tikz-cd}
\usepackage[english]{babel}
\usepackage [autostyle, english = american]{csquotes}
\MakeOuterQuote{"}
\usetikzlibrary{backgrounds}
\usepackage{dynkin-diagrams}
\usepackage{booktabs}
\usepackage{blkarray}
\usetikzlibrary{babel}
\usepackage{upgreek}
\usepackage{color,soul}


\DeclareMathOperator{\Ad}{Ad}
\DeclareMathOperator{\ad}{ad}

\DeclareMathOperator{\SO}{SO}
\DeclareMathOperator{\SU}{SU}
\DeclareMathOperator{\G}{G}
\DeclareMathOperator{\Sp}{Sp}
\DeclareMathOperator{\U}{U}

\DeclareMathOperator{\Spin}{Spin}

\DeclareMathOperator{\Id}{Id}
\DeclareMathOperator{\inv}{inv}
\DeclareMathOperator{\Span}{span}


\theoremstyle{plain}
\newtheorem{theorem}{Theorem}[section]
\newtheorem{lemma}[theorem]{Lemma}
\newtheorem{proposition}[theorem]{Proposition}

\newtheorem*{theorem*}{Theorem}

\theoremstyle{definition}
\newtheorem{definition}[theorem]{Definition}
\newtheorem{remark}[theorem]{Remark}


\newcommand{\Z}{\mathbb{Z}}

\newcommand{\R}{\mathbb{R}}
\newcommand{\C}{\mathbb{C}}

\newcommand{\tad}{{\text{$3$-}(\alpha,\delta)\text{-Sasaki}}}
\newcommand{\ts}{3\text{-Sasakian}}
\renewcommand{\H}{\mathbb{H}}

\newcommand{\RP}{\mathbb{RP}}
\renewcommand{\:}{\colon}
\newcommand{\vect}{\mathfrak{X}}

\newcommand{\Lie}{\mathcal{L}}


\setcounter{tocdepth}{4}
\setcounter{secnumdepth}{4}


\usepackage[margin=3cm]{geometry}


\newtcolorbox{mybox}
{colframe = purple!25,
  colback  = purple!10,
  coltitle = purple!20!black,  
  title    = Idea}
  
\newtcolorbox{mybox2}
{colframe = red!75,
  colback  = red!10,
  coltitle = red!20!white,  
  title    = Problem}

\newtcolorbox{mybox3}
{colframe = blue!75,
  colback  = blue!10,
  coltitle = blue!20!white,  
  title    = To be completed/added later}  
  


\begin{document}

\title{$\mathcal{H}$-Killing Spinors and Spinorial Duality for Homogeneous 3-$(\alpha,\delta)$-Sasaki Manifolds}
\author{Ilka Agricola and Jordan Hofmann}
\begin{abstract}
We show that $\tad$ manifolds admit solutions of a certain new spinorial field equation (the \emph{$\mathcal{H}$-Killing equation}) generalizing the well-known Killing spinors on $\ts$ manifolds. These $\mathcal{H}$-Killing spinors have more desirable geometric properties than the spinors obtained by simply deforming a $\ts$ metric; in particular we obtain a one-to-one correspondence between $\mathcal{H}$-Killing spinors on dual pairs of homogeneous $\tad$ spaces. Finally, we show that $\mathcal{H}$-Killing spinors generalize certain special spinors in dimension $7$ previously constructed by Agricola-Friedrich and Agricola-Dileo using $\G_2$-geometry. 
\end{abstract}

\maketitle


{\it Keywords:} $\tad$ manifold, $\ts$ manifold, canonical connection, Killing spinor, Riemannian homogeneous space.   \\\\
\noindent
{\it 2020 Mathematics Subject Classification:} 53C25, 53C27, 53C30, 53D10.


\tableofcontents
\section{Introduction}

One of the most consequential mathematical achievements of the past century is the Berger holonomy theorem \cite{Berger_holonomy,Simons_holonomy}, which classifies the possible holonomy groups of irreducible non-symmetric Riemannian manifolds. The only odd-dimensional geometry appearing in Berger's list is $\G_2$ in dimension $7$, which has led over the subsequent decades to the study of \emph{non-integrable} odd-dimensional geometries via connections better adapted to the geometry: so-called \emph{characteristic connections} (see e.g.\@ \cite{SRNI}). In dimension $3$(mod $4$) in particular, natural and abundant examples of non-integrable geometries are given by $\ts$ manifolds, which, roughly speaking, are characterized by the existence of certain tensors facilitating the identification of the tangent spaces with $\text{Im}(\H)\oplus \H^n$. These are related to two of the even-dimensional geometries from Berger's list; the cone over a $\ts$ manifold is hyperK\"{a}hler (in fact one can take this as a definition), and, if the \emph{Reeb vector fields} are complete, the space of leaves of the corresponding $3$-dimensional foliation is a quaternionic K\"{a}hler orbifold \cite{BG3Sas}. 


It is well-known that every $\ts$ manifold is spin (as a consequence of the reduction of structure group of the tangent bundle proved by Kuo in \cite{3Sas_structure_reduction}), and they are highly relevant objects in spin geometry, carrying many examples of interesting spinor fields (see e.g.\@ \cite{Bar,3Sasdim7}). In fact, B\"{a}r showed in \cite{Bar} that the existence of a compatible 3-Sasakian structure on a Riemannian manifold is completely determined by the number of linearly independent Riemannian Killing spinors carried by the metric, i.e.\@ the number of linearly independent spinors satisfying the \emph{Killing equation}
\[
\nabla^g_X\psi = \pm \frac{1}{2} X\cdot \psi \qquad \text{ for all } X\in TM.
\]Specifically, B\"{a}r proved that $(M^{4n-1},g)$ carries a compatible 3-Sasakian structure if and only if there exist at least $n+1$ linearly independent Riemannian Killing spinors for the Killing number $\frac{1}{2}$ (or $-\frac{1}{2}$, up to a change of orientation). The existence of Killing spinors puts strong constraints on the geometry of the manifold, implying, for example, that the metric is Einstein and that equality is achieved in Friedrich's estimate for the first eigenvalue of the Dirac operator (see e.g.\@ \cite[Chaps.\@ 5.1, 5.2]{FriedrichBook}). The first authors to realize that spinors apart from Killing spinors can be deformed and carry important information were Friedrich and Kim, who defined in \cite{FrKi00} the class of \emph{Sasakian quasi-Killing spinors} and proved that they induce, in certain dimensions, solutions of the coupled Einstein-Dirac system. Other generalizations of Killing spinors have also been studied, and the existence of such spinors is known to be closely related to $\text{G}$-structures in low dimensions (see e.g.\@ \cite{Spin7,hypo,dim67}).


The aim of the present article is to identify spinors satisfying new types of remarkable field equations on $3$-contact manifolds of every dimension $4n-1\geq 7$, outside of the well-studied $\ts$ case. The setting in which we do this is that of $\tad$ structures, using their remarkable geometric properties such as the existence of a \emph{canonical connection} (a metric connection with totally skew-symmetric torsion well-adapted to the $3$-contact structure). The strategy we employ is a modification of Friedrich and Kath's method in \cite{Fried90}; we identify a certain spinorial connection which acts trivially on a subbundle of the spinor bundle and then extract a spinorial equation by writing this connection as the sum of the Levi-Civita connection and another term. Furthermore, in the homogeneous setting we show that our spinors are compatible with a certain natural notion of duality, observed in \cite{hom3alphadelta}, between positive and negative $\tad$ homogeneous spaces. The study of homogeneous $\tad$ spaces is heavily rooted in Boyer, Galicki, and Mann's classification of homogeneous $\ts$ spaces in \cite{BG3Sas}, and we note that a new, self-contained, and elegant Lie theoretic proof of this classification has recently appeared in \cite{Leander_roots}. 

%
%

Our results are as follows:

\begin{itemize}
\item In \S\ref{section:actual_deformedKS} we describe how the Killing spinor equation on a $\ts$ manifold changes under so-called \emph{$\mathcal{H}$-homothetic deformations}, which transform the $\ts$ structure into a $\tad$ structure with $\alpha\delta>0$. We obtain Theorem \ref{deformed_KS_theorem_ref}, which asserts that a positive $\tad$ manifold of dimension $4n-1$ carries at least $n+1$ linearly independent solutions of the \emph{deformed Killing equation} (\ref{KS_with_torsion}).
\item The so-called \emph{deformed Killing spinors} do not naturally extend to negative $\tad$ structures ($\alpha\delta<0$), so in \S\ref{section:horizontalKS} we prove the existence of spinors satisfying a different generalization of the Killing equation which is valid for any $\tad$ manifold (Theorem \ref{deformedKillingspinorstheorem}). We call these \emph{$\mathcal{H}$-Killing spinors} to emphasize that they behave as Killing spinors along the horizontal distribution $\mathcal{H}$. Along the vertical distribution they satisfy the Killing equation plus a correction term which vanishes in the $3$-$\alpha$-Sasaki case ($\alpha=\delta$).
\item Section \ref{section:duality} is a bit different in flavour, as it concentrates on the homogeneous case, where more structure and a full classification (for $\alpha\delta>0$) are available \cite{hom3alphadelta}. In this section we demonstrate that the $\mathcal{H}$-Killing equation is a natural one to consider by proving that $\mathcal{H}$-Killing spinors on a compact homogeneous $\tad$ space and its non-compact dual correspond in a one-to-one manner.
\item Finally, in \S\ref{section:dim7} we use certain algebraic identities in dimension $7$ to prove in Theorem \ref{Chap_duality:dim7_equivalence_of_two_eqns} that the $\mathcal{H}$-Killing equation in this dimension is equivalent to the generalized Killing equation satisfied by the \emph{auxiliary spinors} introduced in \cite{3Sasdim7,3str}. The auxiliary spinors are defined using the natural $\G_2$-structure induced by a $\tad$ structure in dimension $7$ (see \cite[Thm.\@ 4.5.1]{3str}), and therefore until now had no obvious generalization to higher dimensions.
\end{itemize}

\textbf{Acknowledgments:} The second author was supported by the Engineering and Physical Sciences Research Council [EP/L015234/1, EP/W522429/1]; the London Mathematical Society [ECR-2223-88]; the EPSRC Centre for Doctoral Training in Geometry and Number Theory (The London School of Geometry and Number Theory); University College London; King's College London; and Imperial College London. Parts of this work have previously appeared in the second author's PhD thesis.

\section{Preliminaries}
For a detailed introduction to spin geometry we refer to \cite{LM,FriedrichBook}, and for 3-Sasakian and $\tad$ geometry we recommend \cite{BG3Sas,3Sasdim7,3str}.

\subsection{3-($\alpha,\delta$)-Sasaki Structures}\label{section:tad_definitions}
Here we recall the basic definitions related to $\tad$ manifolds, following the notation and conventions of \cite{3str}. For systematic examination of these structures we refer the reader to \cite{3str}.

\begin{definition}(Almost 3-contact, almost 3-contact metric, 3-Sasakian). An almost 3-contact manifold $(M^{4n-1},\varphi_i,\xi_i,\eta_i)_{i=1}^3$ consists of a manifold of dimension $4n-1$ together with three almost contact structures satisfying 
	\begin{align*}
	\varphi_i(\xi_j)&=-\varphi_j(\xi_i)= \xi_k, \qquad \eta_i=\eta_j\circ \varphi_k = -\eta_k\circ\varphi_j,\\
		\varphi_i& = \varphi_j\circ \varphi_k - \eta_k\otimes \xi_j = -\varphi_k\circ \varphi_j + \eta_j\otimes \xi_k  
	\end{align*}
	for all even permutations $(i,j,k)$ of $(1, 2, 3)$. The structure tensors induce a splitting $TM=\mathcal{H}\oplus \mathcal{V}$ into \emph{horizontal} and \emph{vertical} spaces, defined by 
	\begin{align*} \mathcal{H}:= \cap_{i=1}^3 \ker(\eta_i),\qquad \mathcal{V}:= \text{span}_{\R}\{ \xi_i\}_{i=1}^3,
	\end{align*}and the horizontal restrictions $\varphi_i^{\mathcal{H}}:= \varphi_i\rvert_{\mathcal{H}}$ satisfy the defining relations of the quaternions: \[\varphi_i^{\mathcal{H}} \circ  \varphi_j^{\mathcal{H}}=\varphi_k^{\mathcal{H}}\] for all even permutations $(i,j,k)$ of $(1,2,3)$. In particular this defines a quaternionic contact structure. The action of $\varphi_1$, $\varphi_2$, $\varphi_3$ in the vertical directions is determined by
\begin{align}
	\label{phii_vertical_directions} \varphi_i(\xi_j)=\xi_k 
\end{align}
for all even permutations $(i,j,k)$ of $(1,2,3)$. Notably, any almost 3-contact manifold admits a single metric which is compatible with all three almost contact structures in the sense that
	\begin{align*} g(\varphi_i( - ),\varphi_i( - )) = g - \eta_i\otimes \eta_i \quad \text{ for all } i=1,2,3 , 
		\end{align*}
and we call the data $(M,g,\varphi_i,\xi_i,\eta_i)$ an almost 3-contact metric manifold. If, furthermore, each of the three structures is Sasakian, the manifold is called 3-Sasakian.
\end{definition}
With the above definitions in mind, $\tad$ structures are defined as almost $3$-contact metric structures satisfying a certain integrability condition:
\begin{definition}
	A 3-$(\alpha,\delta)$-Sasaki manifold is an almost 3-contact metric manifold $(M,g,\varphi_i,\xi_i,\eta_i)$ satisfying $$d\eta_i = 2\alpha\Phi_i+2(\alpha-\delta)\eta_j\wedge \eta_k , \qquad \alpha,\delta\in \R, \ \alpha\neq 0$$ for all even permutations $(i,j,k)$ of $(1,2,3)$, where $\Phi_i := g(  - , \varphi_i(-))$ is the metric dual (in the second component) of the skew-symmetric endomorphism $\varphi_i$.
\end{definition}
This notion contains the 3-Sasakian spaces ($\alpha=\delta=1$) as well as many other interesting classes such as Einstein 3-$\alpha$-Sasakian structures ($\alpha=\delta$), parallel structures ($\delta=2\alpha$), degenerate structures ($\delta=0$), and a second Einstein metric ($\delta=(2n+1)\alpha$, where $\dim M:= 4n-1$).

\subsection{Connections on $\tad$ Manifolds}
In this subsection we review the canonical connection of a $\tad$ manifold, and, in the homogeneous setting, the algebraic description of the canonical and Levi-Civita connections in terms of \emph{Nomizu maps}. To begin, we recall from \cite{3str} the formulas for the Levi-Civita derivatives of the structure tensors:
\begin{proposition}\label{LCder} \emph{(\cite[Prop.\@ 2.3.2, Cor.\@ 2.3.1]{3str}).}
	If $(M,g,\xi_i,\eta_i,\varphi_i)$ is a $\tad$ manifold then
	\begin{align}
		(\nabla_Y^g \varphi_i)X &=\alpha[ g(X,Y) \xi_i - \eta_i(X)Y]-2(\alpha-\delta)[\eta_k(Y)\varphi_jX - \eta_j(Y)\varphi_kX] \\
		&\qquad \qquad +(\alpha-\delta)[\eta_j(Y)\eta_j(X) +\eta_k(Y)\eta_k(X)] \xi_i  \nonumber \\
		&\qquad \qquad -(\alpha-\delta)\eta_i(X) [\eta_j(Y)\xi_j +\eta_k(Y)\xi_k] , \nonumber \\
		\nabla_Y^g \xi_i &=-\alpha \varphi_i(Y) - (\alpha-\delta)[\eta_k(Y)\xi_j -\eta_j(Y)\xi_k] \label{LC_derivative_xii_formula} ,
	\end{align}
	for any even permutation $(i,j,k)$ of $(1,2,3)$.
\end{proposition}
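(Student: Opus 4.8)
The plan is to reduce both formulas to the single defining condition $d\eta_i = 2\alpha\Phi_i + 2(\alpha-\delta)\eta_j\wedge\eta_k$ together with metric compatibility and the algebraic relations of the almost $3$-contact structure. I would treat $\nabla^g\xi_i$ first. Since $g$ is compatible we have $\eta_i = g(\xi_i, -)$, hence $(\nabla^g_Y\eta_i)(X) = g(\nabla^g_Y\xi_i, X)$, and it suffices to determine the $(0,2)$-tensor $(Y,X)\mapsto g(\nabla^g_Y\xi_i, X)$. For the torsion-free Levi-Civita connection its antisymmetric part is $\tfrac12 d\eta_i$, which the defining equation supplies explicitly. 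Using skew-symmetry of $\varphi_i$ with respect to $g$, one checks that the metric dual of the claimed formula (\ref{LC_derivative_xii_formula}) is manifestly antisymmetric in $(Y,X)$ and equals precisely $\tfrac12 d\eta_i(Y,X) = \alpha\Phi_i(Y,X) + (\alpha-\delta)(\eta_j\wedge\eta_k)(Y,X)$; hence it reproduces exactly the antisymmetric part of $\nabla^g\eta_i$.

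The symmetric part of $\nabla^g\eta_i$ is $\tfrac12\mathcal{L}_{\xi_i}g$, and the crux of the argument — and the main obstacle — is showing that it vanishes, i.e.\@ that each Reeb field $\xi_i$ is Killing. This does \emph{not} follow from the $i$-th contact form alone: a single almost contact metric structure with a prescribed $d\eta$ leaves the symmetric part of $\nabla\xi$ undetermined, so the rigidity must come from the interlocking of the three structures. I would extract it by differentiating the algebraic identities $\varphi_i(\xi_j)=\xi_k$ and $\varphi_i = \varphi_j\circ\varphi_k - \eta_k\otimes\xi_j$ and combining their three cyclic versions, so that the already-known antisymmetric parts of the $\nabla^g\eta_\ell$ feed into one another and force the symmetric parts to cancel. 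Once $\xi_i$ is Killing we get $g(\nabla^g_Y\xi_i, X) = \tfrac12 d\eta_i(Y,X)$, and formula (\ref{LC_derivative_xii_formula}) follows immediately from the first paragraph.

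With $\nabla^g\xi_i$ (equivalently $\nabla^g\eta_i$) in hand, I would obtain $\nabla^g\varphi_i$ by differentiating the structural identities. Differentiating $\varphi_i^2 = -\id + \eta_i\otimes\xi_i$ and the compatibility relation $g(\varphi_i -,\varphi_i -) = g - \eta_i\otimes\eta_i$ constrains $(\nabla^g_Y\varphi_i)$ on and off the vertical distribution, while differentiating the quaternionic relation $\varphi_i = \varphi_j\circ\varphi_k - \eta_k\otimes\xi_j$ cyclically yields a linear system relating the three unknowns $\nabla^g_Y\varphi_\ell$ whose inhomogeneous terms are the already-computed $\nabla^g\xi_\ell$ and $\nabla^g\eta_\ell$. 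Solving this system — or, equivalently, verifying that the candidate right-hand side of the first displayed formula is consistent with every differentiated identity and reproduces the correct $d\Phi_i$ (obtained by applying $d$ to the defining equation) — gives the claimed expression. I expect the bookkeeping here, separating horizontal from vertical arguments and tracking the $(\alpha-\delta)$-terms across even permutations, to be lengthy but routine once the Killing property of the $\xi_i$ is secured; that property is the genuine conceptual obstacle.
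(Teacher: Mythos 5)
This proposition is quoted by the paper from \cite[Prop.\@ 2.3.2, Cor.\@ 2.3.1]{3str} without proof, so the relevant comparison is with the proof in that source. There the argument runs in the opposite order from yours: one first proves that every $\tad$ manifold is \emph{hypernormal} (the vanishing of all three Nijenhuis tensors $N_{\varphi_i}$ --- \cite[Thm.\@ 2.2.1]{3str}, established by a Kashiwada-type argument exploiting identities relating the three Nijenhuis tensors), then obtains $\nabla^g\varphi_i$ from the standard almost-contact-metric formula expressing $2g((\nabla^g_X\varphi)Y,Z)$ in terms of $d\Phi$, $d\eta$, and $N_\varphi$ (with $d\Phi_i$ extracted, as you correctly note, by applying $d$ to the defining equation), and finally deduces $\nabla^g\xi_i$ and the Killing property as corollaries.

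Your outline has the right skeleton --- the antisymmetric/symmetric split of $\nabla^g\eta_i$ is correct, your verification that the candidate formula (\ref{LC_derivative_xii_formula}) reproduces $\tfrac12 d\eta_i$ is sound, and you rightly identify that the Killing property of $\xi_i$ cannot come from the $i$-th structure alone --- but the proposal contains a genuine gap exactly at the step you flag as the crux. The claim that differentiating $\varphi_i(\xi_j)=\xi_k$ and $\varphi_i=\varphi_j\circ\varphi_k-\eta_k\otimes\xi_j$ and "combining cyclic versions" forces the symmetric parts to cancel is asserted, not demonstrated, and it is not routine: each differentiated identity introduces the unknown tensors $\nabla^g\varphi_\ell$ alongside the $\nabla^g\xi_\ell$, and with only the antisymmetrizations $\mathrm{Alt}(\nabla^g\eta_\ell)=\tfrac12 d\eta_\ell$ and $\mathrm{Alt}(\nabla^g\Phi_\ell)$ known, the resulting linear system on the $1$-jet is not visibly determined --- for a single almost contact metric structure, $\nabla^g\varphi$ is determined by $(d\Phi,d\eta)$ only \emph{together with} $N_\varphi$, which is precisely why \cite{3str} must prove hypernormality first. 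The cancellation you defer is equivalent in content to that theorem, i.e.\@ to the substantive result of the cited source, so calling the remaining bookkeeping "lengthy but routine" understates what must be done. A second, smaller flaw: your parenthetical "or, equivalently, verifying that the candidate right-hand side is consistent with every differentiated identity" is not equivalent to solving the system --- consistency shows the candidate is \emph{a} solution of the linear constraints, but without establishing that the system determines $\nabla^g\varphi_i$ uniquely (which is exactly the missing hypernormality-type input), this does not prove the formula.
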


Indeed, these formulas become rather unwieldy when $\alpha\neq \delta$, and thus the Levi-Civita connection is not well suited to computations in the $\tad$ setting. While it is not possible in general to find a nice connection parallelizing all the structure tensors of a 3-$(\alpha,\delta)$-Sasaki manifold (see \cite[Thm.\@ 4.2.1]{3str}), there is nonetheless a good choice available:

\begin{proposition}\label{prelims:canonical_connection}\emph{(Thms.\@ 4.1.1, 4.4.1, and the discussion in Section 4 of \cite{3str}).}
	Let $(M,g,\varphi_i,\xi_i,\eta_i)$ be a 3-$(\alpha,\delta)$-Sasaki manifold. Then $M$ admits a unique metric connection $\nabla$ with skew torsion such that, for some function $\beta\in C^{\infty}(M)$, $$\nabla_X\varphi_i = \beta ( \eta_k(X) \varphi_j -\eta_j(X) \varphi_k) \quad \forall X\in \Gamma(TM). $$
	Furthermore, $\beta$ is the constant function $\beta\equiv 2(\delta-2\alpha)$ and the torsion of $\nabla$ is the 3-form \begin{align} \label{canonical_torsion_formula} T= \sum_{i=1}^3 \eta_i\wedge d\eta_i + 8(\delta-\alpha) \eta_{1,2,3} = 2\alpha \sum_{i=1}^3 \eta_i\wedge \Phi_i^{\mathcal{H}} +2(\delta - 4\alpha) \eta_{1,2,3}  ,\end{align} 
	where $\eta_{1,2,3}:=\eta_1\wedge \eta_2\wedge \eta_3$. The $\nabla$-derivatives of the structure tensors are given by 
	\begin{align*}
	\nabla_X\varphi_i &= \beta ( \eta_k(X) \varphi_j -\eta_j(X) \varphi_k), \quad
	\nabla_X\xi_i = \beta ( \eta_k(X) \xi_j -\eta_j(X) \xi_k), \quad
	\nabla_X\eta_i = \beta ( \eta_k(X) \eta_j -\eta_j(X) \eta_k),
	\end{align*}
	and, in particular, are parallel in the horizontal directions.
\end{proposition}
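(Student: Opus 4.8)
The plan is to work in the standard framework for metric connections with totally skew-symmetric torsion. Any such connection can be written $\nabla = \nabla^g + \tfrac12 T$ for a unique $3$-form $T\in\Omega^3(M)$, where the endomorphisms $T_X$ are defined by $g(T_XY,Z)=T(X,Y,Z)$; conversely, for any $3$-form $T$ the connection $\nabla^g+\tfrac12 T$ is automatically metric (each $T_X$ is $g$-skew) and has torsion precisely $T$. The whole problem therefore reduces to determining the $3$-form $T$ and the function $\beta$ for which the prescribed identity for $\nabla\varphi_i$ holds, and then establishing uniqueness.

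For existence, the key computation is to expand
\[
\nabla_X\varphi_i = \nabla^g_X\varphi_i + \tfrac12[T_X,\varphi_i],
\]
substitute the explicit Levi-Civita derivative from Proposition~\ref{LCder}, and impose that the result equal $\beta(\eta_k(X)\varphi_j - \eta_j(X)\varphi_k)$. This gives, for each $i$ and each $X$, a linear equation $\tfrac12[T_X,\varphi_i] = \beta(\eta_k(X)\varphi_j - \eta_j(X)\varphi_k) - \nabla^g_X\varphi_i$ with completely explicit right-hand side. Decomposing along $TM=\mathcal{H}\oplus\mathcal{V}$ and using the quaternionic relations $\varphi_i^{\mathcal{H}}\varphi_j^{\mathcal{H}}=\varphi_k^{\mathcal{H}}$ together with the vertical action $\varphi_i\xi_j=\xi_k$, $\varphi_i\xi_k=-\xi_j$, I would solve these three coupled equations simultaneously; the requirement that a single totally antisymmetric $T$ satisfy all three is what forces $\beta\equiv 2(\delta-2\alpha)$ and pins down $T = 2\alpha\sum_i\eta_i\wedge\Phi_i^{\mathcal{H}} + 2(\delta-4\alpha)\eta_{1,2,3}$.

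The equivalence of the two displayed forms of $T$ is then a short direct computation: from $\varphi_i\xi_k=-\xi_j$ one gets $\Phi_i=\Phi_i^{\mathcal{H}}-\eta_j\wedge\eta_k$, so that $\eta_i\wedge d\eta_i = 2\alpha\,\eta_i\wedge\Phi_i^{\mathcal{H}} - 2\delta\,\eta_{1,2,3}$; summing over $i$ and adding $8(\delta-\alpha)\eta_{1,2,3}$ recovers the horizontal expression. The derivatives of $\xi_i$ and $\eta_i$ follow by the same mechanism: writing $\nabla_X\xi_i=\nabla^g_X\xi_i+\tfrac12 T_X\xi_i$ and combining \eqref{LC_derivative_xii_formula} with the explicit $T$, the cross term $2\alpha\sum_i\eta_i\wedge\Phi_i^{\mathcal{H}}$ cancels the $-\alpha\varphi_i$ term in horizontal directions while the vertical contributions assemble into $\beta(\eta_k(X)\xi_j-\eta_j(X)\xi_k)$; the formula for $\nabla_X\eta_i$ is then immediate from $\eta_i=g(\xi_i,-)$ and $\nabla g=0$, since $(\nabla_X\eta_i)(Y)=g(\nabla_X\xi_i,Y)$.

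I expect the main difficulty to lie in two places. First, the horizontal/vertical bookkeeping of the commutators $[T_X,\varphi_i]$ and the verification that the three equations (for $i=1,2,3$) are simultaneously solvable by a single $3$-form is delicate; it is exactly this overdetermination, enforced by the total antisymmetry of $T$, that rigidifies the system and produces the specific constant $\beta=2(\delta-2\alpha)$. Second, uniqueness requires showing that if two admissible data $(\nabla,\beta)$ and $(\nabla',\beta')$ exist, then the difference $S=T-T'$ is a $3$-form satisfying $\tfrac12[S_X,\varphi_i]=(\beta-\beta')(\eta_k(X)\varphi_j-\eta_j(X)\varphi_k)$ for all $X,i$, and that the only solution is $S=0$, $\beta=\beta'$. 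The subtlety here is that the horizontal parallelism alone is far too weak — the commutant of the quaternionic action on $\mathcal{H}$ is the large algebra $\mathfrak{sp}(n)$ — so it is the interplay of commuting with all three $\varphi_i$ and the full $3$-form antisymmetry of $S$ that must be exploited to conclude vanishing; this is the crux of the uniqueness argument.
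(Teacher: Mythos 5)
Your overall architecture is the right one, and it is essentially the route taken in the source the paper cites (note the paper itself offers no proof of this proposition, quoting Thms.\@ 4.1.1 and 4.4.1 of \cite{3str}): reduce to $\nabla=\nabla^g+\tfrac12 T$ with $T$ a $3$-form, rewrite the defining condition as $\tfrac12[T_X,\varphi_i]=\beta(\eta_k(X)\varphi_j-\eta_j(X)\varphi_k)-\nabla^g_X\varphi_i$, and solve this overdetermined system. The pieces you actually execute check out: $\Phi_i=\Phi_i^{\mathcal{H}}-\eta_j\wedge\eta_k$ indeed gives $\eta_i\wedge d\eta_i=2\alpha\,\eta_i\wedge\Phi_i^{\mathcal{H}}-2\delta\,\eta_{1,2,3}$, which reconciles the two displayed expressions for $T$; and with the stated $T$ the cancellations you describe do occur (e.g.\@ for $X\in\mathcal{H}$ one computes $\tfrac12 T_X\xi_i=\alpha\varphi_i(X)$, cancelling $\nabla^g_X\xi_i=-\alpha\varphi_i(X)$, and for $X=\xi_j$ the $\eta_{1,2,3}$-component combines with $\nabla^g_{\xi_j}\xi_i=-\delta\xi_k$ to give $-\beta\xi_k$), with $\nabla\eta_i$ then immediate from metricity.

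There is, however, a genuine gap at precisely the step you flag as the crux: uniqueness is asserted rather than proved, and your diagnosis of where the rigidity comes from points in a slightly wrong direction. You suggest that on $\mathcal{H}$ one must play the three $\varphi_i$ off against each other because the pointwise commutant of the quaternionic action is large (it is $\mathfrak{sp}(n-1)$, incidentally, not $\mathfrak{sp}(n)$, since $\dim\mathcal{H}=4(n-1)$). In fact the pointwise commutant is irrelevant: the classical Friedrich--Ivanov-type lemma says a $3$-form $S$ with $S_X$ commuting with a \emph{single} complex structure $J$ for every $X$ already vanishes. Indeed $g(S_XJY,JZ)=g(S_XY,Z)$ gives $S(X,JY,JZ)=S(X,Y,Z)$; total antisymmetry extends this to every pair of slots; comparing the two evaluations of $S(JX,JY,JZ)$ via slots $(1,2)$ and $(2,3)$ yields $S(JX,Y,Z)=S(X,JY,Z)$, while substituting $Y\mapsto JY$ in the pair identity yields $S(X,JY,Z)=-S(JX,Y,Z)$; hence $S\rvert_{\mathcal{H}}=0$. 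What genuinely requires the full $3$-contact interplay is the vertical bookkeeping, which your sketch omits: for horizontal $X$, $[S_X,\varphi_i]=0$ forces $S_X\xi_i\in\ker\varphi_i\cap\xi_i^{\perp}=0$, so all mixed components of $S$ vanish and $S=S^{\mathcal{H}}+c\,\eta_{1,2,3}$; then evaluating the difference equation at $X=\xi_j$ on horizontal vectors gives $0=[S_{\xi_j},\varphi_i]\rvert_{\mathcal{H}}=-2(\beta-\beta')\varphi_k^{\mathcal{H}}$, forcing $\beta=\beta'$, after which the same commutator evaluated on $\xi_i$ yields $c\,\xi_j=0$, killing the vertical component. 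With these ingredients supplied your plan closes; as written, the uniqueness half --- and with it the claim that solvability \emph{forces} the specific constant $\beta\equiv 2(\delta-2\alpha)$, which you assert but never derive --- is incomplete, and the existence half still requires the (routine, given Proposition \ref{LCder}) verification of the $\nabla\varphi_i$ identity for the stated $T$.
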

 As a matter of notation, throughout the article we shall always denote by $T$ the torsion of $\nabla$. The connection $\nabla$ is called the \emph{canonical connection} of the $\tad$ structure, and we shall make frequent use of it to simplify the curvature calculations in \S\ref{section:horizontalKS}. We will also sometimes use the shorthand $\eta_{i,j}:=\eta_i\wedge \eta_j$, $e_{i,j}:=e_i\wedge e_j$, etc.\@ for differential forms. To translate curvature identities for the canonical connection back in terms of the Levi-Civita connection, we need the following:
\begin{proposition}
\emph{(\cite[Section 1.2]{ADScurv}).} The curvature tensors $R, R^g$ of the canonical and Levi-Civita connections are related by
\begin{align}R^g(X,Y,Z,V) = R(X,Y,Z,V) -\frac{1}{4} g(T(X,Y),T(Z,V)) -\frac{1}{8}dT(X,Y,Z,V) ,  \label{curvdifference}
\end{align}
where $T$ is the torsion tensor of $\nabla$.
\end{proposition}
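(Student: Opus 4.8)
The plan is to realise the canonical connection as an affine modification of the Levi-Civita connection and then expand the curvature. Since $T$ is a $3$-form, the defining property of a metric connection with skew torsion gives $\nabla_X Y = \nabla^g_X Y + \tfrac12\, T(X,Y,\cdot)^{\sharp}$, that is $g(\nabla_X Y,Z) = g(\nabla^g_X Y,Z) + \tfrac12 T(X,Y,Z)$; I would first record this, deducing the coefficient $\tfrac12$ from the two requirements that $\nabla$ be metric (so $g(A_X Y,Z)$ with $A_X := \nabla_X-\nabla^g_X$ is skew in $Y,Z$) and have torsion $T$. Substituting $A_X$ into $R(X,Y)Z = \nabla_X\nabla_Y Z - \nabla_Y\nabla_X Z - \nabla_{[X,Y]}Z$, the second-order pieces assemble into $R^g$, the Christoffel corrections cancel against $\nabla^g_{[X,Y]}$ by torsion-freeness of $\nabla^g$, and what survives is the exact identity
\[
R(X,Y,Z,V) = R^g(X,Y,Z,V) + \tfrac12\big[(\nabla^g_X T)(Y,Z,V) - (\nabla^g_Y T)(X,Z,V)\big] + g([A_X,A_Y]Z,V),
\]
valid for any metric connection with skew torsion.

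Next I would treat the two correction terms separately. The commutator term is purely algebraic: expanding $A$ in an orthonormal frame $\{e_k\}$ gives $g([A_X,A_Y]Z,V) = \tfrac14\big[g(T(X,Z),T(Y,V)) - g(T(Y,Z),T(X,V))\big]$, where $g(T(A,B),T(C,D)) := \sum_k T(A,B,e_k)T(C,D,e_k)$. For the derivative term I would invoke the crucial special feature of the canonical connection, namely that its torsion is parallel, $\nabla T = 0$ (this follows from the $\nabla$-parallelism of $\eta_i$, $\Phi_i^{\mathcal{H}}$ and $\eta_{1,2,3}$ recorded in Proposition \ref{prelims:canonical_connection}; the $\beta$-contributions cancel cyclically). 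Converting $\nabla$ back to $\nabla^g$ then shows that $\nabla^g T$ is totally skew-symmetric and equals $\tfrac12\sigma_T = \tfrac14\, dT$, where $\sigma_T := \tfrac12\sum_k(\iota_{e_k}T)\wedge(\iota_{e_k}T)$; in particular $dT = 2\sigma_T$. This is exactly what forces $R$ to share the pair symmetry $R(X,Y,Z,V) = R(Z,V,X,Y)$ of $R^g$, without which no clean formula involving $dT$ alone could hold.

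Finally I would collect terms. With $\nabla^g T$ totally skew the derivative bracket becomes $\tfrac14\, dT(X,Y,Z,V)$, and it remains to merge this with the commutator term. Expanding the wedge product gives the pointwise identity $\sigma_T(X,Y,Z,V) = g(T(X,Y),T(Z,V)) - g(T(X,Z),T(Y,V)) + g(T(X,V),T(Y,Z))$, and combining it with the symmetry $g(T(X,V),T(Y,Z)) = g(T(Y,Z),T(X,V))$ yields $\sigma_T + g(T(X,Z),T(Y,V)) - g(T(Y,Z),T(X,V)) = g(T(X,Y),T(Z,V))$. Hence the commutator term equals $\tfrac14 g(T(X,Y),T(Z,V)) - \tfrac14\sigma_T$, and substituting $dT = 2\sigma_T$ collapses the sum of the two contributions to exactly $\tfrac14 g(T(X,Y),T(Z,V)) + \tfrac18\, dT(X,Y,Z,V)$, which rearranges to the claimed formula (\ref{curvdifference}).

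The main obstacle is the bookkeeping of this last step: the raw expansion produces a $\nabla^g T$ term together with three \emph{distinct} quadratic pairings of $T$, none of which is manifestly the single pairing $g(T(X,Y),T(Z,V))$ or a multiple of $dT$. The decisive non-formal input is the parallelism $\nabla T = 0$, which simultaneously renders $\nabla^g T$ totally skew (so a multiple of $dT$) and guarantees the pair symmetry of $R$; only after invoking it, and the $4$-form identity expressing $\sigma_T$ through the quadratic pairings, do the numerical coefficients conspire to produce precisely $\tfrac14$ and $\tfrac18$.
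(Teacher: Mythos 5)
Your proposal is correct, and all the coefficients check out. Note, however, that the paper contains no proof of this statement at all: it is imported verbatim from \cite[Section 1.2]{ADScurv}, so there is nothing internal to compare against. What you have written is, in effect, a reconstruction of the standard Friedrich--Ivanov-type argument that underlies that citation, and it is sound at every step: the decomposition $\nabla=\nabla^g+A$ with $g(A_XY,Z)=\tfrac12 T(X,Y,Z)$, the general curvature identity with the $\tfrac12\big[(\nabla^g_XT)(Y,Z,V)-(\nabla^g_YT)(X,Z,V)\big]$ and $g([A_X,A_Y]Z,V)$ corrections, the frame computation of the commutator term, the identity $\sigma_T(X,Y,Z,V)=g(T(X,Y),T(Z,V))-g(T(X,Z),T(Y,V))+g(T(X,V),T(Y,Z))$, and the final bookkeeping
\[
R=R^g+\tfrac14\,dT+\tfrac14\,g(T(X,Y),T(Z,V))-\tfrac14\,\sigma_T
=R^g+\tfrac14\,g(T(X,Y),T(Z,V))+\tfrac18\,dT,
\]
which rearranges to (\ref{curvdifference}). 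You also correctly isolate the one genuinely geometric input, $\nabla T=0$, without which the formula fails (for general skew torsion one retains a $\nabla^g T$-term that is not a multiple of $dT$, and $dT\neq 2\sigma_T$).

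Two small points of precision. First, your parenthetical justification of $\nabla T=0$ begins by invoking ``the $\nabla$-parallelism of $\eta_i$, $\Phi_i^{\mathcal{H}}$ and $\eta_{1,2,3}$''; as Proposition \ref{prelims:canonical_connection} records, $\eta_i$ and $\Phi_i^{\mathcal{H}}$ are \emph{not} individually $\nabla$-parallel when $\beta\neq 0$ (only in horizontal directions), so the parallelism of $T$ rests entirely on the cyclic cancellation of the $\beta$-contributions in the sum $\sum_i\eta_i\wedge\Phi_i^{\mathcal{H}}$, together with $\nabla\eta_{1,2,3}=0$ --- which you do say, and which is correct, but the phrasing should lead with the cancellation rather than with parallelism of the individual tensors. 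Second, the pair symmetry $R(X,Y,Z,V)=R(Z,V,X,Y)$ is a \emph{consequence} of the final formula (every other term has it), not an input to the derivation; presenting it as ``decisive'' overstates its role, though this does not affect the validity of the proof.
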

Finally we recall the description of metric connections in the homogeneous setting via Nomizu maps. Indeed, it is well-known (see e.g. \cite{Nomizumap,Wangconnections,KN1}) that an invariant metric connection $\nabla'$ (we use $\nabla'$ rather than $\nabla$ in order to distinguish it from the canonical connection defined above) on a Riemannian homogeneous space $(M=G/H, g)$ corresponds to an $H$-equivariant linear map $T_oM \to \mathfrak{so}(T_oM)$ ($o:=eH$). Following the modern exposition in \cite[Chap.\@ 6]{ANT_book_principal_fibre_bundles}, this relationship may be expressed as follows:
\begin{proposition}Let $(M=G/H,g)$ be a Riemannian homogeneous space with reductive decomposition $\mathfrak{g}=\mathfrak{h} \oplus \mathfrak{m}$, and let $(\pi_*)_e \: \mathfrak{m}\to T_oM$ ($o:=eH$) denote the isomorphism induced by the (restriction to $\mathfrak{m}$ of the) derivative of the projection $\pi \: G\to M=G/H$ at the identity element.
	\begin{enumerate}[(i)] \item There is a one-to-one correspondence between $G$-invariant metric connections $\nabla'$ and linear maps $\Uplambda' \: \mathfrak{m}\to \mathfrak{so}(\mathfrak{m})$ satisfying
	\[ 
	\Uplambda'(h\cdot X)Y = h\cdot (\Uplambda'(X)(h^{-1}\cdot Y))  \qquad \text{ for all } X,Y\in \mathfrak{m}, \ h\in H,
	\]
	where $h\cdot V := \Ad_H(h)V$ denotes the isotropy action of $h\in H$ on $V\in \mathfrak{m}\cong T_oM$. 
	\item The correspondence in (i) is given explicitly by 
	\[
	(\pi_*)_e (\Uplambda'(X)Y )= (\nabla'_{\widehat{X}} - \Lie_{\widehat{X}})_o(\widehat{Y}_o)   \qquad \text{ for all } X,Y\in \mathfrak{m},
	\]
	where $\widehat{X},\widehat{Y}$ denote the fundamental vector fields induced by $X,Y$.
	\item If $\tau$ is a $G$-invariant tensor field or differential form on $M$, then 
	\[
	(\nabla'_{\widehat{X}}\tau)_o= \Uplambda'(X)\tau_o \qquad \text{ for all } X\in\mathfrak{m} , 
	\] 
	where $\widehat{X}$ denotes the fundamental vector field induced by $X$ and the action of $\Uplambda'(X)\in \mathfrak{so}(\mathfrak{m})$ on $\tau$ is via the extension to the tensor or exterior algebra of the representation of $\mathfrak{so}(\mathfrak{m})$ on $\mathfrak{m}$. 
	\end{enumerate}
\end{proposition}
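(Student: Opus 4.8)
The plan is to work throughout with the difference operator $D_X := \nabla'_{\widehat{X}} - \Lie_{\widehat{X}}$ for $X\in\mathfrak{m}$, exploiting that the fundamental vector fields span every tangent space. First I would observe that, although neither $\nabla'_{\widehat{X}}$ nor $\Lie_{\widehat{X}}$ is tensorial in its vector-field argument, their difference is: for $f\in C^\infty(M)$ the Leibniz terms $(\widehat{X}f)\widehat{Y}$ produced by $\nabla'_{\widehat{X}}(f\widehat{Y})$ and $\Lie_{\widehat{X}}(f\widehat{Y})=[\widehat{X},f\widehat{Y}]$ cancel, so $D_X(f\widehat{Y}) = f\,D_X\widehat{Y}$. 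Consequently $(D_X\widehat{Y})_o$ depends only on $\widehat{Y}_o = (\pi_*)_e(Y)$, and since $G$ acts transitively the fundamental vector fields span $T_oM$; thus $Y\mapsto (\pi_*)_e^{-1}(D_X\widehat{Y})_o$ is a well-defined linear endomorphism of $\mathfrak{m}$, which I take as the definition of $\Uplambda'(X)$. This simultaneously produces the assignment $\nabla'\mapsto\Uplambda'$ and establishes formula (ii).

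Next I would verify the two structural properties. For metricity: since $g$ is $G$-invariant, the flow of each $\widehat{X}$ acts by isometries, so $\widehat{X}$ is Killing and $\Lie_{\widehat{X}}g=0$; combining this with $\nabla'g=0$ gives $g(D_X\widehat{Y},\widehat{Z}) + g(\widehat{Y},D_X\widehat{Z}) = 0$, which at $o$ says precisely that $\Uplambda'(X)\in\mathfrak{so}(\mathfrak{m})$. For equivariance: writing $L_h$ for the action of $h\in H$ and using the $G$-invariance of $\nabla'$, the naturality of the Lie bracket, and the standard identity $(L_h)_*\widehat{X} = \widehat{\Ad_H(h)X}$, I would obtain $(L_h)_*(D_X\widehat{Y}) = D_{\Ad_H(h)X}\,\widehat{\Ad_H(h)Y}$. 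Evaluating at the fixed point $o$ and translating the isotropy action $(L_h)_{*,o}$ on $T_oM$ into $\Ad_H(h)$ on $\mathfrak{m}$ via $(\pi_*)_e(X)=\widehat{X}_o$ yields $h\cdot\bigl(\Uplambda'(X)Y\bigr) = \Uplambda'(h\cdot X)(h\cdot Y)$, which becomes the stated condition after replacing $Y$ by $h^{-1}\cdot Y$.

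For the converse in (i), given an equivariant $\Uplambda'\colon\mathfrak{m}\to\mathfrak{so}(\mathfrak{m})$ I would reconstruct $\nabla'$ by prescribing its origin-datum through formula (ii) and spreading it over $M$ using $G$-invariance; here the equivariance condition is exactly what guarantees that this extension is independent of the chosen group element projecting to a given point, metricity follows from $\Uplambda'(X)\in\mathfrak{so}(\mathfrak{m})$, and the two assignments are mutually inverse by construction. Finally, for (iii) I would note that $D_X$ is a derivation of the full tensor algebra which annihilates functions (as $\nabla'_{\widehat{X}}f = \widehat{X}f = \Lie_{\widehat{X}}f$) and commutes with contractions; hence it is tensorial and completely determined by its action on vector fields, namely by $\Uplambda'(X)$, so on an arbitrary tensor it acts at $o$ as the natural extension of $\Uplambda'(X)$ to the tensor algebra. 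When $\tau$ is $G$-invariant one has $\Lie_{\widehat{X}}\tau=0$, and therefore $(\nabla'_{\widehat{X}}\tau)_o = (D_X\tau)_o = \Uplambda'(X)\tau_o$.

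I expect the main obstacle to be bookkeeping rather than conceptual subtlety: fixing the correct sign conventions in the map $X\mapsto\widehat{X}$ (a Lie algebra anti-homomorphism for the left action) and in $(L_h)_*\widehat{X}=\widehat{\Ad_H(h)X}$, and then verifying carefully in the converse direction that the equivariance relation is both necessary and sufficient for the $G$-invariant extension of the origin-datum to be globally well-defined.
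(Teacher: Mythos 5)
Your proposal is correct, but there is nothing in the paper to compare it against line by line: the paper states this proposition as a classical result, citing \cite{Nomizumap,Wangconnections,KN1} and following the exposition of \cite[Chap.\@ 6]{ANT_book_principal_fibre_bundles}, and gives no proof of its own. The cited route (the Nomizu--Wang theorems) works upstairs: one identifies $G$-invariant connections with $\Ad(H)$-equivariant splittings of an invariant connection form on the relevant principal bundle, so that existence, uniqueness, and well-definedness in both directions of the correspondence come packaged with the bundle formalism. Your route stays entirely on the base and is organized around the difference operator $D_X = \nabla'_{\widehat{X}} - \Lie_{\widehat{X}}$ (essentially the operator $A_X$ classically attached to Killing fields in \cite{KN1}), and the three observations that carry it are all sound: $D_X$ is $C^{\infty}(M)$-linear in its vector-field slot, so $(D_X\widehat{Y})_o$ depends only on $\widehat{Y}_o = (\pi_*)_e Y$ and $\Uplambda'(X)$ is well defined with (ii) holding by construction; $D_X$ is a derivation of the tensor algebra killing functions and commuting with contractions, so at $o$ it acts as the canonical extension of $\Uplambda'(X)$, which together with $\Lie_{\widehat{X}}\tau = 0$ for invariant $\tau$ yields (iii); and $\nabla' g = 0$ plus the Killing property of $\widehat{X}$ give $\Uplambda'(X)\in\mathfrak{so}(\mathfrak{m})$, while $(L_h)_*\widehat{X} = \widehat{\Ad_H(h)X}$, naturality of the bracket, and invariance of $\nabla'$ give exactly the equivariance in (i) after the substitution $Y\mapsto h^{-1}\cdot Y$ you perform. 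What your approach buys is an elementary, self-contained proof with no connection forms; what it costs is that the converse direction is where all the actual content sits, and your sketch there is the thinnest part: when you ``spread the origin-datum over $M$ by $G$-invariance'' you should verify not only independence of the coset representative (which, as you say, is precisely the equivariance of $\Uplambda'$) but also that the resulting assignment is tensorial and Leibniz in the correct slots and smooth --- routine checks, since the prescription $(\nabla'_{\widehat{X}}V)_o := \Uplambda'(X)V_o + [\widehat{X},V]_o$ is manifestly $C^{\infty}$-linear in $\widehat{X}_o$ and satisfies the Leibniz rule in $V$, but they belong in a complete write-up, as does the sign bookkeeping ($[\widehat{X},\widehat{Y}] = -\widehat{[X,Y]}$ for a left action) that you rightly flag.
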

In particular, statement (iii) in the preceding proposition allows one to compute covariant derivatives of invariant tensors and differential forms in a purely algebraic way. To similarly compute the covariant derivatives of invariant spinors (with respect to the spinorial connection $\widetilde{\nabla'}$ induced by $\nabla'$), one uses the lifted Nomizu map $\widetilde{\Uplambda'}\: \mathfrak{m} \to \mathfrak{spin}(\mathfrak{m})$ obtained by composing $\Uplambda$ with the Lie algebra isomorphism $\mathfrak{so}(\mathfrak{m})\cong \mathfrak{spin}(\mathfrak{m})$. The relationship between $\widetilde{\nabla'}$ and $\widetilde{\Uplambda'}$ is then given by a similar formula as in (iii) above, with the action of $\widetilde{\Uplambda'}(X)\in \mathfrak{spin}(\mathfrak{m})$ on an invariant spinor $\psi_o \in (\Sigma_{\inv})_o$ via the spin representation. It is well-known that the Nomizu map of the Levi-Civita connection is given by a simple formula in terms of the Lie bracket of $\mathfrak{g}$:
\begin{proposition}\label{prelims:LC_Nomizu_general_formula}
	\emph{(\cite{ANT_book_principal_fibre_bundles,KN2,Nomizumap}).} If $(M=G/H,g)$ is a Riemannian homogeneous space then the Nomizu map $\Uplambda^g$ associated to the Levi-Civita connection is given by
	\[
	\Uplambda^g(X)Y = \frac{1}{2} \text{\emph{proj}}_{\mathfrak{m}}[X,Y] + U(X,Y)
	\]for all $X,Y\in\mathfrak{m}$, where $U$ is the symmetric $(2,0)$-tensor specified by 
	\begin{align}\label{Utensor}
		2g(U(X,Y),Z) = g(\text{\emph{proj}}_{\mathfrak{m}} [Z,X],Y) + g(X,\text{\emph{proj}}_{\mathfrak{m}}[Z,Y]).
	\end{align}
\end{proposition}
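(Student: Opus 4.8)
The plan is to exploit the uniqueness of the Levi-Civita connection. Since $G$ acts by isometries and isometries preserve the Levi-Civita connection, $\nabla^g$ is itself a $G$-invariant metric connection, and hence corresponds to some Nomizu map under the correspondence of the previous proposition. It therefore suffices to check that the proposed map $\Uplambda^g(X)Y = \tfrac12 \mathrm{proj}_{\mathfrak{m}}[X,Y] + U(X,Y)$ is a legitimate Nomizu map whose associated invariant connection is both metric and torsion-free; by uniqueness this connection must coincide with $\nabla^g$, and the formula is then exactly its Nomizu map.

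First I would verify that $\Uplambda^g$ actually takes values in $\mathfrak{so}(\mathfrak{m})$, i.e.\@ that each $\Uplambda^g(Z)$ is skew-symmetric with respect to $g$ (this is precisely the metric condition built into the correspondence). Expanding $g(\Uplambda^g(Z)X,Y) + g(X,\Uplambda^g(Z)Y)$ and substituting the defining relation $2g(U(Z,X),Y) = g(\mathrm{proj}_{\mathfrak{m}}[Y,Z],X) + g(Z,\mathrm{proj}_{\mathfrak{m}}[Y,X])$ together with the analogous relation for $U(Z,Y)$, the resulting six terms cancel in pairs by the skew-symmetry of the Lie bracket. Combined with the $H$-equivariance required by part (i) of the previous proposition --- which follows from the $\Ad(H)$-invariance of $g$, the $\Ad(H)$-invariance of the splitting $\mathfrak{g}=\mathfrak{h}\oplus\mathfrak{m}$ (so that $\mathrm{proj}_{\mathfrak{m}}$ is $\Ad(H)$-equivariant), and the $\Ad$-equivariance of the bracket --- this shows that $\Uplambda^g$ corresponds to a well-defined $G$-invariant metric connection $\nabla'$.

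Next I would compute the torsion of $\nabla'$. Using part (ii) of the previous proposition, the identity $\Lie_{\widehat{X}}\widehat{Y} = [\widehat{X},\widehat{Y}]$, and the standard relation between the bracket of fundamental vector fields and the algebraic bracket, namely $[\widehat{X},\widehat{Y}]_o = -(\pi_*)_e\,\mathrm{proj}_{\mathfrak{m}}[X,Y]$, one finds that the torsion at $o$ is
\[
T_o\big((\pi_*)_e X,(\pi_*)_e Y\big) = (\pi_*)_e\big(\Uplambda^g(X)Y - \Uplambda^g(Y)X - \mathrm{proj}_{\mathfrak{m}}[X,Y]\big).
\]
Substituting the formula and using that $U$ is symmetric while $\mathrm{proj}_{\mathfrak{m}}[X,Y]$ is skew, the two $U$-terms cancel and the two half-bracket terms combine into the full bracket, so the right-hand side vanishes identically. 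Hence $\nabla'$ is torsion-free, and by the uniqueness of the Levi-Civita connection $\nabla' = \nabla^g$, which proves the claim.

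The main obstacle is essentially bookkeeping: pinning down the sign and normalization conventions in the torsion computation, in particular the precise relationship between the Lie bracket of fundamental vector fields and the algebraic bracket on $\mathfrak{m}$, which is sensitive to whether one works with a left or a right action. Once this convention is fixed consistently with the definition of the Nomizu map in part (ii), both the skew-symmetry and the torsion-free verifications are routine, the entire content being encoded in the symmetrization defining $U$ and the antisymmetry of the bracket.
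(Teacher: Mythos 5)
Your proposal is correct: the paper states this proposition without proof, citing it as a classical result, and your uniqueness argument --- verifying that $\Uplambda^g$ is skew-symmetric (metricity), $H$-equivariant, and yields a torsion-free connection via the relation $[\widehat{X},\widehat{Y}]_o = -(\pi_*)_e\,\mathrm{proj}_{\mathfrak{m}}[X,Y]$ --- is precisely the standard proof found in the cited references (Kobayashi--Nomizu, Nomizu). Your attention to the sign convention for fundamental vector fields of a left action is exactly the one delicate point, and you resolve it consistently with part (ii) of the correspondence.
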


We conclude by recalling from \cite{hom3alphadelta} the explicit formulas for the Nomizu maps of the canonical and Levi-Civita connections of a homogeneous $\tad$ space. We give the formulas only in the case of a $\tad$ homogeneous space fibering over a Wolf space (i.e.\@ a symmetric quaternionic K\"{a}hler space), which are much simpler than the general case and will be sufficient for our purposes.
\begin{proposition}\emph{(\cite[Props.\@ 4.2.1, 4.2.2]{hom3alphadelta}).} Let $(M=G/H,g,\xi_i,\eta_i,\varphi_i)$ be a $\tad$ homogeneous space fibering over a Wolf space, and let $\mathfrak{g}=\mathfrak{h}\oplus \mathfrak{m}$ be a reductive decomposition. Denote by $\mathcal{V},\mathcal{H} \subseteq \mathfrak{m} $ the respective images under the identification $T_oM\cong \mathfrak{m}$ of the vertical and horizontal spaces at the origin.
	\begin{enumerate}[(i)]
		\item The Nomizu map for the canonical connection is given by
\begin{align}\label{canonical_Nomizu_map_explicit_formula}
	\Uplambda(V)W=
\begin{cases}
	\frac{\delta-2\alpha}{\delta}[V,W]& V \in \mathcal{V},\\
	0 & V\in \mathcal{H},
\end{cases} 
\end{align}
	\item The Nomizu map for the Levi-Civita connection is given by
\begin{align}\label{LC_Nomizu_map_explicit_formula}
	\Uplambda^g(V)W=
	\begin{cases}
		\frac{1}{2}\text{\emph{proj}}_{\mathfrak{m}}[V,W]&  V,W\in \mathcal{V} \text{\emph{ or }} V,W\in\mathcal{H}\\
		\left(1-\frac{\alpha}{\delta} \right)[V,W]&  V\in \mathcal{V}, \ W\in\mathcal{H}\\
		\frac{\alpha}{\delta}[V,W]&  V\in\mathcal{H}, \ W\in\mathcal{V}. \\
	\end{cases}	
\end{align}
\end{enumerate}
\end{proposition}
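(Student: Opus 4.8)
The plan is to reduce everything to the Lie bracket of $\mathfrak{g}$ and then feed it into the two known Nomizu-map formulas. First I would pin down the bracket structure of $\mathfrak{m}=\mathcal{V}\oplus\mathcal{H}$. Because $M$ fibers over a symmetric Wolf space $\mathfrak{g}=\mathfrak{k}\oplus\mathfrak{p}$ with $\mathcal{V}$ the $\mathfrak{sp}(1)$-summand of $\mathfrak{k}$ and $\mathcal{H}\cong\mathfrak{p}$, the coarse containments $[\mathcal{V},\mathcal{V}]\subseteq\mathcal{V}$, $[\mathcal{V},\mathcal{H}]\subseteq\mathcal{H}$, and $[\mathcal{H},\mathcal{H}]\subseteq\mathfrak{h}\oplus\mathcal{V}$ (so that $\operatorname{proj}_{\mathcal{H}}[\mathcal{H},\mathcal{H}]=0$) come for free from the subalgebra/isotropy/symmetric-pair structure — this is exactly where the Wolf-space hypothesis is used. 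To get the precise structure constants I would evaluate the defining identity $d\eta_i=2\alpha\Phi_i+2(\alpha-\delta)\eta_j\wedge\eta_k$ on fundamental vector fields via the Koszul formula $d\eta_i(\widehat{X},\widehat{Y})=-\eta_i(\operatorname{proj}_{\mathfrak{m}}[X,Y])$ (standard sign convention for fundamental fields), using that the $\xi_i$ are orthonormal and $\eta_i|_{\mathcal{H}}=0$. On the $\mathcal{V}\times\mathcal{V}$ block this yields $[\xi_j,\xi_k]=2\delta\,\xi_i$, and on the $\mathcal{H}\times\mathcal{H}$ block $\operatorname{proj}_{\mathcal{V}}[H,H']=-2\alpha\sum_i\Phi_i(H,H')\,\xi_i$.

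The one bracket the defining identity cannot see is $[\xi_i,H]$ for $H\in\mathcal{H}$: it is horizontal, so $\eta_i$ annihilates it. Writing $[\xi_i,H]=\lambda\,\varphi_i H$ (legitimate since $\mathcal{V}\cong\mathfrak{sp}(1)$ acts on $\mathcal{H}$ through the quaternionic structure), I would fix $\lambda$ from the Jacobi identity for $(\xi_i,\xi_j,H)$: combining $[\xi_j,\xi_k]=2\delta\xi_i$ with the quaternion relations $\varphi_i\varphi_j=\varphi_k$ gives $2\lambda(\lambda-\delta)=0$, hence $\lambda=\delta$ and $\operatorname{ad}_{\xi_i}|_{\mathcal{H}}=\delta\varphi_i$. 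I expect this to be the main obstacle — or at least the only genuinely nonobvious point — precisely because the naive attempt to read $\lambda$ off the Levi-Civita derivative $\nabla^g_H\xi_i=-\alpha\varphi_iH$ of Proposition \ref{LCder} fails: one checks that the general Nomizu formula reproduces $-\alpha\varphi_iH$ for \emph{every} value of $\lambda$, so $\lambda$ is genuinely determined only by the bracket/Jacobi structure.

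With all brackets in hand, part (ii) is a direct substitution into $\Uplambda^g(X)Y=\tfrac12\operatorname{proj}_{\mathfrak{m}}[X,Y]+U(X,Y)$ from Proposition \ref{prelims:LC_Nomizu_general_formula}, evaluating the symmetric tensor $U$ block by block. On the two diagonal blocks $U$ vanishes by an $\operatorname{ad}$-invariance cancellation (the metric is bi-invariant on $\mathcal{V}\cong\mathfrak{sp}(1)$, and $\operatorname{ad}_{\xi_i}$ is skew on $\mathcal{H}$), leaving $\tfrac12\operatorname{proj}_{\mathfrak{m}}[V,W]$. On the mixed blocks the two surviving contributions give $2g(U(\xi_i,H),H')=(\delta-2\alpha)g(H',\varphi_iH)$, i.e. $U(\xi_i,H)=\tfrac{\delta-2\alpha}{2}\varphi_iH$; adding the $\tfrac12\operatorname{proj}_{\mathfrak{m}}$-term (equal to $\tfrac12\delta\varphi_iH$ when the vertical entry comes first, and $-\tfrac12\delta\varphi_iH$ otherwise) produces exactly $\delta-\alpha=(1-\tfrac{\alpha}{\delta})\delta$ and $-\alpha=\tfrac{\alpha}{\delta}(-\delta)$, i.e. the asymmetric coefficients $1-\tfrac{\alpha}{\delta}$ and $\tfrac{\alpha}{\delta}$ once rewritten through $[\xi_i,H]=\delta\varphi_iH$ and $[H,\xi_i]=-\delta\varphi_iH$.

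For part (i) I would use that the canonical connection differs from the Levi-Civita connection by the metric endomorphism $A$ with $g(A_XY,Z)=\tfrac12 T(X,Y,Z)$, so that $\Uplambda(X)Y=\Uplambda^g(X)Y+\tfrac12 T(X,Y)^\sharp$ with $g(T(X,Y)^\sharp,Z)=T(X,Y,Z)$, and then compute $\tfrac12 T(\cdot,\cdot)^\sharp$ in each block from the explicit torsion \eqref{canonical_torsion_formula}. A short calculation gives $\tfrac12 T(\xi_l,\xi_m)^\sharp=(\delta-4\alpha)\xi_n$, $\tfrac12 T(\xi_l,H)^\sharp=-\alpha\varphi_lH$, and $\tfrac12 T(H,H')^\sharp=\alpha\sum_i\Phi_i(H,H')\xi_i$; adding these to the Levi-Civita values from (ii) collapses the horizontal rows to $0$ and rescales the vertical rows to $\tfrac{\delta-2\alpha}{\delta}[V,W]$, matching the claim. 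Alternatively, and perhaps more cleanly, one could simply verify that the asserted $\Uplambda$ lies in $\mathfrak{so}(\mathfrak{m})$ (immediate, since $\operatorname{ad}_{\xi_i}$ is skew and $\Uplambda(\mathcal{H})=0$), that it reproduces the torsion \eqref{canonical_torsion_formula} and the structure-tensor derivatives $\nabla_X\varphi_i=\beta(\eta_k(X)\varphi_j-\eta_j(X)\varphi_k)$, and then invoke the uniqueness statement in Proposition \ref{prelims:canonical_connection}.
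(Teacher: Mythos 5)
Your reconstruction must be judged on its own merits, because the paper contains no proof of this proposition: it is imported verbatim from \cite[Props.\@ 4.2.1, 4.2.2]{hom3alphadelta}, where the bracket relations you work to recover hold essentially \emph{by construction} ($\xi_i=\delta\sigma_i$ with $[\sigma_i,\sigma_j]=2\sigma_k$, and $\varphi_i$ defined through $\operatorname{ad}(\xi_i)$; indeed this paper itself quotes $\varphi_i=\frac{1}{2\delta}\ad(\xi_i)\rvert_{\mathfrak{sp}(1)}+\frac{1}{\delta}\ad(\xi_i)\rvert_{\mathfrak{m}_1}$ from \cite[Thm.\@ 3.1.1]{hom3alphadelta} in the proof of Theorem \ref{Chap_dual:spinorialdualitytheorem}). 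On its own merits, your computational skeleton is sound and I have verified it: the evaluation of $d\eta_i$ on invariant vectors gives $[\xi_j,\xi_k]=2\delta\xi_i$ and $\operatorname{proj}_{\mathcal{V}}[H,H']=-2\alpha\sum_i\Phi_i(H,H')\xi_i$; the block-by-block computation of $U$ from (\ref{Utensor}) is correct, including the mixed-block values $U(\xi_i,H)=\frac{\delta-2\alpha}{2}\varphi_iH$ and the resulting coefficients $\delta-\alpha$ and $-\alpha$; the three torsion contractions of (\ref{canonical_torsion_formula}) are correct and collapse $\Uplambda$ exactly as claimed. Your observation that $\nabla^g_H\xi_i=-\alpha\varphi_iH$ is reproduced by the Nomizu formula for \emph{every} value of $\lambda$ is also correct, and it is a genuinely sharp remark: the Levi-Civita data in Proposition \ref{LCder}, fed only through vector fields $\xi_i$, cannot determine $\lambda$.

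The genuine gap sits precisely at the step you flag as the main obstacle. First, even granting your ansatz, $2\lambda(\lambda-\delta)=0$ has \emph{two} roots, and you discard $\lambda=0$ without comment; it must be excluded, e.g.\@ by noting that $\lambda=0$ gives $[\mathcal{V},\mathcal{H}]=0$, which together with $[\mathfrak{h},\mathcal{V}]=0$ and $[\mathcal{V},\mathcal{V}]\subseteq\mathcal{V}$ would make $\mathcal{V}\cong\mathfrak{sp}(1)$ an ideal of $\mathfrak{g}$, contradicting the simplicity of $\mathfrak{g}$ recorded immediately after the proposition (equivalently, contradicting condition (iii) of Definition \ref{Chap_dual:3sasakian_data_definition}, which makes $\mathfrak{g}_1^{\C}\cong\C^2\otimes U$ a nontrivial $\mathfrak{sp}(1)$-module). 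Second, and more substantively, the ansatz $[\xi_i,H]=\lambda\varphi_iH$ with a single scalar and index-aligned proportionality is asserted, not proved: a priori $\ad(\xi_i)\rvert_{\mathcal{H}}$ is merely some skew endomorphism commuting with $\mathfrak{h}$, and under the weaker ansatz $[\xi_i,H]=\lambda_i\varphi_iH$ the Jacobi constraints $\lambda_j\lambda_k=\delta\lambda_i$ admit sign-mixed solutions such as $(\lambda_1,\lambda_2,\lambda_3)=(-\delta,-\delta,\delta)$, so the Jacobi identity alone cannot close the argument even after the zero root is removed. A repair that bypasses both issues is to apply part (iii) of the Nomizu-map correspondence recalled in the preliminaries to the invariant tensors $\varphi_j$: writing $A_i:=\ad(\xi_i)\rvert_{\mathcal{H}}$, your $U$-computation gives $\Uplambda^g(\xi_i)\rvert_{\mathcal{H}}=A_i-\alpha\varphi_i$ with no ansatz at all, and comparing $\Uplambda^g(\xi_i)\cdot\varphi_j$ with the formula for $\nabla^g_{\xi_i}\varphi_j$ in Proposition \ref{LCder} forces $[A_i,\varphi_i]=0$ and $[A_i,\varphi_j]=2\delta\varphi_k$, so that $A_i-\delta\varphi_i$ commutes with the whole quaternionic structure; one then kills this commutant piece using $\mathfrak{g}_1^{\C}\cong\C^2\otimes U$ (or, most economically, one cites \cite[Thm.\@ 3.1.1]{hom3alphadelta}, where $A_i=\delta\varphi_i$ holds by construction). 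With $[\xi_i,H]=\delta\varphi_iH$ secured, the remainder of your argument for both (i) and (ii) is complete and correct, and your alternative uniqueness-based route for (i) would also work.
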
 
Note that the choice of reductive complement $\mathfrak{m}$ in the preceding proposition is unique for the following reason. Any $\tad$ homogeneous space fibering over a Wolf space is specified by certain Lie algebraic data, called \emph{generalized $\ts$ data}, described in \cite{hom3alphadelta}. In particular the Lie algebra $\mathfrak{g}$ is simple, hence the complement $\mathfrak{m}$ is canonically given by the orthogonal complement of the isotropy subalgebra $\mathfrak{h}\subseteq \mathfrak{g}$ with respect to the Killing form. Explicit realizations of the reductive complement $\mathfrak{m}$ are given in \cite{homdata} in the case of homogeneous $\ts$ spaces, and this description carries over to the $\tad$ setting (see \cite{hom3alphadelta}).

\subsection{Spin Structures and the Spinor Bundle}

In this subsection we briefly recall the basic definitions and facts about spinors, following the conventions of \cite{AHLspheres}. Let $(M^m,g)$ be an oriented $m$-dimensional Riemannian manifold and $P_{\SO}$ its oriented orthonormal frame bundle. Furthermore, let $\lambda \: \Spin(m)\to \SO(m)$ be the double covering homomorphism and by abuse of notation let $R\: P_G \times G \to P_G $ denote right multiplication map ($G=\SO(m), \Spin(m)$). A \emph{spin structure} on $M$ is a principal $\Spin(m)$-bundle $P_{\Spin}$ together with a $2$-to-$1$ covering map $\Lambda \: P_{\Spin}\to P_{\SO}$ such that the following diagram commutes:

\[\begin{tikzcd}
	{P_{\Spin}\times \Spin(m)} & {P_{\Spin}} \\
	&& M \\
	{P_{\SO}\times \SO(m)} & {P_{\SO}}
	\arrow[from=1-2, to=2-3]
	\arrow[from=3-2, to=2-3]
	\arrow[from=1-1, to=1-2, "R"]
	\arrow[from=3-1, to=3-2, "R"]
	\arrow["\lambda",from=1-2, to=3-2]
	\arrow["\Lambda \times \lambda ",from=1-1, to=3-1]
\end{tikzcd}\]

The \emph{spinor bundle} is the associated vector bundle
\[
\Sigma M := P_{\Spin}\times_{\Spin(m)} \Sigma 
\]
induced by the \emph{spin representation} $\Spin(m)\to \text{GL}(\Sigma)$. The Levi-Civita connection $\nabla^g$ induces a connection on $\Sigma M$, which we also denote by $\nabla^g$ (by composing the connection $1$-form with the Lie algebra isomorphism $\mathfrak{so}(m)\cong \mathfrak{spin}(m)$), and this spinorial connection parallelizes a $\Spin(m)$-invariant Hermitian product $\langle \ , \ \rangle$.

The following is an explicit realization of the spin representation in dimension $m=4n-1$. Letting $e_1,\dots, e_{4n-1}$ be the standard orthonormal basis of $(\R^{4n-1},g_{\text{Euc}})$, define
\[
L' :=  \Span_{\C} \{ x_j:= \frac{1}{\sqrt{2}}( e_{2j}-ie_{2j+1})  \}_{j=1}^{2n-1}, \qquad L' :=  \Span_{\C} \{ y_j:= \frac{1}{\sqrt{2}}( e_{2j}+ie_{2j+1})  \}_{j=1}^{2n-1}
\]
to be respectively the spaces of formal $(1,0)$ and $(0,1)$ vectors in $T^{\C}M$ (we do not assume the existence of a complex/contact structure for this definition). The spin representation of \begin{align} \label{spin_group_def}\Spin(m)  := \{ v_1  \dots v_{2\ell} \: v_i\in \R^n, ||v_i||=1, \ell \in \mathbb{Z} \} \subset \langle \R^n  \ \rvert \  v\cdot w + w\cdot v = -2g_{\text{Euc}}(v,w) \rangle  \end{align} may be realized on the vector space $\Sigma:= \Lambda^{\bullet}L'$ by specifying the action of the unit vectors $e_i$:
\begin{align}\label{cliffordmultONB}
	e_{2j}\cdot \eta  &=  i(x_j \lrcorner \eta + y_j\wedge \eta ),\quad e_{2j+1}\cdot \eta =  (y_j\wedge\eta - x_j\lrcorner\eta ),\quad 
	e_{1}=  i\Id\rvert_{\Sigma^{\text{even}}} -i\Id\rvert_{\Sigma^{\text{odd}}},
\end{align}
for all $\eta\in \Sigma$. The algebra on the right hand side of (\ref{spin_group_def}) is called the \emph{Clifford algebra}, and the action of $\R^n$ on $\Sigma$ by the above formulas is called \emph{Clifford multiplication}. Clifford multiplication extends naturally to an action of $\Lambda^{\bullet}\R^m$ on $\Sigma$ by specifying $e_{i_1}\wedge \dots \wedge e_{i_\ell} \cdot \eta = e_{i_1}\cdot \ldots \cdot e_{i_\ell} \cdot \eta$ for all $\eta \in \Sigma$, and at the level of bundles this gives an action of vector fields and differential forms on spinors. Furthermore, Clifford multiplication by vector fields is skew-symmetric with respect to the Hermitian form $\langle \ , \ \rangle$.

\subsection{Killing Spinors on $\ts$ Manifolds} In this subsection we recall previously known results about Killing spinors on $\ts$ manifolds. Throughout the article we shall always fix the orientation such that all Killing spinors correspond to the Killing number $\frac{1}{2}$ (which is possible in dimension 3 (mod 4)). The story begins in the late 80s and early 90s with results obtained independently by several different authors. Focusing specifically on the Sasakian and $\ts$ cases, Friedrich and Kath constructed in \cite{Fried90} certain rank $2$ subbundles of the spinor bundle carrying a basis of Killing spinors:
\begin{theorem}\emph{(\cite[Thm.\@ 1]{Fried90}).}
	If $(M,g,\xi,\eta,\varphi)$ is an Einstein-Sasakian manifold, then
	\begin{align*}
		E:=  \{ \psi\in\Gamma(\Sigma M): (-2\varphi(X) + \xi\cdot X -X\cdot \xi)\cdot \psi =0 \ \text{ for all } X\in TM\} 
	\end{align*}
	has rank $2$ and is spanned over $C^{\infty}(M)$ by Killing spinors for the Killing number $\frac{1}{2}$.  
\end{theorem}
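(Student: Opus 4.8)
The plan is to split the statement into a pointwise (algebraic) claim, namely that $\dim_{\C}E_p = 2$ at every point, and a differential claim, namely that the sections of $E$ are Killing spinors for the number $\frac12$. For the first I would work fibrewise in the spin representation, and for the second I would covariantly differentiate the defining relation and feed in the Sasakian structure equations (Proposition \ref{LCder} with $\alpha=\delta=1$) together with the Einstein condition.

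First, the algebraic reduction. Observe that the defining endomorphism vanishes identically when $X=\xi$, since $\varphi(\xi)=0$ and $\xi\cdot\xi - \xi\cdot\xi = 0$, so $\xi$ imposes no condition. For horizontal $X$ one has $\eta(X)=0$, so the Clifford relation gives $\xi\cdot X - X\cdot\xi = 2\,\xi\cdot X$ and the equation collapses to
\[
\varphi(X)\cdot\psi = \xi\cdot X\cdot\psi \qquad \text{for all } X\in\mathcal{H}.
\]
I would then pass to a local unitary frame adapted to the contact structure, in which $\xi=e_1$ and $\varphi$ acts as $\pm i$ on the formal $(1,0)$ and $(0,1)$ vectors $x_j,y_j$ of (\ref{cliffordmultONB}). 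Splitting $\psi = \psi^+ + \psi^-$ into even and odd parts and using $e_1 = i\,\Id|_{\Sigma^{\text{even}}} - i\,\Id|_{\Sigma^{\text{odd}}}$, testing the equation against $X=x_j$ and $X=y_j$ yields exactly $x_j\cdot\psi^+ = 0$ and $y_j\cdot\psi^- = 0$ for all $j$. Since $x_j$ acts by contraction (annihilation) and $y_j$ by exterior multiplication (creation), the common kernels are the one-dimensional vacuum line $\Lambda^0 L'$ and the one-dimensional top line $\Lambda^{2n-1}L'$; in dimension $4n-1$ these carry the correct (even, resp.\@ odd) parities, so both survive and $E_p = \Lambda^0 L'\oplus\Lambda^{2n-1}L'$ is two-dimensional. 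This realizes $E_p$ as the sum of the two extremal pure-spinor lines determined by the contact structure.

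Second, the Killing property. Here I would differentiate the identity $(-2\varphi(X)+\xi\cdot X - X\cdot\xi)\cdot\psi = 0$ by $\nabla^g_Y$, treating $X$ as parallel at the point, and substitute $\nabla^g_Y\xi = -\varphi(Y)$ and $(\nabla^g_Y\varphi)X = g(X,Y)\xi - \eta(X)Y$ from Proposition \ref{LCder}. This expresses $(-2\varphi(X)+\xi\cdot X - X\cdot\xi)\cdot\nabla^g_Y\psi$ as an explicit Clifford expression applied to $\psi$, and since the kernel of the family $\{-2\varphi(X)+\xi\cdot X - X\cdot\xi\}_{X\in\mathcal{H}}$ is exactly $E$, this controls $\nabla^g_Y\psi$ modulo $E$. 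To extract $\nabla^g_Y\psi = \frac12 Y\cdot\psi$ one sets $\phi_Y := \nabla^g_Y\psi - \frac12 Y\cdot\psi$ and shows, after reducing the resulting Clifford identities and separating the horizontal and vertical parts of $Y$, that $\phi_Y$ vanishes. Equivalently, and perhaps more transparently, I would verify that every $\psi\in\Gamma(E)$ is a Dirac eigenspinor for the limiting eigenvalue $\mp\frac{m}{2}$ and then invoke the equality case of Friedrich's estimate, which applies precisely because $(M,g)$ is Einstein with $\Ric = (m-1)g$; the conceptual reason behind all of this is that the metric cone over a Sasaki-Einstein manifold is Ricci-flat K\"ahler and carries a two-dimensional space of parallel spinors restricting to the sought Killing spinors.

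I expect the second part to be the main obstacle. The algebraic reduction is routine bookkeeping once the Clifford conventions are fixed, but the passage from ``the defining relation is preserved'' to ``the sections are honestly Killing'' is where the Einstein hypothesis is indispensable: for a general, non-Einstein Sasakian manifold the bundle $E$ still has rank $2$ by the first part, yet its sections are not Killing (the round sphere versus a Berger sphere already illustrates this). Pinning down exactly where $\Ric = (m-1)g$ enters the differentiated identity --- and checking that all of the correction terms produced by the Sasakian structure equations cancel against the Einstein term to leave precisely the Killing number $\frac12$ --- is the delicate computational core of the argument.
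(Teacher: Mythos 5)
Your algebraic part is correct and essentially optimal: for horizontal $X$ the relation collapses to $\varphi(X)\cdot\psi=\xi\cdot X\cdot\psi$, and testing against $x_j,y_j$ with $\xi=e_1$ as in (\ref{cliffordmultONB}) gives exactly $x_j\lrcorner\,\psi^+=0$ and $y_j\wedge\psi^-=0$, so $E_p=\Lambda^0L'\oplus\Lambda^{2n-1}L'$ with the right parities; this is consistent with the explicit bases $\Psi_{E_1,0}=1$, $\Psi_{E_1,1}=y_1\wedge\omega^{n-1}$ recorded in Theorem \ref{prelims:inv_KS_theorem}. Your differentiation step is also the correct opening move of the differential part, and it is precisely the Friedrich--Kath step that this paper generalizes: differentiating the defining relation and substituting $\nabla^g_Y\xi=-\varphi(Y)$ and $(\nabla^g_Y\varphi)X=g(X,Y)\xi-\eta(X)Y$, together with the identity (\ref{technicallemma2}), shows that the modified connection $\widehat{\nabla}_Y\psi:=\nabla^g_Y\psi-\frac{1}{2}Y\cdot\psi$ preserves $E$.

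However, there is a genuine gap in how you try to finish. What the differentiated identity proves is only that $\widehat{\nabla}_Y\psi\in\Gamma(E)$ whenever $\psi\in\Gamma(E)$; it cannot show that $\phi_Y:=\nabla^g_Y\psi-\frac{1}{2}Y\cdot\psi$ vanishes for every section, because that statement is \emph{false}: $E$ is a $C^{\infty}(M)$-module, so $f\psi$ lies in $\Gamma(E)$ for any function $f$ and is not Killing when $df\neq 0$ --- which is why the theorem asserts only that $E$ is \emph{spanned} by Killing spinors. The missing idea is the flatness of $\widehat{\nabla}\rvert_E$: one computes the curvature $\widehat{R}(X,Y)\psi$, projects onto $E$ (where only the terms $e_s\cdot\varphi(e_s)$ survive, so everything reduces to the $\varphi$-twisted trace $\sum_s R^g(X,Y,e_s,\varphi(e_s))$), and uses the Sasakian curvature identities together with $\Ric=(m-1)g$ --- this is exactly where Einstein enters --- to see that this trace cancels the algebraic curvature $\frac{1}{4}(X\cdot Y-Y\cdot X)$ of the Killing term; the holonomy principle then yields a basis of $\widehat{\nabla}$-parallel, hence Killing, sections. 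This is the structure of the paper's own generalization (Propositions \ref{curvprop}, \ref{conversions}, \ref{LCprojections} and the flatness proposition, specialized to $\alpha=\delta$). Your two fallback routes do not close this gap: membership in $E$ is a purely algebraic condition and does not make $\psi$ a Dirac eigenspinor --- one only gets $D\psi=-\frac{4n-1}{2}\psi+\sum_s e_s\cdot\widehat{\nabla}_{e_s}\psi$, and the correction term is not visibly zero until flatness is already known, so the equality case of Friedrich's estimate has nothing to bite on; and the cone argument produces two Killing spinors but gives no identification of their span with the bundle $E$, which is the actual content of the statement.
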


\begin{remark}\label{Ei_bundle_remark}
	For a $\ts$ manifold $(M,g,\xi_i,\eta_i,\varphi_i)$, which is automatically spin and Einstein (see e.g.\@ \cite[Cor.\@ 2.2.4, Cor.\@ 2.2.5]{BG_3Sas_paper}), one can consider for each individual Sasakian structure the bundle defined in the preceding theorem. This gives three rank $2$ bundles
	\begin{align}\label{Ei_definition}
	E_i:=  \{ \psi\in\Gamma(\Sigma M): (-2\varphi_i(X) + \xi_i\cdot X -X\cdot \xi_i)\cdot \psi =0 \ \text{ for all } X\in TM\} 
\end{align}
spanned by Riemannian Killing spinors. In Section \ref{section:horizontalKS} we will search for candidates for interesting spinors in the (non-direct) sum $E:=E_1+E_2+E_3$, whose rank satisfies $2\leq \text{rank}(E) \leq 6$. The failure of the sum to be direct can be seen in some examples in Table \ref{Tab:spinors_PsiEi_low_dim}, where we recall from \cite{Hof22} explicit bases for the $E_i$ in some low dimensions.
\end{remark}

In the preceding remark we note that the bundles $E_i$ associated to a $\ts$ manifold are spanned by at least $2$ and at most $6$ linearly independent Killing spinors. In dimension $7$, combining results from \cite{Fried90,BFGK,nearly_parallel_g2} gives a full description of the geometric structures carried by a spin manifold according to the number of linearly independent Killing spinors:
\begin{theorem}\emph{(\cite{Fried90,nearly_parallel_g2}).}
Let	$(M^7,g)$ be a $7$-dimensional spin manifold.
\begin{enumerate}[(i)]
	\item $(M^7,g)$ carries at least three linearly independent Killing spinors if and only if it admits a $\ts$ structure;
	\item $(M^7,g)$ carries at least two linearly independent Killing spinors if and only if it admits an Einstein-Sasakian structure;
	\item $(M^7,g)$ carries a non-trivial Killing spinor if and only if it admits a nearly parallel $\G_2$-structure. 
\end{enumerate}
\end{theorem}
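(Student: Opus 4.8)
The plan is to deduce all three equivalences from a single principle: the number of linearly independent Killing spinors for the fixed Killing number $\tfrac12$ is controlled by the holonomy of the Riemannian cone, via B\"ar's correspondence. First I would observe that the space of such Killing spinors is the kernel of the modified connection $X\mapsto \nabla^g_X-\tfrac12 X\cdot$ on $\Sigma M$; since this is an honest connection, its parallel sections have locally constant dimension, so "the number of linearly independent Killing spinors" is a well-defined integer invariant. The integrability condition of the Killing equation moreover forces $(M^7,g)$ to be Einstein with $\mathrm{scal}=42>0$, so (as in the cited references, which work in the compact/complete setting) I may assume $M$ complete, whence compact by Myers.

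Next I would pass to the $8$-dimensional metric cone $\widehat M=(\R^{+}\times M,\,dr^2+r^2g)$ and invoke B\"ar's theorem, which puts real Killing spinors on $M^7$ (Killing number $\tfrac12$) in bijection with parallel spinors on $\widehat M$. By Gallot's theorem the cone is either flat, forcing $M=S^7$ — a case in which $M$ carries all three structures and the maximal number of Killing spinors, so the three equivalences hold trivially — or it is irreducible, in which case $\mathrm{Hol}(\widehat M)$ lies on Berger's list and admits a parallel spinor. In real dimension $8$ the only such holonomy groups are $\Spin(7)$, $\SU(4)$ and $\Sp(2)$, and Wang's classification gives the dimension of the space of parallel spinors as $1$, $2$, $3$ respectively.

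The heart of the matter is then the dictionary between $\mathrm{Hol}(\widehat M)$ and the special geometry of $M$: the cone is hyperk\"ahler ($\Sp(2)$) exactly when $M$ is $\ts$ (which the paper takes as the definition of the $\ts$ condition), it is Calabi--Yau ($\SU(4)$) exactly when $M$ is Einstein--Sasakian, and it has holonomy $\Spin(7)$ exactly when $M$ carries a nearly parallel $\G_2$-structure. Because these three structures are nested (a $\ts$ manifold is Einstein--Sasakian for each of its Sasakian structures, and a Sasaki--Einstein $7$-manifold carries a nearly parallel $\G_2$-structure), in exact parallel with the inclusions $\Sp(2)\subset\SU(4)\subset\Spin(7)$ and the inequalities $3\ge 2\ge 1$, combining the dictionary with Wang's counts yields (i), (ii) and (iii) simultaneously.

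The hard part will be the holonomy-to-structure dictionary, i.e.\@ the "only if" directions that reconstruct the geometric structure on $M$ from a parallel spinor on $\widehat M$ (equivalently, a Killing spinor on $M$). For (iii) this means building the defining $3$-form $\omega(X,Y,Z)=\langle X\cdot Y\cdot Z\cdot\psi,\psi\rangle$ of a $\G_2$-structure from a unit spinor $\psi$ and checking that the Killing equation is equivalent to $d\omega=\lambda\,{*}\omega$ with constant $\lambda$, the nearly parallel condition; for (ii) one extracts the Reeb field and contact form of a Sasakian structure from a pair of Killing spinors, its Einstein property being forced by the scalar-curvature normalization; and for (i) one recovers the full triple of Sasakian structures. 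Here I would lean on results already available in the excerpt: the converse of (ii) is the Friedrich--Kath theorem exhibiting the rank-$2$ bundle $E$ of Killing spinors on an Einstein--Sasakian manifold, and (i) is precisely the $n=2$ case of B\"ar's theorem quoted in the introduction, so the only genuinely new construction to carry out is the spinor-to-$\G_2$ correspondence underlying (iii).
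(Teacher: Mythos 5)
The paper itself contains no proof of this statement: it is recalled as a known result and attributed to \cite{Fried90,nearly_parallel_g2}, so the only meaningful comparison is with the proofs in those sources. Your cone-holonomy architecture---B\"ar's Killing/parallel correspondence, Gallot's flat-or-irreducible dichotomy for cones over complete manifolds, Wang's counts $1,2,3$ of parallel spinors for $\Spin(7)$, $\SU(4)$, $\Sp(2)$ in dimension $8$, and the dictionary (hyperK\"ahler cone $\leftrightarrow$ $\ts$, Calabi--Yau cone $\leftrightarrow$ Einstein--Sasakian, $\Spin(7)$ cone $\leftrightarrow$ nearly parallel $\G_2$)---is exactly the route of B\"ar and of \cite{nearly_parallel_g2}. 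By contrast, Friedrich--Kath's original argument in \cite{Fried90} is a direct tensorial one: the structure tensors are built from the Killing spinors and a modified connection with curvature identities, i.e.\@ the style the present paper itself adapts in \S\ref{section:horizontalKS}. So your proposal is a correct reconstruction of the cited proof, with the trade-off that the cone method is shorter and unifies (i)--(iii), but genuinely requires global hypotheses (completeness, and see below simple connectedness) that the direct Friedrich--Kath-type constructions and the pointwise spinor-to-$\G_2$ equivalence of \cite{nearly_parallel_g2} do not.

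Two concrete slips, neither fatal but both worth fixing. First, your claim that (i) ``is precisely the $n=2$ case of B\"ar's theorem quoted in the introduction'' is not available as stated: the quoted theorem explicitly assumes $4n-1>7$. Nothing is lost, since your own Gallot--Wang dichotomy already proves (i) in dimension $7$ (at least three parallel spinors on an irreducible $8$-dimensional cone force $\mathrm{Hol}=\Sp(2)$), but you should rely on that rather than on the quotation. Second, in the flat-cone branch, flatness of the cone forces constant curvature $1$, i.e.\@ $M$ is a spherical space form, and concluding $M=S^7$ silently uses simple connectedness---a hypothesis which, like completeness, is present in the cited sources but absent from the statement as the paper records it. This is not vacuous fine print: $\RP^7$ is itself a homogeneous $\ts$ space (it is excluded by simple connectedness in the paper's proof of Theorem \ref{Chap:duality:dual_equations_theorem_hom3ad}), and handling general space forms would require B\"ar's computation of Killing spinors on quotients of $S^7$. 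Finally, note that the three dictionary equivalences remain asserted in your write-up; for (i) and (ii) they follow from results quoted in the paper together with the cone characterizations, but for (iii) the equivalence between a unit Killing spinor and a nearly parallel $\G_2$-structure is, as you correctly identify, the one construction still to be carried out---it is precisely the content of \cite{nearly_parallel_g2}.
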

Separately, the Killing spinors on $\ts$ manifolds have been studied from the point of view of holonomy. The crucial ingredient in this approach is the following result of B\"{a}r:
\begin{theorem}\label{prelims:Bar_cone_Thm}\emph{(\cite{Bar}).}
	There is a correspondence between real Killing spinors on a Riemannian spin manifold $(M,g)$ and parallel spinors on its cone $(\overline{M}:=M\times \R, \bar{g}:= r^2 g+ dr^2)$. 
\end{theorem}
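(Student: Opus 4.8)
The plan is to compare the Levi-Civita connections of $(M,g)$ and of the cone $(\overline{M},\bar g)$ directly, and to track what the parallel condition on $\overline{M}$ becomes after restricting to a slice $\{r=\text{const}\}$. First I would record the warped-product (Koszul/O'Neill) formulas for the cone connection $\overline{\nabla}$: writing $\partial_r$ for the radial field and regarding vector fields $X,Y$ on $M$ as $r$-independent fields on $\overline{M}$, these read
\[
\overline{\nabla}_{\partial_r}\partial_r = 0, \qquad \overline{\nabla}_{\partial_r} X = \overline{\nabla}_X \partial_r = \tfrac{1}{r} X, \qquad \overline{\nabla}_X Y = \nabla^g_X Y - r\, g(X,Y)\, \partial_r .
\]
From an orthonormal frame $s_1,\dots,s_m$ of $(M,g)$ one obtains the orthonormal cone frame $\bar e_i := \tfrac{1}{r} s_i$ $(i=1,\dots,m)$ together with $\bar e_0 := \partial_r$, and the formulas above determine all coefficients $\bar g(\overline{\nabla}_{\bar e_a}\bar e_b,\bar e_c)$ explicitly.

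The second ingredient is the identification of spinor bundles. Since $m=\dim M$ is odd, $\dim\overline{M}=m+1$ is even, and the restriction of the $(m+1)$-dimensional spin representation to $\Spin(m)$ recovers the $m$-dimensional one; concretely this yields a bundle isomorphism $\Sigma\overline{M}\rvert_{\{r=\text{const}\}}\cong \Sigma M$ under which the cone product $\bar e_0\cdot \bar e_i\,\cdot$ corresponds to Clifford multiplication by $s_i$ on $M$, up to the fixed sign and factor dictated by the conventions in (\ref{cliffordmultONB}). I would fix this identification and express $X\cdot_M\psi$ through the cone product as $\partial_r\cdot X\cdot \psi$.

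Next I would insert the connection coefficients from the first step into the standard expression $\overline{\nabla}_Z \overline{\psi} = Z(\overline{\psi}) + \tfrac14 \sum_{a,b}\bar g(\overline{\nabla}_Z \bar e_a,\bar e_b)\,\bar e_a\cdot \bar e_b\cdot \overline{\psi}$ for the spinorial Levi-Civita connection. Taking $Z=\partial_r$ shows $\overline{\nabla}_{\partial_r}\overline{\psi} = \partial_r(\overline{\psi})$, so a parallel spinor is completely determined by its restriction to $\{r=1\}$ and is reconstructed by radial parallel transport. Taking $Z=X\in TM$, the mixed coefficients $\bar g(\overline{\nabla}_X \bar e_i,\bar e_0)$ collapse, after applying the identification above, into a single multiple of Clifford multiplication by $X$, yielding the key identity
\[
\overline{\nabla}_X \overline{\psi}\ \longleftrightarrow\ \nabla^g_X \psi - \tfrac{1}{2}\, X\cdot \psi ,
\]
valid on each slice. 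Hence $\overline{\nabla}\,\overline{\psi}=0$ on $\overline{M}$ is equivalent to $\psi$ satisfying the Killing equation with Killing number $\tfrac12$ on $M$, and $\overline{\psi}\mapsto \overline{\psi}\rvert_{\{r=1\}}$ is the asserted bijection (the Killing number $-\tfrac12$ arises from the other half-spinor identification, i.e.\@ the reversed orientation).

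The main obstacle is the careful handling of the spinor-bundle identification across the dimension shift together with its $r$-dependence: because the slice frame $\bar e_i=\tfrac1r s_i$ scales with $r$, the isomorphism $\Sigma\overline{M}\cong \pi^*\Sigma M$ genuinely depends on the radial coordinate, so differentiating it in $r$ a priori produces extra terms that must be shown to cancel against the $\tfrac1r$ factors appearing in the warped-product connection. Keeping every Clifford-multiplication sign consistent with (\ref{cliffordmultONB}) is the delicate bookkeeping that makes the clean identity above emerge, and it is the one place where the argument must be executed with genuine care rather than routine computation.
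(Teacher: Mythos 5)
Your proposal is correct and is essentially Bär's original cone argument; the paper itself gives no proof of Theorem \ref{prelims:Bar_cone_Thm}, importing it directly from \cite{Bar}, and your reconstruction (warped-product connection coefficients, the identification of Clifford algebras via $e_i \mapsto \partial_r\cdot e_i$, and the collapse of the mixed terms into $-\tfrac12 X\cdot\psi$ on the slice) is exactly the standard route. The one obstacle you flag resolves itself immediately: $\overline{\nabla}_{\partial_r}\bar e_i = -\tfrac{1}{r^2}s_i + \tfrac{1}{r}\cdot\tfrac{1}{r}s_i = 0$, so the adapted frame and hence the induced spinor trivialization are radially parallel and no extra $r$-derivative terms arise — the only genuine imprecision in your write-up is that for $m$ odd it is each \emph{half}-spinor bundle $\Sigma^{\pm}\overline{M}$ (not the full spinor bundle) that restricts to $\Sigma M$, which is precisely where the two Killing numbers $\pm\tfrac12$ come from, as you note at the end.
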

In \S\ref{section:tad_definitions} we recalled the definition of $\ts$ manifolds in terms of the structure tensors $(g,\xi_i,\eta_i,\varphi_i)$, however $\ts$ manifolds may be equivalently defined by the condition that their cone is hyperK\"{a}hler (see \cite{BG_3Sas_paper}). Combining this with Wang's earlier result about the dimensions of the spaces of parallel spinors carried by the geometries from Berger's list (\cite{Wang}), B\"{a}r obtained:
\begin{theorem}\emph{(\cite[Thm.\@ 4']{Bar}).}
	Let $(M^{4n-1},g)$ be a complete simply-connected Riemannian spin manifold of dimension $4n-1>7$. Then $(M,g)$ admits a $\ts$ structure if and only if it carries at least $n+1$ linearly independent Killing spinors.
\end{theorem}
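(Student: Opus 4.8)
The plan is to pass to the Riemannian cone $(\overline{M} = M\times \R_{>0},\, \bar g = dr^2 + r^2 g)$ of dimension $4n$ and to translate both sides of the claimed equivalence into statements about its holonomy and parallel spinors. Two inputs recalled above make this possible: Bär's cone correspondence (Theorem \ref{prelims:Bar_cone_Thm}), under which real Killing spinors of Killing number $\tfrac12$ on $(M,g)$ are in bijection with parallel spinors on $(\overline M, \bar g)$; and the characterization that $(M,g)$ admits a $\ts$ structure if and only if $\overline M$ is hyperK\"{a}hler, i.e.\ has holonomy contained in $\Sp(n)\subset \SO(4n)$ (see \cite{BG_3Sas_paper}). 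Hence it suffices to prove the purely Riemannian assertion that $\overline M$ is hyperK\"{a}hler if and only if it carries at least $n+1$ parallel spinors. The last ingredient is Wang's computation \cite{Wang} of the dimension of the space of parallel spinors for the entries of Berger's list: in dimension $4n$ the holonomies admitting parallel spinors are $\SU(2n)$ (Calabi--Yau), with a $2$-dimensional space of parallel spinors, and $\Sp(n)$ (hyperK\"{a}hler), with an $(n+1)$-dimensional space.

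For the forward direction I would argue as follows. If $(M,g)$ is $\ts$ then $\overline M$ is hyperK\"{a}hler with holonomy $\Sp(n)$, so by Wang's theorem its space of parallel spinors is exactly $(n+1)$-dimensional; transporting these spinors back through the cone correspondence yields $n+1$ linearly independent Killing spinors on $M$, as required.

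For the converse, suppose $M$ carries at least $n+1$ linearly independent Killing spinors. The cone correspondence then produces at least $n+1$ parallel spinors on $\overline M$, which in particular forces $\overline M$ to be Ricci-flat. I would next invoke Gallot's cone theorem to conclude that $\overline M$ is either flat or irreducible (and note that a non-flat cone is never locally symmetric, so that Berger's classification applies). If $\overline M$ is flat then $M$ is the round sphere $S^{4n-1}$, which is $\ts$. Otherwise $\overline M$ is irreducible with a nontrivial parallel spinor, so its holonomy is one of the special groups of Berger's list. Here the hypothesis $4n-1 > 7$ enters decisively: it forces $n\geq 3$, hence $n+1 \geq 4 > 2$, which by the spinor counts above rules out the Calabi--Yau holonomy $\SU(2n)$, while the same dimension restriction removes the exceptional cases $\Spin(7)$ (dimension $8$) and $\G_2$ (dimension $7$). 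The only surviving possibility is $\Sp(n)$, so $\overline M$ is hyperK\"{a}hler and $M$ is $\ts$.

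The main obstacle is the converse direction, and more precisely the holonomy bookkeeping needed to upgrade the \emph{inequality} ``$\geq n+1$ parallel spinors'' to an exact identification of the holonomy group. This rests on two points that must be handled with care: ruling out reducible cones, for which I rely on Gallot's theorem together with the non-symmetry of non-flat cones, and having Wang's \emph{exact} parallel-spinor counts available for every candidate holonomy so that the strict inequality $n+1 > 2$ eliminates $\SU(2n)$. It is exactly the hypothesis $4n-1>7$ that guarantees $n+1>2$ and excludes the exceptional dimensions $7$ and $8$, where the low-dimensional coincidences among the spinor counts would otherwise obstruct this argument.
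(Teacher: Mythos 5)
Your proposal is correct and follows essentially the same route the paper attributes to B\"{a}r: the cone correspondence of Theorem \ref{prelims:Bar_cone_Thm}, the characterization of $\ts$ structures via hyperK\"{a}hler cones, Wang's parallel-spinor counts for the Berger list, and Gallot's flat-or-irreducible dichotomy to justify applying Berger, with the flat case handled by the round sphere (exactly the rephrasing the paper notes). One cosmetic point: in the forward direction the space of parallel spinors is \emph{exactly} $(n+1)$-dimensional only when the holonomy equals $\Sp(n)$; for $\text{Hol}\subseteq \Sp(n)$ the fixed subspace can only grow, so the needed inequality ``at least $n+1$'' holds regardless.
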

Note that this is a slight rephrasing of \cite[Thm.\@ 4']{Bar}, where we have used the fact that the spinor bundle of the round sphere is trivialized by Killing spinors (by Theorem \ref{prelims:Bar_cone_Thm} together with the fact that the cone over the round sphere is the flat Euclidean space). Recently, the investigation of the relationship between Killing spinors and Sasakian and $\ts$ structures was continued by the second author in \cite{Hof22}, and the following explicit formulas were obtained in the homogeneous setting:
\begin{theorem}\label{prelims:inv_KS_theorem}\emph{(\cite[Thm.\@ 6.1, Prop.\@ 6.4]{Hof22}).}
	Let $(M^{4n-1}=G/H,g,\xi_i,\eta_i,\varphi_i)$ be a simply-connected homogeneous $\ts$ space, and fix a realization of the spin representation with respect to an adapted basis of the reductive complement $\mathfrak{m}$.
	\begin{enumerate}[(i)]
		\item If $n\geq 2$, the space of invariant Killing spinors on $(M,g)$ has a basis given by
		 \begin{align*}
		 \psi_k:= \omega^{k+1} -i(k+1) y_1\wedge \omega^k, \qquad -1\leq k\leq n-1,
		 \end{align*}
	 where $\omega:= \sum_{p=1}^{n-1} y_{2p}\wedge y_{2p+1}$. If $n=1$, the space of invariant Killing spinors on $(M,g)$ has a basis given by $1,y_1$. 
	 \item The bundles $E_i$, $i=1,2,3$ are spanned over $C^{\infty}(M)$ by the Killing spinors $\Psi_{E_i,0},\Psi_{E_i,1}$ given as follows:
	 \begin{align*}
	 	\Psi_{E_1,0}&:= 1, \qquad \Psi_{E_1,1}:= y_1\wedge \omega^{n-1}, \qquad \Psi_{E_2,0} := \sum_{k=0}^{\lfloor \frac{n-1}{2} \rfloor} \frac{(-1)^k}{(2k+1)!} \psi_{2k},\\
	 	\Psi_{E_2,1}&:= \sum_{k=0}^{\lfloor \frac{n}{2} \rfloor} \frac{(-1)^k}{(2k)!} \psi_{2k-1},  \qquad \Psi_{E_3,0} := \sum_{k=0}^{\lfloor \frac{n-1}{2} \rfloor} \frac{1}{(2k+1)!} \psi_{2k}, \qquad  \Psi_{E_3,1}:= \sum_{k=0}^{\lfloor \frac{n}{2} \rfloor} \frac{1}{(2k)!} \psi_{2k-1}. 
	 \end{align*} 
	 \end{enumerate} 
\end{theorem}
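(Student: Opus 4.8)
The plan is to reduce the Killing equation to a purely algebraic condition at the origin and then solve it explicitly in the exterior-algebra model $\Sigma=\Lambda^\bullet L'$. An invariant spinor is an $H$-fixed vector $\psi_o\in\Sigma$, and by the spinorial version of statement (iii) of the Nomizu-map proposition its Levi-Civita derivative is $\nabla^g_{\widehat X}\psi=\widetilde{\Uplambda^g}(X)\psi_o$; hence the Killing equation $\nabla^g_X\psi=\tfrac12 X\cdot\psi$ becomes the linear system
\[
\widetilde{\Uplambda^g}(X)\,\psi_o=\tfrac12\,X\cdot\psi_o\qquad\text{for all }X\in\mathfrak{m},
\]
which it suffices to impose on an adapted basis. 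Since every $\ts$ homogeneous space fibers over a Wolf space, I would compute $\widetilde{\Uplambda^g}$ from the explicit formula (\ref{LC_Nomizu_map_explicit_formula}) specialized to $\alpha=\delta=1$, composed with the isomorphism $\mathfrak{so}(\mathfrak{m})\cong\mathfrak{spin}(\mathfrak{m})$; equivalently one reads off $\Uplambda^g(X)$ from the structure-tensor derivatives of Proposition \ref{LCder}, where for $\alpha=\delta=1$ one has simply $\Uplambda^g(X)\xi_i=-\varphi_i(X)$, and the horizontal block is governed by the $\ts$ relation $d\eta_i=2\Phi_i$.

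The heart of the argument is to turn both sides into operators on $\Lambda^\bullet L'$. With the adapted basis I would take $\xi_1=e_1$ and $\xi_2,\xi_3$ so that $y_1=\tfrac{1}{\sqrt2}(e_2+ie_3)$ is the single vertical $(0,1)$-vector, while the horizontal directions produce $y_2,\dots,y_{2n-1}$; then $\omega=\sum_{p=1}^{n-1}y_{2p}\wedge y_{2p+1}$ is purely horizontal and $y_1$ is purely vertical. Using (\ref{cliffordmultONB}), Clifford multiplication by the $\xi_i$ and by the horizontal generators, as well as $\widetilde{\Uplambda^g}(X)$, become combinations of wedge and contraction operators. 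The decisive structural fact is that wedging with $\omega$ and contracting against it form an $\mathfrak{sl}_2$-triple on the horizontal exterior algebra (the Lefschetz action), so the horizontal part of the system acts on the powers $\omega^{k}$ by raising and lowering and generates the integer weights $k+1$, while the $\xi_1$-equation fixes the relative coefficient $-i(k+1)$ between $\omega^{k+1}$ and $y_1\wedge\omega^k$. Substituting the ansatz $\psi_k=\omega^{k+1}-i(k+1)\,y_1\wedge\omega^k$ and checking the vertical and horizontal equations directly then proves (i), with the convention $\omega=0$ recovering $\psi_{-1}=1$ and $\psi_0\sim y_1$ when $n=1$.

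To see that $\psi_{-1},\dots,\psi_{n-1}$ form a basis of all invariant Killing spinors, I would argue that they are linearly independent — their top-degree components $\omega^{k+1}$ lie in distinct exterior degrees — and that the whole solution space has dimension at most $n+1$. The cleanest upper bound comes from B\"{a}r's correspondence (Theorem \ref{prelims:Bar_cone_Thm}): Killing spinors on $M$ correspond to parallel spinors on the hyperK\"{a}hler cone, whose holonomy $\Sp(n)\subset\Spin(4n)$ carries exactly $n+1$ parallel spinors by Wang's theorem. Alternatively the bound falls out of the explicit linear system once the $\mathfrak{sl}_2$-action has been identified.

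Finally, for (ii) I would substitute the general invariant Killing spinor $\psi=\sum_k c_k\psi_k$ into the defining relation $(-2\varphi_i(X)+\xi_i\cdot X-X\cdot\xi_i)\cdot\psi=0$ of $E_i$. Writing $\varphi_i$ and $\xi_i$ as operators on $\Lambda^\bullet L'$ turns this into a linear recursion on the $c_k$ whose solution space is two-dimensional; solving the recursion produces exactly the stated bases, the alternating signs and factorials being its explicit solution and the passage from $E_2$ to $E_3$ reflecting the sign of $\varphi_3$ relative to $\varphi_2$. The main obstacle throughout is the computation underlying the second paragraph: pinning down the lifted Nomizu map and all the Clifford actions on $\Lambda^\bullet L'$ with the correct signs and factors of $i$, since it is exactly this bookkeeping that collapses to the clean combinatorial form of the answer.
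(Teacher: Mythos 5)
First, a point of reference: the paper does not prove this theorem at all --- it is recalled verbatim, with citation, from \cite[Thm.\@ 6.1, Prop.\@ 6.4]{Hof22} as background material, so there is no internal proof to compare you against. Measured against the method of that reference, your overall route (reduce the Killing equation to the algebraic system $\widetilde{\Uplambda^g}(X)\psi_o=\tfrac12 X\cdot\psi_o$ via the lifted Nomizu map, realize everything explicitly on $\Sigma=\Lambda^{\bullet}L'$ in an adapted basis, verify the ansatz $\psi_k$, then a recursion for the $E_i$-bases) is the right one. However, as written there are two genuine gaps.

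The first concerns $H$-invariance, which you never verify. The constant map $\psi\equiv\psi_k$ defines a spinor field on $G/H$ only if $\psi_k$ is fixed by the spin lift of the isotropy representation, and the completeness half of (i) --- that \emph{every} invariant Killing spinor is a combination of the $\psi_k$ --- requires solving the algebraic system inside $\Sigma^H$, not inside all of $\Sigma$. You never determine $\Sigma^H$. Existence can be repaired uniformly: the structure tensors are $G$-invariant, so the isotropy fixes each $\xi_i$ and commutes with each $\varphi_i$, hence its image lies in $\Sp(n-1)\subset \SO(4n-1)$, and symplectic invariant theory identifies $\Sigma^{\Sp(n-1)}$ with the $2n$-dimensional span of the $\omega^k$ and $y_1\wedge\omega^k$ --- but this argument must actually be made. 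For completeness the issue is worse: in the models where $H\subsetneq \Sp(n-1)$ (e.g.\@ $\SU(m)/\mathrm{S}(\U(m-2)\times \U(1))$), the space $\Sigma^H$ is a priori strictly larger than $\Sigma^{\Sp(n-1)}$, so ``solving the explicit linear system'' on the span of the $\omega^k$ and $y_1\wedge\omega^k$ does not by itself bound the number of invariant solutions.

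The second gap is that your stated upper bound --- ``the whole solution space has dimension at most $n+1$'' via B\"{a}r's cone correspondence and Wang's theorem --- is false for the round sphere $S^{4n-1}=\Sp(n)/\Sp(n-1)$: there the cone is flat and the Killing spinors for the Killing number $\tfrac12$ trivialize the spinor bundle, giving dimension $2^{2n-1}$; Wang's count of $n+1$ applies only when the cone holonomy is exactly $\Sp(n)$, i.e.\@ off the sphere. So the two ideas you present as interchangeable alternatives are in fact both needed, in complementary roles: for $M\neq S^{4n-1}$ the cone argument gives exactly $n+1$ Killing spinors in total, and exhibiting $n+1$ invariant ones (with invariance checked) finishes; for the sphere, where $H=\Sp(n-1)$ and $\Sigma^H$ is exactly the $2n$-dimensional space above, one must instead solve the algebraic system on $\Sigma^H$ and see directly that its solution space has dimension $n+1$. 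Two smaller omissions in (ii): ``spanned over $C^{\infty}(M)$'' additionally needs pointwise linear independence of the two sections (checked at the origin and propagated by invariance, or by flatness of the Friedrich--Kath connection restricted to $E_i$), and the degenerate case $n=1$, where $\omega=0$, should be treated separately, as the statement itself does.
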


\begin{table}[h!]
	\centering
	\begin{tabular}{ |l||l|l|l| }
		\hline
		$\dim(M)$  &  $\Psi_{E_1,0}$ & $\Psi_{E_2,0}$ & $\Psi_{E_3,0}$ \\
		\hline \hline 
		$7$   & $1$       &  $\omega-iy_1$   &  $\omega-iy_1$ \\
		$11$  & $1$     & $\omega-iy_1+\frac{1}{2}i y_1\wedge \omega^2$  & $\omega -iy_1 - \frac{1}{2}y_1\wedge \omega^2$ \\
		$15$  & $1$      & $\omega-iy_1 +\frac{1}{2}i y_1\wedge \omega^2 -\frac{1}{6}\omega^3$ & $\omega -iy_1 - \frac{1}{2}y_1\wedge \omega^2 +\frac{1}{6}\omega^2$ \\
		\hline\hline 
		$\dim(M)$ & $\Psi_{E_1,1}$ & $\Psi_{E_2,1}$ & $\Psi_{E_3,1}$ \\ \hline \hline 
		$7$   & $y_1\wedge \omega$       & $1+iy_1\wedge \omega$ & $1-iy_1\wedge \omega$ \\
		$11$  & $y_1\wedge \omega^2$      & $1 + iy_1\wedge \omega   -\frac{1}{2}\omega^2 $  &  $1 - iy_1\wedge \omega +\frac{1}{2}\omega^2 $ \\
		$15$  & $y_1\wedge \omega^3$      &  $1 +iy_1\wedge \omega -\frac{1}{2}\omega^2  - \frac{1}{6} iy_1\wedge \omega^3$ & $1 -iy_1\wedge \omega +\frac{1}{2}\omega^2  - \frac{1}{6} iy_1\wedge \omega^3$\\
		\hline
	\end{tabular}
	\caption{Bases for the $E_i$ in Low Dimensions \cite{Hof22}}
	\label{Tab:spinors_PsiEi_low_dim}
\end{table}

In low dimensions, explicit formulas for the spinors $\Psi_{E_i,0},\Psi_{E_i,1}$ spanning the $E_i$ can be found in \cite{Hof22}, and we recall them in Table \ref{Tab:spinors_PsiEi_low_dim}. The preceding theorem gives a detailed picture of the situation for $\ts$ homogeneous spaces, however until now little was known in the $\tad$ setting in dimension $>7$. Recalling that $\tad$ manifolds are almost never $\nabla^g$-Einstein (see \cite[Prop.\@ 2.3.3]{3str}), it follows that they do not generally admit Riemannian Killing spinors. In the subsequent two sections we study solutions of two differential equations generalizing the Killing equation. We examine first the equation that is obtained by deforming the $\ts$ Killing spinors with respect to a certain natural class of deformations on the underlying manifold (the so-called \emph{$\mathcal{H}$-homothetic deformations}). Secondly, we consider solutions of a different generalization of the Killing equation, whose solutions are sections of the (natural generalizations of the) bundles $E_i$ defined above. These have the advantage that they are defined for all $\tad$ manifolds, rather than only those which can be obtained from $\ts$ manifolds by $\mathcal{H}$-homothetic deformations (i.e.\@ the positive ones).

\section{$\mathcal{H}$-Homothetic Deformations of Killing Spinors on Positive $\tad$ Spaces}\label{section:actual_deformedKS}

In this section we examine how Killing spinors on $\ts$ manifolds transform under the \emph{$\mathcal{H}$-homothetic deformations} introduced in \cite[\S2.3]{3str}. We begin by recalling the definition of these deformations and their basic properties:
\begin{definition}(\cite[\S2.3]{3str}).\label{H_deformation_definition} Let $a,b,c\in \R$ be real numbers satisfying $a>0$, $a+b>0$, $c\neq 0$, $c^2=a+b$, and define $\alpha':= \alpha c/a$, $\delta' = \delta/c$. The $\mathcal{H}$-homothetic deformation of a $\tad$ manifold $(M,g,\xi_i,\eta_i,\varphi_i)$ relative to $a,b,c$ is the $3$-$(\alpha',\delta')$-Sasaki manifold $(M,g',\xi_i',\eta_i',\varphi_i')$ with structure tensors given by
	\[
	\eta_i' = c \eta_i , \qquad \xi_i' = \frac{1}{c}\xi_i, \qquad \varphi_i'=\varphi_i, \qquad g' = ag + b\sum_{i=1}^3 \eta_i\otimes \eta_i.   
	\]
\end{definition}
In particular, observe that $\mathcal{H}$-homothetic deformations satisfy $\alpha'\delta' = \alpha\delta/a$, so it is impossible to change type (positive, degenerate, negative) by such a deformation. It is demonstrated in \cite[\S2.3]{3str} that every positive ($\alpha\delta>0$) structure is an $\mathcal{H}$-homothetic deformation of a $3$-$\hat{\alpha}$-Sasakian structure ($\hat{\alpha}=\hat{\delta}$), every negative ($\alpha\delta<0$) structure is an $\mathcal{H}$-homothetic deformation of a negative structure with $\hat{\alpha} =- \hat{\delta}$, and any $\mathcal{H}$-homothetic deformation of a degenerate ($\delta=0$) structure is again degenerate.

Starting with a $\ts$ structure ($\alpha_0=\delta_0=1$), it is easy to see from the preceding definition that the parameters $a,b,c$ needed to obtain a $\tad$ structure with a given $\alpha,\delta$ satisfying $\alpha\delta >0$ are
\begin{align}
	\label{3Sas_def_parameters} a= \frac{1}{\alpha\delta} ,\qquad b  = \frac{\alpha-\delta}{\alpha\delta^2}, \qquad c = \frac{1}{\delta}. 
\end{align}
In particular, we have
\begin{align}\label{deformed_tensor_definitions}
\eta_i' = \frac{1}{\delta} \eta_i ,\qquad \xi_i'=\delta \xi_i ,\qquad \varphi_i'=\varphi_i, \qquad g' = \frac{1}{\alpha\delta}g + \frac{\alpha-\delta}{\alpha\delta^2} \sum_{i=1}^3\eta_i\otimes \eta_i. 
\end{align}
\begin{remark}
Since we are interested in deforming the $\ts$ Killing spinors along this family we need the orientation to remain consistent, so we shall only consider deformations resulting in $\tad$ structures with $\alpha,\delta>0$. 
\end{remark}
In what follows, we adapt the argument from \cite[Chap.\@ 2.2]{Julia_BB_PhD_thesis} to the $3$-contact setting in order to see how the Riemannian Killing spinors on the original Sasakian manifold change under the $\mathcal{H}$-homothetic deformation (\ref{3Sas_def_parameters}). As a matter of notation, we denote by $g$ the $\ts$ metric and by $g'$ the $\tad$ metric, and we consider the isometric identifications $\sigma \: (TM,g)\to (TM,g')$ of the tangent spaces given by
\begin{align}
	\label{small_j_def} \sigma(X) = \sqrt{\alpha\delta} X + (\delta-\sqrt{\alpha\delta})\sum_{p=1}^3 \eta_p(X) \xi_p \qquad \text{ for all } X \in TM.
\end{align}   
Concretely, $\sigma $ acts as multiplication by $\sqrt{\alpha\delta}$ on $\mathcal{H}$ and multiplication by $\delta$ on $\mathcal{V}$. A tedious but straightforward calculation shows that the Levi-Civita connection of the $\tad$ metric is simply the sum of the original Levi-Civita connection and a correction term depending on the original structure tensors:
\begin{lemma}\label{deformed_LC_formula}
The Levi-Civita connection $\nabla^{g'}$ of the $\tad$ structure obtained from a $\ts$ structure by the $\mathcal{H}$-homothetic deformation (\ref{3Sas_def_parameters}) is given by 
\[
\nabla^{g'}_XY = \nabla^{g}_XY + \frac{\delta-\alpha}{\delta} \sum_{p=1}^3 [\eta_p(X)\varphi_p(Y) + \eta_p(Y)\varphi_p(X)]. 
\]
\end{lemma}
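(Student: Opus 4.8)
The plan is to avoid computing $\nabla^{g'}$ from scratch and instead exploit the uniqueness of the Levi-Civita connection. Writing
\[
A(X,Y) := \frac{\delta-\alpha}{\delta}\sum_{p=1}^3\big[\eta_p(X)\varphi_p(Y) + \eta_p(Y)\varphi_p(X)\big]
\]
for the proposed difference tensor, I would set $\widetilde{\nabla} := \nabla^g + A$ and show it is the Levi-Civita connection of $g'$ by verifying the two characterizing properties. Torsion-freeness is immediate: $A$ is visibly symmetric in $X$ and $Y$, so $\widetilde{\nabla}_X Y - \widetilde{\nabla}_Y X = \nabla^g_X Y - \nabla^g_Y X = [X,Y]$. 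All the content therefore lies in checking that $\widetilde{\nabla}$ is metric with respect to $g'$, after which the statement follows from uniqueness.

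For the metric compatibility I would use three inputs, all available from the $\ts$ specialization ($\alpha=\delta=1$) of Proposition \ref{LCder} together with the defining relations of the structure tensors: first, $\nabla^g g = 0$; second, $\nabla^g_X\xi_p = -\varphi_p(X)$, which with $\eta_p = g(\xi_p,-)$ gives $(\nabla^g_X\eta_p)(Y) = -g(\varphi_p X, Y)$; and third, the skew-symmetry $g(\varphi_p X, Y) = -g(X,\varphi_p Y)$. Splitting $g' = \lambda\, g + \mu\sum_p \eta_p\otimes\eta_p$ with $\lambda := \tfrac{1}{\alpha\delta}$ and $\mu := \tfrac{\alpha-\delta}{\alpha\delta^2}$, I would compute the compatibility defect $(\widetilde{\nabla}_X g')(Y,Z)$ of each summand separately. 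Because $\lambda$ is constant and $\nabla^g g = 0$, the $g$-part reduces to the $A$-terms $-\lambda\big[g(A(X,Y),Z) + g(Y,A(X,Z))\big]$, while the $\eta\otimes\eta$-part contributes the derivatives $(\nabla^g_X\eta_p)$ and additional $A$-terms.

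The crux of the computation — and the step I expect to demand the most care — is the bookkeeping in the vertical directions, where two cancellations carry the argument. First, using $g(\varphi_p X, Y) = -g(X,\varphi_p Y)$, all terms carrying the factor $\eta_p(X)$ cancel in pairs, so that $g(A(X,Y),Z) + g(Y,A(X,Z))$ collapses to $\tfrac{\delta-\alpha}{\delta}\, S$, where $S := \sum_p\big[\eta_p(Y)g(\varphi_p X,Z) + \eta_p(Z)g(\varphi_p X,Y)\big]$; this $S$ is precisely the quantity produced by the $(\nabla^g_X\eta_p)$ terms of the $\eta\otimes\eta$-part. Second, a short calculation with the relations $\eta_i = \eta_j\circ\varphi_k = -\eta_k\circ\varphi_j$ shows that the vertical component of $A$ vanishes identically, i.e.\@ $\eta_p(A(X,Y)) = 0$ for all $p$, which annihilates the remaining $A$-terms in the $\eta\otimes\eta$-part. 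What then survives is a single multiple $-(\lambda\kappa + \mu)\, S$ of $S$, where $\kappa := \tfrac{\delta-\alpha}{\delta}$; this vanishes because the deformation parameters (\ref{3Sas_def_parameters}) satisfy exactly $\mu = -\lambda\kappa$, i.e.\@ $\tfrac{\alpha-\delta}{\alpha\delta^2} = -\tfrac{1}{\alpha\delta}\cdot\tfrac{\delta-\alpha}{\delta}$. Hence $\widetilde{\nabla} g' = 0$, and by uniqueness of the Levi-Civita connection $\widetilde{\nabla} = \nabla^{g'}$, which is the assertion. I would verify the parameter identity $\mu = -\lambda\kappa$ last, since it is the only place where the specific values of $a,b,c$ enter and is exactly what makes the whole cancellation close.
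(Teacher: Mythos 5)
Your proposal is correct: every step checks out, including the two cancellations you flag as the crux. The vertical component of $A$ does vanish, since for instance $\eta_1(A(X,Y)) = \kappa\left[\eta_2(X)\eta_3(Y)+\eta_2(Y)\eta_3(X)-\eta_3(X)\eta_2(Y)-\eta_3(Y)\eta_2(X)\right]=0$ by the relations $\eta_i=\eta_j\circ\varphi_k=-\eta_k\circ\varphi_j$, and the parameter identity $\mu=-\lambda\kappa$ holds on the nose, so the compatibility defect $-(\mu+\lambda\kappa)S$ vanishes and uniqueness of the Levi-Civita connection finishes the argument. The comparison with the paper is slightly unusual here because the paper gives no proof at all: Lemma \ref{deformed_LC_formula} is introduced with the phrase ``a tedious but straightforward calculation shows,'' which implicitly points at a direct computation (e.g.\@ via the Koszul formula applied to $g'=\lambda g+\mu\sum_p\eta_p\otimes\eta_p$, expanding all derivative terms of the $\eta_p$'s). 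Your guess-and-verify route is genuinely different in organization and, I would say, preferable: by taking the candidate difference tensor as given, torsion-freeness is free from the symmetry of $A$, and the entire content is concentrated in one metric-compatibility check where the three needed inputs ($\nabla^g g=0$, $\nabla^g_X\xi_p=-\varphi_p(X)$ in the $\ts$ case from Proposition \ref{LCder}, and skew-symmetry of $\varphi_p$) are cleanly isolated. What the Koszul-style computation buys instead is that it \emph{derives} the formula rather than verifying it — your method requires knowing the answer in advance, which is fine for proving the stated lemma but would not have discovered it. One presentational suggestion: since the lemma is used later via Lemma \ref{difference_tensor_lemma}, it is worth recording explicitly in your write-up that $\eta_p(A(X,Y))=0$, as this identity (that the correction is purely horizontal-valued in the relevant sense) is exactly the kind of fact that streamlines the subsequent computation of $\tau_{\xi_i'}$.
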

Letting $\Sigma$ (resp.\@ $\widetilde{\Sigma}$) denote the spinor bundle of $(M,g)$ (resp.\@ $(M,g')$), there is an identification $ \Sigma \to\widetilde{\Sigma}$, $\psi\mapsto \widetilde{\psi}$ which relates the Clifford multiplication between the two spinor bundles via $\sigma $:
\begin{align}\label{tilde_relation_Cliff_mult} \widetilde{X\cdot \psi} = 	\sigma (X)\cdot \widetilde{\psi}  \qquad \text{ for all } \psi\in \Sigma, X\in TM.
\end{align}
In the following lemma we introduce another metric connection on $(TM,g')$ whose spinorial lift has a particularly useful property:
\begin{lemma}\label{nabla_sigma_definition}
	The connection $\nabla^{\sigma} $ on $(TM,g')$ defined by \[
	\nabla^{\sigma}_XY:= (\sigma \circ \nabla^{g}_X\circ \sigma^{-1})Y = \sigma (\nabla^{g}_X (\sigma^{-1}(Y))), \qquad  \text{ for all } X,Y\in TM
	\] is metric with respect to $g'$, and is given explicitly by
	\begin{align*}
	\nabla^{\sigma}_XY &= \nabla_X^{g}Y +\left( 1- \frac{\delta}{\sqrt{\alpha\delta}} \right) \sum_{p=1}^3 \Phi_p(X,Y)\xi_p + \left( 1- \frac{\sqrt{\alpha\delta}}{\delta} \right) \sum_{p=1}^3 \eta_p(Y) \varphi_p(X)^{\mathcal{H}} \\
	& \qquad  - \left(   1-\frac{\sqrt{\delta}}{\sqrt{\alpha}} \right)\sum_{p=1}^3 \eta_p(Y) \varphi_p(X)^{\mathcal{V}},
	\end{align*}
where superscript $\mathcal{H}$ and $\mathcal{V}$ denote projection onto the horizontal and vertical spaces respectively. Furthermore, its spinorial lift satisfies \begin{align}\label{j_derivative_relation} \nabla_X^{\sigma} \widetilde{\psi} = \widetilde{ \nabla_X^{g} \psi},\qquad \text{ for all } \psi\in \Sigma, X\in TM.
\end{align}
\end{lemma}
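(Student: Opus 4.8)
The plan is to use that $\sigma$ in \eqref{small_j_def} is, by construction, a pointwise isometry $(TM,g)\to(TM,g')$, so that $\nabla^\sigma=\sigma\circ\nabla^g\circ\sigma^{-1}$ is simply the Levi-Civita connection of $g$ transported along $\sigma$. Metricity is then formal, the explicit formula is a bookkeeping computation with the $\ts$ structure equations, and the spinorial identity is the naturality of the spin lift combined with \eqref{tilde_relation_Cliff_mult}.

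\emph{Metricity.} First I would record that $g'(\sigma V,\sigma W)=g(V,W)$ for all $V,W\in TM$: from \eqref{deformed_tensor_definitions} and \eqref{small_j_def} the map $\sigma$ acts as $\sqrt{\alpha\delta}\,\id$ on $\mathcal H$, where $g'=\tfrac1{\alpha\delta}g$, and as $\delta\,\id$ on $\mathcal V$, where $g'(\xi_i,\xi_j)=\tfrac1{\delta^2}\delta_{ij}$, so in both cases the factors cancel. Since $\sigma$ is $C^\infty(M)$-linear, tensoriality and the Leibniz rule show $\nabla^\sigma$ is a bona fide connection. Writing $Y=\sigma Y_0$, $Z=\sigma Z_0$ and using that $\nabla^g$ is $g$-metric,
\[
X\,g'(Y,Z)=X\,g(Y_0,Z_0)=g(\nabla^g_XY_0,Z_0)+g(Y_0,\nabla^g_XZ_0)=g'(\nabla^\sigma_XY,Z)+g'(Y,\nabla^\sigma_XZ),
\]
which is exactly $g'$-metricity.

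\emph{Explicit formula.} To extract the explicit expression I would expand $\sigma^{-1}(Y)=\tfrac1{\sqrt{\alpha\delta}}Y+\bigl(\tfrac1\delta-\tfrac1{\sqrt{\alpha\delta}}\bigr)\sum_p\eta_p(Y)\xi_p$, apply $\nabla^g_X$ via the Leibniz rule, and then apply $\sigma$. The only input needed is Proposition \ref{LCder} at $\alpha=\delta=1$, which gives $\nabla^g_X\xi_p=-\varphi_p(X)$ and hence $(\nabla^g_X\eta_p)(Y)=\Phi_p(X,Y)$. Differentiating $\eta_p(Y)$ produces a term $\Phi_p(X,Y)\xi_p$ together with a term $\eta_p(\nabla^g_XY)\xi_p$; the latter is cancelled exactly by the vertical part created when $\sigma$ is applied to $\tfrac1{\sqrt{\alpha\delta}}\nabla^g_XY$, leaving the clean $\Phi_p(X,Y)\xi_p$ contribution with coefficient $1-\tfrac{\delta}{\sqrt{\alpha\delta}}$. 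The remaining terms come from $\nabla^g_X\xi_p=-\varphi_p(X)$, and here lies the only subtlety: because $\sigma$ rescales $\mathcal H$ by $\sqrt{\alpha\delta}$ but $\mathcal V$ by $\delta$, one must split $\varphi_p(X)=\varphi_p(X)^{\mathcal H}+\varphi_p(X)^{\mathcal V}$ and apply $\sigma$ to each piece separately, which is precisely what generates the two distinct coefficients $1-\tfrac{\sqrt{\alpha\delta}}{\delta}$ and $-\bigl(1-\tfrac{\sqrt{\delta}}{\sqrt{\alpha}}\bigr)$ attached to the horizontal and vertical parts in the claimed formula.

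\emph{Spinorial identity.} For the last identity I would argue by naturality of the spin lift. Fix a local $g$-orthonormal frame $(e_a)$; since $\sigma$ is an isometry, $(\sigma e_a)$ is a local $g'$-orthonormal frame, and $\nabla^\sigma=\sigma\circ\nabla^g\circ\sigma^{-1}$ shows that the $\mathfrak{so}$-valued connection one-form of $\nabla^g$ in $(e_a)$ equals that of $\nabla^\sigma$ in $(\sigma e_a)$. The identification $\psi\mapsto\widetilde\psi$ intertwines Clifford multiplication by $e_a$ with Clifford multiplication by $\sigma e_a$, by \eqref{tilde_relation_Cliff_mult}. Since each spinorial connection is built from the same one-form $\omega_{ab}$ via the lift $\mathfrak{so}(m)\cong\mathfrak{spin}(m)$ and the action of $\tfrac14\sum_{a,b}\omega_{ab}\,e_a\cdot e_b$, the two covariant derivatives correspond term by term under $\widetilde{\cdot}$, yielding $\nabla^\sigma_X\widetilde\psi=\widetilde{\nabla^g_X\psi}$. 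The main obstacle throughout is the bookkeeping in the explicit-formula step, where the differing $\mathcal H$- and $\mathcal V$-rescalings of $\sigma$ must be tracked carefully; everything else is formal.
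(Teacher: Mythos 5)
Your proposal is correct and takes essentially the same route as the paper's proof: the explicit formula via the same Leibniz expansion of $\sigma(\nabla^g_X(\sigma^{-1}Y))$ (with the $\eta_p(\nabla^g_XY)$ terms absorbed through $(\nabla^g_X\eta_p)(Y)=\Phi_p(X,Y)$ and the $\mathcal{H}$/$\mathcal{V}$ splitting of $\varphi_p(X)$ producing the two distinct coefficients), metricity from the pointwise isometry property $g'(\sigma V,\sigma W)=g(V,W)$, and the spinorial identity from the naturality of the spin lift together with (\ref{tilde_relation_Cliff_mult}). Your orthonormal-frame verification of the last identity simply fleshes out what the paper dispatches with "follows essentially by definition of the spin lift," and all your coefficient bookkeeping checks out.
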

\begin{proof}
	We directly calculate:
	\begin{align*}
	&	\nabla_X^{\sigma} Y = \sigma\left(\nabla_X^{g} \left[ \frac{1}{ \sqrt{\alpha\delta}} Y + \left(\frac{1}{\delta} - \frac{1}{\sqrt{\alpha\delta}} \right)  \sum_{p=1}^3 \eta_p(Y)\xi_p    \right]   \right) \\
	=& \frac{1}{\sqrt{\alpha\delta}} \sigma ( \nabla_X^{g} Y) +\left(\frac{1}{\delta} - \frac{1}{\sqrt{\alpha\delta}} \right) \sum_{p=1}^3 \sigma (\nabla_X^{g}(\eta_p(Y)\xi_p)) \\
		=& \frac{1}{\sqrt{\alpha\delta}} \left[ \sqrt{\alpha\delta}\ \nabla_X^{g} Y + (\delta-\sqrt{\alpha\delta}) \sum_{p=1}^3\eta_p(\nabla_X^{g}Y)\xi_p \right]    +\left(\frac{1}{\delta} - \frac{1}{\sqrt{\alpha\delta}} \right) \sum_{p=1}^3 \sigma ( X(\eta_p(Y)) \xi_p  + \eta_p(Y) \nabla_X^{g} \xi_p     ),
		\end{align*}
		and hence the difference of $\nabla^{\sigma}$ and $\nabla^g$ is given by
		\begin{align*}
		&\nabla_X^{\sigma}Y - \nabla^g_XY \\
		=&   \left(\frac{\delta}{\sqrt{\alpha\delta}} -1\right) \sum_{p=1}^3 \left[   X(\eta_p( Y)) - (\nabla_X^{g}\eta_p)Y \right]\xi_p  + \left(\frac{1}{\delta} - \frac{1}{\sqrt{\alpha\delta}} \right) \sum_{p=1}^3 \left[ \delta X(\eta_p(Y)) \xi_p -\eta_p(Y)\sigma (\varphi_p(X))  \right]\\
		=&  -  \left( \frac{\delta}{\sqrt{\alpha\delta}} -1 \right)  \sum_{p=1}^3  \Phi_p(X,Y) \xi_p - \left( \frac{1}{\delta}-\frac{1}{\sqrt{\alpha\delta}}\right) \sum_{p=1}^3 \eta_p(Y)\left[    \sqrt{\alpha\delta}\ \varphi_p(X)^{\mathcal{H}}  +\delta\varphi_p(X)^{\mathcal{V}}   \right] \\
		=& \left( 1- \frac{\delta}{\sqrt{\alpha\delta}} \right) \sum_{p=1}^3 \Phi_p(X,Y)\xi_p + \left( 1- \frac{\sqrt{\alpha\delta}}{\delta} \right) \sum_{p=1}^3 \eta_p(Y) \varphi_p(X)^{\mathcal{H}}  - \left(   1-\frac{\sqrt{\delta}}{\sqrt{\alpha}} \right)\sum_{p=1}^3 \eta_p(Y) \varphi_p(X)^{\mathcal{V}}.
	\end{align*}
The fact that $\nabla^{\sigma}$ is metric with respect to $g'$ is easily seen by noting that
\begin{align*} 
g'(\nabla^{\sigma}_XY,Z) + g'(Y,\nabla_X^{\sigma} Z) &= g(\nabla_X^{g}(\sigma^{-1}Y),\sigma^{-1}Z) + g(\sigma^{-1}Y,\nabla_X^{g}(\sigma^{-1}Z) ) \\ 
&= Xg(\sigma^{-1}Y,\sigma^{-1}Z) =Xg'(Y,Z). 
\end{align*}
Finally, (\ref{j_derivative_relation}) follows essentially by definition of the spin lift of a metric connection, using (\ref{tilde_relation_Cliff_mult}). 
\end{proof}
The reason we consider $\nabla^{\sigma}$ is that it serves as an intermediate step in determining how $\nabla^{g'}$ acts on Killing spinors in $\widetilde{\Sigma}$. Indeed, since $\nabla^{\sigma}$ is a metric connection we have $\nabla^{g'} = \nabla^{\sigma} + \tau$ for some difference tensor $\tau \in TM^* \otimes \Lambda^2 TM$, and the right hand side of (\ref{j_derivative_relation}) has a very simple form when $\psi$ is a Killing spinor on the $\ts$ manifold $(M,g)$. Thus, in order to differentiate such a deformed Killing spinor $\widetilde{\psi}$ all that remains is to describe its Clifford product by $\tau_V$ for all $ V\in TM$, as we do in the following lemma.
\begin{lemma}\label{difference_tensor_lemma}
	The difference tensor $\tau := \nabla^{g'} - \nabla^{\sigma} \in TM^* \otimes \Lambda^2TM$ satisfies
	\[
	\tau_X = (\alpha-\sqrt{\alpha\delta}) \sum_{p=1}^3 \eta_p' \wedge (X\lrcorner\Phi_p') , \qquad \tau_{\xi_i'} = (\alpha-\delta) (\Phi_i')^{\mathcal{H}} 
	\]
	for all $X\in \mathcal{H}$ and all $Y,Z\in TM$.  
\end{lemma}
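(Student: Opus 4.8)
The plan is to obtain $\tau$ by directly subtracting the two explicit connection formulas already established and then repackaging the result in terms of the primed structure tensors. Since both $\nabla^{g'}$ (Lemma \ref{deformed_LC_formula}) and $\nabla^{\sigma}$ (Lemma \ref{nabla_sigma_definition}) are metric with respect to $g'$, the difference $\tau_X = \nabla^{g'}_X - \nabla^{\sigma}_X$ is $g'$-skew as an endomorphism of $TM$ and so may be regarded as an element of $\Lambda^2 TM$ via $g'$ (with the convention $\tau_X(Y,Z)=g'(\tau_X Y,Z)$); the task is then to identify this $2$-form. Subtracting the two formulas the common $\nabla^g$ terms cancel, leaving
\[
\tau_X Y = \frac{\delta-\alpha}{\delta}\sum_{p=1}^3\bigl[\eta_p(X)\varphi_p Y + \eta_p(Y)\varphi_p X\bigr] - \Bigl(1-\tfrac{\delta}{\sqrt{\alpha\delta}}\Bigr)\sum_{p=1}^3\Phi_p(X,Y)\xi_p - \Bigl(1-\tfrac{\sqrt{\alpha\delta}}{\delta}\Bigr)\sum_{p=1}^3\eta_p(Y)\varphi_p(X)^{\mathcal{H}} + \Bigl(1-\tfrac{\sqrt{\delta}}{\sqrt{\alpha}}\Bigr)\sum_{p=1}^3\eta_p(Y)\varphi_p(X)^{\mathcal{V}}.
\]
I would first record the simplifications $\tfrac{\delta}{\sqrt{\alpha\delta}} = \tfrac{\sqrt{\delta}}{\sqrt{\alpha}}$ and $\tfrac{\sqrt{\alpha\delta}}{\delta}=\sqrt{\alpha/\delta}$, together with the facts (all immediate from (\ref{deformed_tensor_definitions})) that $\eta_p'=\tfrac{1}{\delta}\eta_p$ is the $g'$-dual of $\xi_p'=\delta\xi_p$ and that $\Phi_p'=\tfrac{1}{\alpha\delta}\Phi_p$ on $\mathcal{H}$.

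For the first identity I restrict to $X\in\mathcal{H}$. Then $\eta_p(X)=0$, and since $\varphi_p$ preserves $\mathcal{H}$ we have $\varphi_p(X)^{\mathcal{H}}=\varphi_p X$ and $\varphi_p(X)^{\mathcal{V}}=0$, so the last sum drops. The two surviving $\eta_p(Y)\varphi_p X$ contributions combine with coefficient $\tfrac{\delta-\alpha}{\delta}-\bigl(1-\sqrt{\alpha/\delta}\bigr)=\tfrac{\sqrt{\alpha\delta}-\alpha}{\delta}$, leaving $\tau_X Y = \tfrac{\sqrt{\alpha\delta}-\alpha}{\delta}\sum_p\eta_p(Y)\varphi_p X -\bigl(1-\tfrac{\delta}{\sqrt{\alpha\delta}}\bigr)\sum_p\Phi_p(X,Y)\xi_p$. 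To match the claimed form I expand $\eta_p'\wedge(X\lrcorner\Phi_p')$ and convert to an endomorphism via $g'$; using $\eta_p'=g'(\xi_p',-)$ and the $g'$-skewness of $\varphi_p$, its associated endomorphism sends $Y$ to $-\eta_p'(Y)\varphi_p X-\Phi_p'(X,Y)\xi_p'$. Substituting $\eta_p'=\eta_p/\delta$, $\xi_p'=\delta\xi_p$, $\Phi_p'(X,Y)=\tfrac{1}{\alpha\delta}\Phi_p(X,Y)$ and multiplying by $(\alpha-\sqrt{\alpha\delta})$ reproduces exactly the two coefficients above, establishing the first formula.

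For the second identity I use tensoriality and $\xi_i'=\delta\xi_i$ to reduce to $\tau_{\xi_i}$, setting $X=\xi_i$ in the displayed expression. Here $\eta_p(\xi_i)=\delta_{pi}$, and the quaternionic relations give $\varphi_i\xi_i=0$, $\varphi_j\xi_i=-\xi_k$, $\varphi_k\xi_i=\xi_j$ for $(i,j,k)$ even; one then checks that both $\sum_p\eta_p(Y)\varphi_p\xi_i$ and $\sum_p\Phi_p(\xi_i,Y)\xi_p$ equal $\eta_k(Y)\xi_j-\eta_j(Y)\xi_k$, while $\varphi_i Y^{\mathcal{V}}=-(\eta_k(Y)\xi_j-\eta_j(Y)\xi_k)$. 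Assembling the four vertical contributions, the two proportional to $\tfrac{\delta-\alpha}{\delta}$ cancel and the two proportional to $(1-\sqrt{\delta/\alpha})$ cancel, so all vertical terms vanish and only $\tau_{\xi_i}Y=\tfrac{\delta-\alpha}{\delta}\varphi_i Y^{\mathcal{H}}$ survives. Scaling by $\delta$ and using that the $g'$-endomorphism associated to $(\Phi_i')^{\mathcal{H}}$ is $-\varphi_i(\cdot)^{\mathcal{H}}$ yields $\tau_{\xi_i'}=(\delta-\alpha)\varphi_i(\cdot)^{\mathcal{H}}=(\alpha-\delta)(\Phi_i')^{\mathcal{H}}$, as claimed.

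The main obstacle is precisely this vertical cancellation for $\tau_{\xi_i}$: three genuinely different vertical terms (from $\varphi_i$ acting on $Y^{\mathcal{V}}$, from the $\Phi_p$-term, and from the vertical-projection term) must be shown to sum to zero, which hinges on the exact signs in $\varphi_p\xi_i=\pm\xi_\bullet$ together with the coincidence of the two $\Phi$-type and $\eta$-type sums. A secondary point requiring care is maintaining a consistent identification of $2$-forms with $g'$-skew endomorphisms throughout, since this is where the sign relating $\Phi_i'$ to $\varphi_i$ enters.
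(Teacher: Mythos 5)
Your proposal is correct and follows essentially the same route as the paper: subtract the explicit formulas of Lemmas \ref{deformed_LC_formula} and \ref{nabla_sigma_definition}, split into the cases $X\in\mathcal{H}$ and $X=\xi_i$, use the quaternionic action of $\varphi_p$ on the Reeb fields for the vertical cancellations, and convert between unprimed and primed tensors via (\ref{deformed_tensor_definitions}). The only difference is presentational — you match endomorphisms associated to the primed $2$-forms via $g'$, whereas the paper expands the scalar expressions $\tau_X(Y,Z)=g'(\nabla^{g'}_XY-\nabla^{\sigma}_XY,Z)$ directly — and your coefficient identities (e.g.\@ $\frac{\alpha-\sqrt{\alpha\delta}}{\alpha}=1-\sqrt{\delta/\alpha}$) check out.
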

\begin{proof}
	Let $Y,Z\in TM$ be arbitrary. For $X\in \mathcal{H}$ we compute, using the preceding two lemmas,
	\begin{align*}
		&\tau_X(Y,Z) = g'(\nabla^{g'}_X Y - \nabla^{\sigma}_XY, Z) \\
		&=\sum_{p=1}^3  g'\left(  \frac{\delta-\alpha}{\delta}  \eta_p(Y)\varphi_p(X) - \left( 1- \frac{\delta}{\sqrt{\alpha\delta}} \right)  \Phi_p(X,Y)\xi_p - \left( 1- \frac{\sqrt{\alpha\delta}}{\delta} \right)  \eta_p(Y) \varphi_p(X) ,Z   \right) \\
		&= \frac{1}{\alpha\delta}\sum_{p=1}^3\left[   \frac{\alpha-\sqrt{\alpha\delta} }{\delta}\eta_p(Y)\Phi_p(X,Z) -\left( 1-\frac{\delta}{\sqrt{\alpha\delta}}\right) \eta_p(Z)\Phi_p(X,Y)   \right] \\
		& \qquad  -  \frac{\alpha-\delta}{\alpha\delta^2}\left(1-\frac{\delta}{\sqrt{\alpha\delta}}\right) \sum_{p=1}^3  \eta_p(Z) \Phi_p(X,Y)  \\
		&= \frac{\alpha-\sqrt{\alpha\delta}}{\alpha\delta^2} \sum_{p=1}^3 \left[  \eta_p(Y)\Phi_p(X,Z)  - \eta_p(Z)\Phi_p(X,Y)     \right] = \frac{\alpha-\sqrt{\alpha\delta}}{\alpha\delta^2} \sum_{p=1}^3 \left( \eta_p\wedge (X\lrcorner \Phi_p)\right)(Y,Z)\\
		&= ( \alpha-\sqrt{\alpha\delta}) \sum_{p=1}^3 (\eta_p' \wedge (X\lrcorner\Phi_p')) (Y,Z). 
	\end{align*}
In the vertical directions the difference between $\nabla^{g'}$ and $\nabla^{\sigma}$ is given by
\begin{align*}
	&\nabla^{g'}_{\xi_i'}Y - \nabla^{\sigma}_{\xi_i'}Y  = \delta \left( \nabla^{g'}_{\xi_i}Y - \nabla^{\sigma}_{\xi_i}Y \right)  \\
	=&    (\delta-\alpha)(\varphi_i(Y) + \eta_k(Y) \xi_j -\eta_j(Y)\xi_k  )  - \left( \delta -\frac{\delta^2}{\sqrt{\alpha\delta}} \right) \sum_{p=1}^3 \Phi_p(\xi_i,Y)\xi_p  +\left(\delta -\frac{\delta\sqrt{\delta}}{\sqrt{\alpha}} \right) \sum_{p=1}^3 \eta_p(Y) \varphi_p(\xi_i)  
	\end{align*}
for all $Y \in TM$. Simplifying, using the formula (\ref{phii_vertical_directions}) describing the action of the endomorphisms $\varphi_p$ (and the associated $2$-forms $\Phi_p$) in the vertical directions and the formula (\ref{deformed_tensor_definitions}) for $g'$, we obtain
\begin{align*}
	& \tau_{\xi_i'}(Y,Z) = \frac{1}{\alpha} \left[ \frac{\delta-\alpha}{\delta}\left(  -\Phi_i(Y,Z) -\eta_{j,k}(Y,Z) \right) + \left(1-\frac{\delta}{\sqrt{\alpha\delta}}\right) \eta_{j,k}(Y,Z) - \left( 1-\frac{\sqrt{\delta}}{\sqrt{\alpha}} \right)  \eta_{j,k}(Y,Z)  \right] \\
	& \qquad \qquad +\frac{\alpha-\delta}{\alpha\delta}  \left[ \left(1- \frac{\delta}{\sqrt{\alpha\delta}}\right) \eta_{j,k}(Y,Z) - \left( 1-\frac{\sqrt{\delta}}{\sqrt{\alpha}} \right) \eta_{j,k}(Y,Z)  \right]  \\
	=& \frac{\alpha-\delta}{\alpha\delta} \left( \Phi_i + \eta_{j,k} \right)(Y,Z) = (\alpha-\delta) (\Phi_i')^{\mathcal{H}}(Y,Z),             
\end{align*}
where $(i,j,k)$ is an even permutation of $(1,2,3)$.
\end{proof}
As a consequence we obtain the following theorem specifying the differential equations of the deformed Killing spinors on $\tad$ manifolds with respect to the Levi-Civita connection. Beginning with this theorem and continuing for the remainder of the article, we shall always use $(g,\xi_i,\eta_i,\varphi_i)$ to denote a $\tad$ structure rather than a $\ts$ structure.
\begin{theorem}\label{deformed_KS_theorem_ref}
	A positive $\tad$ manifold $(M^{4n-1},g,\xi_i,\eta_i,\varphi_i)$ of dimension $4n-1$ admits at least $n+1$ linearly independent spinors satisfying
	\begin{align}\label{KS_with_torsion}
		\nabla_{\xi_i}^{g}\psi  &=  \frac{\delta }{2} \xi_i\cdot \psi  + \frac{\alpha-\delta}{2} \Phi_i^{\mathcal{H}} \cdot\psi  , \qquad \nabla^{g}_X \psi = \frac{\sqrt{\alpha\delta} }{2} X\cdot \psi  + \frac{\sqrt{\alpha\delta}-\alpha }{2}\sum_{p=1}^3\xi_p\cdot  \varphi_p(X) \cdot \psi  
	\end{align}
for all $X \in \mathcal{H}$. If $(M,g)$ is isometric to the round sphere $S^{4n-1}$ (with its standard $\ts$ structure), then the spinor bundle is spanned by solutions of (\ref{KS_with_torsion}). 
\end{theorem}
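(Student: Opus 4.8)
The plan is to realize the desired spinors as deformations of the Killing spinors on the underlying $\ts$ manifold, since the two preceding lemmas have already isolated all of the analytic content. Throughout I would keep the convention of Lemmas \ref{nabla_sigma_definition} and \ref{difference_tensor_lemma}, writing $g$ for the $\ts$ metric and $g'$ (with primed structure tensors) for the $\tad$ metric; the latter is the structure called $g$ in the statement, so the equations (\ref{KS_with_torsion}) to be verified read, in this notation, $\nabla^{g'}_{\xi_i'}\widetilde\psi=\tfrac{\delta}{2}\xi_i'\cdot\widetilde\psi+\tfrac{\alpha-\delta}{2}(\Phi_i')^{\mathcal{H}}\cdot\widetilde\psi$ and $\nabla^{g'}_X\widetilde\psi=\tfrac{\sqrt{\alpha\delta}}{2}X\cdot\widetilde\psi+\tfrac{\sqrt{\alpha\delta}-\alpha}{2}\sum_p\xi_p'\cdot\varphi_p'(X)\cdot\widetilde\psi$ for $X\in\mathcal{H}$. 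First I would recall that every positive $\tad$ structure is the $\mathcal{H}$-homothetic deformation (\ref{3Sas_def_parameters}) of a $\ts$ structure on the same manifold. Since a $\ts$ manifold of dimension $4n-1$ carries at least $n+1$ linearly independent Riemannian Killing spinors $\psi_1,\dots,\psi_{n+1}$ for the Killing number $\tfrac12$ (by the results of B\"ar recalled above, together with the dimension-$7$ statement, for the orientation we have fixed), the fibrewise bundle isomorphism $\psi\mapsto\widetilde\psi$ of (\ref{tilde_relation_Cliff_mult}) produces $n+1$ spinors $\widetilde\psi_j$ on the $\tad$ manifold and preserves their linear independence. It therefore suffices to check that each $\widetilde\psi_j$ solves (\ref{KS_with_torsion}).

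To do so I would write $\nabla^{g'}=\nabla^{\sigma}+\tau$ with $\tau$ the difference tensor of Lemma \ref{difference_tensor_lemma}, and pass to the induced spinorial connections. By (\ref{j_derivative_relation}), the Killing equation $\nabla^{g}_Z\psi=\tfrac12 Z\cdot\psi$, and the intertwining (\ref{tilde_relation_Cliff_mult}), the $\nabla^{\sigma}$-term reproduces exactly the leading Killing terms: for $X\in\mathcal{H}$ one finds $\nabla^{\sigma}_X\widetilde\psi=\widetilde{\nabla^{g}_X\psi}=\tfrac12\sigma(X)\cdot\widetilde\psi=\tfrac{\sqrt{\alpha\delta}}{2}X\cdot\widetilde\psi$, and along the vertical direction $\xi_i'=\delta\xi_i$ one finds, using $\sigma(\xi_i)=\xi_i'$, that $\nabla^{\sigma}_{\xi_i'}\widetilde\psi=\tfrac{\delta}{2}\xi_i'\cdot\widetilde\psi$, since $\sigma$ scales $\mathcal{H}$ by $\sqrt{\alpha\delta}$ and $\mathcal{V}$ by $\delta$. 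It then remains to add the $\tau$-contribution. The one point requiring care is that the spinorial lift of a difference of metric connections acts on spinors by $\tfrac14\sum_{j,k}g'(\tau_X e_j,e_k)\,e_j\cdot e_k=\tfrac12\,\tau_X\cdot$, i.e.\ with a factor $\tfrac12$ relative to Clifford multiplication by the $2$-form $\tau_X$ itself. With this factor, the vertical correction $\tfrac12\tau_{\xi_i'}\cdot\widetilde\psi=\tfrac{\alpha-\delta}{2}(\Phi_i')^{\mathcal{H}}\cdot\widetilde\psi$ is immediate from Lemma \ref{difference_tensor_lemma}, giving the first equation of (\ref{KS_with_torsion}).

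For the horizontal correction I would use the Clifford identity that, for $X\in\mathcal{H}$ (so that $\varphi_p'(X)$ is again horizontal and hence $g'$-orthogonal to $\xi_p'$), one has $X\lrcorner\Phi_p'=-(\varphi_p'(X))^{\flat}$, whence $\bigl(\eta_p'\wedge(X\lrcorner\Phi_p')\bigr)\cdot\widetilde\psi=-\xi_p'\cdot\varphi_p'(X)\cdot\widetilde\psi$; here I use that $\eta_p'$ is the $g'$-dual of the unit vector $\xi_p'$ and that the Clifford product of $g'$-orthogonal vectors agrees with their exterior product. Combining this with $\tau_X=(\alpha-\sqrt{\alpha\delta})\sum_p\eta_p'\wedge(X\lrcorner\Phi_p')$ and the factor $\tfrac12$ above yields $\tfrac12\tau_X\cdot\widetilde\psi=\tfrac{\sqrt{\alpha\delta}-\alpha}{2}\sum_p\xi_p'\cdot\varphi_p'(X)\cdot\widetilde\psi$, which is exactly the horizontal correction in (\ref{KS_with_torsion}). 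This establishes the field equations for every deformed Killing spinor (and, dropping primes per the convention fixed just before the theorem, recovers the stated form).

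For the round sphere, I would invoke that the spinor bundle of $S^{4n-1}$ with its round metric is spanned fibrewise by Killing spinors for the number $\tfrac12$ (as follows from B\"ar's cone construction in Theorem \ref{prelims:Bar_cone_Thm}, the cone being flat $\R^{4n}$). Applying the fibrewise isomorphism $\psi\mapsto\widetilde\psi$ then carries this spanning family to a spanning family of solutions of (\ref{KS_with_torsion}) on the deformed sphere, so the entire spinor bundle consists of such solutions. I expect the only genuinely delicate point to be the bookkeeping of the factor $\tfrac12$ in the spinorial difference tensor and the accompanying Clifford identity for the horizontal part; the remainder is a direct substitution of the two preceding lemmas into $\nabla^{g'}=\nabla^{\sigma}+\tau$.
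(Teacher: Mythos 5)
Your proposal is correct and follows exactly the paper's own route: the paper proves the theorem by the same decomposition $\nabla^{g'}=\nabla^{\sigma}+\tau$, combining (\ref{j_derivative_relation}) with Lemma \ref{difference_tensor_lemma} and quoting B\"ar's count of Killing spinors (with the round sphere carrying a full basis) for the multiplicity statement. The only difference is one of detail: the paper calls the substitution ``straightforward,'' while you spell out the two points it leaves implicit --- the factor $\tfrac12$ in the spinorial lift of the difference tensor and the Clifford identity $\bigl(\eta_p'\wedge(X\lrcorner\Phi_p')\bigr)\cdot\widetilde\psi=-\xi_p'\cdot\varphi_p'(X)\cdot\widetilde\psi$ --- both of which you handle correctly.
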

\begin{proof}
	The formulas follow from a straightforward calculation using (\ref{j_derivative_relation}) and Lemma \ref{difference_tensor_lemma}. The lower bound on the number of solutions is a consequence of the well-known fact that a $\ts$ manifold of dimension $4n-1$ admits $n+1$ linearly independent Killing spinors if it is not isometric to the round sphere $S^{4n-1}$ (with its standard $\ts$ structure) and a full basis of Killing spinors otherwise (see e.g.\@ \cite{Bar}).
\end{proof}

We introduce a name for spinors satisfying (\ref{KS_with_torsion}). 
\begin{definition}
	A \emph{deformed Killing spinor} on a positive $\tad$ manifold is a solution of (\ref{KS_with_torsion}). 
\end{definition}

\begin{remark}
	Deformed Killing spinors, as per the preceding definition, are not to be confused with the solutions of (\ref{deformedKillingspinorsbundle}) in the next section, which we shall call \emph{$\mathcal{H}$-Killing spinors}. This is an unfortunate clash of terminology with the previous work \cite{AHLspheres}, which used the name deformed Killing spinors to describe solutions of (\ref{deformedKillingspinorsbundle}), however we believe that spinors obtained from Killing spinors by $\mathcal{H}$-homothetic deformations are much more deserving of the name "deformed". 
\end{remark}

\begin{remark}
Conceptually, the existence of spinors satisfying (\ref{KS_with_torsion}) emphasizes an important point: $\ts$ structures exist within the large family of $\tad$ structures, and the spinors on these spaces should also vary smoothly along the family. Equation (\ref{KS_with_torsion}) says that these spinors are Killing spinors up to a correction term, defined in terms of the structure tensors, which vanishes in the $3$-$\alpha$-Sasakian case ($\alpha=\delta$).
\end{remark}

In the following proposition we collect the spinorial equations satisfied by solutions of (\ref{KS_with_torsion}) with respect to the other relevant spinorial connections, namely the canonical connection $\nabla$ of the $\tad$ structure and the connection $\nabla^{\sigma}$ defined in Lemma \ref{nabla_sigma_definition}.

\begin{proposition} Deformed Killing spinors are generalized Killing spinors with torsion for the metric connection $\nabla^{\sigma}$, satisfying the equation
\[  \nabla_X^{\sigma} \psi = \frac{1}{2}\sigma(X)\cdot \psi = \left(\frac{\delta }{2} X^{\mathcal{V}} + \frac{\sqrt{\alpha\delta}}{2}X^{\mathcal{H}} \right)\cdot \psi  \] 
for all $X \in TM$. With respect to the canonical connection, $\nabla$, they satisfy
\begin{align}
\nabla_{\xi_i} \psi & = \frac{\delta}{2}\xi_i\cdot \psi +  \frac{2\alpha-\delta }{2} \Phi_i^{\mathcal{H}} \cdot \psi + \frac{\delta-4\alpha}{2} \eta_j\cdot \eta_k\cdot \psi , \label{canonical_deformed_vertical_direction} \\
\nabla_X \psi&= \frac{\sqrt{\alpha\delta}}{2}X\cdot \psi + \frac{ \sqrt{\alpha\delta}}{2} \sum_{p=1}^3 \xi_p\cdot \varphi_p(X)\cdot \psi  \label{canonical_deformed_horizontal_direction}
\end{align}
for all $X\in \mathcal{H}$.
\end{proposition}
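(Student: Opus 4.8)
The plan is to treat the two connections separately, deducing the $\nabla^{\sigma}$ equation directly from the construction in Lemma~\ref{nabla_sigma_definition} and then passing to the canonical connection $\nabla$ via the standard spinorial torsion formula.

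First I would handle $\nabla^{\sigma}$. By construction a deformed Killing spinor is of the form $\psi=\widetilde{\psi_0}$, where $\psi_0$ is a genuine Riemannian Killing spinor on the underlying $\ts$ manifold $(M,g)$, i.e.\@ $\nabla^g_X\psi_0=\tfrac{1}{2}X\cdot\psi_0$. Combining the intertwining relation (\ref{j_derivative_relation}) with the Clifford-multiplication identity (\ref{tilde_relation_Cliff_mult}) gives immediately
\[
\nabla_X^{\sigma}\psi=\widetilde{\nabla_X^g\psi_0}=\widetilde{\tfrac{1}{2}X\cdot\psi_0}=\tfrac{1}{2}\sigma(X)\cdot\psi .
\]
It then remains only to expand $\sigma(X)=\sqrt{\alpha\delta}\,X^{\mathcal{H}}+\delta X^{\mathcal{V}}$, read off from (\ref{small_j_def}), to recover the stated formula. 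This step requires no computation beyond unwinding the definitions.

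For the canonical connection I would invoke the standard relation between the spinorial lift of a metric connection with skew torsion $T$ and the Levi-Civita spinorial connection,
\[
\nabla_X\psi=\nabla^g_X\psi+\tfrac{1}{4}(X\lrcorner T)\cdot\psi ,
\]
valid for $\nabla=\nabla^g+\tfrac{1}{2}T$. I would then feed in the Levi-Civita derivatives already recorded in (\ref{KS_with_torsion}) and contract the explicit torsion form (\ref{canonical_torsion_formula}) against $\xi_i$ and against horizontal $X$. Using $\eta_p(\xi_i)=\delta_{pi}$ and $\xi_i\lrcorner\Phi_p^{\mathcal{H}}=0$ one finds $\xi_i\lrcorner T=2\alpha\,\Phi_i^{\mathcal{H}}+2(\delta-4\alpha)\,\eta_j\wedge\eta_k$; for $X\in\mathcal{H}$, using $\eta_p(X)=0$, $X\lrcorner\eta_{1,2,3}=0$, and $X\lrcorner\Phi_p^{\mathcal{H}}=-(\varphi_pX)^{\flat}$, one finds $X\lrcorner T=2\alpha\sum_p\eta_p\wedge(\varphi_pX)^{\flat}$. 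In each of these wedge products the two factors are mutually orthogonal $1$-forms, so Clifford multiplication by the $2$-form agrees with the product of the individual Clifford multiplications; hence $\tfrac14(\xi_i\lrcorner T)\cdot\psi=\tfrac{\alpha}{2}\Phi_i^{\mathcal{H}}\cdot\psi+\tfrac{\delta-4\alpha}{2}\eta_j\cdot\eta_k\cdot\psi$ and $\tfrac14(X\lrcorner T)\cdot\psi=\tfrac{\alpha}{2}\sum_p\xi_p\cdot\varphi_p(X)\cdot\psi$. Adding these to (\ref{KS_with_torsion}) and collecting coefficients (using $\tfrac{\alpha-\delta}{2}+\tfrac{\alpha}{2}=\tfrac{2\alpha-\delta}{2}$ in the vertical case and $\tfrac{\sqrt{\alpha\delta}-\alpha}{2}+\tfrac{\alpha}{2}=\tfrac{\sqrt{\alpha\delta}}{2}$ in the horizontal case) yields exactly (\ref{canonical_deformed_vertical_direction}) and (\ref{canonical_deformed_horizontal_direction}).

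The computations are essentially routine; the only delicate points are bookkeeping ones. One must fix the sign convention $\nabla=\nabla^g+\tfrac12 T$ so that the factor $\tfrac14$ in the spinorial torsion formula is correct for the Clifford convention $v\cdot w+w\cdot v=-2g(v,w)$ used here. The contraction identity $X\lrcorner\Phi_p^{\mathcal{H}}=-(\varphi_pX)^{\flat}$ for $X\in\mathcal{H}$ relies on the skew-symmetry of $\varphi_p$ together with the definition $\Phi_p=g(-,\varphi_p(-))$, and a sign slip there would propagate into the horizontal equation. Finally, the passage from wedge products to Clifford products must be justified by the orthogonality of the relevant $1$-forms (horizontal versus vertical, or distinct vertical directions); this is exactly what produces the clean term $\tfrac{\delta-4\alpha}{2}\eta_j\cdot\eta_k\cdot\psi$ in the vertical equation from the $\eta_{1,2,3}$-part of the torsion.
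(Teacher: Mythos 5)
Your proposal is correct and takes essentially the same route as the paper: the paper likewise derives the $\nabla^{\sigma}$ equation immediately from (\ref{j_derivative_relation}) and obtains the canonical-connection equations by an "easy calculation" with the torsion formula (\ref{canonical_torsion_formula}), which is exactly your contraction $\nabla_X\psi=\nabla^g_X\psi+\tfrac{1}{4}(X\lrcorner T)\cdot\psi$ added to (\ref{KS_with_torsion}). Your bookkeeping is accurate throughout ($\xi_i\lrcorner T=2\alpha\,\Phi_i^{\mathcal{H}}+2(\delta-4\alpha)\,\eta_j\wedge\eta_k$, $X\lrcorner T=2\alpha\sum_p\eta_p\wedge(\varphi_pX)^{\flat}$ for horizontal $X$, and the coefficient sums $\tfrac{\alpha-\delta}{2}+\tfrac{\alpha}{2}=\tfrac{2\alpha-\delta}{2}$, $\tfrac{\sqrt{\alpha\delta}-\alpha}{2}+\tfrac{\alpha}{2}=\tfrac{\sqrt{\alpha\delta}}{2}$), so you have simply made explicit the computation the paper leaves to the reader.
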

\begin{proof}
	The differential equation with respect to $\nabla^{\sigma}$ follows immediately from (\ref{j_derivative_relation}). The other equations follow from an easy calculation using the formula (\ref{canonical_torsion_formula}) for the torsion of $\nabla$.
\end{proof}

\begin{remark} The preceding proposition lends credence to the notion that $\nabla^{\sigma}$ is better suited to the deformed Killing spinors than either the Levi-Civita or canonical connections of the $\tad$ structure. This is not altogether surprising, given that the Levi-Civita connection on a $\ts$ manifold has good spinorial properties, and $\nabla^{\sigma}$ is its natural extension under $\mathcal{H}$-homothetic deformations.
\end{remark}

In practice, although deformed Killing spinors behave nicely with respect to $\nabla^{\sigma}$, they also have several unfortunate properties:
\begin{enumerate}[(i)]
	\item They are not typically Killing spinors for the Levi-Civita connection in the horizontal or vertical directions;
	\item They do not generalize well to negative $\tad$ manifolds due to the appearance of coefficients involving $\sqrt{\alpha\delta}$.
\end{enumerate}
Let us elaborate on point (ii). At present, the main techniques for finding globally defined spinor fields are quite limited: one can look for bundles of spinors preserved by a connection whose restriction to the bundle is flat (as Friedrich and Kath did in \cite{Fried90}), or one can use the existence of a $\text{G}$-structure to find trivial subbundles for the (lifted) action of $G$ on the spinor module (important examples of this include invariant spinors on homogeneous spaces and the defining spinors of $\SU(3)$- and $\G_2$-structures in dimensions $6$ and $7$ respectively). The first technique is ruled out for our purposes since the natural choice of connection to consider would be $\bar{\nabla}$ defined by \[\bar{\nabla}_{\xi_i}\psi  = \nabla^{g}_{\xi_i}\psi   -   \frac{\delta}{2}\xi_i\cdot \psi - \frac{\alpha-\delta}{2} \Phi_i^{\mathcal{H}} \cdot \psi  ,\quad \bar{\nabla}_X \psi = \nabla^{g'}_X \psi - \frac{\sqrt{\alpha\delta} }{2} X\cdot \psi -\frac{\sqrt{\alpha\delta} - \alpha }{2} \sum_{p=1}^3\xi_p\cdot \varphi_p(X) \cdot \psi      \]
for $X\in \mathcal{H}$, and this doesn't generalize well to the negative $\tad$ setting in light of point (ii) above; one would need to consider spinorial equations with complex coefficients, which deviate from the spirit of classical real and imaginary Killing spinors. The second technique is also not straightforward in our situation; while there is a reduction to $\Sp(n-1)\cong \Sp(n-1)\times \{1\}\subset \SO(4n-1)$ (\cite{3Sas_structure_reduction}) and this group is known to stabilize $2n$ linearly independent spinors (see e.g.\@ \cite{AHLspheres}), the differential equations satisfied by these spinors depend on the intrinsic torsion of the $\Sp(n-1)$-structure, which is not well understood at present. The rest of the paper is devoted to a different generalization of Killing spinors, introduced in Theorem \ref{deformedKillingspinorstheorem}, which avoids these problems. We will show that these spinors can be defined for any $\tad$ manifold, including when $\alpha\delta<0$, and that they behave well in the homogeneous setting with respect to a certain notion of duality which we introduce in Section \ref{section:duality}.

\section{$\mathcal{H}$-Killing Spinors on $\tad$ Spaces}\label{section:horizontalKS}
Let $(M,g,\varphi_i,\xi_i,\eta_i)$ be a $\tad$ manifold with Levi-Civita connection $\nabla^g$ and canonical connection $\nabla$. In this section we will prove that the bundles $E_i$ recalled in (\ref{Ei_definition}) are spanned by spinors satisfying the \emph{$\mathcal{H}$-Killing equation} (\ref{deformedKillingspinorsbundle}). To construct such spinors we adapt the method of Friedrich and Kath in \cite{Fried90}: we prove that the bundles $E_i$ are preserved by a certain modified connection $\widehat{\nabla}$, and that the restriction of $\widehat{\nabla}$ to $E:= E_1+E_2+E_3$ is flat. The connection $\widehat{\nabla}$ will be chosen in an obvious way so that its parallel sections are precisely $\mathcal{H}$-Killing spinors, and we shall assume that the underlying manifold is simply-connected so that flatness of $\widehat{\nabla}\rvert_E$ implies the existence of globally defined parallel sections. While the overall strategy of Friedrich and Kath still works in this new setting, the calculations become considerably more complicated than in the $\ts$ case. We will mitigate this by using the canonical connection--which was only recently introduced in \cite{3str}--where possible, in order to take advantage of the curvature identities calculated in \cite{ADScurv}. The main result of this section is the following:

\begin{theorem}\label{deformedKillingspinorstheorem} On a simply-connected $\tad$ space $(M,g, \varphi_i,\xi_i,\eta_i)$, the bundle $E:=E_1+E_2+E_3$ has rank at least $2$ and is spanned by solutions of the equation
\begin{align} \label{deformedKillingspinorsbundle} 
\nabla^g_X\psi &=  \frac{\alpha}{2} X\cdot \psi +\frac{\alpha-\delta}{2}\sum_{p=1}^3\eta_p(X) \Phi_p\cdot \psi \qquad \text{ for all } X\in TM.
\end{align}
\end{theorem}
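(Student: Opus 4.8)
The plan is to follow the Friedrich--Kath strategy announced before the statement: realize the $\mathcal{H}$-Killing spinors as the parallel sections of an auxiliary spinorial connection $\widehat{\nabla}$, show that $\widehat{\nabla}$ preserves the bundles $E_i$, and then prove that its restriction to $E:=E_1+E_2+E_3$ is flat, so that simple-connectedness furnishes a global $\widehat{\nabla}$-parallel frame of $\mathcal{H}$-Killing spinors.

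Concretely, I would first introduce
\[
\widehat{\nabla}_X\psi := \nabla^g_X\psi - \frac{\alpha}{2}X\cdot\psi - \frac{\alpha-\delta}{2}\sum_{p=1}^3 \eta_p(X)\,\Phi_p\cdot\psi ,
\]
so that by construction the $\widehat{\nabla}$-parallel sections are exactly the solutions of (\ref{deformedKillingspinorsbundle}). The rank claim is the easy part: the defining relation of each $E_i$ in (\ref{Ei_definition}) is a pointwise linear condition involving only the almost $3$-contact metric structure and not $\alpha,\delta$, so the Friedrich--Kath linear-algebra computation giving $\mathrm{rank}(E_i)=2$ carries over verbatim, whence $2\le\mathrm{rank}(E)\le 6$ as in Remark \ref{Ei_bundle_remark}.

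Next I would show that $\widehat{\nabla}$ preserves each $E_i$. For $\psi\in\Gamma(E_i)$ the membership is encoded by $\Theta_i(X)\cdot\psi=0$ for all $X$, where $\Theta_i(X):=-2\varphi_i(X)+\xi_i\cdot X-X\cdot\xi_i$. Applying $\nabla^g_Y$ to this identity and inserting the Levi-Civita derivatives of $\varphi_i$ and $\xi_i$ from Proposition \ref{LCder} expresses $\Theta_i(X)\cdot\nabla^g_Y\psi$ as a Clifford product of $\psi$ by the structure tensors; substituting the definition of $\widehat{\nabla}$ and reducing the resulting products (via the quaternionic relations $\varphi_i^{\mathcal H}\varphi_j^{\mathcal H}=\varphi_k^{\mathcal H}$ and (\ref{phii_vertical_directions})) should yield $\Theta_i(X)\cdot\widehat{\nabla}_Y\psi=0$, i.e.\ $\widehat{\nabla}_Y\psi\in\Gamma(E_i)$. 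This is the step that pins down the precise shape of the correction term in (\ref{deformedKillingspinorsbundle}): it is exactly the zeroth-order modification of $\nabla^g$ that compensates the extra $(\alpha-\delta)$-contributions in Proposition \ref{LCder} so as to close up on $E_i$.

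The hard part is the flatness of $\widehat{\nabla}$ on $E$. Writing $\widehat{\nabla}=\nabla^g+A$, with $A$ the zeroth-order Clifford-multiplication term read off from (\ref{deformedKillingspinorsbundle}), the curvature is
\[
\widehat{R}(X,Y)=R^g(X,Y)+(\nabla^g_X A)_Y-(\nabla^g_Y A)_X+[A_X,A_Y],
\]
and I must show $\widehat{R}(X,Y)\psi=0$ for every $\psi\in\Gamma(E)$. Rather than wrestle with the unwieldy Riemannian curvature directly, I would pass to the canonical connection $\nabla$: using $\nabla^g_X\psi=\nabla_X\psi-\tfrac14(X\lrcorner T)\cdot\psi$ together with the curvature comparison (\ref{curvdifference}) imports the structural curvature identities of \cite{ADScurv} and the horizontal parallelism $\nabla\varphi_i=\nabla\xi_i=\nabla\eta_i=0$ on $\mathcal{H}$ from Proposition \ref{prelims:canonical_connection}, which drastically simplifies the algebra. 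The zeroth-order contributions $(\nabla^g A)$ and $[A_X,A_Y]$ generate Clifford products that must then be collapsed modulo the defining relations $\Theta_i(X)\cdot\psi=0$; the crux is that the surviving curvature terms cancel exactly against these constraints, and verifying this cancellation case by case (horizontal--horizontal, horizontal--vertical, vertical--vertical) is the genuinely laborious obstacle. Once $\widehat{R}|_E=0$ is established, the flat connection $\widehat{\nabla}$ on the (at least rank $2$) bundle $E$ over the simply-connected $M$ admits a global parallel frame, and these parallel sections are by construction the $\mathcal{H}$-Killing spinors spanning $E$, completing the proof.
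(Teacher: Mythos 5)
Your proposal is correct and follows essentially the same route as the paper: the same modified connection (\ref{modified_connection_formula}), the same preservation argument for the $E_i$ obtained by differentiating the defining relation and reducing via Proposition \ref{LCder} and Clifford-algebra identities, and the same flatness argument that imports the canonical-connection curvature identities of \cite{ADScurv} through the comparison formula (\ref{curvdifference}) before projecting onto $E_i$ case by case (horizontal--horizontal, mixed, vertical--vertical), concluding with the holonomy principle on the simply-connected base. The only cosmetic difference is that the paper computes $\widehat{R}$ directly from the Levi-Civita expansion and uses the canonical connection solely to evaluate the $\varphi_i$-twisted traces of $R^g$, which matches your plan in substance.
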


\begin{definition}
	We call solutions of (\ref{deformedKillingspinorsbundle}) \emph{$\mathcal{H}$-Killing spinors}.
\end{definition}	
\begin{remark}
	Note that $\mathcal{H}$-Killing spinors are generalizations of Killing spinors (which may be obtained by setting $\alpha=\delta$), and also avoid several of the problems encountered with deformed Killing spinors in the previous section: they are Killing spinors in the horizontal directions, and are well-defined for all $\tad$ manifolds (not just the positive ones).  
\end{remark}

The rest of the section is devoted to the proof of Theorem \ref{deformedKillingspinorstheorem}.
\subsection{The Modified Spinorial Connection}

In this subsection we introduce a \emph{modified connection} $\widehat{\nabla}$ on the spinor bundle and show that it preserves the bundles $E_i$, $i=1,2,3$. To begin, we prove two technical lemmas which will allow us to adapt the calculation of Friedrich and Kath in \cite{Fried90} to this new setting. The first lemma describes how sections of $E_i$ behave with respect to Clifford multiplication by the structural $2$-forms $\Phi_j,\Phi_k$, which is necessary to understand due to the second term on the right hand side of the $\mathcal{H}$-Killing equation (\ref{deformedKillingspinorsbundle}).
\begin{lemma}
	Let $(M,g,\xi_i,\eta_i,\varphi_i)$ be a $\tad$ manifold. If $\psi\in \Gamma(E_i)$ and $(i,j,k)$ is an even permutation of $(1,2,3)$, then
	\begin{align}
		(-2\varphi_i(X)+\xi_i X - X\xi_i )\cdot \Phi_j\cdot \psi &= [8\varphi_k(X) -2\xi_k X+2X \xi_k  -4\eta_i(X)\xi_j +4\eta_j(X)\xi_i ]\cdot \psi , \label{degthreeproduct1}\\
		(-2\varphi_i(X)+\xi_i X-X\xi_i)\cdot \Phi_k\cdot \psi &=  [-8\varphi_j(X)+2\xi_j X-2X\xi_j -4\eta_i(X)\xi_k+4\eta_k(X)\xi_i]\cdot \psi ,  \label{degthreeproduct2}
	\end{align}
	for all $X\in\vect(M)$.
\end{lemma}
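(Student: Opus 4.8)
The plan is to reduce everything to a single Clifford-algebraic commutation rule: under Clifford multiplication a vector can be moved past the structural $2$-form $\Phi_j$ at the cost of a first-order term. Using the standard rules $V\cdot\Phi_j = V\wedge\Phi_j - V\lrcorner\Phi_j$ and $\Phi_j\cdot V = V\wedge\Phi_j + V\lrcorner\Phi_j$ for a $2$-form, together with $\Phi_j(Y,Z)=g(Y,\varphi_j Z)$ and the skew-symmetry of $\varphi_j$ to identify $V\lrcorner\Phi_j = -(\varphi_j V)^\flat$, I would first record
\[ V\cdot\Phi_j = \Phi_j\cdot V + 2\varphi_j(V) \qquad\text{for every } V\in\vect(M). \]
(One checks this is compatible with the paper's convention $v\cdot w + w\cdot v = -2g(v,w)$.) Everything downstream is driven by this identity and the pointwise quaternionic relations of the almost $3$-contact structure.

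Abbreviating $P_i(X):= -2\varphi_i(X) + \xi_i\cdot X - X\cdot\xi_i$, so that $\psi\in\Gamma(E_i)$ means exactly $P_i(Y)\cdot\psi=0$ for all $Y$, the central step is to commute $\Phi_j$ through each of the three vectors in $P_i(X)$ via the rule above. Each commutation yields the expected term with $\Phi_j$ now on the left, plus a correction built from $\varphi_j\varphi_i$, $\varphi_j(\xi_i)$, and $\varphi_j(X)$. Collecting the $\Phi_j$-on-the-left pieces reassembles precisely $\Phi_j\cdot P_i(X)$, whence
\[ P_i(X)\cdot\Phi_j\cdot\psi = \Phi_j\cdot\underbrace{P_i(X)\cdot\psi}_{=0} + (\text{corrections})\cdot\psi = (\text{corrections})\cdot\psi , \]
so that the whole content of the lemma is the simplification of the correction terms. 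This is where the hypothesis $\psi\in\Gamma(E_i)$ is used a second, decisive time.

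For the corrections I would substitute $\varphi_j\varphi_i = -\varphi_k + \eta_i\otimes\xi_j$ and $\varphi_j(\xi_i) = -\xi_k$, both immediate from the defining relations and $\varphi_i(\xi_j)=-\varphi_j(\xi_i)=\xi_k$. The correction then consists of single-vector terms $\varphi_k(X)$, $\eta_i(X)\xi_j$; the mixed products $\xi_k\cdot X$ and $X\cdot\xi_k$; and the commutator $\xi_i\cdot\varphi_j(X) - \varphi_j(X)\cdot\xi_i$. The key point is that this last commutator may be reduced via the $E_i$-relation in the form $(\xi_i\cdot V - V\cdot\xi_i)\cdot\psi = 2\varphi_i(V)\cdot\psi$: taking $V=\varphi_j(X)$ and using $\varphi_i\varphi_j = \varphi_k + \eta_j\otimes\xi_i$ turns it into $2\varphi_k(X)+2\eta_j(X)\xi_i$ on $\psi$. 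This is what upgrades the $4\varphi_k(X)$ from the first term into the $8\varphi_k(X)$ of (\ref{degthreeproduct1}) and produces the $4\eta_j(X)\xi_i$ term. By contrast, the $\xi_k\cdot X$ and $X\cdot\xi_k$ terms must \emph{not} be reduced this way, since $\psi\in\Gamma(E_i)$ and not $\Gamma(E_k)$; they survive verbatim as the $-2\xi_k X + 2X\xi_k$ on the right-hand side. Assembling the pieces gives (\ref{degthreeproduct1}).

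Identity (\ref{degthreeproduct2}) follows by the identical procedure with $j$ and $k$ interchanged, the only subtlety being that $(i,k,j)$ is an odd permutation, so one uses $\varphi_k\varphi_i = \varphi_j + \eta_i\otimes\xi_k$, $\varphi_k(\xi_i)=\xi_j$, and $\varphi_i\varphi_k = -\varphi_j + \eta_k\otimes\xi_i$, which flips several signs. I expect the main obstacle to be exactly this bookkeeping: keeping the quaternionic sign conventions consistent across the four occurrences of $\varphi\varphi$ and $\varphi(\xi)$, and being disciplined about which two-vector commutators are legitimately simplified by the $E_i$-membership of $\psi$ (only those built from $\xi_i$) versus which must be carried through untouched. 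Once that discipline is maintained, no genuine analytic difficulty remains.
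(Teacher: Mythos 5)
Your proposal is correct and follows essentially the same route as the paper's proof: commute $\Phi_j$ (resp.\ $\Phi_k$) to the left through the three vectors of the operator $-2\varphi_i(X)+\xi_i X - X\xi_i$ using $V\cdot\Phi_j = \Phi_j\cdot V + 2\varphi_j(V)$, kill the reassembled $\Phi_j\cdot(-2\varphi_i(X)+\xi_iX-X\xi_i)\cdot\psi$ via the $E_i$-relation, and then apply the $E_i$-relation a second time with $V=\varphi_j(X)$ together with $\varphi_i\varphi_j=\varphi_k+\eta_j\otimes\xi_i$ to upgrade $4\varphi_k(X)$ to $8\varphi_k(X)$ and produce $4\eta_j(X)\xi_i$. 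All the quaternionic identities and signs you invoke (including the odd-permutation bookkeeping for the second identity, where the paper simply says it is proved analogously) check out against the paper's conventions.
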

\begin{proof}
	We calculate in the Clifford algebra,
	\begin{align*}
		-2\varphi_i(X)\cdot \Phi_j &= 2(2\varphi_i(X)\lrcorner \Phi_j - \Phi_j\cdot \varphi_i(X)) = 2( -2\varphi_j(\varphi_i(X)) - \Phi_j\cdot \varphi_i(X) )\\
		&= -4\eta_i(X)\xi_j +4\varphi_k(X) -2\Phi_j\cdot \varphi_i(X) , \\
		\xi_i\cdot X \cdot \Phi_j &= \xi_i\cdot (-2X\lrcorner \Phi_j + \Phi_j\cdot X)= 2\xi_i\cdot \varphi_j(X) +\xi_i \cdot \Phi_j \cdot X \\
		&= 2\xi_i\cdot \varphi_j(X)+( -2\xi_i\lrcorner \Phi_j  +\Phi_j\cdot \xi_i)\cdot X\\
		&=2\xi_i\cdot \varphi_j(X) -2\xi_k\cdot X +\Phi_j\cdot \xi_i\cdot X, \\
		-X\cdot \xi_i\cdot \Phi_j &= -X\cdot ( -2\xi_i\lrcorner \Phi_j + \Phi_j\cdot \xi_i) = 2X\cdot\xi_k +(2X\lrcorner \Phi_j - \Phi_j\cdot X)\cdot \xi_i \\ &= 2X\cdot \xi_k -2\varphi_j(X) \cdot \xi_i  -\Phi_j\cdot X\cdot \xi_i .
	\end{align*}
	Adding these equations and using the defining relation for $E_i$ gives
	\begin{align*}
		&(-2\varphi_i(X)+\xi_i X-X \xi_i)\cdot \Phi_j\cdot \psi \\
		=& [4\varphi_k(X) -2\xi_k X+2X\xi_k]\cdot \psi +[-4\eta_i(X)\xi_j + 2\xi_i\cdot \varphi_j(X) -2 \varphi_j(X)\cdot \xi_i]\cdot \psi \\
	=&  [4\varphi_k(X) -2\xi_k X+2X\xi_k]\cdot \psi +[-4\eta_i(X)\xi_j + 4\varphi_i(\varphi_j(X))]\cdot \psi \\
		=&  [4\varphi_k(X) -2\xi_k X+2X\xi_k]\cdot \psi +[-4\eta_i(X)\xi_j + 4\varphi_k(X) +4\eta_j(X)\xi_i ]\cdot \psi \\
		=& [8\varphi_k(X) -2\xi_kX+2X\xi_k -4\eta_i(X) \xi_j +4\eta_j(X)\xi_i]\cdot \psi.
	\end{align*}
	Equation (\ref{degthreeproduct2}) is proved analogously.
\end{proof}
The second technical lemma is an identity which arises from Friedrich and Kath's calculations for the $\ts$ case, and also holds for $\tad$ manifolds:
\begin{lemma}\emph{(Based on the proof of \cite[Thm.\@ 1]{Fried90}).} If $(M,g,\xi_i,\eta_i,\varphi_i)$ is a $\tad$ manifold and $\psi\in \Gamma(E_i)$, then
	\begin{align}\label{technicallemma2} 
	 [-2 (g(X,Y)\xi_i -\eta_i(X)Y)- \varphi_i(Y) X +  X\varphi_i(Y) ] \cdot \psi +(-2\varphi_i(X) +\xi_i X-X\xi_i)\cdot\left( \frac{1}{2} Y\cdot \psi    \right) =0.
	\end{align}
\end{lemma}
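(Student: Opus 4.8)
The plan is to prove the identity by a single computation in the Clifford algebra, using only the defining relation of $E_i$ from (\ref{Ei_definition}) together with the Clifford relation $v\cdot w + w\cdot v = -2g(v,w)$ and the convention $\Phi_i(X,Y)=g(X,\varphi_i(Y))$. The crucial first move is to rewrite the defining relation of $E_i$ in the more symmetric form $(\xi_i\wedge X)\cdot\psi = \varphi_i(X)\cdot\psi$ for all $X\in TM$, obtained by dividing by $2$ and using $\xi_i\cdot X - X\cdot\xi_i = 2\,\xi_i\wedge X$. Equivalently, the element $\tfrac12(-2\varphi_i(X) + \xi_i\cdot X - X\cdot\xi_i) = -\varphi_i(X) + \xi_i\wedge X$ annihilates $\psi$, a fact I will exploit repeatedly.

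First I would simplify the second summand $\tfrac12(-2\varphi_i(X)+\xi_i\cdot X - X\cdot\xi_i)\cdot Y\cdot\psi$ by commuting $Y$ to the left past this element. The point is that it is the sum of a vector part $-\varphi_i(X)$ and a decomposable bivector part $\xi_i\wedge X$: moving $Y$ through the vector part produces an anticommutator (a sign change) while moving it through the bivector part produces a commutator. Because of this sign mismatch the resulting left factor is not the annihilating element but differs from it by $2\varphi_i(X)$; writing $Y\cdot(-\varphi_i(X)+\xi_i\wedge X)\cdot\psi = 0$ then leaves the inner-product terms coming from the two (anti)commutators, namely multiples of $g(X,Y)\xi_i$, $\eta_i(Y)X$ and $\Phi_i(X,Y)$, together with a residual term $2\,Y\cdot\varphi_i(X)\cdot\psi$.

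Next I would treat the first summand: using $\varphi_i(Y)\cdot X = -2g(\varphi_i(Y),X) - X\cdot\varphi_i(Y) = -2\Phi_i(X,Y) - X\cdot\varphi_i(Y)$, the combination $-\varphi_i(Y)\cdot X + X\cdot\varphi_i(Y)$ becomes $2\Phi_i(X,Y) + 2\,X\cdot\varphi_i(Y)$, and the scalar $\Phi_i(X,Y)$ here cancels the one produced in the previous step. After this cancellation the whole expression collapses to $\bigl[-4g(X,Y)\xi_i + 2\eta_i(X)Y + 2\eta_i(Y)X + 2\,X\cdot\varphi_i(Y) + 2\,Y\cdot\varphi_i(X)\bigr]\cdot\psi$, which is manifestly symmetric in $X$ and $Y$.

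The final step, and the main obstacle, is to show that this residual symmetric expression vanishes; a naive use of the $E_i$-relation to shuffle $\xi_i$ around loops indefinitely, so one must apply it in the correct direction exactly once. Concretely I would replace $\varphi_i(Y)\cdot\psi$ and $\varphi_i(X)\cdot\psi$ by $(\xi_i\wedge Y)\cdot\psi$ and $(\xi_i\wedge X)\cdot\psi$ (the adjacency to $\psi$ is what makes this legitimate), expand the resulting $\xi_i$-sandwiched products with the Clifford relation, and observe that the non-tensorial contributions assemble into multiples of $\xi_i\cdot(X\cdot Y + Y\cdot X)\cdot\psi = -2g(X,Y)\,\xi_i\cdot\psi$. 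Exploiting the $X\leftrightarrow Y$ symmetry, the $\eta_i(X)Y$ and $\eta_i(Y)X$ terms reproduce exactly those already present with opposite sign, and the $g(X,Y)\xi_i$ terms cancel in pairs, giving $0$. It is worth noting that neither (\ref{degthreeproduct1}) nor the quaternionic relations among the $\varphi_i$ are needed for this lemma; only the single structure $E_i$ enters.
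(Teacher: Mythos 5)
Your computation is correct---I checked each step, from the rewriting $\xi_i\cdot X - X\cdot\xi_i = 2\,\xi_i\wedge X$ through the cancellation of the $\Phi_i(X,Y)$ scalars to the final identity $2(X\cdot\xi_i\cdot Y + Y\cdot\xi_i\cdot X)\cdot\psi = \bigl(4g(X,Y)\xi_i - 4\eta_i(X)Y - 4\eta_i(Y)X\bigr)\cdot\psi$, which kills the residual symmetric expression---and it is essentially the paper's approach, since the paper's proof simply defers to the Clifford-algebraic calculation of Friedrich--Kath in \cite[p.\@ 547]{Fried90}, which likewise uses only the defining relation of $E_i$ and the Clifford relations. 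Your self-contained version, with its two carefully placed applications of the $E_i$-relation, faithfully reproduces that argument, and your closing observation that no quaternionic or $\tad$-specific structure enters is consistent with the paper's point that the Sasakian-case computation carries over verbatim.
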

\begin{proof}
	This follows by the same calculation as on \cite[p.\@ 547]{Fried90} (note that their calculation has a small typo on the second last line; the term "$+XY\xi$" should instead say "$\pm XY\xi$").
\end{proof}

Using these two lemmas we prove the desired result:
\begin{proposition}
	If $(M,g,\xi_i,\eta_i,\varphi_i)$ is a $\tad$ manifold then the modified spinorial connection
	\begin{align}\label{modified_connection_formula}
	\widehat{\nabla}_Y\psi  :=\nabla_Y^g \psi - \frac{\alpha}{2} Y\cdot \psi -  \frac{\alpha-\delta}{2} \sum_{p=1}^3 \eta_p(Y)\Phi_p \cdot \psi , \qquad Y\in TM, \ \psi \in \Sigma M
	\end{align} preserves the bundles $E_i$, $i=1,2,3$.
\end{proposition}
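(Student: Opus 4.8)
The plan is to fix $\psi \in \Gamma(E_i)$, write $A_i(X) := -2\varphi_i(X) + \xi_i\cdot X - X\cdot\xi_i$ for the defining Clifford operator of $E_i$ in (\ref{Ei_definition}), and verify directly that $A_i(X)\cdot\widehat{\nabla}_Y\psi = 0$ for all $X,Y\in TM$. Inserting the definition (\ref{modified_connection_formula}) decomposes this quantity into three pieces: the Levi-Civita piece $A_i(X)\cdot\nabla^g_Y\psi$, the Killing-type piece $-\tfrac{\alpha}{2}A_i(X)\cdot Y\cdot\psi$, and the correction piece $-\tfrac{\alpha-\delta}{2}\sum_p\eta_p(Y)\,A_i(X)\cdot\Phi_p\cdot\psi$. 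I would treat these using Proposition \ref{LCder}, the identity (\ref{technicallemma2}), and the cubic identities (\ref{degthreeproduct1})--(\ref{degthreeproduct2}), respectively. For the Levi-Civita piece, extending $X$ so that $\nabla^g_Y X = 0$ at the point under consideration and differentiating the relation $A_i(X)\cdot\psi=0$ yields $A_i(X)\cdot\nabla^g_Y\psi = -[-2(\nabla^g_Y\varphi_i)(X) + (\nabla^g_Y\xi_i)\cdot X - X\cdot(\nabla^g_Y\xi_i)]\cdot\psi$, after which Proposition \ref{LCder} turns the whole statement into a purely algebraic identity in the Clifford bundle.

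I would then verify that identity by splitting on the vector $Y$. The case $Y\in\mathcal{H}$ is immediate: the correction piece vanishes since $\eta_p(Y)=0$, every $(\alpha-\delta)$-term of Proposition \ref{LCder} carries a factor $\eta_j(Y)$ or $\eta_k(Y)$ and hence drops out, and the surviving Levi-Civita piece is precisely the negative of the Killing-type piece as read off from (\ref{technicallemma2}). For $Y=\xi_i$ one computes from Proposition \ref{LCder} that $(\nabla^g_{\xi_i}\varphi_i)(X)=0$ and $\nabla^g_{\xi_i}\xi_i=0$, so the Levi-Civita piece vanishes and the Killing-type piece vanishes by (\ref{technicallemma2}); the expression then collapses to the single diagonal term $-\tfrac{\alpha-\delta}{2}A_i(X)\cdot\Phi_i\cdot\psi$, which is not covered by (\ref{degthreeproduct1})--(\ref{degthreeproduct2}). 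To handle it I would first establish the auxiliary Clifford identity $A_i(X)\cdot\Phi_i = 4(X-\eta_i(X)\xi_i) + 4\,\xi_i\cdot\varphi_i(X) + \Phi_i\cdot A_i(X)$, by the same manipulations used to prove (\ref{degthreeproduct1}) together with $\varphi_i^2 = -\id + \eta_i\otimes\xi_i$. On $E_i$ the last summand vanishes and the first two combine into $A_i(X)\cdot\Phi_i\cdot\psi = 2\,A_i(\varphi_i(X))\cdot\psi = 0$, the final equality being the defining relation of $E_i$ applied to the vector $\varphi_i(X)$. This reduction is the key point that makes the vertical diagonal direction close.

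The genuinely computational case is $Y=\xi_j$, with $Y=\xi_k$ following by the symmetry $j\leftrightarrow k$ and using (\ref{degthreeproduct2}) in place of (\ref{degthreeproduct1}); here all three pieces are nonzero. Inserting $(\nabla^g_{\xi_j}\varphi_i)(X) = (2\alpha-\delta)[\eta_j(X)\xi_i - \eta_i(X)\xi_j] + 2(\alpha-\delta)\varphi_k(X)$ and $\nabla^g_{\xi_j}\xi_i = -\delta\,\xi_k$ into the Levi-Civita piece, evaluating the Killing-type piece via (\ref{technicallemma2}), and expanding the correction piece via (\ref{degthreeproduct1}), one collects the coefficients of the five Clifford monomials $\varphi_k(X)$, $\xi_k\cdot X$, $X\cdot\xi_k$, $\eta_j(X)\xi_i$, $\eta_i(X)\xi_j$ and checks that each cancels identically as a polynomial in $\alpha,\delta$.

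The main obstacle is thus bookkeeping rather than concept: in the direction $Y=\xi_j$ one must keep track of the numerous $(\alpha-\delta)$-contributions of Proposition \ref{LCder} and match them against the outputs of the cubic identities, verifying term-by-term cancellation. The only genuinely new ingredient beyond the stated lemmas is the auxiliary identity for $A_i(X)\cdot\Phi_i$ and its reduction to $A_i(\varphi_i(X))\cdot\psi$, which disposes of the diagonal term left over in the $Y=\xi_i$ case.
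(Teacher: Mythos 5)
Your proposal is correct and follows essentially the same route as the paper's own proof: differentiate the defining relation of $E_i$, split into the cases $Y\in\mathcal{H}$, $Y=\xi_i$, $Y=\xi_j$, $Y=\xi_k$, and cancel term-by-term using Proposition \ref{LCder}, identity (\ref{technicallemma2}), and the cubic identities (\ref{degthreeproduct1})--(\ref{degthreeproduct2}) (your coefficient bookkeeping in the $Y=\xi_j$ direction checks out exactly as in the paper). Your only deviation is the diagonal term in the $Y=\xi_i$ case, where your auxiliary identity $(-2\varphi_i(X)+\xi_i\cdot X - X\cdot\xi_i)\cdot\Phi_i = 2\,(-2\varphi_i(\varphi_i(X))+\xi_i\cdot \varphi_i(X) - \varphi_i(X)\cdot\xi_i) + \Phi_i\cdot(-2\varphi_i(X)+\xi_i\cdot X - X\cdot\xi_i)$ is a correct pointwise substitute for the paper's argument that $\Phi_i$ acts on sections of $E_i$ as a multiple of $\xi_i$ and that $\xi_i$ anti-commutes with the operator defining $E_i$.
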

\begin{proof}
	Let $\psi\in \Gamma(E_i)$, and take $(i,j,k)$ an even permutation of $(1,2,3)$. Differentiating the defining equation $$(-2\varphi_i(X) +\xi_i X-X\xi_i)\psi =0  $$ with respect to $Y$ gives
	\begin{align}
		0&= [-2(\nabla^g_Y\varphi_i)X -2 \varphi_i(\nabla_Y^gX) +(\nabla_Y^g \xi_i)X + \xi_i(\nabla_Y^gX) - (\nabla_Y^gX)\xi_i - X(\nabla_Y^g \xi_i)]\cdot \psi \nonumber \\
		&\qquad \qquad +(-2\varphi_i(X) +\xi_i X-X\xi_i)\cdot \nabla_Y^g\psi\nonumber \\
		&= [-2(\nabla^g_Y\varphi_i)X  +(\nabla_Y^g \xi_i)X   - X(\nabla_Y^g \xi_i)]\cdot \psi \label{mastereq}\\
		&\qquad \qquad +(-2\varphi_i(X) +\xi_i X-X\xi_i)\cdot\left( \frac{\alpha}{2} Y\cdot \psi + \frac{\alpha-\delta}{2} \sum_{p=1}^3 \eta_p(Y)\Phi_p\cdot \psi    \right)\nonumber \\
		&\qquad \qquad  +(-2\varphi_i(X) +\xi_i X-X\xi_i)\cdot \widehat{\nabla}_Y\psi . \nonumber 
	\end{align}
	\begin{enumerate}[(i)]
		\item If $Y\in\mathcal{H}$ then (\ref{mastereq}) simplifies, using Proposition \ref{LCder}, to
			\begin{align*}
				0&= [-2\alpha (g(X,Y)\xi_i -\eta_i(X)Y)-\alpha \varphi_i(Y) X + \alpha X\varphi_i(Y) ] \cdot \psi \\
				& \qquad +(-2\varphi_i(X) +\xi_i X-X\xi_i)\cdot\left( \frac{\alpha}{2} Y\cdot \psi    \right)  +(-2\varphi_i(X) +\xi_i X-X\xi_i)\cdot \widehat{\nabla}_Y\psi,
			\end{align*}
		and the fact that the first two terms on the right hand side of the above equation sum to zero follows immediately from (\ref{technicallemma2}).
		\item If $Y=\xi_i $ then Proposition \ref{LCder} reduces (\ref{mastereq}) to 
		\begin{align*}
			0&=(-2\varphi_i(X) +\xi_i X-X\xi_i)\cdot\left(   \frac{\alpha}{2} \xi_i\cdot \psi + \frac{\alpha-\delta}{2} \Phi_i\cdot \psi    \right) +(-2\varphi_i(X) +\xi_i X-X\xi_i)\cdot \widehat{\nabla}_{\xi_i}\psi ,
		\end{align*}
		whose first term vanishes due to the fact that $\Phi_i$ acts on $E_i^-$ as a multiple of $\xi_i$, and $\xi_i$ anti-commutes with the operator defining $E_i^-$.
		\item If $Y=\xi_j$ then (\ref{mastereq}) reduces to
			\begin{align*}
				0&= -2[\alpha( \eta_j(X) \xi_i - \eta_i(X)\xi_j) +2(\alpha-\delta)\varphi_k(X)+(\alpha-\delta)\eta_j(X) \xi_i -(\alpha-\delta)\eta_i(X)\xi_j ]\cdot \psi \\
				&\qquad \qquad +[(-\alpha\xi_k +(\alpha-\delta)\xi_k)    X-  X (-\alpha\xi_k +(\alpha-\delta)\xi_k)   ]\cdot \psi \\
				&\qquad \qquad + (-2\varphi_i(X) +\xi_i X - X\xi_i )\cdot \left(\frac{\alpha}{2} \xi_j\cdot \psi+\frac{\alpha-\delta}{2} \Phi_j\cdot \psi\right)\\
				&\qquad \qquad +(-2\varphi_i(X)+\xi_i X -X \xi_i) \cdot \widehat{\nabla}_{\xi_j}\psi\\
				&=(\alpha-\delta)[-4\varphi_k(X) -2\eta_j(X)\xi_i +2\eta_i(X)\xi_j+\xi_k X-X\xi_k +\frac{1}{2} (-2\varphi_i(X)+\xi_iX- X\xi_i) \cdot \Phi_j] \cdot \psi\\
				&\qquad \qquad +(-2\varphi_i(X)+\xi_i X -X \xi_i) \cdot \widehat{\nabla}_{\xi_j}\psi
			\end{align*}
		(for the second equality we have again used (\ref{technicallemma2}) to eliminate some of the terms). The vanishing of the first term then follows immediately by substituting (\ref{degthreeproduct1}). 
		\item For the case $Y=\xi_k$, one performs an analogous calculation using (\ref{degthreeproduct2}).
	\end{enumerate}
\end{proof}

\subsection{Curvature and Torsion Identities for the Canonical Connection}

In this subsection we derive some useful curvature identities for the canonical connection, and also some torsion identities which will allow us to translate the former into curvature identities for the Levi-Civita connection. These will be used in the next subsection towards our ultimate goal of showing that the restriction of $\widehat{\nabla}$ to $E$ is flat. We denote by $\nabla^g$, $\nabla$, $\widehat{\nabla}$ the Levi-Civita, canonical, and modified connections, and by $R^g$, $R$, $\widehat{R}$ their respective curvature operators, and we recall from Proposition \ref{prelims:canonical_connection} the constant $\beta:=2(\delta-2\alpha)$. For the purposes of our calculations, it will be convenient to use a special type of orthonormal frame called an \emph{adapted frame}:
\begin{definition}
	An orthonormal frame $e_1,\dots, e_{4n-1}$ with $\xi_i = e_{i}$ for $i=1,2,3$ is said to be adapted if the fundamental 2-forms $\Phi_i$, $i=1,2,3$ take the form
\begin{align}
\Phi_1&= -\xi_{2,3} -\sum_{p=0}^{n-1} (e_{4p,4p+1} + e_{4p+2,4p+3}) ,  \label{Phi1}\\
\Phi_2&= \xi_{1,3} -\sum_{p=0}^{n-1}(e_{4p,4p+2} - e_{4p+1,4p+3}) ,  \label{Phi2} \\
\Phi_3&= -\xi_{1,2} -\sum_{p=0}^{n-1}(e_{4p,4p+3}+ e_{4p+1,4p+2}) . \label{Phi3} 
\end{align}
\end{definition}
From this point forward, we shall always work with adapted orthonormal frames. We begin by proving a curvature identity for a certain $\varphi_i$-twisted trace of the canonical curvature. The reason to consider such a twisting is the same as in the classical argument by Friedrich and Kath; if $\psi \in \Gamma(E_i)$ then the $E_i$-projection of the Clifford product of a $2$-form $\tau = \sum_{p<q} \tau_{p,q}e_p\wedge e_q $ with $\psi$ is given precisely by considering only the Clifford products with the terms $\tau_{p,q}e_p\wedge e_q$ where $e_q=\pm \varphi_i e_p$ (see the proof of \cite[Thm.\@ 1]{Fried90}). In particular, in the next subsection we will apply this to the curvature $2$-forms $R^g(X,Y)\in \Lambda^2TM$ associated to pairs of tangent vectors $X,Y\in TM$ in the course of proving that the restriction of $\widehat{\nabla}$ to $E$ is flat. To calculate these curvature $2$-forms we shall first calculate the corresponding $2$-forms for the canonical connection (taking advantage of its nice behaviour with respect to the $\tad$ structure tensors), and then translate these back in terms of the Levi-Civita connection using (\ref{curvdifference}). As such, we begin with a proposition giving identities for the $\varphi_i$-twisted traces of the canonical curvature operator $R$. The subsequent proposition gives the $\varphi_i$-twisted traces of certain operators defined in terms of the canonical torsion $T$ and its exterior derivative $dT$; this is precisely the information needed in order to translate the canonical curvature identities into identities for the Levi-Civita curvature using (\ref{curvdifference}).
\begin{proposition} \label{curvprop}
	If $(M,g,\xi_i,\eta_i,\varphi_i)$ is a $\tad$ manifold and $e_1,\dots,e_{4n-1}$ an adapted orthonormal (local) frame, then
	\[
	\sum_{s=1}^{4n-1} R(X,Y,e_s,\varphi_i(e_s))= 
	\begin{cases}
		4n\alpha\beta  \Phi_i(X,Y) , & X,Y\in \mathcal{H}, \\
		0,& X\in \mathcal{H},Y\in\mathcal{V},\\
		0, & X\in \mathcal{V},Y\in\mathcal{H},\\
		8n\alpha\beta  \Phi_i(X,Y) , & X,Y\in\mathcal{V}.
	\end{cases}
	\]
\end{proposition}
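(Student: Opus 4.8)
The plan is to convert the twisted trace into an algebraic statement about the $\splie(1)$-component of the skew-symmetric curvature endomorphism $R(X,Y)$, and then to pin that component down using the Ricci identity for the canonical connection. First I would observe that, by Proposition \ref{prelims:canonical_connection}, $\nabla$ preserves the vertical distribution $\mathcal{V}$ (since each $\nabla_Z\xi_i$ is vertical) and hence also $\mathcal{H}=\mathcal{V}^{\perp}$; consequently each $R(X,Y)$ is block-diagonal with respect to $TM=\mathcal{H}\oplus\mathcal{V}$, as is each $\varphi_i$. Using the orthonormality of the adapted frame one rewrites $\sum_{s} R(X,Y,e_s,\varphi_i(e_s))=-\tr\big(R(X,Y)\varphi_i\big)$, and this trace splits into a horizontal and a vertical contribution.

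Next I would decompose the horizontal block $R(X,Y)|_{\mathcal{H}}$ into its $\splie(1)$-part $\sum_l a_l\,\varphi_l^{\mathcal{H}}$ (with $a_l=a_l(X,Y)$) plus a part lying in the orthogonal complement of $\Span\{\varphi_1^{\mathcal{H}},\varphi_2^{\mathcal{H}},\varphi_3^{\mathcal{H}}\}$ inside $\mathfrak{so}(\mathcal{H})$. Since the pairing $\langle A,B\rangle=-\tr(AB)$ is the restriction of the metric, this complement contributes nothing to $\tr(R(X,Y)\varphi_i)$, while $\tr_{\mathcal{H}}(\varphi_l^{\mathcal{H}}\varphi_i^{\mathcal{H}})=-4(n-1)\delta_{li}$. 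Writing the vertical block as $\sum_l b_l\,\varphi_l^{\mathcal{V}}$ and using $\tr_{\mathcal{V}}(\varphi_l^{\mathcal{V}}\varphi_i^{\mathcal{V}})=-2\delta_{li}$, I obtain $\sum_{s} R(X,Y,e_s,\varphi_i(e_s))=4(n-1)a_i+2b_i$, reducing everything to the computation of $a_i$ and $b_i$ in the three cases.

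To find these coefficients I would apply the Ricci identity, which, because $\nabla$ has torsion $T$, reads $[R(X,Y),\varphi_i]=(\nabla^2_{X,Y}-\nabla^2_{Y,X})\varphi_i+\nabla_{T(X,Y)}\varphi_i$. Both terms are computable in closed form from the explicit $\nabla$-derivatives of the $\varphi_i$ in Proposition \ref{prelims:canonical_connection} (which vanish in horizontal directions) together with the torsion formula (\ref{canonical_torsion_formula}). The crucial point is that for $X,Y\in\mathcal{H}$ the symmetric second derivative vanishes while $T(X,Y)=2\alpha\sum_l\Phi_l(X,Y)\xi_l$ is vertical, so the horizontal-horizontal contribution enters entirely through $\nabla_{T(X,Y)}\varphi_i=2\alpha\beta\big(\Phi_k(X,Y)\varphi_j-\Phi_j(X,Y)\varphi_k\big)$; for mixed $X\in\mathcal{H}$, $Y\in\mathcal{V}$ one checks that $T(X,Y)$ is horizontal and both terms vanish, giving $0$; and for $X,Y\in\mathcal{V}$ both the symmetric second derivative and the (now vertical) torsion term contribute.

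Comparing the two descriptions of $[R(X,Y),\varphi_i]$ then yields the coefficients. On $\mathcal{H}$ the bracket relations are $[\varphi_l^{\mathcal{H}},\varphi_i^{\mathcal{H}}]=2\varphi_m^{\mathcal{H}}$, whereas on $\mathcal{V}$ they are $[\varphi_l^{\mathcal{V}},\varphi_i^{\mathcal{V}}]=\varphi_m^{\mathcal{V}}$ (the vertical action being the $\mathfrak{so}(3)$-action rather than the quaternionic one); matching the single endomorphism $[R(X,Y),\varphi_i]$ on both blocks forces $b_l=2a_l$ in every case, which is precisely what produces the factor of two between the two non-trivial cases. Reading off the $\varphi_j,\varphi_k$-components of the explicit commutator, and using the cyclic symmetry of the structure to recover the $\varphi_i$-component, I get $a_i=\alpha\beta\,\Phi_i(X,Y)$ in the horizontal-horizontal case, $a_i=0$ in the mixed case, and $a_i=2\alpha\beta\,\Phi_i(X,Y)$ in the vertical-vertical case; substituting into $4(n-1)a_i+2b_i=4n\,a_i$ gives exactly $4n\alpha\beta\,\Phi_i$, $0$, and $8n\alpha\beta\,\Phi_i$. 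I expect the main obstacle to be the vertical-vertical case: there $(\nabla^2_{\xi_m,\xi_p}-\nabla^2_{\xi_p,\xi_m})\varphi_i$ is genuinely nonzero and must be computed from the iterated structure equations and then combined with $\nabla_{T(\xi_m,\xi_p)}\varphi_i$, whose coefficient $2(\delta-4\alpha)$ only yields the stated answer after this cancellation, so careful bookkeeping of the differing bracket normalizations on $\mathcal{H}$ and $\mathcal{V}$ is essential.
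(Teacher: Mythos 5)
Your argument is correct, and it reaches the stated identities by a genuinely different route from the paper. The paper's proof is essentially a bookkeeping exercise on top of \cite{ADScurv}: it quotes the pairwise horizontal identity $R(X,Y,e_s,\varphi_i e_s)+R(X,Y,\varphi_j e_s,\varphi_k e_s)=2\alpha\beta\,\Phi_i(X,Y)$, the vertical identities for $R(X,Y,\xi_j,\xi_k)$ and $R(\xi_p,\xi_q,\cdot,\cdot)$, and the vanishing of $R(X,Y)$ for mixed arguments, and then sums over the adapted frame. You instead isolate exactly the curvature information the twisted trace can see: since $\nabla$ preserves $\mathcal{V}$ by Proposition \ref{prelims:canonical_connection}, the operator $R(X,Y)$ is block-diagonal, the trace $-\tr\bigl(R(X,Y)\varphi_i\bigr)$ only detects the $\splie(1)$-component $a_i$ of the horizontal block and the vertical block coefficient $b_i$ (with the correct normalizations $\tr_{\mathcal{H}}(\varphi_l^{\mathcal{H}}\varphi_i^{\mathcal{H}})=-4(n-1)\delta_{li}$, $\tr_{\mathcal{V}}(\varphi_l^{\mathcal{V}}\varphi_i^{\mathcal{V}})=-2\delta_{li}$, giving $4(n-1)a_i+2b_i$), and you pin these down via the Ricci identity with torsion and the structure equations for $\nabla\varphi_i$, $\nabla\eta_i$ and $T$. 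I verified your three cases: horizontally the identity $[R(X,Y),\varphi_i]=\nabla_{T(X,Y)}\varphi_i=2\alpha\beta\bigl(\Phi_k(X,Y)\varphi_j-\Phi_j(X,Y)\varphi_k\bigr)$ yields $a_i=\alpha\beta\,\Phi_i(X,Y)$; in the mixed case $T(X,\xi_m)=2\alpha\varphi_m X$ is horizontal and all commutators vanish; vertically the second-derivative term $-\beta^2\varphi_j$ combines with the torsion term $2(\delta-4\alpha)\beta\,\varphi_j$ via $\beta\bigl(2(\delta-4\alpha)-\beta\bigr)=-4\alpha\beta$, giving $a_i=2\alpha\beta\,\Phi_i(X,Y)$ — all consistent with the paper's totals $4n\alpha\beta\,\Phi_i$, $0$, $8n\alpha\beta\,\Phi_i$. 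What your route buys is self-containedness (it requires only Proposition \ref{prelims:canonical_connection} and the torsion formulas (\ref{torsionformulaver})--(\ref{torsionformula}), not the catalogue of curvature identities; in the mixed case it even uses strictly less than the paper, namely only the vanishing of the $\splie(1)$ and vertical parts rather than of all of $R(X,Y)$), plus a structural explanation of the factor $2$ between the two nontrivial cases through the bracket normalizations $[\varphi_l^{\mathcal{H}},\varphi_i^{\mathcal{H}}]=2\varphi_m^{\mathcal{H}}$ versus $[\varphi_l^{\mathcal{V}},\varphi_i^{\mathcal{V}}]=\varphi_m^{\mathcal{V}}$. Two steps should be made explicit in a final write-up: first, "matching on both blocks forces $b_l=2a_l$" is not automatic, since the horizontal and vertical blocks of $[R(X,Y),\varphi_i]$ are a priori independent; it holds because in every case the right-hand side of the Ricci identity is a combination of the \emph{full} endomorphisms $\varphi_1,\varphi_2,\varphi_3$, whose two blocks carry the same coefficients, while also noting that the $\splie(1)$-projection of $[S,\varphi_i^{\mathcal{H}}]$ vanishes for $S$ orthogonal to $\splie(1)$ (so the complement of $R\rvert_{\mathcal{H}}$ does not contaminate the reading of $a_j,a_k$). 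Second, in the mixed case one needs the small extra observation that $[R(X,Y),\varphi_l]=0$ for all $l$ forces the vertical block to vanish (no nonzero element of $\mathfrak{so}(3)$ is central) and, together with the cyclic trick you already use to recover the $\varphi_i$-component, forces all $a_l=0$.
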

\begin{proof} We consider the cases one at a time:
	\begin{enumerate}[(i)]
		\item Suppose first that $X,Y\in\mathcal{H}$. For $e_s\in \mathcal{H}$ and any even permutation $(i,j,k)$ of $(1,2,3)$, it follows from \cite[Eqn.\@ (2.6)]{ADScurv} that
		\begin{align*}
			2\alpha\beta \Phi_i(X,Y) &= R(X,Y,e_s,\varphi_i(e_s)) + R(X,Y,\varphi_j(e_s),\varphi_k(e_s)) \\
			&= R(X,Y,e_s,\varphi_i(e_s)) + R(X,Y,\varphi_j(e_s),\varphi_i(\varphi_j(e_s))), 
		\end{align*}
		and taking the sum over the horizontal basis vectors $e_s$, $s=4,\dots,4n-1$ then gives
		\begin{align}\label{canonicalcurv_identity_a}
			2\sum_{s=4}^{4n-1} R(X,Y,e_s,\varphi_i(e_s))&= 2(4n-4)\alpha\beta \Phi_i(X,Y)
		\end{align}
		(since $\varphi_j(e_s)$ runs through the list $\pm e_4,\dots, \pm e_{4n-1}$ as $s$ runs through $4,\dots, 4n-1$). In the vertical directions, it follows from \cite[Eqns.\@ (1.2), (2.5)]{ADScurv} that $R(X,Y,\xi_j,\xi_k) = 2\alpha\beta \Phi_i(X,Y)$,
		and hence
		\begin{align}\label{canonicalcurv_identity_b}
			\sum_{s=1}^3 R(X,Y,e_s,\varphi_i(e_s)) &= 4\alpha\beta\Phi_i(X,Y).
		\end{align}
		Combining (\ref{canonicalcurv_identity_a}) and (\ref{canonicalcurv_identity_b}) then gives the result in this case:
		\begin{align*}
			\sum_{i=1}^{4n-1}R(X,Y,e_s,\varphi_i(e_s)) &= [(4n-4)\alpha\beta +4\alpha\beta]\Phi_i(X,Y) = 4n\alpha\beta \Phi_i(X,Y).
		\end{align*}
		\item For the mixed cases, the first paragraph of \cite[Section 2.2]{ADScurv} shows that $R(X,Y)$ is the zero operator when $X\in \mathcal{H},Y\in\mathcal{V}$ or $X\in\mathcal{V}$, $Y\in\mathcal{H}$.
		\item Suppose now that $X,Y\in\mathcal{V}$, and without loss of generality write $X=\xi_p$, $Y=\xi_q$ for $(p,q,r)$ an even permutation of $(1,2,3)$. Letting $e_s\in\mathcal{H}$, it follows from \cite[Eqns.\@ (2.4), (2.5)]{ADScurv} respectively that
		\begin{align*}
			R(\xi_p,\xi_q,\xi_j,\xi_k) &= -4\alpha\beta (\delta_{p,j}\delta_{q,k} -\delta_{p,k}\delta_{q,j}) = -4\alpha\beta (\eta_p\wedge \eta_q)(\xi_j,\xi_k) = 4\alpha\beta \Phi_r(\xi_j,\xi_k) ,    \\
			R(\xi_p,\xi_q,e_s,\varphi_ie_s) &= 2\alpha\beta \Phi_r(e_s, \varphi_ie_s) ,
		\end{align*}
		and combining these gives
		\begin{align*}
			\sum_{s=1}^{4n-1} R(X,Y,e_s,\varphi_i (e_s)) &=\sum_{s=1}^{4n-1} R(\xi_p,\xi_q,e_s,\varphi_i e_s) = 2[4\alpha\beta \Phi_r(\xi_j,\xi_k) ] +2\alpha\beta\sum_{s=4}^{4n-1} \Phi_r(e_s,\varphi_i e_s) \\
			&= 8\alpha\beta \Phi_r(\xi_j,\xi_k) - 2 \alpha\beta(4n-4) \delta_{i,r} = -8\alpha\beta \delta_{i,r}- 8(n-1)\alpha\beta\delta_{i,r}\\
			&= -8n\alpha\beta \delta_{i,r} = 8n\alpha\beta \Phi_i(\xi_p,\xi_q) = 8n\alpha\beta \Phi_i(X,Y),
		\end{align*}
		where we have calculated using an even permutation $(i,j,k)$ of $(1,2,3)$.
	\end{enumerate}
	\end{proof}
We now give formulas for the $\varphi_i$-twisted traces of the torsion terms appearing in the formula (\ref{curvdifference}) relating $R$ and $R^g$:
\begin{proposition} If $(M,g,\xi_i,\eta_i,\varphi_i)$ is a $\tad$ manifold and $e_1,\dots,e_{4n-1}$ an adapted orthonormal (local) frame, then \label{conversions}
	\begin{align*}
		\sum_{s=1}^{4n-1}g(T(X,Y),T(e_s,\varphi_i(e_s))) &= \begin{cases}
			\{-16(n-1)\alpha^2 +8\alpha(\delta-4\alpha)\}\Phi_i(X,Y)  & X,Y\in \mathcal{H}, \\
			0& X\in \mathcal{H},Y\in\mathcal{V},\\
			0 & X\in \mathcal{V},Y\in\mathcal{H},\\
			8(\delta-4\alpha) (2(n+1)\alpha-\delta)\Phi_i(X,Y) & X,Y\in\mathcal{V},
		\end{cases}
	\end{align*}
	and \begin{align*}
		\sum_{s=1}^{4n-1} dT(X,Y,e_s,\varphi_ie_s) &= \begin{cases}
			\{ -16\alpha^2 (2n-1) +8\alpha\beta\} \Phi_i(X,Y) & X,Y\in \mathcal{H}, \\
			0& X\in \mathcal{H},Y\in\mathcal{V},\\
			0 & X\in \mathcal{V},Y\in\mathcal{H},\\
			32(n-1)\alpha(\delta-2\alpha)\Phi_i(X,Y) & X,Y\in\mathcal{V}.
		\end{cases}
	\end{align*}
\end{proposition}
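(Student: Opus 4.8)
The plan is to compute both twisted traces by direct evaluation in an adapted frame, using only the explicit torsion formula (\ref{canonical_torsion_formula}), the structure equations $d\eta_i = 2\alpha\Phi_i + 2(\alpha-\delta)\eta_j\wedge\eta_k$, and the standard forms (\ref{Phi1})--(\ref{Phi3}) of the $\Phi_i$. The guiding mechanism in both cases is that a twisted trace $\sum_s \omega(e_s,\varphi_i(e_s))$ of a $2$-form $\omega$ isolates its component along $\Phi_i$ (since $\sum_s \omega(e_s,\varphi_i(e_s)) = -\tr(\varphi_i\circ \omega^\sharp)$ for the skew endomorphism $\omega^\sharp$), which is precisely why every output is a multiple of $\Phi_i(X,Y)$; the four-case split merely reflects how the torsion $3$-form contracts on horizontal versus vertical arguments.

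For the first identity I would exploit a clean factorization. Since $T(X,Y)$ does not depend on $s$, bilinearity of $g$ gives
\[
\sum_{s=1}^{4n-1} g(T(X,Y), T(e_s, \varphi_i(e_s))) = g\Big(T(X,Y), \sum_{s=1}^{4n-1} T(e_s, \varphi_i(e_s))\Big),
\]
so the entire problem reduces to evaluating the single vector $S_i := \sum_s T(e_s,\varphi_i(e_s))$ once. First I would contract the torsion and record that $T(X,Y)$ is \emph{vertical} (a combination of the $\xi_p$) when $X,Y\in\mathcal{H}$, equals the \emph{horizontal} vector $2\alpha\varphi_q(X)$ when $X\in\mathcal{H},Y=\xi_q$, and is a multiple of $\xi_r$ when $X,Y\in\mathcal{V}$. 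Next I would compute $S_i$ by splitting the sum into horizontal $e_s$ (where the quaternion relations $\varphi_m\varphi_i = -\delta_{mi}\id + \sum_l\epsilon_{mil}\varphi_l$ on $\mathcal{H}$ collapse the horizontal trace to a multiple of $\xi_i$) and vertical $e_s$ (where only the $\eta_{1,2,3}$-part of $T$ contributes), yielding $S_i = (4\delta - 8(n+1)\alpha)\xi_i$. Pairing $S_i$ against $T(X,Y)$ in each of the four cases then immediately produces the stated constants, the two mixed cases vanishing because $T(X,Y)$ and $S_i$ then lie in orthogonal (horizontal versus vertical) subspaces.

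For the second identity the first step is to compute $dT$ intrinsically from the structure equations (rather than via the curvature relation (\ref{curvdifference}), which would be circular here). Using $d(\eta_i\wedge d\eta_i) = d\eta_i\wedge d\eta_i$ together with the expansion of $d(\eta_1\wedge\eta_2\wedge\eta_3)$ and substituting the structure equations, all terms $\eta_j\wedge\eta_k\wedge\eta_j\wedge\eta_k$ vanish and one is left with the compact expression
\[
dT = 4\alpha^2 \sum_{i=1}^3 \Phi_i^{\mathcal{H}}\wedge\Phi_i^{\mathcal{H}} + 4\alpha\beta \sum_{i=1}^3 \Phi_i^{\mathcal{H}}\wedge\eta_j\wedge\eta_k,
\]
the sums being over even permutations $(i,j,k)$ of $(1,2,3)$ and $\beta = 2(\delta-2\alpha)$ as in Proposition \ref{prelims:canonical_connection}. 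It then remains to apply the twisted trace to this $4$-form termwise, using the expansion of the wedge of two $2$-forms; this reduces everything to the twisted traces of $\Phi_m^{\mathcal{H}}\wedge\Phi_m^{\mathcal{H}}$ and $\Phi_m^{\mathcal{H}}\wedge\eta_j\wedge\eta_k$ on horizontal and vertical argument pairs.

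I expect the genuine obstacle to be this last step. Unlike the first identity, the $4$-form $dT(X,Y,-,-)$ does not factor through its final two slots, so the twisted trace couples $(X,Y)$ with $(e_s,\varphi_i(e_s))$; expanding $(\Phi_m^{\mathcal{H}}\wedge\Phi_m^{\mathcal{H}})(X,Y,e_s,\varphi_i(e_s))$ produces cross-contractions such as $\sum_s \Phi_m^{\mathcal{H}}(X,e_s)\,\Phi_m^{\mathcal{H}}(Y,\varphi_i(e_s))$ that must be resolved using the quaternion relations and the precise signs of the adapted frame, and keeping track of how many of the $n-1$ horizontal quaternionic blocks contribute is where sign and counting errors are easiest to make. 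Carefully resolving these contractions in the horizontal--horizontal and vertical--vertical cases is what ultimately produces the combinatorial factors $2n-1$ and $n-1$ and lets $\beta$ reassemble in the final coefficients; the two mixed cases again vanish for parity reasons, since $dT$ contains no term with exactly one vertical leg.
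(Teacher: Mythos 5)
Your proposal is correct, and it reorganizes the computation in two ways that differ genuinely from the paper's proof. For the first identity the paper substitutes the component formulas (\ref{torsionformulaver}), (\ref{torsionformula}) for $T$ (imported from \cite{ADScurv}) and evaluates the pairing $g(T(X,Y),T(e_s,\varphi_i(e_s)))$ separately in each of the four cases, summing the $4n-4$ horizontal and $2$ vertical contributions each time; your factorization through the single vector $S_i=\sum_s T(e_s,\varphi_i(e_s))$ does that bookkeeping once. Your claimed value checks out: horizontal $e_s$ give $T(e_s,\varphi_ie_s)=-2\alpha\,\xi_i$ (total $-8(n-1)\alpha\,\xi_i$), while $e_s=\xi_j$ and $e_s=\xi_k$ each give $2(\delta-4\alpha)\xi_i$, so $S_i=(4\delta-8(n+1)\alpha)\xi_i$; pairing against the vertical part $2\alpha\sum_p\Phi_p(X,Y)\xi_p$ of $T$ in the horizontal case gives $8\alpha\delta-16(n+1)\alpha^2=-16(n-1)\alpha^2+8\alpha(\delta-4\alpha)$, against $2(\delta-4\alpha)\xi_r$ in the vertical case gives $8(\delta-4\alpha)(2(n+1)\alpha-\delta)$ via $\Phi_i(\xi_p,\xi_q)=-\delta_{ir}$, and the mixed cases vanish because $T(X,\xi_q)=2\alpha\varphi_q(X)$ is horizontal — all in agreement with the statement. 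For the second identity you are doing strictly more than the paper, which does not derive $dT$ or the horizontal--horizontal trace at all: it quotes (\ref{dtorsion}) and the coefficient $-16\alpha^2(2n-1)+8\alpha\beta$ directly from \cite[Eqns.\@ (1.12), (3.4)]{ADScurv} and only computes the remaining cases by hand. Your intrinsic derivation of $dT$ is sound: writing $\Phi_i=\Phi_i^{\mathcal{H}}-\eta_{jk}$, the cross term in $d\eta_i\wedge d\eta_i$ contributes $-8\alpha\delta\,\Phi_i^{\mathcal{H}}\wedge\eta_{jk}$, which combines with $16\alpha(\delta-\alpha)\Phi_i^{\mathcal{H}}\wedge\eta_{jk}$ from $8(\delta-\alpha)\,d\eta_{1,2,3}$ to give exactly your coefficient $8\alpha(\delta-2\alpha)=4\alpha\beta$, matching (\ref{dtorsion}). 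The cross-contraction you correctly flag as the real obstacle also resolves as you predict: for $X,Y\in\mathcal{H}$ one has $\sum_s\Phi_m(X,e_s)\Phi_m(Y,\varphi_ie_s)=g(X,\varphi_m\varphi_i\varphi_mY)$ with $\varphi_m\varphi_i\varphi_m=\mp\varphi_i$ on $\mathcal{H}$ according as $m=i$ or $m\neq i$, and assembling the three shuffle terms yields $\sum_m\sum_s(\Phi_m^{\mathcal{H}}\wedge\Phi_m^{\mathcal{H}})(X,Y,e_s,\varphi_ie_s)=-4(2n-1)\Phi_i(X,Y)$, hence $-16\alpha^2(2n-1)$ after multiplying by $4\alpha^2$, while the $\Phi_m^{\mathcal{H}}\wedge\eta_{jk}$ terms contribute $8\alpha\beta$ (horizontal case) and $32(n-1)\alpha(\delta-2\alpha)$ (vertical case); your parity argument for the mixed cases is valid since every summand of $dT$ has an even number of vertical legs, as does the pair $(e_s,\varphi_i(e_s))$. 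One small correction: using (\ref{curvdifference}) to obtain $dT$ would not actually be circular — the paper simply cites \cite{ADScurv}, whose formula is itself derived from the structure equations much as you do — but your self-contained route is a legitimate trade: what the paper buys by citation, you buy by a longer but independent verification, and your $S_i$-factorization is cleaner than the paper's case-by-case pairing.
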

\begin{proof}
	From \cite[Eqns.\@ (1.10), (1.11), (1.12)]{ADScurv}, the canonical torsion and its exterior derivative satisfy 
		\begin{align}
			T(\xi_j,\xi_k)&=2(\delta-4\alpha)\xi_i\label{torsionformulaver}\\
			T(X,Y)&= 2\alpha\sum_{p=1}^3 [\eta_p(Y)\varphi_p(X) -\eta_p(X)\varphi_p(Y)+\Phi_p(X,Y)\xi_p ] - 2(\alpha-\delta)\mathfrak{S}_{i,j,k}\eta_{ij}(X,Y)\xi_k, \label{torsionformula}\\
			dT&= 4\alpha^2\sum_{p=1}^3 \Phi_p^{\mathcal{H}}\wedge \Phi_p^{\mathcal{H}} +8\alpha(\delta-2\alpha)\mathfrak{S}_{i,j,k} \Phi_i^{\mathcal{H}} \wedge \eta_{jk} . \label{dtorsion} 
		\end{align}
	We treat the cases one at a time: 
	\begin{enumerate}[(i)] 
		\item Suppose first that $X,Y\in\mathcal{H}$. If $e_s\in\mathcal{H}$, then substituting (\ref{torsionformulaver}) and (\ref{torsionformula}) and gives
			\begin{align*}
				g(T(X,Y), T(\xi_j,\xi_k))&= g\left( 2\alpha\sum_{p=1}^3 \Phi_p(X,Y)\xi_p,\ 2(\delta-4\alpha)\xi_i \right) = 4\alpha(\delta-4\alpha) \Phi_i(X,Y) ,\\
				g(T(X,Y),T(e_s,\varphi_ie_s)) &= g\left( 2\alpha\sum_{p=1}^3 \Phi_p(X,Y)\xi_p,\  2\alpha\sum_{l=1}^3 \Phi_l(e_s,\varphi_ie_s) \xi_l \right) = -4\alpha^2 \Phi_i(X,Y),
			\end{align*}
		and hence 
		\begin{align*}
			\sum_{s=1}^{4n-1} g(T(X,Y),T(e_s,\varphi_ie_s)) &= (4n-4)[-4\alpha^2\Phi_i(X,Y)] + 2[4\alpha(\delta-4\alpha)\Phi_i(X,Y)] \\
			&=\{-16(n-1)\alpha^2 +8\alpha(\delta-4\alpha)\}\Phi_i(X,Y) .
		\end{align*}
		The formula $$\sum_{s=1}^{4n-1} dT(X,Y,e_s,\varphi_ie_s) = \{ -16\alpha^2 (2n-1) +8\alpha\beta\} \Phi_i(X,Y) $$ appears as \cite[Eqn.\@ (3.4)]{ADScurv} (note that we are working in dimension $4n-1$ rather than $4n+3$). 
		\item Suppose that $X\in\mathcal{H}$, $Y\in\mathcal{V}$ or $X\in\mathcal{V}$, $Y\in\mathcal{H}$. If $e_s\in\mathcal{H}$, then substituting (\ref{torsionformulaver}) and (\ref{torsionformula}) gives
		\begin{align*}
			g(T(X,Y),T(\xi_j,\xi_k)) &= g\left( 2\alpha\sum_{p=1}^3 [\eta_p(Y)\varphi_pX - \eta_p(X)\varphi_pY] ,\ 2(\delta-4\alpha)\xi_i \right) \\
			&= 4\alpha (\delta-4\alpha)\sum_{p=1}^3 [\eta_p(Y)g(\varphi_pX,\xi_i)-\eta_p(X)g(\varphi_pY,\xi_i)] =0
		\end{align*}
		and
		\begin{align*}
			g(T(X,Y),T(e_s,\varphi_ie_s)) &= g\left( 2\alpha\sum_{p=1}^3 [\eta_p(Y)\varphi_pX - \eta_p(X)\varphi_pY] ,\ 2\alpha\sum_{l=1}^3 \Phi_l(e_s,\varphi_ie_s)\xi_l \right) \\
			&= -4\alpha^2 \sum_{p=1}^3 [ \eta_p(Y)g(\varphi_p X,\xi_i) - \eta_p(X)g(\varphi_pY,\xi_i)]= 0 .  
		\end{align*}
		The formula (\ref{dtorsion}) immediately implies that $dT(X,Y,e_s,\varphi_ie_s) = dT(X,Y,\xi_j,\xi_k)=0$. Both of the desired formulas then follow by taking sums.
		\item Suppose that $X,Y\in\mathcal{V}$. If $e_s\in\mathcal{H}$, then substituting (\ref{torsionformulaver}) and (\ref{torsionformula}) gives
		\begin{align*}
			&g(T(X,Y),T(\xi_j,\xi_k)) = 4\alpha(\delta-4\alpha) \sum_{p=1}^3[\eta_p(Y) g(\varphi_pX,\xi_i) - \eta_p(X)g(\varphi_pY,\xi_i)] \\&\qquad \qquad  +4\alpha(\delta-4\alpha) \Phi_i(X,Y) -4(\alpha-\delta)(\delta-4\alpha) \eta_{jk}(X,Y)\\
			=& 4\alpha(\delta-4\alpha)[2\Phi_i(X,Y)] +4\alpha(\delta-4\alpha) \Phi_i(X,Y)  +4(\alpha-\delta)(\delta-4\alpha) \Phi_i(X,Y)\\
			=&-4(\delta-4\alpha)^2 \Phi_i(X,Y)  
		\end{align*}
		and
		\begin{align*}
		&	g(T(X,Y),T(e_s,\varphi_ie_s)) = g(T(X,Y), -2\alpha \xi_i) \\
			=& -4\alpha^2\sum_{p=1}^3 [\eta_p(Y) g(\varphi_pX,\xi_i) - \eta_p(X)g(\varphi_pY,\xi_i)]  -4\alpha^2 \Phi_i(X,Y) +4\alpha(\alpha-\delta) \eta_{jk}(X,Y)\\
			=& -4\alpha^2[2\Phi_i(X,Y)] -4\alpha^2 \Phi_i(X,Y) - 4\alpha(\alpha-\delta)\Phi_i(X,Y)= [-16\alpha^2+4\alpha\delta]\Phi_i(X,Y),
		\end{align*}
		and it follows that
			\begin{align*}
				\sum_{s=1}^{4n-1} g(T(X,Y),T(e_s,\varphi_ie_s)) &= (4n-4)[-16\alpha^2 +4\alpha\delta]\Phi_i(X,Y) + 2[-4(\delta-4\alpha)^2 \Phi_i(X,Y)] \\
				&= 8(\delta-4\alpha) (2(n+1)\alpha-\delta)\Phi_i(X,Y).
			\end{align*}
		On the other hand, if $e_s\in\mathcal{H}$, then (\ref{dtorsion}) gives
		\begin{align*}
			dT(X,Y,e_s,\varphi_ie_s)&= 8\alpha (\delta-2\alpha) \mathfrak{S}_{p,q,r} \Phi_p^{\mathcal{H}}(e_s,\varphi_ie_s)\eta_{qr}(X,Y) = 8\alpha (\delta-2\alpha) \Phi_i(X,Y) ,\\
			dT(X,Y,\xi_j,\xi_k)&= 0,
		\end{align*}
		and hence
			\begin{align*}
				\sum_{s=1}^{4n-1} dT(X,Y,e_s,\varphi_ie_s) &= (4n-4)[8\alpha (\delta-2\alpha)\Phi_i(X,Y)] +2[0]  = 32(n-1)\alpha(\delta-2\alpha)\Phi_i(X,Y)  .
		\end{align*}
	\end{enumerate}
\end{proof}
Having calculated the $\varphi_i$-twisted traces of the terms on the right hand side of (\ref{curvdifference}), we now have a formula for the $\varphi_i$-twisted traces of $R^g$. These will be used in the next subsection to deduce explicit formulas for the projections of the Levi-Civita spinorial curvatures onto the bundles $E_i$, which will be the crucial ingredient in the proof that $\widehat{\nabla}\rvert_E$ is flat.

\subsection{Projection Identities and Flatness of the Modified Connection}
In the present subsection we show, using the curvature identities calculated in the previous subsection, that the restriction of the modified connection $\widehat{\nabla}$ to $E=E_1+E_2+E_3$ (the non-direct sum) is flat. Assuming the manifold is simply-connected we obtain, as a consequence of flatness, a basis of $E$ consisting of globally-defined $\widehat{\nabla}$-parallel spinors, proving Theorem \ref{deformedKillingspinorstheorem}. We begin with the $E_i$-projection formulas for the Levi-Civita curvature: 

\begin{proposition} \label{LCprojections}  Let $(M,g,\xi_i,\eta_i,\varphi_i)$ be a $\tad$ manifold and suppose that $\psi\in\Gamma(E_i)$. For any $X,Y\in TM$, the orthogonal projection onto $E_i$ of the Levi-Civita spinorial curvature $R^g(X,X)\psi$ is given by
		\[
		\text{\emph{pr}}_{E_i} R^g(X,Y)\psi = \begin{cases}
			\{-2(n-1)\alpha(\alpha-\delta)+\frac{1}{2}\delta^2\}\Phi_i(X,Y)\ \xi_i\cdot \psi  &X,Y\in\mathcal{V}, \\
			\{(2n-1)\alpha\delta -(2n-\frac{3}{2})\alpha^2\} \Phi_i(X,Y)\ \xi_i\cdot \psi  &X,Y\in\mathcal{H},\\
			0 &\text{$X\in \mathcal{H},Y\in\mathcal{V}$ \emph{or} $X\in\mathcal{V},Y\in\mathcal{H}$.}
		\end{cases} 
		\]
\end{proposition}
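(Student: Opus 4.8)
The plan is to compute $\pr_{E_i}R^g(X,Y)\psi$ by writing the spinorial curvature as Clifford multiplication by the curvature $2$-form and then invoking the projection principle from the proof of \cite[Thm.\@ 1]{Fried90}. Writing $R^g(X,Y)\psi=\frac{1}{2}\sum_{s<t}R^g(X,Y,e_s,e_t)\,e_s\cdot e_t\cdot\psi$ with respect to an adapted frame, the cited principle says that for $\psi\in\Gamma(E_i)$ the only summands surviving the projection onto $E_i$ are those for which $\{e_s,e_t\}$ is a $\varphi_i$-pair, i.e.\@ $e_t=\pm\varphi_i(e_s)$. Thus the whole computation reduces to (a) identifying the universal spinor $\pr_{E_i}(e_s\cdot\varphi_i(e_s)\cdot\psi)$ attached to each such pair, and (b) summing the coefficients $R^g(X,Y,e_s,\varphi_i(e_s))$, which is exactly the $\varphi_i$-twisted trace controlled by the previous propositions.

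For step (a) I would first observe that the universal spinor is the same for every pair. Using the defining relation $(-2\varphi_i(Z)+\xi_i\cdot Z-Z\cdot\xi_i)\cdot\psi=0$ of $E_i$ with $Z=e_s$ horizontal (where $e_s\perp\xi_i$ forces $\xi_i\cdot e_s-e_s\cdot\xi_i=2\xi_i\cdot e_s$) gives $\varphi_i(e_s)\cdot\psi=\xi_i\cdot e_s\cdot\psi$, whence $e_s\cdot\varphi_i(e_s)\cdot\psi=\xi_i\cdot\psi$; the same relation with $Z=\xi_j$ gives $\xi_j\cdot\xi_k\cdot\psi=\xi_i\cdot\psi$ for the single vertical pair. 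A short Clifford computation also shows that $\xi_i$ anticommutes with the operator $-2\varphi_i(Z)+\xi_i\cdot Z-Z\cdot\xi_i$ for every $Z$, so $\xi_i\cdot\psi\in\Gamma(E_i)$ and hence $\pr_{E_i}(\xi_i\cdot\psi)=\xi_i\cdot\psi$. Therefore every $\varphi_i$-pair contributes the same spinor $\xi_i\cdot\psi$, and since the trace $\sum_{s=1}^{4n-1}R^g(X,Y,e_s,\varphi_i(e_s))$ visits each pair twice while annihilating $e_s=\xi_i$ (as $\varphi_i(\xi_i)=0$), the pair-sum equals half the trace. Combining with the overall factor $\frac{1}{2}$ yields the clean formula
\[
\pr_{E_i}R^g(X,Y)\psi=\frac{1}{4}\Big(\sum_{s=1}^{4n-1}R^g(X,Y,e_s,\varphi_i(e_s))\Big)\,\xi_i\cdot\psi .
\]

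It then remains to evaluate the twisted trace of $R^g$. Substituting (\ref{curvdifference}) gives
\[
\sum_{s}R^g(X,Y,e_s,\varphi_i e_s)=\sum_{s}R(X,Y,e_s,\varphi_i e_s)-\tfrac{1}{4}\sum_{s}g(T(X,Y),T(e_s,\varphi_i e_s))-\tfrac{1}{8}\sum_{s}dT(X,Y,e_s,\varphi_i e_s),
\]
and each of the three sums on the right is supplied by Proposition \ref{curvprop} and Proposition \ref{conversions}. In particular the mixed case $X\in\mathcal{H},Y\in\mathcal{V}$ (and vice versa) vanishes termwise, giving the stated $0$. In the horizontal and vertical cases one inserts $\beta=2(\delta-2\alpha)$ and collects the coefficients of $\alpha^2$, $\alpha\delta$, and $\delta^2$; after multiplication by $\frac{1}{4}$ these reproduce $(2n-1)\alpha\delta-(2n-\tfrac{3}{2})\alpha^2$ and $-2(n-1)\alpha(\alpha-\delta)+\tfrac{1}{2}\delta^2$ respectively, completing the proof.

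The main obstacle is the bookkeeping in step (a): one must be certain that the universal spinor really is pair-independent and lies in $E_i$ (so the projection acts as the identity on it), and that the conversion between the pair-sum and the full $\varphi_i$-twisted trace produces exactly the factor $\frac{1}{4}$ rather than $\frac{1}{2}$ or $\frac{1}{8}$. The remaining danger is purely arithmetic: the $\delta^2$ term in the vertical case and several $\alpha^2$ cancellations only appear after the torsion contributions from Proposition \ref{conversions} are combined with the correct signs, so a sign slip in (\ref{curvdifference}) or in the traces would stay invisible until the final collection of coefficients. By contrast, the conceptual input---the Friedrich--Kath projection principle together with the curvature and torsion traces---has already been isolated in the preceding propositions, so no new geometric idea is required.
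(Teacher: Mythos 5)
Your proposal is correct and takes essentially the same route as the paper's own proof: the Friedrich--Kath projection principle reduces $\pr_{E_i}R^g(X,Y)\psi$ to $\frac{1}{4}\bigl(\sum_{s=1}^{4n-1}R^g(X,Y,e_s,\varphi_i(e_s))\bigr)\,\xi_i\cdot\psi$, and the twisted trace is then evaluated by substituting (\ref{curvdifference}) together with Propositions \ref{curvprop} and \ref{conversions}. Your additional bookkeeping (the pair-independent identity $e_s\cdot\varphi_i(e_s)\cdot\psi=\xi_i\cdot\psi$, the observation that $\xi_i\cdot\psi\in\Gamma(E_i)$ so the projection acts as the identity on it, and the factor $\frac{1}{4}$ from the double-counted trace) is accurate and simply makes explicit what the paper leaves implicit.
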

\begin{proof}
	Letting $e_1,\dots, e_{4n-1}$ be an adapted orthonormal frame, we recall from the proof of \cite[Thm.\@ 1]{Fried90} that if $\psi\in\Gamma(E_i)$ then $e_p\cdot e_q\cdot \psi$ is orthogonal to $E_i$ unless $e_q = e_p$ or $\pm \varphi_ie_p$. We also recall that the defining condition for $E_i$ implies that if $\psi\in\Gamma(E_i)$ then $e_s\cdot \varphi_i(e_s)\cdot \psi = \xi_i \cdot \psi$ (for $e_s\neq \xi_i$). Using (\ref{curvdifference}), we have:
		\begin{align*}
			&\text{pr}_{E_i}R^g(X,Y)\psi = \frac{1}{4}\sum_{s=1}^{4n-1} R^g(X,Y,e_s,\varphi_i(e_s)) \ e_s\cdot \varphi_i(e_s)\cdot \psi= \frac{1}{4}\sum_{s=1}^{4n-1} R^g(X,Y,e_s,\varphi_i(e_s)) \ \xi_i\cdot \psi \\
			&\qquad = \frac{1}{4}\sum_{s=1}^{4n-1}\left( R(X,Y,e_s,\varphi_i(e_s)) -\frac{1}{4} g(T(X,Y),T(e_s,\varphi_i(e_s))) -\frac{1}{8}dT(X,Y,e_s,\varphi_i(e_s))\right)  \xi_i\cdot \psi .
		\end{align*}
	The result then follows by substituting the expressions from Propositions \ref{curvprop} and \ref{conversions}.
\end{proof}

We also need to compute various identities for the $E_i$-projections of terms involving the fundamental $2$-forms $\Phi_i$, $i=1,2,3$, as such terms will arise in the spinorial curvatures of $\widehat{\nabla}$ due to the final term on the right hand side of (\ref{modified_connection_formula}). To that end, the following lemma collects the required projection formulas:
\begin{lemma} Let $(M,g,\xi_i,\eta_i,\varphi_i)$ be a $\tad$ manifold. For any $\psi\in\Gamma(E_i)$ and any even permutation $(p,q,r)$ of $(1,2,3)$, we have \label{projectionlemma}
	\begin{enumerate}[(i)]
		\item $\text{\emph{pr}}_{E_i}(\Phi_p\cdot \psi) = -\delta_{i,p}(2n-1)\xi_i\cdot \psi$,
		\item $\text{\emph{pr}}_{E_i}(\Phi_p\cdot\Phi_q\cdot \psi -\Phi_q\cdot \Phi_p\cdot \psi)= 2\delta_{i,r} (4n-3)\xi_i\cdot \psi $,
		\item $\text{\emph{pr}}_{E_i}((\nabla^g_{\xi_p}\Phi_q)\cdot \psi - (\nabla_{\xi_q}^g\Phi_p)\cdot \psi  ) = \delta_{i,r}[-2\delta +8(n-1)(\alpha-\delta)] \xi_i\cdot \psi $.
	\end{enumerate}
\end{lemma}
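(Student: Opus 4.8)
The plan is to reduce all three identities to a single projection formula valid for Clifford multiplication by an arbitrary $2$-form. Recall from the proof of Proposition \ref{LCprojections} the two facts, due to Friedrich and Kath, that for $\psi \in \Gamma(E_i)$ and an adapted frame one has $e_a \cdot e_b \cdot \psi \perp E_i$ whenever $e_b \neq \pm \varphi_i(e_a)$, and $e_s \cdot \varphi_i(e_s) \cdot \psi = \xi_i \cdot \psi$ for $e_s \neq \xi_i$. Expanding a $2$-form $\omega = \sum_{a<b}\omega(e_a,e_b)\, e_a \wedge e_b$ and collecting the surviving pairs $\{e_a, \varphi_i(e_a)\}$, a short bookkeeping argument shows that
\[
\pr_{E_i}(\omega \cdot \psi) = \frac{1}{2}\Big( \sum_{s=1}^{4n-1} \omega(e_s, \varphi_i(e_s)) \Big)\, \xi_i \cdot \psi .
\]
As a sanity check, for $\omega = \Phi_i$ this uses $\varphi_i^2 = -\id + \eta_i \otimes \xi_i$ and recovers the coefficient $-(2n-1)$. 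This formula reduces each part to computing a single \emph{$\varphi_i$-twisted trace}, so the bulk of the work becomes purely algebraic and Clifford multiplication disappears from the problem.

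For part (i) I would apply the formula to $\omega = \Phi_p$, whereupon $\omega(e_s, \varphi_i(e_s)) = g(e_s, \varphi_p \varphi_i(e_s))$ and the twisted trace becomes $\tr(\varphi_p \varphi_i)$. Using $\varphi_i^2 = -\id + \eta_i \otimes \xi_i$ gives the stated $-\delta_{i,p}(2n-1)$, while for $p \neq i$ the product $\varphi_p\varphi_i$ is trace-free on both $\mathcal{H}$ and $\mathcal{V}$, giving $0$. For part (ii) the essential observation is that the Clifford commutator $\Phi_p \cdot \Phi_q - \Phi_q \cdot \Phi_p$ is again a genuine $2$-form: the degree-$0$ and degree-$4$ components of the Clifford product $\Phi_p \cdot \Phi_q$ are symmetric in $p,q$ and cancel, which is precisely why the statement is phrased as a commutator. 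Under the identification of $2$-forms with skew endomorphisms this commutator is $2$ times the $2$-form whose associated skew endomorphism is $[\varphi_p, \varphi_q]$, and a direct computation on an adapted frame gives $[\varphi_p, \varphi_q] = 2\varphi_r$ on $\mathcal{H}$ and $[\varphi_p, \varphi_q] = \varphi_r$ on $\mathcal{V}$. Feeding this into the projection formula and counting the $4n-4$ horizontal and $3$ vertical basis vectors produces $2\delta_{i,r}(4n-3)$.

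Part (iii) is where the main difficulty lies. Here I would apply the formula to the $2$-form $\omega = \nabla^g_{\xi_p}\Phi_q - \nabla^g_{\xi_q}\Phi_p$, using $(\nabla^g_Z \Phi_q)(X,Y) = g(X, (\nabla^g_Z \varphi_q)Y)$ so that the twisted trace becomes $\tr(B \varphi_i)$ with $B := \nabla^g_{\xi_p}\varphi_q - \nabla^g_{\xi_q}\varphi_p$. The obstacle is that evaluating $B$ requires substituting the rather unwieldy Levi-Civita formula of Proposition \ref{LCder} with $Y = \xi_p$ (and then $Y = \xi_q$), which after simplification yields $B = 2(2\alpha-\delta)(\eta_p \otimes \xi_q - \eta_q \otimes \xi_p) - 4(\alpha-\delta)\varphi_r$; one then computes $\tr(B\varphi_i)$, the $\varphi_r$ term contributing $-4(\alpha-\delta)\tr(\varphi_r\varphi_i)$ and the rank-one terms contributing multiples of $\delta_{i,r}$. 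The bookkeeping with even/odd permutations and the signs of $\varphi_i(\xi_j)$ is the error-prone step, but it collapses to $\delta_{i,r}[-2\delta + 8(n-1)(\alpha-\delta)]$ as claimed. Throughout I would confirm the off-diagonal ($i \neq r$) vanishing by the same trace computation, since $\tr(\varphi_r \varphi_i) = 0$ and the relevant rank-one traces vanish whenever $i \neq r$.
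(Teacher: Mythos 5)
Your strategy is essentially the paper's own method, packaged more systematically. The general projection formula $\pr_{E_i}(\omega\cdot\psi)=\tfrac{1}{2}\bigl(\sum_{s}\omega(e_s,\varphi_i(e_s))\bigr)\,\xi_i\cdot\psi$ is correct, and it is exactly the mechanism the paper uses both in Proposition \ref{LCprojections} and in its proof of part (i), which writes $\Phi_p=-\tfrac{1}{2}\sum_s e_s\wedge\varphi_p(e_s)$ and invokes the same two Friedrich--Kath facts. Your parts (i) and (iii) check out completely: the tensor $B=2(2\alpha-\delta)(\eta_p\otimes\xi_q-\eta_q\otimes\xi_p)-4(\alpha-\delta)\varphi_r$ coincides with the paper's computation from Proposition \ref{LCder}, the rank-one traces $\eta_p(\varphi_i\xi_q)=-\delta_{i,r}$ and $\eta_q(\varphi_i\xi_p)=\delta_{i,r}$ are right, and $\tfrac{1}{2}\tr(B\varphi_r)=-2\delta+8(n-1)(\alpha-\delta)$ as claimed. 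For (ii) you genuinely deviate: the paper commutes $\Phi_q$ past vectors by hand using $V\lrcorner\Phi_q=-\tfrac{1}{2}(V\cdot\Phi_q-\Phi_q\cdot V)$ and reduces to (i), whereas you pass through the dictionary $\Lambda^2\cong\mathfrak{so}$; your endomorphism identity $[\varphi_p,\varphi_q]=2\varphi_r$ on $\mathcal{H}$ and $[\varphi_p,\varphi_q]=\varphi_r$ on $\mathcal{V}$ is correct, and this route is arguably cleaner since all Clifford manipulation is confined to the one projection formula.

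There is, however, one concrete sign to fix in (ii). Your part (i) commits you to the paper's conventions $\Phi(X,Y)=g(X,\varphi(Y))$ and $v\cdot w+w\cdot v=-2g(v,w)$ (with the opposite duality convention your twisted trace would give $+(2n-1)$ for $p=i$). Under exactly these conventions the Clifford commutator of two $2$-forms is $-2$ times, not $+2$ times, the metric dual of the commutator of the associated skew endomorphisms: for instance $[e_1e_2,\,e_2e_3]=-2\,e_1e_3$ in the Clifford algebra, while the endomorphisms dual to $e_1\wedge e_2$ and $e_2\wedge e_3$ have commutator dual to $+e_1\wedge e_3$. Carrying your stated factor $+2$ through the twisted trace gives $\tfrac{1}{2}\cdot 2\cdot\tr\bigl([\varphi_p,\varphi_q]\varphi_i\bigr)=\delta_{i,r}\bigl(2\cdot(-(4n-4))+(-2)\bigr)=-2\delta_{i,r}(4n-3)$, i.e.\ the wrong sign; with the correct factor $-2$ you recover $+2\delta_{i,r}(4n-3)\,\xi_i\cdot\psi$. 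A useful cross-check: the paper's intermediate identity $(\Phi_p\cdot\Phi_q-\Phi_q\cdot\Phi_p)\cdot\psi=-2\Phi_r\cdot\psi+\sum_{s\geq 4}\varphi_p(e_s)\cdot\varphi_q(e_s)\cdot\psi$ is equivalent to $[\Phi_p,\Phi_q]_{\mathrm{Cl}}=-2\bigl(2\Phi_r^{\mathcal{H}}+\Phi_r^{\mathcal{V}}\bigr)$, confirming the $-2$. Since you assert the correct final coefficient and every other ingredient is right, this is a fixable bookkeeping slip rather than a structural flaw, but as written the factor and the conclusion of (ii) are inconsistent with each other.
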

\begin{proof}
	Letting $\psi\in \Gamma(E_i)$, we prove the three identities one at a time.
	\begin{enumerate}[(i)]
		\item This follows by writing $\Phi_p=-\frac{1}{2} \sum_{s=1}^{4n-1} e_s\wedge \varphi_p(e_s)$ in an adapted frame and using 
		\[
		\text{proj}_{E_i^-} (e_s\cdot \varphi_p(e_s)\cdot \psi) = \delta_{i,p} \  \xi_i\cdot \psi 
		\]
		(see the proof of \cite[Thm.\@ 1]{Fried90}).
		\item We use the relation $V\lrcorner \Phi_q =-\frac{1}{2}(V\cdot \Phi_q - \Phi_q\cdot V)$, which may be deduced by subtracting Equations (1.4) in Chapter 1.2 of \cite{BFGK}. Considering first the horizontal part of $\Phi_p$, we calculate
		\begin{align*}
			\Phi_p^{\mathcal{H}}\cdot \Phi_q\cdot \psi &= -\frac{1}{2} \sum_{s=4}^{4n-1} e_s\cdot \varphi_p(e_s)\cdot \Phi_q\cdot \psi =-\frac{1}{2}\sum_{s=4}^{4n-1} e_s\cdot[-2(\varphi_p(e_s)\lrcorner \Phi_q) + \Phi_q \cdot \varphi_p(e_s) ]\cdot \psi \\
			&= -\frac{1}{2} \sum_{s=4}^{4n-1} [-2e_s\cdot \varphi_r(e_s) +e_s\cdot \Phi_q\cdot \varphi_p(e_s)]\cdot \psi \\
			&= -\frac{1}{2} \sum_{s=4}^{4n-1} [-2e_s\cdot \varphi_r(e_s) +(-2e_s\lrcorner \Phi_q + \Phi_q\cdot e_s   )   \cdot \varphi_p(e_s)]\cdot \psi \\
			&= -2\Phi_r^{\mathcal{H}}\cdot \psi  +\Phi_q\cdot \Phi_p^{\mathcal{H}} \cdot \psi +\sum_{s=4}^{4n-1} \varphi_p(e_s)\cdot \varphi_q(e_s) \cdot \psi ,
		\end{align*}
		and similarly for the vertical part,
		\begin{align*}
			\Phi_p^{\mathcal{V}} \cdot \Phi_q\cdot \psi &= -\xi_q\cdot \xi_r\cdot \Phi_q\cdot \psi = -\xi_q \cdot( -2\xi_r\lrcorner \Phi_q + \Phi_q\cdot \xi_r   )\cdot \psi =-\xi_q \cdot (2\xi_p +\Phi_q\cdot \xi_r)\cdot \psi\\
			&= -2\xi_q\cdot \xi_p \cdot \psi + (2\xi_q\lrcorner\Phi_q -\Phi_q\cdot \xi_q\cdot \xi_r )\cdot \psi \\
			&= -2\Phi_r^{\mathcal{V}}\cdot \psi +\Phi_q\cdot \Phi_p^{\mathcal{V}} \cdot \psi ,   
		\end{align*}
		Adding the above two equations, we deduce
		\begin{align*}
		(\Phi_p\cdot \Phi_q - \Phi_q\cdot \Phi_p) \cdot \psi = -2\Phi_r\cdot \psi +\sum_{s=4}^{4n-1} \varphi_p(e_s)\cdot \varphi_q(e_s)\cdot \psi  ,
		\end{align*}
		and projecting onto $E_i$ using part (i) of this lemma gives the result. 
		\item From Proposition \ref{LCder} we have
		\begin{align*}
			(\nabla^g_{\xi_p}\varphi_q - \nabla^g_{\xi_q}\varphi_p ) &= 2(2\alpha-\delta)\eta_p\otimes \xi_q - 2(2\alpha-\delta) \eta_q\otimes \xi_p -4(\alpha-\delta)\varphi_r \\
			&= 2(2\alpha-\delta) \varphi_r\rvert_{\mathcal{V}} -4(\alpha -\delta) \varphi_r
		\end{align*}
		The result then follows by lowering indices, taking the Clifford product with $\psi$, and projecting onto $E_i$ using part (i).
	\end{enumerate}
\end{proof}

The final step in the proof of Theorem \ref{deformedKillingspinorstheorem} is the following proposition, which asserts that the restriction of $\widehat{\nabla}$ to the bundle $E$ is flat. The theorem will then follow by the holonomy principle (see e.g.\@ \cite[p.\@ 282]{Besse_Einstein_manifolds}).
\begin{proposition}
	The restriction $\widehat{\nabla}\rvert_{E}$ is flat, i.e. $\widehat{R}(\cdot, \cdot)\psi \equiv 0 $ for all $\psi\in \Gamma(E)$.
\end{proposition}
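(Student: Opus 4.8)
The plan is to compute the curvature of $\widehat{\nabla}$ directly from its definition \eqref{modified_connection_formula} and to show that its $E_i$-projection vanishes. Writing $\widehat{\nabla}_Y = \nabla^g_Y + A_Y$ with the endomorphism-valued one-form
\[
A_Y\psi = -\tfrac{\alpha}{2}\, Y\cdot\psi - \tfrac{\alpha-\delta}{2}\sum_{p=1}^3 \eta_p(Y)\,\Phi_p\cdot\psi ,
\]
the standard formula for the curvature of a connection differing from $\nabla^g$ by $A$ reads
\[
\widehat{R}(X,Y)\psi = R^g(X,Y)\psi + (\nabla^g_X A)_Y\psi - (\nabla^g_Y A)_X\psi + A_X A_Y\psi - A_Y A_X\psi .
\]
Since the preceding proposition shows that $\widehat{\nabla}$ preserves each $E_i$, the operator $\widehat{R}(X,Y)$ maps $\Gamma(E_i)$ into $\Gamma(E_i)$; hence for $\psi\in\Gamma(E_i)$ it suffices to prove $\pr_{E_i}\widehat{R}(X,Y)\psi = 0$, whereupon $\widehat{R}(X,Y)\psi=0$. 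As $E = E_1 + E_2 + E_3$ and curvature is linear, establishing this for each $i$ yields flatness of $\widehat{\nabla}\rvert_E$. By skew-symmetry in $X,Y$ it is enough to treat the three cases $X,Y\in\mathcal{H}$; $X,Y\in\mathcal{V}$; and $X\in\mathcal{H},\,Y\in\mathcal{V}$, matching the case divisions of Propositions \ref{curvprop}, \ref{conversions} and \ref{LCprojections}.

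First I would simplify the $A$-contributions. Split $A_Y = K_Y + B_Y$ into the Killing part $K_Y\psi = -\tfrac{\alpha}{2}Y\cdot\psi$ and the correction $B_Y\psi = -\tfrac{\alpha-\delta}{2}\sum_p \eta_p(Y)\Phi_p\cdot\psi$. Because Clifford multiplication is $\nabla^g$-parallel, $K$ is a $\nabla^g$-parallel one-form, so $(\nabla^g_X K)_Y = 0$ and the derivative terms come entirely from $B$; using that $\nabla^g$ is torsion-free to recognise the $\nabla^g\eta_p$ terms as $d\eta_p$, one finds
\[
\big[(\nabla^g_X A)_Y - (\nabla^g_Y A)_X\big]\psi = -\tfrac{\alpha-\delta}{2}\sum_{p=1}^3\Big( d\eta_p(X,Y)\,\Phi_p + \eta_p(Y)\,\nabla^g_X\Phi_p - \eta_p(X)\,\nabla^g_Y\Phi_p\Big)\cdot\psi .
\]
The commutator splits as $[A_X,A_Y] = [K_X,K_Y] + [K_X,B_Y] + [B_X,K_Y] + [B_X,B_Y]$: the first bracket is the purely algebraic $\tfrac{\alpha^2}{4}(X\cdot Y\cdot - Y\cdot X\cdot)$ familiar from the integrability of Killing spinors, the cross terms are mixed vector--two-form Clifford products, and the last bracket is governed by $\Phi_p\cdot\Phi_q - \Phi_q\cdot\Phi_p$.

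Next I would carry out the projection case by case. The term $\pr_{E_i}R^g(X,Y)\psi$ is supplied directly by Proposition \ref{LCprojections}. For the algebraic pieces I would use the Friedrich--Kath rule that, in an adapted frame, $\pr_{E_i}(e_p\cdot e_q\cdot\psi)$ survives only when $e_q\in\{e_p,\pm\varphi_i e_p\}$, together with $e_s\cdot\varphi_i(e_s)\cdot\psi = \xi_i\cdot\psi$; this reduces $\pr_{E_i}[K_X,K_Y]\psi$ to a multiple of $\Phi_i(X,Y)\,\xi_i\cdot\psi$. The projections of the $\Phi$-terms are exactly Lemma \ref{projectionlemma}: part (i) handles the $\Phi_p$-terms arising in the $B$-derivative and the cross terms, part (ii) handles $[B_X,B_Y]$, and part (iii) together with Proposition \ref{LCder} handles the $\nabla^g_X\Phi_p$ terms, which occur only in the vertical case (the earlier identities \eqref{degthreeproduct1}--\eqref{degthreeproduct2} are available wherever the defining operator of $E_i$ reappears). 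In each of the three cases every surviving contribution becomes a scalar multiple of $\Phi_i(X,Y)\,\xi_i\cdot\psi$, and one checks that these scalars sum to zero; the mixed case vanishes term by term, since $\pr_{E_i}R^g = 0$ there and the $\Phi$-projections carry Kronecker deltas that annihilate the off-type contributions. Flatness of $\widehat{\nabla}\rvert_E$ then follows, and the holonomy principle yields the global $\widehat{\nabla}$-parallel basis of $E$ asserted in Theorem \ref{deformedKillingspinorstheorem}.

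The conceptual content here is modest; the difficulty is entirely in the bookkeeping. The genuine obstacle is ensuring that the many coefficient contributions --- the Killing commutator, the two cross terms, the $[B_X,B_Y]$ term, the $d\eta_p$ and $\nabla^g_X\Phi_p$ derivative terms, and the \emph{non-Einstein} curvature coefficients $\{-2(n-1)\alpha(\alpha-\delta)+\tfrac12\delta^2\}$ and $\{(2n-1)\alpha\delta-(2n-\tfrac32)\alpha^2\}$ of Proposition \ref{LCprojections} --- cancel exactly, rather than leaving an $\alpha,\delta,n$-dependent remainder. I expect the vertical case $X,Y\in\mathcal{V}$ to be the most delicate, as it is the only one in which the derivative $\nabla^g_{\xi_p}\Phi_q$ contributes (through Lemma \ref{projectionlemma}(iii)); verifying that the curvature coefficient and the $\alpha-\delta$ correction precisely annihilate there is the crux of the computation.
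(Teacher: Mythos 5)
Your proposal is correct and follows essentially the same route as the paper: your curvature expansion $\widehat{R}=R^g+(\nabla^g A)^{\mathrm{alt}}+[A,A]$ reproduces exactly the six-term expression the paper obtains by directly computing $\widehat{\nabla}_X\widehat{\nabla}_Y\psi-\widehat{\nabla}_Y\widehat{\nabla}_X\psi-\widehat{\nabla}_{[X,Y]}\psi$ (your $d\eta_p$-term is the paper's $\alpha\Phi_p(X,Y)+(\alpha-\delta)\eta_{p+1,p+2}(X,Y)$ coefficient via the $\tad$ defining equation), and the concluding case-by-case $E_i$-projection using Propositions \ref{curvprop}, \ref{conversions}, \ref{LCprojections} and Lemma \ref{projectionlemma} is precisely the paper's final step. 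The differences are organizational (the $K+B$ split; making explicit, as the paper does not, that $\widehat{\nabla}$-invariance of $E_i$ lets one conclude $\widehat{R}(X,Y)\psi=0$ from the vanishing of its $E_i$-projection), and your verification is at the same level of detail as the paper's own ``the result now follows by projecting.''
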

\begin{proof}
	Suppose that $\psi\in\Gamma(E_i)$. Using the definition of $\widehat{\nabla}$ we calculate, for any vector fields $X,Y$,
		\begin{align*}
		&	\widehat{\nabla}_X\widehat{\nabla}_Y \psi = \widehat{\nabla}_X[\nabla_Y^g\psi - \frac{\alpha}{2} Y\cdot \psi - \frac{\alpha-\delta}{2} \sum_{p=1}^3\eta_p(Y)\Phi_p\cdot \psi     ]\\
			&= \nabla_X^g\nabla_Y^g\psi -\frac{\alpha}{2} X\cdot \nabla_Y^g\psi -\frac{\alpha-\delta}{2}\sum_{p=1}^3[  \eta_p(X) \Phi_p\cdot \nabla_Y^g\psi ] - \frac{\alpha}{2}\nabla^g_X(Y\cdot \psi) + \frac{\alpha^2}{4} X\cdot Y\cdot \psi\\ 
			&\qquad \qquad  + \frac{\alpha(\alpha-\delta)}{4} \sum_{p=1}^3 [\eta_p(X) \Phi_p\cdot Y\cdot \psi ] -\frac{\alpha-\delta}{2} \sum_{p=1}^3\nabla^g_X[\eta_p(Y)\cdot\Phi_p \cdot \psi]\\
			&\qquad \qquad +\frac{\alpha(\alpha-\delta)}{4} \sum_{p=1}^3 [\eta_p(Y)X\cdot \Phi_p\cdot \psi   ] + \frac{(\alpha-\delta)^2}{4} \sum_{p,q=1}^3[ \eta_p(Y)\eta_q(X) \Phi_q\cdot \Phi_p\cdot \psi ] .\\
		\end{align*}
		Expanding the derivatives and using the formula (\ref{LC_derivative_xii_formula}) for the Levi-Civita derivatives of the Reeb vector fields (or equivalently, the dual $1$-forms) shows that the above expression is equal to 
		\begin{align*}
			& \nabla_X^g\nabla_Y^g\psi -\frac{\alpha}{2} X\cdot \nabla_Y^g\psi -\frac{\alpha-\delta}{2}\sum_{p=1}^3[  \eta_p(X) \Phi_p\cdot \nabla_Y^g\psi ] - \frac{\alpha}{2}[(\nabla^g_XY)\cdot \psi +  Y\cdot\nabla_X^g\psi] \\
			& + \frac{\alpha^2}{4} X\cdot Y\cdot \psi+ \frac{\alpha(\alpha-\delta)}{4} \sum_{p=1}^3 [\eta_p(X) \Phi_p\cdot Y\cdot \psi ]\\ 
			&   -\frac{\alpha-\delta}{2} \sum_{p=1}^3\bigg[   \big(\alpha\Phi_p(X,Y)+(\alpha-\delta) \eta_{p+1,p+2}(X,Y) +\eta_p(\nabla_X^gY) \big)  \Phi_p\cdot \psi  + \eta_p(Y)(\nabla_X^g\Phi_p)\cdot \psi \\ & +\eta_p(Y) \Phi_p\cdot \nabla_X^g\psi       \bigg] 
			+\frac{\alpha(\alpha-\delta)}{4} \sum_{p=1}^3 [\eta_p(Y)X\cdot \Phi_p\cdot \psi   ] + \frac{(\alpha-\delta)^2}{4} \sum_{p,q=1}^3[ \eta_p(Y)\eta_q(X) \Phi_q\cdot \Phi_p\cdot \psi ].
		\end{align*}
	The spinorial curvature is then given by
		\begin{align*}
			&\widehat{R}(X,Y)\psi = \widehat{\nabla}_X\widehat{\nabla}_Y \psi - \widehat{\nabla}_Y\widehat{\nabla}_X\psi - \widehat{\nabla}_{[X,Y]}\psi \nonumber \\
			&= R^g(X,Y)\psi +\frac{\alpha^2}{4} (X\cdot Y-Y\cdot X)\cdot \psi +\frac{\alpha(\alpha-\delta)}{2} \sum_{p=1}^3 [\eta_p(X)( Y\lrcorner \Phi_p)    -  \eta_p(Y)(X\lrcorner \Phi_p)   ]\cdot \psi \nonumber \\
			&- (\alpha-\delta)\sum_{p=1}^3[\alpha\Phi_p(X,Y) +(\alpha-\delta) \eta_{p+1,p+2}(X,Y)]\Phi_p\cdot \psi  -\frac{\alpha-\delta}{2}\sum_{p=1}^3 [\eta_p(Y)(\nabla^g_X\Phi_p)  -\eta_p(X)(\nabla^g_Y\Phi_p)]\cdot \psi \nonumber\\
			&  +\frac{(\alpha-\delta)^2}{4} \sum_{p,q=1}^3 [\eta_p(Y)\eta_q(X)-\eta_p(X)\eta_q(Y)]\cdot\Phi_q\cdot \Phi_p \cdot \psi ,
		\end{align*}
	where the indices $p,p+1,p+2$ are taken modulo 3. The result now follows by considering the various cases of $X,Y$ being in $\mathcal{H},\mathcal{V}$ and projecting onto $E_i$, using the formulas from Proposition \ref{LCprojections} and Lemma \ref{projectionlemma}.
\end{proof}

\begin{remark}\label{Dirac_remark}
	Finally, we calculate the action of the (Riemannian) Dirac operator on $\mathcal{H}$-Killing spinors. If $\psi\in\Gamma(E)$ is an $\mathcal{H}$-Killing spinor, then the Riemannian Dirac operator acts on it by
	\begin{align*}
		D\psi &= \sum_{s=1}^{4n-1} e_s\cdot \nabla^g_{e_s}\psi = \sum_{s=1}^{4n-1} e_s\cdot \left( \frac{\alpha}{2} e_s\cdot \psi + \frac{\alpha-\delta}{2}\sum_{p=1}^{3} \eta_p(e_s) \Phi_p\cdot \psi \right) \\
		&= -\frac{(4n-1)\alpha}{2} \psi + \frac{\alpha-\delta}{2} \sum_{p=1}^3 \xi_p\cdot \Phi_p \cdot \psi .
	\end{align*}
This shows $\mathcal{H}$-Killing spinors satisfy an eigenvalue equation up to a correction term, which vanishes in the $3$-$\alpha$-Sasakian case (consistent with the fact that these are classical Killing spinors when $\alpha=\delta$).
\end{remark}
%
%
%
%
%

\section{Compact/Noncompact Dual Pairs of Homogeneous 3-$(\alpha,\delta)$-Sasaki Spaces}\label{section:duality}

Let us begin by briefly recalling the notion of 3-Sasakian data from \cite{homdata}, which the authors of that paper use to characterize homogeneous 3-Sasakian manifolds:
\begin{definition}\label{Chap_dual:3sasakian_data_definition}
	(\cite[Def.\@ 4.1]{homdata}). A 3-Sasakian data consists of a pair of Lie algebras $(\mathfrak{g},\mathfrak{h})$ such that,
	\begin{enumerate}[(i)]
		\item The Lie algebra $\mathfrak{g}$ is compact and simple;
		\item There is a $\Z_2$-graded decomposition $\mathfrak{g}=\mathfrak{g}_0\oplus \mathfrak{g}_1$ with $\mathfrak{g}_0=\mathfrak{h}\oplus \mathfrak{sp}(1)$ a sum of commuting subalgebras; 
		\item There exists an $\mathfrak{h}^{\C}$-module $U$ of complex dimension $2(n-1)$ such that $\mathfrak{g}_1^{\C}\cong \C^2 \otimes U$ as a module for $\mathfrak{g}_0^{\C}\cong \mathfrak{sp}(1)^{\C}\oplus \mathfrak{h}^{\C}$, where $\C^2$ is the standard representation of $\mathfrak{sp}(1)^{\C} \cong \mathfrak{sl}(2,\C)$. 
	\end{enumerate} 
\end{definition} 
In \cite{hom3alphadelta} this same data without the compactness assumption is called \emph{generalized 3-Sasakian data}, and the authors note that it specifies families of $\tad$ homogeneous spaces. 

\subsection{Duality of Extended Symmetric Data}

What follows is a dualization procedure generalizing the homogeneous $\tad$ duality described in \cite[Remark 3.1.1(c)]{hom3alphadelta}. Generalizing Definition \ref{Chap_dual:3sasakian_data_definition}, we define:
\begin{definition}\label{Chap_dual:extended_symmetric_data_definition}
	\emph{Extended symmetric data} $(\mathfrak{g},\mathfrak{h},\mathfrak{k},g)$ consists of a triple of real Lie algebras with $\mathfrak{h},\mathfrak{k}\subset \mathfrak{g}$, together with an inner product $g$ on $\mathfrak{g}/\mathfrak{h}$, such that the following properties hold: 
	\begin{enumerate}[(i)]
		\item The Lie algebra $\mathfrak{g}$ is semi-simple;
		\item There is a $\Z_2$-grading $\mathfrak{g}=\mathfrak{g}_0\oplus \mathfrak{g}_1$ such that $\mathfrak{g}_0=\mathfrak{h}\oplus \mathfrak{k}$;
		\item  The inner product $g$ is positive definite and takes the form
		\begin{align} g = \lambda_0\kappa_{\mathfrak{g}}\rvert_{\mathfrak{k}\times\mathfrak{k}} +\lambda_1\kappa_{\mathfrak{g}}\rvert_{\mathfrak{g}_1\times\mathfrak{g}_1}, \qquad \lambda_0,\lambda_1\in\R\setminus \{0\}, \label{Chap_dual:metricform}
		\end{align}
		under the natural identification $\mathfrak{g}/\mathfrak{h}\cong \mathfrak{k}\oplus \mathfrak{g}_1$, where $\kappa_{\mathfrak{g}}$ denotes the Killing form of $\mathfrak{g}$.
	\end{enumerate}
\end{definition}
The idea behind the preceding definition is that the Lie algebras $(\mathfrak{g},\mathfrak{g}_0)$ constitute a Riemannian symmetric pair, thus the pair $(\mathfrak{g},\mathfrak{h})$ can be viewed as the Lie algebraic data of a homogeneous space fibering over a symmetric base. Indeed, the $\Z_2$-grading $\mathfrak{g}=\mathfrak{g}_0\oplus \mathfrak{g}_1$, together with the fact that $\mathfrak{k}$ and $\mathfrak{h}$ commute, gives the following commutator relations:
\begin{align} [\mathfrak{h},\mathfrak{h}]\subseteq \mathfrak{h}, \quad [\mathfrak{k},\mathfrak{k}]\subseteq \mathfrak{k}, \quad[\mathfrak{h},\mathfrak{k}]=0, \quad  [\mathfrak{g}_0,\mathfrak{g}_1]\subseteq \mathfrak{g}_1,  \quad  [\mathfrak{g}_1,\mathfrak{g}_1] \subseteq  \mathfrak{h}\oplus \mathfrak{k} . \label{Chap_dual:commutatorrels} 
\end{align}

The case of most interest for us is that of $\ts$ data as in Definition \ref{Chap_dual:extended_symmetric_data_definition}, though we give a construction which is valid in the slightly more general setting of extended symmetric data.
\begin{remark}
	Associated to extended symmetric data $(\mathfrak{g},\mathfrak{h},\mathfrak{k},g)$ is the vector space 
	\[
	\mathfrak{m}:= \mathfrak{k}\oplus \mathfrak{g}_1
	\]
	serving as a reductive complement to $\mathfrak{h} \subseteq \mathfrak{g}$. Conversely, the Lie algebra $\mathfrak{k}$ may be recovered from $\mathfrak{m}$ via $\mathfrak{k} = \mathfrak{m}\cap \mathfrak{g}_0$. For this reason we shall also refer to $(\mathfrak{g},\mathfrak{h},\mathfrak{m},g)$ as extended symmetric data. We shall also denote by $\mathfrak{m}_i:= \mathfrak{m}\cap \mathfrak{g}_i$, $i=0,1$ the components of $\mathfrak{m}$ with respect to the $\Z_2$-grading on $\mathfrak{g}$.
\end{remark}
With the preceding remark in mind, we are ready to define our notion of duality at the Lie algebra level:
\begin{definition} 
	\label{Chap_dual:Liedualdef}Given extended symmetric data $(\mathfrak{g},\mathfrak{h},\mathfrak{m},g)$, we define (for the same $\mathfrak{k}$) the dual extended symmetric data $(\mathfrak{g}', \mathfrak{h}, \mathfrak{m}', g')$ by setting
	\begin{align*} \mathfrak{g}'&:= \mathfrak{g}_0 \oplus i\mathfrak{g}_1 \subseteq \mathfrak{g}^{\C} , \\ \mathfrak{m}'&:= \mathfrak{k} \oplus i\mathfrak{g}_1\subseteq \mathfrak{g}' ,  \end{align*}
	and taking $g'$ to be the real inner product induced on $\mathfrak{m}'$ by extending $g$ sesquilinearly to $\mathfrak{g}^{\C}$ and restricting to the real form $\mathfrak{g}'\subset \mathfrak{g}^{\C}$. 
\end{definition}
The extension of $g$ to $\mathfrak{g}^{\C}$ is done sesquilinearly in the preceding definition in order to ensure that $g'$ is positive definite--exactly as in the usual duality of symmetric spaces. It is furthermore clear that this dualization procedure is involutive, so it makes sense to consider pairs of dual extended symmetric spaces without specifying in which direction the duality goes. 
\begin{remark}
Let us briefly compare Definition \ref{Chap_dual:Liedualdef} with Kath's notion of duality in \cite{kath_Tduals}. Apart from the fact that Kath's duality is between Riemannian and pseudo-Riemannian spaces, we note that it also depends on a Lie algebra involution $T$, whose $1$-eigenspace (resp. $(-1)$-eigenspace) indicates which tangent directions on the compact side should correspond to tangent directions on the non-compact side with positive norm squared (resp. negative norm squared). Such an involution is not necessary for our purposes, as the decomposition $\mathfrak{m}=\mathfrak{k}\oplus \mathfrak{g}_1$ automatically keeps track of which directions are to be modified. Rather, our duality construction is obtained from the duality between compact and non-compact symmetric spaces by dualizing the symmetric pair $(\mathfrak{g},\mathfrak{g}_0)$.
\end{remark}
\begin{definition}
	Let $K$ be a connected Lie group with Lie algebra $\mathfrak{k}$. An \emph{extended symmetric space} (relative to $K$) is a connected Riemannian homogeneous space $(M:=G/H,g)$ with Lie algebra decomposition as in Definition \ref{Chap_dual:extended_symmetric_data_definition}. Letting $G'$ be the connected subgroup of $G^{\C}$ corresponding to the real Lie subalgebra $\mathfrak{g}' \subseteq \mathfrak{g}^{\C}$, we define the dual of $(M,g)$ to be $(M':=G'/H, g')$. 
\end{definition}
	In order to investigate the spinorial properties of the dual we first describe the special orthogonal group $\SO(\mathfrak{m}',g')$ of the Riemannian metric $g'$. We define 
\begin{align*} 
	\mathfrak{so}(\mathfrak{m})_0 &:= \{A\in \mathfrak{so}(\mathfrak{m}): A(\mathfrak{k})\subseteq \mathfrak{k}\text{ and } A(\mathfrak{m}_1)\subseteq \mathfrak{m}_1    \} , \\
	\mathfrak{so}(\mathfrak{m})_1 &:= \{B\in \mathfrak{so}(\mathfrak{m}): B(\mathfrak{k})\subseteq \mathfrak{m}_1\text{ and } B(\mathfrak{m}_1)\subseteq \mathfrak{k}    \} , 
\end{align*} 
and the non-standard Lie bracket $[[\cdot,\cdot]]$ on $ \mathfrak{so}(\mathfrak{m})_0\oplus i\mathfrak{so}(\mathfrak{m})_1$ given by
\[
 [[A_1,A_2]]:= [A_1,A_2]_{\mathfrak{so}(\mathfrak{m})^{\C}}, \quad [[A,iB]]:=i[A,B]_{\mathfrak{so}(\mathfrak{m})^{\C}}, \quad [[iB_1,iB_2]]:=[B_1,B_2]_{\mathfrak{so}(\mathfrak{m})^{\C}}  ,
\]
where $[ \cdot , \cdot ]_{\mathfrak{so}(\mathfrak{m})^{\C}}$ denotes the usual commutator in $\mathfrak{so}(\mathfrak{m})^{\C}$. It is clear that the bracket $[[\cdot,\cdot]]$ is constructed so that $\mathfrak{so}(\mathfrak{m})_0\oplus i\mathfrak{so}(\mathfrak{m})_1$ has the same commutators as $\mathfrak{so}(\mathfrak{m})= \mathfrak{so}(\mathfrak{m})_0\oplus \mathfrak{so}(\mathfrak{m})_1$. The following two lemmas and subsequent proposition are analogous to \cite[Props.\@ 6.1, 3.1, 4.1]{kath_Tduals}; we identify $\mathfrak{so}(\mathfrak{m}',g')$ with $\mathfrak{so}(\mathfrak{m})_0\oplus i \mathfrak{so}(\mathfrak{m})_1$, and explain how to use this identification to describe the relationship between the Levi-Civita connections on the dual pair of spaces. 
\begin{lemma}
	Let $(\mathfrak{g},\mathfrak{h},\mathfrak{m},g)$ be extended symmetric data, and $(\mathfrak{g}',\mathfrak{h},\mathfrak{m}',g')$ the dual data. The map $\tau\: \mathfrak{so}(\mathfrak{m})_0\oplus i\mathfrak{so}(\mathfrak{m})_1  \to \mathfrak{so}(\mathfrak{m}',g')$ given by \begin{align*}
		\tau(A)(x) = A(x),\quad \tau(A)(iy)= iA(y),\quad \tau(iB)(x)=iB(x),\quad \tau(iB)(iy)= B(y),
	\end{align*}
	for all $A\in\mathfrak{so}(\mathfrak{m})_0$, $B\in\mathfrak{so}(\mathfrak{m})_1$, $x\in \mathfrak{k}$, $y\in\mathfrak{m}_1$ is an isomorphism of Lie algebras.
\end{lemma}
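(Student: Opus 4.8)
The plan is to realize $\tau$ as a composition of two transparently bracket-preserving maps, so that the homomorphism property becomes automatic and only membership in the target and bijectivity require separate checks. First I record a structural fact that will be used repeatedly: since $\kappa_{\mathfrak{g}}(\mathfrak{g}_0,\mathfrak{g}_1)=0$ and $\mathfrak{k}\subseteq\mathfrak{g}_0$, the decomposition $\mathfrak{m}=\mathfrak{k}\oplus\mathfrak{m}_1$ is $g$-orthogonal; hence the sesquilinear extension of $g$ and its restriction to $\mathfrak{m}'$ give $g'(x+iy,\,x'+iy')=g(x,x')+g(y,y')$ for $x,x'\in\mathfrak{k}$ and $y,y'\in\mathfrak{m}_1$, with no cross terms. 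In particular $g'$ is just $g$ read off component-wise across the splitting $\mathfrak{m}'=\mathfrak{k}\oplus i\mathfrak{m}_1$.

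Next I introduce the $\C$-linear automorphism $\Theta$ of $\mathfrak{m}^{\C}=\mathfrak{k}^{\C}\oplus\mathfrak{m}_1^{\C}$ determined by $\Theta|_{\mathfrak{k}^{\C}}=\id$ and $\Theta|_{\mathfrak{m}_1^{\C}}=i\cdot\id$, so that $\Theta(\mathfrak{m})=\mathfrak{m}'$. The key computation is that for $A\in\mathfrak{so}(\mathfrak{m})_0$ and $B\in\mathfrak{so}(\mathfrak{m})_1$ (extended $\C$-linearly to $\mathfrak{m}^{\C}$) one has $\tau(A)=\Theta A\Theta^{-1}|_{\mathfrak{m}'}$ and $\tau(iB)=\Theta B\Theta^{-1}|_{\mathfrak{m}'}$. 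This is checked by evaluating each side on $x\in\mathfrak{k}$ and on $iy\in i\mathfrak{m}_1$ and matching against the four defining formulas; e.g. $\Theta A\Theta^{-1}(iy)=\Theta A(y)=iA(y)=\tau(A)(iy)$, and $\Theta B\Theta^{-1}(iy)=\Theta B(y)=B(y)=\tau(iB)(iy)$ since $B(y)\in\mathfrak{k}$. It follows that $\tau(A+iB)=\Theta(A+B)\Theta^{-1}|_{\mathfrak{m}'}$, that is $\tau=\mathrm{Conj}_{\Theta}\circ\rho$, where $\rho\colon \mathfrak{so}(\mathfrak{m})_0\oplus i\mathfrak{so}(\mathfrak{m})_1\to\mathfrak{so}(\mathfrak{m})$ is the ``drop the $i$'' map $A+iB\mapsto A+B$.

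With this factorization the homomorphism property is formal. The map $\rho$ is a Lie algebra isomorphism from $(\mathfrak{so}(\mathfrak{m})_0\oplus i\mathfrak{so}(\mathfrak{m})_1,[[\cdot,\cdot]])$ onto $(\mathfrak{so}(\mathfrak{m}),[\cdot,\cdot])$ — this is precisely the statement, recorded in the excerpt, that $[[\cdot,\cdot]]$ reproduces the commutators of $\mathfrak{so}(\mathfrak{m})$, and it is confirmed on the three bracket types straight from the definition — while $\mathrm{Conj}_{\Theta}$ is an algebra automorphism of $\mathrm{End}(\mathfrak{m}^{\C})$ and so preserves commutators. Composing gives $\tau([[X,Y]])=[\tau(X),\tau(Y)]$, and the bracket on the right may be computed in $\mathrm{End}(\mathfrak{m}')$ because every operator involved preserves the real subspace $\mathfrak{m}'$ and restricts there to a real endomorphism. (Alternatively one verifies the identity directly on the pairs $(A_1,A_2)$, $(A,iB)$, $(iB_1,iB_2)$ by tracking factors of $i$; the third pair is exactly where the absence of a sign flip in the definition $[[iB_1,iB_2]]:=[B_1,B_2]$ is what makes the two $i$'s in $\tau(iB_1),\tau(iB_2)$ cancel correctly.)

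Finally I check that $\tau$ is well-defined into $\mathfrak{so}(\mathfrak{m}',g')$ and bijective. Skew-symmetry is a one-line computation from the component-wise form of $g'$ and the $g$-skew-symmetry of $A,B$, giving $g'(\tau(A)u,v)=-g'(u,\tau(A)v)$ and the same for $\tau(iB)$. Injectivity is immediate, since evaluating $\tau(A+iB)$ on $\mathfrak{k}$ and on $i\mathfrak{m}_1$ recovers $A$ and $B$ separately (the summands $\mathfrak{k}$ and $i\mathfrak{m}_1$ being real-linearly independent in $\mathfrak{m}'$), and surjectivity then follows from the equality of real dimensions $\dim\mathfrak{so}(\mathfrak{m})=\dim\mathfrak{so}(\mathfrak{m}',g')$, both determined by $\dim\mathfrak{k}+\dim\mathfrak{m}_1$. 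There is no deep obstacle here; the only point requiring care is the bookkeeping in identifying $\tau$ with $\mathrm{Conj}_{\Theta}\circ\rho$, namely keeping the two independent roles of $\sqrt{-1}$ — the $i$ defining $\mathfrak{m}'=\mathfrak{k}\oplus i\mathfrak{m}_1$ and the $i$ defining $i\mathfrak{so}(\mathfrak{m})_1$ — properly separated. Once their placement is pinned down, every remaining step is routine.
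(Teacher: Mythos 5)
Your proof is correct, and it takes a genuinely different route from the paper's. The paper argues by direct computation: it verifies skew-symmetry of $\tau(A+iB)$ with respect to $g'$ in the three cases $(x_1,x_2)$, $(x_1,iy)$, $(iy_1,iy_2)$, asserts that $\tau$ is a linear isomorphism, and then establishes the homomorphism property by expanding $\tau[[A+iB,C+iD]]$ and $[\tau(A+iB),\tau(C+iD)]_{\mathfrak{so}(\mathfrak{m}',g')}$ separately on the $\mathfrak{k}$- and $i\mathfrak{m}_1$-directions and matching terms. You instead factor $\tau=\mathrm{Conj}_{\Theta}\circ\rho$, with $\Theta$ acting as $\id$ on $\mathfrak{k}^{\C}$ and $i\cdot\id$ on $\mathfrak{m}_1^{\C}$ (so $\Theta(\mathfrak{m})=\mathfrak{m}'$) and $\rho(A+iB)=A+B$; your verification of the four defining formulas against $\Theta(A+B)\Theta^{-1}\rvert_{\mathfrak{m}'}$ is accurate, the essential cases being $\Theta A\Theta^{-1}(iy)=iA(y)$ and $\Theta B\Theta^{-1}(iy)=B(y)$, which use exactly the block conditions $A(\mathfrak{m}_1)\subseteq\mathfrak{m}_1$ and $B(\mathfrak{m}_1)\subseteq\mathfrak{k}$. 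This makes the homomorphism property structural rather than computational: $\rho$ intertwines $[[\cdot,\cdot]]$ with the commutator of $\mathfrak{so}(\mathfrak{m})$ precisely because the paper \emph{defines} the non-standard bracket to reproduce those commutators, and conjugation by $\Theta$ is an automorphism of $\End(\mathfrak{m}^{\C})$ preserving the operators that stabilize $\mathfrak{m}'$ — it also explains conceptually why the convention $[[iB_1,iB_2]]=[B_1,B_2]$, rather than the complexified $-[B_1,B_2]$, is the correct one. Your remaining checks are sound: the component-wise formula $g'(x+iy,x'+iy')=g(x,x')+g(y,y')$ follows from the sesquilinear extension together with the $g$-orthogonality of $\mathfrak{k}$ and $\mathfrak{m}_1$ (which is already immediate from the block form (\ref{Chap_dual:metricform}), without needing $\kappa_{\mathfrak{g}}(\mathfrak{g}_0,\mathfrak{g}_1)=0$), skew-symmetry of $\tau(A)$ and $\tau(iB)$ is then a short computation matching the paper's, and injectivity plus the dimension count $\dim\mathfrak{so}(\mathfrak{m})_0+\dim\mathfrak{so}(\mathfrak{m})_1=\dim\mathfrak{so}(\mathfrak{m})=\dim\mathfrak{so}(\mathfrak{m}',g')$ supplies the bijectivity that the paper merely asserts. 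The trade-off: the paper's proof is self-contained and checks everything on basis directions, while your factorization eliminates the longest calculation and exhibits $\mathfrak{so}(\mathfrak{m},g)$ and $\mathfrak{so}(\mathfrak{m}',g')$ as conjugate inside $\mathfrak{gl}(\mathfrak{m}^{\C})$ — the same mechanism that reappears later in the isometry $\theta$ used in the proof of Theorem \ref{Chap_dual:spinorialdualitytheorem}.
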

\begin{proof}
	Let $x_1,x_2\in\mathfrak{k}$, $y_1,y_2\in\mathfrak{m}_1$, $A\in \mathfrak{so}(\mathfrak{m})_0$, and $B\in \mathfrak{so}(\mathfrak{m})_1$. Using the definitions of $\mathfrak{so}(\mathfrak{m})_0$, $\mathfrak{so}(\mathfrak{m})_1$, and $g'$, we calculate
	\begin{align*}
		g'(\tau(A+iB)x_1,x_2) + g'(x_1,\tau(A+iB)x_2) &= g'(Ax_1+iBx_1,x_2) + g'(x_1,Ax_2+iBx_2)\\
		&= g(Ax_1,x_2) + g(x_1,Ax_2) =0 , \\
		g'(\tau(A+iB)x_1,iy_1) + g'(x_1,\tau(A+iB)(iy_1))  &= g'( Ax_1 +iBx_1, iy_1 ) + g'( x_1, iAy_1 +By_1)\\
		&= g(Bx_1,y_1) + g(x_1,By_1) =0 , \\
		g'(\tau(A+iB)(iy_1),iy_2) + g'(iy_1,\tau(A+iB)(iy_2))&= g'(iAy_1+By_1,iy_2) + g'(iy_1, iAy_2 +By_2) \\
		&= g(Ay_1,y_2) +g(y_1,Ay_2) =0,
	\end{align*}
	hence $\tau(A+iB)\in\mathfrak{so}(\mathfrak{m}',g')$. The map $\tau$ is a linear isomorphism, so it remains only to check that it is a Lie algebra homomorphism. One the one hand, we calculate
	\begin{align*}
		\tau [[A+iB,C+iD]] &= \tau( [A,C]_{\mathfrak{so}(\mathfrak{m})^{\C}}+[B,D]_{\mathfrak{so}(\mathfrak{m})^{\C}} + i[B,C]_{\mathfrak{so}(\mathfrak{m})^{\C}}+i[A,D]_{\mathfrak{so}(\mathfrak{m})^{\C}}),
	\end{align*}
	and on the other hand,
	\begin{align*}
		[\tau(A+iB),\tau(C+iD)]_{\mathfrak{so}(\mathfrak{m}',g')}(x)&=(\tau(A+iB)\circ\tau(C+iD))(x) - (\tau(C+iD)\circ\tau(A+iB))(x)  \\
		&= \tau(A+iB)(Cx+iDx) - \tau(C+iD)(Ax+iBx)       \\
		&= ACx +iADx +iBCx +BDx - (CAx +iCBx+iDAx+DBx   )     		\\
		&= \tau([A,C]_{\mathfrak{so}(\mathfrak{m})^{\C}} +i[A,D]_{\mathfrak{so}(\mathfrak{m})^{\C}} +i[B,C]_{\mathfrak{so}(\mathfrak{m})^{\C}} +[B,D]_{\mathfrak{so}(\mathfrak{m})^{\C}})(x) 
	\end{align*}
	in the $\mathfrak{k}$ directions, and
	\begin{align*}
		[\tau(A+iB),\tau(C+iD)]_{\mathfrak{so}(\mathfrak{m}',g')}(iy)&= (\tau(A+iB)\circ\tau(C+iD))(iy) - (\tau(C+iD)\circ\tau(A+iB))(iy )\\
		&=  \tau(A+iB)(iCy+Dy) - \tau(C+iD)(iAy+By)\\
		&=iACy+ADy+BCy+iBDy -(iCAy +CBy+DAy+iDBy)\\
		&=\tau([A,C]_{\mathfrak{so}(\mathfrak{m})^{\C}} +i[A,D]_{\mathfrak{so}(\mathfrak{m})^{\C}} +i[B,C]_{\mathfrak{so}(\mathfrak{m})^{\C}} +[B,D]_{\mathfrak{so}(\mathfrak{m})^{\C}})(iy) 
	\end{align*}
	in the $\mathfrak{m}_1$ directions, completing the proof.
\end{proof}
\begin{lemma}
	\label{Chap_dual:endomorphismparts}Let $(\mathfrak{g},\mathfrak{h},\mathfrak{m},g)$ be extended symmetric data. If $x\in\mathfrak{k}$ and $y\in\mathfrak{m}_1$ then
	\[
	\ad(x)\in\mathfrak{so}(\mathfrak{m})_0,
	\quad   \text{\emph{proj}}_{\mathfrak{m}}\circ\ad(y)\in\mathfrak{so}(\mathfrak{m})_1,
	\quad  \Uplambda^g(x)\in\mathfrak{so}(\mathfrak{m})_0,\quad  \Uplambda^g(y)\in\mathfrak{so}(\mathfrak{m})_1 . 
	\]
\end{lemma}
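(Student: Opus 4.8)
The plan is to split each of the four assertions into two independent checks — the graded behaviour on the decomposition $\mathfrak{m}=\mathfrak{k}\oplus\mathfrak{m}_1$, and skew-symmetry — since both $\mathfrak{so}(\mathfrak{m})_0$ and $\mathfrak{so}(\mathfrak{m})_1$ are cut out by a graded condition together with membership in $\mathfrak{so}(\mathfrak{m})$. The graded behaviour can be read off directly from the commutator relations (\ref{Chap_dual:commutatorrels}). For $x\in\mathfrak{k}$ the inclusions $[\mathfrak{k},\mathfrak{k}]\subseteq\mathfrak{k}$ and $[\mathfrak{k},\mathfrak{g}_1]\subseteq\mathfrak{g}_1$ show $\ad(x)$ leaves both $\mathfrak{k}$ and $\mathfrak{m}_1$ invariant; for $y\in\mathfrak{m}_1=\mathfrak{g}_1$ the inclusions $[\mathfrak{g}_1,\mathfrak{k}]\subseteq\mathfrak{g}_1$ and $[\mathfrak{g}_1,\mathfrak{g}_1]\subseteq\mathfrak{h}\oplus\mathfrak{k}$ show $\text{proj}_{\mathfrak{m}}\circ\ad(y)$ sends $\mathfrak{k}$ into $\mathfrak{m}_1$ and $\mathfrak{m}_1$ into $\mathfrak{k}$ (the projection only discarding the $\mathfrak{h}$-component). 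So the two $\ad$-maps are of type $0$ and type $1$ respectively.

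For the two Nomizu-map assertions I would feed these into the Levi-Civita formula $\Uplambda^g(X)=\tfrac{1}{2}\text{proj}_{\mathfrak{m}}[X,\,\cdot\,]+U(X,\,\cdot\,)$ of Proposition \ref{prelims:LC_Nomizu_general_formula}. The bracket term inherits the graded behaviour just established, so the whole point reduces to showing that the symmetric tensor $U$ respects the $\Z_2$-grading, that is $U(\mathfrak{m}_a,\mathfrak{m}_b)\subseteq\mathfrak{m}_{a+b}$ with indices taken mod $2$. I would extract this from the defining identity (\ref{Utensor}): pairing $U(X,Y)$ against $Z\in\mathfrak{m}_c$ with $X\in\mathfrak{m}_a$, $Y\in\mathfrak{m}_b$, each summand on the right is the $g$-pairing of a projected bracket (which shifts the grading additively) against a homogeneous vector, and since $g$ is block-diagonal for the grading ($g(\mathfrak{k},\mathfrak{g}_1)=0$ by (\ref{Chap_dual:metricform})) both summands vanish unless $c\equiv a+b$; nondegeneracy of $g$ then forces $U(X,Y)\in\mathfrak{m}_{a+b}$. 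Hence $\Uplambda^g(x)$ preserves the splitting and $\Uplambda^g(y)$ interchanges its two factors.

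It remains to handle skew-symmetry, which is where the care is needed. For the two $\Uplambda^g$-statements it is free: $\nabla^g$ is metric, so $\Uplambda^g(X)\in\mathfrak{so}(\mathfrak{m},g)$ for every $X$, and combined with the grading this finishes $\Uplambda^g(x)\in\mathfrak{so}(\mathfrak{m})_0$ and $\Uplambda^g(y)\in\mathfrak{so}(\mathfrak{m})_1$. For the $\ad$-maps I would instead use $\ad$-invariance of the Killing form, the one extra input being $\kappa_{\mathfrak{g}}(\mathfrak{h},\mathfrak{k})=0$ — which holds because $\mathfrak{m}$ is taken to be the Killing-orthogonal complement of $\mathfrak{h}$, the grading already supplying $\kappa_{\mathfrak{g}}(\mathfrak{h},\mathfrak{g}_1)=0$. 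Since $\ad(x)$ preserves the grading, the scalings $\lambda_0,\lambda_1$ never mix and its $\kappa$-skewness passes directly to $g$-skewness, giving $\ad(x)\in\mathfrak{so}(\mathfrak{m})_0$. The genuinely delicate case, and the one I expect to be the main obstacle, is $\text{proj}_{\mathfrak{m}}\circ\ad(y)$: this map interchanges the two blocks, so $\lambda_0$ and $\lambda_1$ enter asymmetrically and $\ad$-invariance of $\kappa_{\mathfrak{g}}$ controls skewness against the Killing form rather than immediately against $g$. I would therefore verify the swapping-sector skewness carefully against the invariant form built into $\mathfrak{so}(\mathfrak{m})_1$, the key computation being $\kappa_{\mathfrak{g}}([y,u],v)=-\kappa_{\mathfrak{g}}(u,\text{proj}_{\mathfrak{m}}[y,v])$ for $u\in\mathfrak{k}$, $v\in\mathfrak{m}_1$, where $\kappa_{\mathfrak{g}}(\mathfrak{h},\mathfrak{k})=0$ is exactly what kills the discarded $\mathfrak{h}$-component.
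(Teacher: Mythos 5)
Your handling of the grading and of the two Nomizu-map assertions coincides with the paper's proof: there, too, the mapping properties of $\ad(x)$ and $\text{proj}_{\mathfrak{m}}\circ\ad(y)$ are read off directly from (\ref{Chap_dual:commutatorrels}), and the statement about $U$ is established by writing out exactly the four pairings your parity argument encodes — $2g(U(x_1,x_2),y)$, $2g(U(x_1,y),x_2)$, $2g(U(y,x_1),x_2)$, $2g(U(y_1,y_2),y_3)$ — each vanishing because the bracket shifts the $\Z_2$-degree while $g$ is block-diagonal by (\ref{Chap_dual:metricform}); skew-symmetry of $\Uplambda^g(x)$ and $\Uplambda^g(y)$ is then free because $\nabla^g$ is metric, just as you say. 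Your verification for $\ad(x)$ is also correct: since it preserves the two blocks, the scalings $\lambda_0,\lambda_1$ never mix and $\kappa_{\mathfrak{g}}$-invariance gives $g$-skewness.

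The step you flag as the key computation for $\text{proj}_{\mathfrak{m}}\circ\ad(y)$, however, does not close, and cannot: the identity $\kappa_{\mathfrak{g}}([y,u],v)=-\kappa_{\mathfrak{g}}(u,\text{proj}_{\mathfrak{m}}[y,v])$ proves skewness with respect to the restricted Killing form, and for a block-\emph{swapping} map this is inequivalent to $g$-skewness. Explicitly, for $u\in\mathfrak{k}$, $v\in\mathfrak{m}_1$,
\[
g(\text{proj}_{\mathfrak{m}}[y,u],v)+g(u,\text{proj}_{\mathfrak{m}}[y,v])
=\lambda_1\kappa_{\mathfrak{g}}([y,u],v)+\lambda_0\kappa_{\mathfrak{g}}(u,[y,v])
=(\lambda_1-\lambda_0)\,\kappa_{\mathfrak{g}}([y,u],v),
\]
and since $\kappa_{\mathfrak{g}}\rvert_{\mathfrak{g}_1}$ is nondegenerate and $[\mathfrak{m}_1,\mathfrak{k}]\neq 0$ in the relevant examples, this defect is nonzero whenever $\lambda_0\neq\lambda_1$ — which is the generic homogeneous $\tad$ metric. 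So no amount of care in the "swapping sector" rescues $g$-skewness of $\text{proj}_{\mathfrak{m}}\circ\ad(y)$; the second assertion carries only its grading content (plus $\kappa_{\mathfrak{g}}$-skewness), which is exactly what the paper's one-line argument establishes and all that is used downstream, where the needed $g$-skewness is always that of a full Nomizu map. A useful consistency check: $\Uplambda^g(y)=\tfrac{1}{2}\text{proj}_{\mathfrak{m}}\circ\ad(y)+(y\lrcorner U)$ is $g$-skew although neither summand is, the mixed-sector defects $\pm\tfrac{\lambda_1-\lambda_0}{2}\kappa_{\mathfrak{g}}([y,u],v)$ cancelling. A smaller caveat in the same direction: $\kappa_{\mathfrak{g}}(\mathfrak{h},\mathfrak{k})=0$ is not part of Definition \ref{Chap_dual:extended_symmetric_data_definition} — it holds for generalized $3$-Sasakian data (e.g.\@ by a trace computation on $\C^2\otimes U$), but your appeal to $\mathfrak{m}$ being the Killing-orthogonal complement of $\mathfrak{h}$ imports a hypothesis the abstract definition does not make, so it should be stated as an extra input.
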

\begin{proof}
	The first two assertions follow from the commutator relations (\ref{Chap_dual:commutatorrels}).
	For the second two assertions, we use the first two together with standard implicit formula for the Nomizu map from Proposition \ref{prelims:LC_Nomizu_general_formula}. Letting $x,x_1,x_2,x_3\in\mathfrak{k}$ and $y,y_1,y_2,y_3\in\mathfrak{m}_1$ and using (\ref{Utensor}), (\ref{Chap_dual:metricform}), and (\ref{Chap_dual:commutatorrels}), we calculate
	\begin{align*}
		&2g(U(x_1,x_2),y) = g([y,x_1],x_2) + g(x_1,[y,x_2]) = 0+0=0,\\
		&2g(U(x_1,y),x_2) = g([x_2,x_1],y) + g(x_1,[x_2,y]) = 0+0=0,\\
		&2g(U(y,x_1),x_2) = g([x_2,y],x_1) + g(y, [x_2,x_1]) = 0+0=0,\\
		&2g(U(y_1,y_2),y_3) = g(\text{proj}_{\mathfrak{m}} [y_3,y_1],y_2) + g( y_1,\text{proj}_{\mathfrak{m}}[y_3,y_2]) = 0+0=0  .
	\end{align*}
	Thus we have proved $(x\lrcorner U) \in \mathfrak{so}(\mathfrak{m})_0$ and $(y\lrcorner U) \in \mathfrak{so}(\mathfrak{m})_1$, and the result follows.
\end{proof}
\begin{proposition}
	\label{Chap_dual:LCNomizu}Let $(M,g)$ be an extended symmetric space, and $(M',g')$ its dual. In terms of the identification $\tau$, the Levi-Civita connection of $(M',g')$ has Nomizu map given by
	\begin{align*} \Uplambda^{g'}(iy) = -\tau(i\Uplambda^g(y))   , \quad \Uplambda^{g'} (x_1)x_2 = \tau(\Uplambda^g(x_1))x_2, \quad \Uplambda^{g'}(x)(iy) = -\tau(\Uplambda^g(x))(iy)  +2i[x,y] . \end{align*}
	for $x\in \mathfrak{k}$, $y\in\mathfrak{m}_1$. 
\end{proposition}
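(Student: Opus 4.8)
The plan is to compute the Nomizu map $\Uplambda^{g'}$ on $(M',g')$ directly from the general formula in Proposition \ref{prelims:LC_Nomizu_general_formula}, and then to massage each case until it can be rewritten in terms of the corresponding quantity on the compact side $(M,g)$ via the isomorphism $\tau$. Since $\Uplambda^g(X)Y = \tfrac{1}{2}\text{proj}_{\mathfrak{m}}[X,Y] + U(X,Y)$, the whole computation reduces to understanding how the bracket and the $U$-tensor of $\mathfrak{g}'$ compare with those of $\mathfrak{g}$. The key structural input is that the duality only modifies the $\mathfrak{m}_1$-directions by a factor of $i$: a basis vector $y\in\mathfrak{m}_1$ corresponds to $iy\in i\mathfrak{g}_1\subseteq\mathfrak{m}'$, while $x\in\mathfrak{k}$ is unchanged. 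Using the commutator relations (\ref{Chap_dual:commutatorrels}), I would record how brackets transform: $[x_1,x_2]'=[x_1,x_2]$ stays in $\mathfrak{k}$; $[x,iy]'=i[x,y]$ with $[x,y]\in\mathfrak{m}_1$; and $[iy_1,iy_2]'=-[y_1,y_2]$ with $[y_1,y_2]\in\mathfrak{h}\oplus\mathfrak{k}$ (the sign being exactly the source of the non-compact dual). The corresponding statement for $g'$ is that it agrees with $g$ on $\mathfrak{k}$ and, because of the sesquilinear extension, $g'(iy_1,iy_2)=g(y_1,y_2)$.

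First I would handle the purely vertical case $\Uplambda^{g'}(x_1)x_2$. Here everything stays inside $\mathfrak{k}$, where $\mathfrak{g}'$ and $\mathfrak{g}$ literally agree as Lie algebras and $g'=g$, so $\Uplambda^{g'}(x_1)x_2=\Uplambda^g(x_1)x_2$; invoking Lemma \ref{Chap_dual:endomorphismparts} (which places $\Uplambda^g(x_1)\in\mathfrak{so}(\mathfrak{m})_0$) and the definition of $\tau$ on $\mathfrak{so}(\mathfrak{m})_0$, this is precisely $\tau(\Uplambda^g(x_1))x_2$. Next I would treat $\Uplambda^{g'}(iy)$. By Lemma \ref{Chap_dual:endomorphismparts}, $\Uplambda^g(y)\in\mathfrak{so}(\mathfrak{m})_1$ swaps $\mathfrak{k}$ and $\mathfrak{m}_1$; the point is to verify, plugging into (\ref{Utensor}) on the dual side and tracking the $i$'s through the sesquilinear $g'$, that $\Uplambda^{g'}(iy)$ acting on $\mathfrak{m}'$ matches the prescribed operator. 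The factors of $i$ combine with the bracket sign-change $[iy_1,iy_2]'=-[y_1,y_2]$ to produce exactly the overall minus sign in $-\tau(i\Uplambda^g(y))$.

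The genuinely interesting case — and the one I expect to be the main obstacle — is the mixed term $\Uplambda^{g'}(x)(iy)$, where an extra $+2i[x,y]$ appears. Here $\Uplambda^{g'}(x)(iy)=\tfrac12\text{proj}_{\mathfrak{m}'}[x,iy]'+U'(x,iy)$. The bracket gives $\tfrac12\,i\,\text{proj}_{\mathfrak{m}}[x,y]$, which carries a factor $i$ not present on the compact side, whereas the symmetric part $U'$ transforms like the $g$-symmetric piece does in the other cases; the mismatch between how the (skew) bracket part and the (symmetric) $U$ part each pick up $i$ is what forces the correction term. Concretely, I would split $\Uplambda^g(x)(y)=\tfrac12\text{proj}_{\mathfrak{m}}[x,y]+U(x,y)$, observe that $-\tau(\Uplambda^g(x))(iy)=-i\,\Uplambda^g(x)(y)=-\tfrac{i}{2}\text{proj}_{\mathfrak{m}}[x,y]-iU(x,y)$, and then check that the actual value $+\tfrac{i}{2}\text{proj}_{\mathfrak{m}}[x,y]+(\text{the }U'\text{ contribution})$ differs from $-\tau(\Uplambda^g(x))(iy)$ by precisely $2i[x,y]$ once the sign on the bracket term is reconciled. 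The care required is bookkeeping the sesquilinear extension of $g$ and making sure $U'$ evaluates to the expected multiple of $U$; this is a finite check over the cases $x'\in\mathfrak{k}$ and $iy'\in i\mathfrak{g}_1$ using (\ref{Utensor}) and (\ref{Chap_dual:commutatorrels}), but it is where the sign conventions must be handled with the most attention.
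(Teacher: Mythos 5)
Your proposal is correct, but it proceeds in the opposite direction from the paper. You derive $\Uplambda^{g'}$ from scratch by applying the general formula of Proposition \ref{prelims:LC_Nomizu_general_formula} to the dual data --- recording the bracket transformations $[x_1,x_2]$, $[x,iy]=i[x,y]$, $[iy_1,iy_2]=-[y_1,y_2]$ together with $g'(iy_1,iy_2)=g(y_1,y_2)$, computing the dual $U$-tensor from (\ref{Utensor}), and then re-expressing everything through $\tau$. The paper instead treats the three displayed formulas as an ansatz and verifies only that they define a $g'$-skew-symmetric and torsion-free Nomizu map, concluding by uniqueness of the Levi-Civita connection; this sidesteps any explicit computation of the dual $U$-tensor, since skewness and torsion-freeness can be checked using just the corresponding known properties of $\Uplambda^g$, Lemma \ref{Chap_dual:endomorphismparts}, and $\ad$-invariance of $\kappa_{\mathfrak{g}}$. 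Your route is more constructive --- it \emph{finds} the formula rather than confirming a guess --- at the cost of the $U'$-bookkeeping; the paper's verification is shorter but presupposes the answer. One heuristic in your sketch should be corrected: in the mixed case $U'$ does \emph{not} transform like the symmetric piece does in the other cases. A direct computation with (\ref{Utensor}), (\ref{Chap_dual:commutatorrels}), and $\ad$-invariance of $\kappa_{\mathfrak{g}}$ on $\mathfrak{g}_1$ gives $U'(x,iy)=i\bigl([x,y]-U(x,y)\bigr)$, so the correction $2i[x,y]$ arises in equal halves: $i[x,y]$ from the sign flip in the bracket part (your $+\tfrac{i}{2}[x,y]$ versus $-\tfrac{i}{2}[x,y]$) and another $i[x,y]$ from this extra bracket contribution hiding inside $U'$. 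With these values your final comparison does yield $\Uplambda^{g'}(x)(iy)=-\tau(\Uplambda^g(x))(iy)+2i[x,y]$, and the analogous checks in the cases $\Uplambda^{g'}(iy)$ and $\Uplambda^{g'}(x_1)x_2$ (for the latter, note one must also verify via $\kappa_{\mathfrak{g}}$-orthogonality of the grading that $U'(x_1,x_2)$ has no $i\mathfrak{g}_1$-component, not merely that the $\mathfrak{k}$-data agree) go through as you describe, so the plan executes correctly.
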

\begin{proof}
	In order to show that the above expression for $\Uplambda^{g'}$ induces a metric connection, it suffices to check that $\Uplambda^{g'}(x)$ is skew-symmetric with respect to $g'$. Using (\ref{Chap_dual:metricform}), (\ref{Chap_dual:commutatorrels}), Lemma \ref{Chap_dual:endomorphismparts}, and the fact that the image of $\tau$ lies in $\mathfrak{so}(\mathfrak{m}',g')$, we calculate:{\small
		\begin{align*}
			&g'(\Uplambda^{g'}(x_1)x_2,x_3) + g'(x_2,\Uplambda^{g'}(x_1)x_3) = g'(\tau(\Uplambda^g(x_1))x_2,x_3) + g'( x_2, \tau(\Uplambda^g(x_1))x_3)=0,\\
			&g'(\Uplambda^{g'}(x_1)x_2,iy) + g'(x_2,\Uplambda^{g'}(x_1)(iy)) =g'( \tau(\Uplambda^g(x_1))x_2, iy) + g'(x_2, -\tau(\Uplambda^g(x_1))(iy)+2i[x_1,y])=0,\\
			&g'(\Uplambda^{g'}(x)(iy_1),iy_2) + g'(iy_1,\Uplambda^{g'}(x)(iy_2)) = g'(-\tau(\Uplambda^g(x))(iy_1) + 2i[x,y_1],iy_2) \\
			&\qquad \qquad \qquad +g'(iy_1,-\tau(\Uplambda^g(x))(iy_2)  + 2i[x,y_2]) = 2\lambda_1 \kappa_{\mathfrak{g}}([x,y_1],y_2) +2\lambda_1 \kappa_{\mathfrak{g}}(y_1,[x,y_2])= 0.
		\end{align*}
	}To see that the given expression for $\Uplambda^{g'}$ is torsion-free, we use the fact that $\Uplambda^g$ is torsion-free to calculate:
	\begin{align*}
		&\Uplambda^{g'}(x_1)x_2 - \Uplambda^{g'}(x_2)x_1 - \text{proj}_{\mathfrak{m}'}[x_1,x_2] =\tau(\Uplambda^g(x_1))x_2 - \tau(\Uplambda^g(x_2))x_1 - [x_1,x_2]\\
		&\qquad \qquad  = \Uplambda^g(x_1)x_2 - \Uplambda^g(x_2)x_1 - [x_1,x_2]=0, \\
		&\Uplambda^{g'}(x)(iy) - \Uplambda^{g'}(iy)(x) - \text{proj}_{\mathfrak{m}'}[x,iy]= -\tau(\Uplambda^g(x))(iy) + 2i[x,y] + \tau(i\Uplambda^g(y))(x) - [x,iy] \\
		&\qquad \qquad = -i\Uplambda^g(x)y +i[x,y] +i\Uplambda^g(y)x = 0,\\
		&\Uplambda^{g'}(iy_1)(iy_2) - \Uplambda^{g'}(iy_2)(iy_1) - \text{proj}_{\mathfrak{m}'}[iy_1,iy_2] = -\tau(i\Uplambda^g(y_1))(iy_2) + \tau(i\Uplambda^g(y_2))(iy_1) +\text{proj}_{\mathfrak{m}}[y_1,y_2]\\
		&\qquad \qquad =-\Uplambda^g(y_1)y_2 + \Uplambda^g(y_2)y_1 + \text{proj}_{\mathfrak{m}}[y_1,y_2]=0 .
	\end{align*}
	The result then follows from the fact that the Levi-Civita connection is the unique torsion-free metric connection.
\end{proof}
Finally we turn our attention to the spinorial properties of the dual pairs. Inspired by \cite[Prop.\@ 7.2]{kath_Tduals}, we show that the question of existence of a homogeneous spin structure is equivalent for dual pairs of extended symmetric spaces:
\begin{proposition}
	If $(M=G/H,g)$ and $(M'=G'/H,g')$ are a dual pair of extended symmetric spaces with connected isotropy group $H$, then $M$ admits a homogeneous spin structure if and only if $M'$ admits one. \label{Chap_dual:dualspinlift}
\end{proposition}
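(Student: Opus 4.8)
The plan is to reduce the existence of a homogeneous spin structure on each space to a lifting problem for its isotropy representation, and then to produce an $H$-equivariant isometry between the two isotropy modules that identifies the two lifting problems. Recall that on a reductive homogeneous space $G/H$ with connected isotropy group $H$, the oriented orthonormal frame bundle is the associated bundle $G\times_H \SO(\mathfrak{m})$ for the isotropy representation $\Ad\colon H\to \SO(\mathfrak{m},g)$, and a homogeneous (i.e.\ $G$-invariant) spin structure exists precisely when $\Ad$ admits a lift $\widetilde{\Ad}\colon H\to \Spin(\mathfrak{m})$ with $\lambda\circ \widetilde{\Ad}=\Ad$. Thus it suffices to compare the isotropy representations of $M$ and $M'$.

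First I would construct the comparison map $\Xi\colon \mathfrak{m}=\mathfrak{k}\oplus\mathfrak{g}_1\to \mathfrak{m}'=\mathfrak{k}\oplus i\mathfrak{g}_1$ defined by $\Xi(x+y)=x+iy$ for $x\in\mathfrak{k}$, $y\in\mathfrak{g}_1$, and check that it is an isometry $(\mathfrak{m},g)\to(\mathfrak{m}',g')$. Since the metric form \eqref{Chap_dual:metricform} has no mixed $\mathfrak{k}$--$\mathfrak{g}_1$ component, both $g$ and its sesquilinear extension $g'$ split orthogonally along the displayed direct sums, so it is enough to treat the two blocks separately: on $\mathfrak{k}$ the extension restricts to $g$, while on $\mathfrak{g}_1$ one computes $g'(iy_1,iy_2)=i\,\overline{i}\,\lambda_1\kappa_{\mathfrak{g}}(y_1,y_2)=g(y_1,y_2)$, so that $\Xi$ is norm-preserving.

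Next I would verify $H$-equivariance of $\Xi$ with respect to the two isotropy actions. Because $[\mathfrak{h},\mathfrak{k}]=0$ from \eqref{Chap_dual:commutatorrels} and $H$ is connected, $\Ad(h)$ restricts to the identity on $\mathfrak{k}$ for every $h\in H$, viewed inside either $G$ or $G'$. On the remaining block, $\Ad(h)$ is complex-linear on $\mathfrak{g}^{\C}$, so $\Ad(h)(iy)=i\,\Ad(h)(y)$; hence the isotropy action on $i\mathfrak{g}_1\subseteq\mathfrak{g}'$ is carried onto the isotropy action on $\mathfrak{g}_1\subseteq\mathfrak{g}$ by $\Xi$. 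Combining the blocks gives $\Xi\circ\Ad_M(h)=\Ad_{M'}(h)\circ\Xi$ for all $h\in H$. Conjugation by the isometry $\Xi$ then yields a Lie group isomorphism $\Xi_*\colon\SO(\mathfrak{m})\to\SO(\mathfrak{m}')$ with $\Xi_*\circ\Ad_M=\Ad_{M'}$, and since an isometry of inner product spaces induces an isomorphism of the corresponding Clifford algebras restricting to the spin groups, $\Xi$ lifts to $\widetilde{\Xi}_*\colon\Spin(\mathfrak{m})\to\Spin(\mathfrak{m}')$ covering $\Xi_*$. Consequently $\widetilde{\Ad}_M$ is a lift of $\Ad_M$ if and only if $\widetilde{\Xi}_*\circ\widetilde{\Ad}_M$ is a lift of $\Ad_{M'}$, which proves the equivalence.

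The main obstacle is the careful bookkeeping in the middle two steps rather than any deep input: one must confirm that the sesquilinear extension is set up so that $\Xi$ is genuinely an isometry (the sign computation $i\,\overline{i}=1$ being exactly what restores positive-definiteness on the dual), and that the adjoint action of the common isotropy group $H$ is truly identified under $\Xi$, which relies on the commutativity $[\mathfrak{h},\mathfrak{k}]=0$ together with the complex-linearity of $\Ad(h)$ on $\mathfrak{g}^{\C}$. Once these are established, the passage from homogeneous spin structures to a lifting problem and its transport along $\widetilde{\Xi}_*$ are purely formal.
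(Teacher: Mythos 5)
Your proposal is correct and follows essentially the same route as the paper: both reduce the existence of a homogeneous spin structure to lifting the isotropy representation (the paper cites the same standard result), construct the $H$-equivariant isometry $\mathfrak{m}\to\mathfrak{m}'$ acting as the identity on $\mathfrak{k}$ and as multiplication by $i$ on $\mathfrak{g}_1$, extend it to an isomorphism $\Spin(\mathfrak{m})\to\Spin(\mathfrak{m}')$ via the Clifford algebras, and transport the lift. Your only additions are explicit verifications (the sesquilinear sign check and the equivariance via $[\mathfrak{h},\mathfrak{k}]=0$, connectedness of $H$, and complex-linearity of $\Ad(h)$ on $\mathfrak{g}^{\C}$) that the paper leaves implicit.
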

\begin{proof}
	Both directions of the `if and only if' statement are identical, so we prove only the forward direction here. Supposing that $(M=G/H,g)$ admits a homogeneous spin structure, it follows from \cite[Prop.\@ 1.3]{invariantspinstructures} that there is a lift $\widetilde{\Ad}\:H\to\Spin(\mathfrak{m})$ of the isotropy representation $\Ad\:H\to\SO(\mathfrak{m})$, and it suffices to show that $\Ad'\:H\to \SO(\mathfrak{m}')$ lifts to a map $\widetilde{\Ad'}\: H \to\Spin(\mathfrak{m}')$. If $\{v_1,\dots,v_k,w_1,\dots, w_l\}$ is a $g$-orthonormal basis for $\mathfrak{m}$ such that $\{v_t\}_{t=1}^k$ is a basis for $\mathfrak{k}$ and $\{w_t\}_{t=1}^l$ is a basis for $\mathfrak{m}_1$, then $\{v_1,\dots, v_k, iw_1,\dots,iw_l\}$ is a $g'$-orthonormal basis for $\mathfrak{m}'$. The identification $\sigma\: (\mathfrak{m},g)\to(\mathfrak{m}',g')$ given by $v_t\mapsto v_t$ and $w_t\mapsto iw_t$ is an $H$-equivariant isometry, i.e.\@ $\Ad'(h) = \sigma\circ \Ad(h) \circ \sigma^{-1} $ for all $h\in H$. Noting that the isometry $\sigma$ naturally extends to an isomorphism $\sigma\: \Spin(\mathfrak{m})\to \Spin(\mathfrak{m}')$ (by viewing these inside the respective Clifford algebras), we claim that the desired lift is given by $\widetilde{\Ad'}(h) := \sigma( \widetilde{\Ad}(h))$ for all $h\in H$. Denoting by $\lambda$ (resp. $\lambda'$) the covering map $\Spin(\mathfrak{m})\to\SO(\mathfrak{m})$ (resp. $\Spin(\mathfrak{m}')\to\SO(\mathfrak{m}')$), this follows from the calculation
	\begin{align*}
		\lambda'(\widetilde{\Ad'}(h))(v) &= \lambda'(\sigma(\widetilde{\Ad}(h)))(v) =   \sigma(\widetilde{\Ad}(h)) \cdot v \cdot \sigma(\widetilde{\Ad}(h))^{-1} = \sigma(\widetilde{\Ad}(h) \cdot \sigma^{-1}(v)\cdot \widetilde{\Ad}(h)^{-1}) \\
		&= \sigma\big( \lambda(\widetilde{\Ad}(h))(\sigma^{-1}(v))\big) = \sigma\big( \Ad(h)(\sigma^{-1}(v)) \big) = \Ad'(h)(v)
	\end{align*}
	for all $v\in \mathfrak{m}'$. 
\end{proof}
\subsection{Homogeneous $\tad$ Dual Pairs}

From this point forward we restrict attention to the case of generalized $\ts$ data, corresponding to extended symmetric data with $\mathfrak{k}=\mathfrak{sp}(1)$ satisfying the additional condition (iii) in Definition \ref{Chap_dual:3sasakian_data_definition}.
\begin{definition}
	The dual of a $\tad$ homogeneous space $(M=G/H,g,\xi_i,\eta_i,\varphi_i)$ is the $3$-$(\alpha',\delta')$-Sasaki space obtained by applying the duality construction to the corresponding generalized $\ts$ data, where $\alpha':=\alpha$ and $\delta':=-\delta$. The full list of $\tad$ dual pairs can be found in \cite[Table 1]{hom3alphadelta}.
\end{definition}
\begin{remark}
It is easy to check that Proposition \ref{Chap_dual:LCNomizu} is compatible with (\ref{LC_Nomizu_map_explicit_formula}) in the $\tad$ setting:
\begin{align*}
	\Uplambda^{g'}(x_1)(x_2) &=  \tau(\Uplambda^g(x_1))x_2 = \Uplambda^g(x_1)x_2 = \frac{1}{2}[x_1,x_2] , \\
	\Uplambda^{g'}(x)(iy)&=-\tau(\Uplambda^g(x))(iy) +2i[x,y] = -i\Uplambda^g(x)y +2i[x,y] \\
	& = -i(1-\frac{\alpha}{\delta})[x,y] +2i[x,y] =(1-\frac{\alpha'}{\delta'})[x,iy] , \\
	\Uplambda^{g'}(iy)(x)&= -\tau(i\Uplambda^g(y))x =-i\Uplambda^g(y)x = -i \frac{\alpha}{\delta}[y,x] = \frac{\alpha'}{\delta'} [iy,x] , \\
	\Uplambda^{g'}(iy_1)(iy_2)&= -\tau(i\Uplambda^g(y_1))(iy_2) = -\Uplambda^g(y_1)(y_2) = -\frac{1}{2}\text{proj}_{\mathfrak{sp}(1)} [y_1,y_2]=\frac{1}{2}\text{proj}_{\mathfrak{sp}(1)} [iy_1,iy_2].
\end{align*}
\end{remark}
We can also easily describe the relationship between the canonical connections of a $\tad$ dual pair using the identification $\tau$ introduced in the previous section:
\begin{proposition}\label{Chap_dual:CanNomizu}
	If $(M,g)$ and $(M',g')$ are a dual pair of homogeneous $\tad$ spaces then the canonical connections  $\Uplambda\: \mathfrak{m}\times \mathfrak{m}\to \mathfrak{m}$ and $\Uplambda'\: \mathfrak{m}'\times \mathfrak{m}'\to\mathfrak{m}'$ are related by
	\[
	\Uplambda'(V)W=
	\begin{cases}
		\tau(\Uplambda(V))W -\frac{4\alpha'}{\delta'}[V,W]& V \in \mathfrak{sp}(1),\\
		0 & V\in i\mathfrak{m}_1.
	\end{cases}
	\] 
\end{proposition}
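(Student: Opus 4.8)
The plan is to obtain $\Uplambda'$ by applying the explicit canonical Nomizu formula (\ref{canonical_Nomizu_map_explicit_formula}) directly to the dual space $(M',g')$ and then rewriting the result through the identification $\tau$. Since the duality construction is involutive and produces again a $\tad$ homogeneous space fibering over a (possibly non-compact) Wolf space, formula (\ref{canonical_Nomizu_map_explicit_formula}) applies verbatim to $(M',g')$, now with parameters $\alpha'=\alpha$, $\delta'=-\delta$ and with vertical and horizontal spaces $\mathcal{V}'=\mathfrak{sp}(1)$ and $\mathcal{H}'=i\mathfrak{m}_1$ respectively. In particular the case $V\in i\mathfrak{m}_1=\mathcal{H}'$ is immediate: (\ref{canonical_Nomizu_map_explicit_formula}) gives $\Uplambda'(V)W=0$, which is exactly the second line of the claimed formula, so no further work is needed there.

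For the remaining case $V\in\mathfrak{sp}(1)=\mathcal{V}'$, formula (\ref{canonical_Nomizu_map_explicit_formula}) reads $\Uplambda'(V)W=\tfrac{\delta'-2\alpha'}{\delta'}[V,W]_{\mathfrak{g}'}$, where $[\cdot,\cdot]_{\mathfrak{g}'}$ is the bracket of $\mathfrak{g}'$ inherited from $\mathfrak{g}^{\C}$. First I would show that $\tau(\Uplambda(V))W=\tfrac{\delta-2\alpha}{\delta}[V,W]_{\mathfrak{g}'}$ for every $W\in\mathfrak{m}'$. By (\ref{canonical_Nomizu_map_explicit_formula}) for $(M,g)$, the endomorphism $\Uplambda(V)=\tfrac{\delta-2\alpha}{\delta}\ad(V)|_{\mathfrak{m}}$ lies in $\mathfrak{so}(\mathfrak{m})_0$ (it is metric and preserves $\mathfrak{k}$ and $\mathfrak{m}_1$ by Lemma \ref{Chap_dual:endomorphismparts}), so $\tau(\Uplambda(V))$ is defined. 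Checking the two types of argument separately: for $W=x\in\mathfrak{k}$ the defining property $\tau(A)x=A(x)$ gives $\tau(\Uplambda(V))x=\tfrac{\delta-2\alpha}{\delta}[V,x]=\tfrac{\delta-2\alpha}{\delta}[V,x]_{\mathfrak{g}'}$; for $W=iy$ with $y\in\mathfrak{m}_1$ the property $\tau(A)(iy)=iA(y)$ together with $[V,iy]_{\mathfrak{g}'}=i[V,y]$ gives $\tau(\Uplambda(V))(iy)=i\tfrac{\delta-2\alpha}{\delta}[V,y]=\tfrac{\delta-2\alpha}{\delta}[V,iy]_{\mathfrak{g}'}$. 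This establishes the claimed identity for $\tau(\Uplambda(V))$.

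It then remains only to reconcile the scalars. Substituting $\alpha'=\alpha$ and $\delta'=-\delta$ one has $\tfrac{\delta-2\alpha}{\delta}-\tfrac{4\alpha'}{\delta'}=\tfrac{\delta-2\alpha}{\delta}+\tfrac{4\alpha}{\delta}=\tfrac{\delta+2\alpha}{\delta}=\tfrac{\delta'-2\alpha'}{\delta'}$, so that $\tau(\Uplambda(V))W-\tfrac{4\alpha'}{\delta'}[V,W]_{\mathfrak{g}'}=\tfrac{\delta'-2\alpha'}{\delta'}[V,W]_{\mathfrak{g}'}=\Uplambda'(V)W$, which is the first line of the asserted formula. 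The only genuinely delicate point is bookkeeping the factors of $i$ consistently: one must track that $\tau$ intertwines the $\mathfrak{g}$-bracket on $\mathfrak{m}$ with the $\mathfrak{g}'$-bracket on $\mathfrak{m}'$ (up to the $i$-twist built into $\tau$), which is precisely what makes $\tau(\Uplambda(V))$ coincide with the $\tfrac{\delta-2\alpha}{\delta}$-multiple of $\ad_{\mathfrak{g}'}(V)$. A secondary point worth a sentence of justification is that (\ref{canonical_Nomizu_map_explicit_formula}) is legitimately available for the (possibly non-compact) dual; this follows because the dual is again of the required form, the reductive complement $\mathfrak{m}'$ being the Killing-orthogonal complement of $\mathfrak{h}$ in the semisimple $\mathfrak{g}'$.
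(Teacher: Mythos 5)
Your proposal is correct and takes essentially the same route as the paper's proof: both apply the explicit formula (\ref{canonical_Nomizu_map_explicit_formula}) to each member of the dual pair (both fibering over Wolf spaces) and reduce the claim to the scalar identity $\frac{\delta'-2\alpha'}{\delta'}=\frac{\delta+2\alpha}{\delta}=\frac{\delta-2\alpha}{\delta}+\frac{4\alpha}{\delta}$ under $\alpha'=\alpha$, $\delta'=-\delta$. The only difference is that you make explicit the bookkeeping the paper leaves implicit, namely that $\tau(\Uplambda(V))$ agrees with $\frac{\delta-2\alpha}{\delta}\,\mathrm{ad}_{\mathfrak{g}'}(V)$ on both $\mathfrak{k}$ and $i\mathfrak{m}_1$, which is a worthwhile (and correct) verification.
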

\begin{proof}
	Noting that $M$ and $M'$ fiber over Wolf spaces, the result then follows from (\ref{canonical_Nomizu_map_explicit_formula}) by calculating
	\[
	\frac{\delta'-2\alpha'}{\delta'} [V,W]= \frac{\delta+2\alpha}{\delta}[V,W] = \frac{\delta-2\alpha}{\delta}[V,W] +\frac{4\alpha}{\delta}[V,W]=\tau(\Uplambda(V))W -\frac{4\alpha'}{\delta'}[V,W]  
	\]
	for $V\in\mathfrak{sp}(1)$.
\end{proof}
Inspired by \cite[Thm.\@ 7.2]{kath_Tduals}, we have the following theorem relating the $\mathcal{H}$-Killing spinors on a compact homogeneous $\tad$ space and its non-compact dual. The special case of dimension $7$ admits some nice simplifications, which will be discussed in more detail in the subsequent section.
\begin{theorem}\label{Chap_dual:spinorialdualitytheorem}
	Let $(M,g)$ and $(M',g')$ be a dual pair of homogeneous $\tad$ spaces of dimension $4n-1$, and identify the spinor modules $\Sigma\cong \Sigma'\cong  \Lambda^{\bullet} \C^{2n-1}$. If $\psi\: G\to \Sigma$, $\psi\equiv u$ is a constant $H$-equivariant map then the constant map $\psi'\: G' \to \Sigma'$, $\psi'\equiv u$ is also $H$-equivariant. In this case:
	
	\begin{enumerate}[(i)]
		\item The invariant spinor on $(M,g)$ induced by $\psi$ satisfies the $\mathcal{H}$-Killing equation if and only if the invariant spinor on $(M',g')$ induced by $\psi$ satisfies the $\mathcal{H}'$-Killing equation\footnote{By this we mean the equation $\nabla^{g'}_X\psi' = \frac{\alpha'}{2} X\cdot \psi' +\frac{\alpha'-\delta'}{2} \sum_{i=1}^3 \eta_i'(X)\Phi_i'\cdot \psi'$, where $\alpha':=\alpha$, $\delta':=-\delta$.};
		\item If the invariant spinor on $(M,g)$ induced by $\psi$ is $\nabla$-parallel, then the invariant spinor on $(M',g')$ satisfies
		\begin{align} \label{Chap_dual:canonicaldualequation}
			\nabla_X' \psi' = 	\begin{cases}
				2\alpha' (\Phi_i'  -  \xi_j'\cdot\xi_k')\cdot  \psi' & X =\xi_i'\in\mathcal{V}', \\
				0 & X\in \mathcal{H}',
			\end{cases}  
		\end{align}
		for any even permutation $(i,j,k)$ of $(1,2,3)$. 
	\end{enumerate} 
\end{theorem}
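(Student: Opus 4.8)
The plan is to reduce all three assertions to purely algebraic identities on the common value $u\in\Lambda^\bullet\C^{2n-1}$, using the Nomizu-map description of invariant spinors: for an invariant spinor one has $\nabla^g_X\psi=\widetilde{\Uplambda^g(X)}\cdot u$ and $\nabla_X\psi=\widetilde{\Uplambda(X)}\cdot u$, where the lifted Nomizu maps act by Clifford multiplication via $\mathfrak{so}(\mathfrak{m})\cong\mathfrak{spin}(\mathfrak{m})$. Each equation in the statement thereby becomes a family of Clifford-algebraic relations indexed by $X$, which I will verify separately for $X$ vertical ($X\in\mathfrak{k}=\mathfrak{sp}(1)$) and $X$ horizontal ($X\in\mathfrak{m}_1$). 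For the equivariance claim, note that $[\mathfrak{h},\mathfrak{k}]=0$ and $[\mathfrak{h},\mathfrak{m}_1]\subseteq\mathfrak{m}_1$ force the isotropy representation to fix $\mathfrak{k}$ pointwise and preserve $\mathfrak{m}_1$; since the $H$-equivariant isometry $\sigma$ of Proposition \ref{Chap_dual:dualspinlift} acts as the identity on $\mathfrak{k}$ and as multiplication by $i$ on $\mathfrak{m}_1$, the matrices of $\Ad(h)$ and $\Ad'(h)$ coincide in corresponding bases, so the lifted isotropy actions on $\Lambda^\bullet\C^{2n-1}$ agree and every $\widetilde{\Ad}(H)$-fixed $u$ is automatically $\widetilde{\Ad'}(H)$-fixed.

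The central device is a dictionary translating the dual data back to $M$. Because $\sigma$ extends to the Clifford isomorphism that is the identity on $\Lambda^\bullet\C^{2n-1}$, Clifford multiplication by a vertical vector is unchanged, while Clifford multiplication by the horizontal vector $\sigma(w)=iw$ on $\Sigma'$ equals Clifford multiplication by $w$ on $\Sigma$; moreover, for $A\in\mathfrak{so}(\mathfrak{m})_0$ and $B\in\mathfrak{so}(\mathfrak{m})_1$ the lifts of $\tau(A)$ and $\tau(iB)$ act on the identified module exactly as the lifts of $A$ and $B$. Feeding the relations of Proposition \ref{Chap_dual:LCNomizu} into this dictionary, together with Lemma \ref{Chap_dual:endomorphismparts} (which places $\Uplambda^g(x)\in\mathfrak{so}(\mathfrak{m})_0$ and $\Uplambda^g(w)\in\mathfrak{so}(\mathfrak{m})_1$), gives a relation between $\widetilde{\Uplambda^{g'}}$ and $\widetilde{\Uplambda^g}$ in each type of direction. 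Combining these with the adapted normal forms (\ref{Phi1})--(\ref{Phi3}), the Reeb relations $[\xi_j,\xi_k]=2\delta\xi_i$ and $[\xi_i,\cdot]|_{\mathcal H}=\delta\varphi_i$, and the $E_i$-identity $\xi_j\cdot\xi_k\cdot u=\xi_i\cdot u$, the two $\mathcal{H}$-Killing equations reduce to the same algebraic relations: the horizontal directions carry only the Killing term $\tfrac{\alpha}{2}X\cdot u$, and the vertical directions produce the $\Phi_i$-correction, with the substitution $\delta'=-\delta$ absorbing the sign changes introduced by the factor $i$ on $\mathcal H'=i\mathfrak{m}_1$ and by Proposition \ref{Chap_dual:LCNomizu}. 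This proves part (i).

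For part (ii) the computation is clean and self-contained. In horizontal directions $\Uplambda'|_{\mathcal H'}=0$ by Proposition \ref{Chap_dual:CanNomizu}, giving $\nabla'_X\psi'=0$ for $X\in\mathcal H'$ immediately. In the vertical direction, Proposition \ref{Chap_dual:CanNomizu} yields $\Uplambda'(\xi_i')=\tau(\Uplambda(\xi_i'))-\tfrac{4\alpha'}{\delta'}\,\pr_{\mathfrak{m}'}\ad(\xi_i')$; the first term lifts to an operator annihilating $u$ because $u$ is $\nabla$-parallel, so that $\nabla'_{\xi_i'}\psi'=-\tfrac{4\alpha'}{\delta'}\,\widetilde{\ad(\xi_i')}\cdot u$. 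It then remains to compute the two-form of $\ad(\xi_i')$ on $(\mathfrak{m}',g')$: its vertical part is $2\delta'\,\xi_j'\wedge\xi_k'$ (from $[\xi_j',\xi_k']=2\delta'\xi_i'$ and unit Reeb fields) and its horizontal part is $-\delta'\,\Phi_i'^{\mathcal H}$ (from $[\xi_i',\cdot]|_{\mathcal H'}=\delta'\varphi_i'$), whence $\widetilde{\ad(\xi_i')}=-\tfrac{\delta'}{2}(\Phi_i'-\xi_j'\cdot\xi_k')$ after using $\Phi_i'^{\mathcal H}=\Phi_i'+\xi_j'\wedge\xi_k'$ and the orthogonality $\xi_j'\wedge\xi_k'\mapsto\xi_j'\cdot\xi_k'$. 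Multiplying by $-\tfrac{4\alpha'}{\delta'}$ produces exactly $2\alpha'(\Phi_i'-\xi_j'\cdot\xi_k')\cdot u$, which is (\ref{Chap_dual:canonicaldualequation}).

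I expect the main obstacle to be the sign and normalization bookkeeping in part (i): one must pin down precisely how the dual structure tensors $(\xi_i',\eta_i',\varphi_i',\Phi_i')$ are expressed in terms of the original ones (in particular the sign forced on the vertical Reeb directions by $\delta'=-\delta$), and then verify that the relative sign between $\widetilde{\Uplambda^{g'}}$ and $\widetilde{\Uplambda^g}$ in horizontal directions, combined with the membership condition for $E_i$ in the vertical directions, makes the two Killing-type equations genuinely equivalent rather than off by a sign. Part (ii), by contrast, is a direct calculation once the Reeb bracket relations are in hand, and I would present it essentially as above.
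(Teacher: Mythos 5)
Your overall strategy coincides with the paper's (reduce everything to Clifford-algebraic identities at the origin via Nomizu maps, using Propositions \ref{Chap_dual:LCNomizu} and \ref{Chap_dual:CanNomizu}), and your part (ii) is essentially the paper's computation and is correct. The genuine gap is in part (i), and it sits exactly where you flagged "sign and normalization bookkeeping": your dictionary is built on the identification $\sigma = \Id\rvert_{\mathfrak{k}}\oplus i\,\Id\rvert_{\mathfrak{m}_1}$ together with the claim that Clifford multiplication by a vertical vector is unchanged. This is the naive identification the paper explicitly rejects. Since $\xi_i' = \delta'\sigma_i = -\delta\sigma_i = -\xi_i$, the map $\sigma$ carries an adapted frame $\{\xi_1,\xi_2,\xi_3,e_4,\dots\}$ of $(M,g)$ to $\{-\xi_1',-\xi_2',-\xi_3', ie_4,\dots\}$, which is \emph{wrongly oriented} for the dual structure; because the identifications $\Sigma\cong\Sigma'\cong\Lambda^{\bullet}\C^{2n-1}$ are made through adapted frames (with the orientation fixed throughout the paper so that Killing spinors have Killing number $\tfrac12$), under $\sigma$ the vertical Clifford multiplications come out off by a sign. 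The substitution $\delta'=-\delta$ does not absorb this uniformly: in the vertical equation the Killing term $\tfrac{\alpha'}{2}\xi_i'\cdot u$ flips under the vertical sign change while the correction term $\Phi_i'\cdot u$ does not (since $\Phi_i$ has no mixed vertical-horizontal components), so the two sides of your putative identity transform differently and the asserted equivalence does not follow as stated. The paper repairs precisely this by replacing $\sigma$ with $\theta := -\Id\rvert_{\mathfrak{sp}(1)}\oplus i\,\Id\rvert_{\mathfrak{m}_1}$, which satisfies $\theta(\xi_i)=\xi_i'$, matches adapted frames to adapted frames, makes $V$ and $\theta(V)$ act as the identical Clifford operator, and gives $\Phi_i'=\theta(\Phi_i)$; the vertical computation then yields $\theta^{-1}\bigl(\widetilde{\Uplambda}^{g'}(\xi_i')\bigr) = -\delta'\Phi_i + \widetilde{\Uplambda}^g(\xi_i)$, from which part (i) drops out directly.

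A secondary flaw: your reduction invokes the identity $\xi_j\cdot\xi_k\cdot u = \xi_i\cdot u$. Part (i) makes no hypothesis that $\psi$ is a section of $E=E_1+E_2+E_3$ --- it concerns an arbitrary invariant solution of (\ref{deformedKillingspinorsbundle}) --- so this identity is not available; and it is also unnecessary, since in the paper's argument the $\Phi_i$-correction emerges from the difference of Nomizu maps alone, with no Clifford relation imposed on $u$. As written, your argument would at best prove part (i) for spinors lying in $E$. By contrast, your equivariance argument (via $[\mathfrak{h},\mathfrak{k}]=0$, which forces $\Ad(h)\rvert_{\mathfrak{k}}=\Id$ for connected $H$) and your part (ii) --- where $\Uplambda'\rvert_{\mathcal{H}'}=0$ kills the horizontal directions and the vertical term $-\tfrac{4\alpha'}{\delta'}\,\widetilde{\ad(\xi_i')}$ evaluates, with the same endomorphism-to-form conventions as the paper, to $2\alpha'(\Phi_i'-\xi_j'\cdot\xi_k')$ --- are sound and match the paper's proof.
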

\begin{proof}
	We would like to define an isometry $\theta\: (\mathfrak{m},g)\to (\mathfrak{m}',g')$ in order to compare the two spaces. The obvious choice is the identification $x\mapsto x$, $y\mapsto  iy$ used in the proof of Proposition \ref{Chap_dual:dualspinlift}, however this leads to a problem with the orientations of the two spaces. Indeed, using the notation of \cite[Thm.\@ 3.1.1]{hom3alphadelta}, the Reeb vector fields of the dual pair are related by
	\[
	\xi_i'=\delta' \sigma_i = -\delta\sigma_i=-\xi_i,
	\]
	so an orthonormal frame 
	$
	\{\xi_1,\xi_2,\xi_3, e_4,\dots,e_{4n-4} \}
	$ inside the standard orientation for $(M,g)$ would be identified with the frame 
	$
	\{-\xi_1'-\xi_2',-\xi_3', ie_4,\dots ie_{4n-4}\},
	$ which is \emph{not} oriented in the standard way for the dual $3$-$(\alpha',\delta')$-Sasaki space $(M',g')$. Thus we choose instead the isometry 
	\[\theta := -\Id\rvert_{\mathfrak{sp}(1)} \oplus i\Id\rvert_{\mathfrak{m}_1},
	\]
	which identifies $\xi_i$ with $\xi_i'$. Then, under identification of the spinor modules as above, each tangent vector $V\in\mathfrak{m}$ and its image $\theta(V)\in \mathfrak{m}'$ act as the same operator by Clifford multiplication (likewise for the extension of $\theta$ to tensors and differential forms). Under this identification it is clear that $\psi$ is $H$-invariant if and only if $\psi'$ is $H$-invariant, due to the fact that the $2$-form $\ad'(h) \in \mathfrak{so}(\mathfrak{m}',g')$ coincides with $\theta(\ad(h))$ for all $h \in \mathfrak{h}$. By \cite[Thm.\@ 3.1.1]{hom3alphadelta} we have
	\[
	\varphi_i' = \frac{1}{2\delta'}\ad(\xi_i')\rvert_{\mathfrak{sp}(1)} + \frac{1}{\delta'}\ad(\xi_i')\rvert_{i\mathfrak{m}_1} =   \frac{1}{2\delta}\ad(\xi_i)\rvert_{\mathfrak{sp}(1)} + \frac{1}{\delta}\ad(\xi_i)\rvert_{i\mathfrak{m}_1}  , 
	\]	
	and therefore $\Phi_i' = \theta(\Phi_i)$. 
	In the horizontal directions, it follows from Lemma \ref{Chap_dual:endomorphismparts} and Proposition \ref{Chap_dual:LCNomizu} above, together with the explicit formula for the Nomizu map recalled in (\ref{LC_Nomizu_map_explicit_formula}), that
	\begin{align*} \theta^{-1}( \Uplambda^{g'}(iy) )&= \theta^{-1} (-\tau(i\Uplambda^g(y)))    =  \theta^{-1}(- \frac{\alpha}{\delta} \ad(iy)\rvert_{\mathfrak{sp}(1)}  +\frac{1}{2} \text{proj}_{\mathfrak{sp}(1)} \ad(iy)\rvert_{i\mathfrak{m}_1})\\
		&= \frac{\alpha}{\delta} \ad(y)\rvert_{\mathfrak{sp}(1)} +\frac{1}{2}\text{proj}_{\mathfrak{sp}(1)} \ad(y)\rvert_{\mathfrak{m}_1} \\
		&= \Uplambda^g(y).
	\end{align*}
	Thus if $\psi\equiv u$ is an invariant $\mathcal{H}$-Killing spinor, then we have
	\[
	\widetilde{\Uplambda}^{g'}(iy)\cdot u  = \theta(\widetilde{\Uplambda}^g(y))\cdot u = \theta(\frac{\alpha}{2}y) \cdot u = \frac{\alpha'}{2} (iy)\cdot u , 
	\] where $\widetilde{\Uplambda^{g}}$, $\widetilde{\Uplambda^{g'}}$ denote the spin lifts of $\Uplambda^g$, $\Uplambda^{g'}$. The situation in the vertical directions is somewhat more complicated. Arguing as above, we have{\small
		\begin{align*}
			\theta^{-1}(\Uplambda^{g'}(\xi_i)) &=\theta^{-1}( \tau(\Uplambda^g(\xi_i))\rvert_{\mathfrak{sp}(1)} -\tau(\Uplambda^g(\xi_i))\rvert_{i\mathfrak{m}_1} + 2\ad(\xi_i)\rvert_{i\mathfrak{m}_1} ) \\
			&= \Uplambda^g(\xi_i)\rvert_{\mathfrak{sp}(1)} -\Uplambda^g(\xi_i)\rvert_{\mathfrak{m}_1} + 2\ad(\xi_i)\rvert_{\mathfrak{m}_1}
			=  \Uplambda^g(\xi_i)\rvert_{\mathfrak{sp}(1)} - \Uplambda^g(\xi_i) +\Uplambda^g(\xi_i)\rvert_{\mathfrak{sp}(1)} + 2\ad(\xi_i)\rvert_{\mathfrak{m}_1}\\ &= 2\Uplambda^g(\xi_i)\rvert_{\mathfrak{sp}(1)} -\Uplambda^g(\xi_i) + 2\ad(\xi_i)\rvert_{\mathfrak{m}_1} 
			= \ad(\xi_i)\rvert_{\mathfrak{sp}(1)} - \Uplambda^g(\xi_i) +2\ad(\xi_i)\rvert_{\mathfrak{m}_1}\\
			&= - 2\delta \Phi_i -\Uplambda^g(\xi_i)
		\end{align*}
	}(where for the final equality we have identified the endomorphism field $\ad(\xi_i)\rvert_{\mathfrak{sp}(1)}$ with the $2$-form $-2\delta \Phi_i\rvert_{\mathcal{V}}$ using the metric $g$). Taking the spin lift then gives 
	\[
	\theta^{-1}(\widetilde{\Uplambda}^{g'}(\xi_i)) = -\delta\Phi_i -\widetilde{\Uplambda}^g(\xi_i) , 
	\] or equivalently, 
	\[
	\theta^{-1}(\widetilde{\Uplambda}^{g'}(\xi_i')) = - \delta' \Phi_i + \widetilde{\Uplambda}^g(\xi_i) , 
	\]
	and hence
	\begin{align*}
		\widetilde{\Uplambda^{g'}}(\xi_i') \cdot u &= \theta(-\delta'\Phi_i + \widetilde{\Uplambda}^g(\xi_i)) \cdot u  =-\delta'\Phi_i'\cdot u + \theta(\widetilde{\Uplambda}^g(\xi_i)) \cdot u \\
		&=-\delta' \Phi_i'\cdot u +\theta( \frac{\alpha}{2}\xi_i +\frac{\alpha-\delta}{2}\Phi_i)\cdot u 
		= -\delta' \Phi_i'\cdot u + \frac{\alpha'}{2} \xi_i' \cdot u +\frac{\alpha'+\delta'}{2} \Phi_i'\cdot u \\
		&= \frac{\alpha'}{2}\xi_i'\cdot u +\frac{\alpha'-\delta'}{2}\Phi_i'\cdot u . 
	\end{align*}
	It follows that 
	\[
	\widetilde{\Uplambda}^{g'}(V)\cdot u = \frac{\alpha'}{2}V\cdot u + \frac{\alpha'-\delta'}{2}\sum_{i=1}^3 \eta_i'(V)\Phi_i'\cdot u  , 
	\]for all $V\in\mathfrak{m}'$, proving the first part of the theorem. Assume now that $\psi\equiv u$ is parallel for the canonical connection. It is immediately apparent from Proposition \ref{Chap_dual:CanNomizu} that $\widetilde{\Uplambda'}(iy)\cdot u =0$ for all $iy\in i\mathfrak{m}_1$, and in the vertical directions
	\begin{align*}
		\theta^{-1}(\Uplambda'(\xi_i')) &= \theta^{-1}\left(\tau(\Uplambda(\xi_i'))-\frac{4\alpha'}{\delta'}\ad(\xi_i')\right) = \Uplambda(\xi_i') -\frac{4\alpha'}{\delta'} \ad(\xi_i') \\
		&= -\Uplambda(\xi_i) - 4\alpha' ( \varphi_i + \varphi_i\rvert_{\mathfrak{sp}(1)}) = -\Uplambda(\xi_i) +4\alpha'( \Phi_i + \Phi_i^{\mathcal{V}}   ) 
	\end{align*}
	(where for the final equality we have identified skew-symmetric endomorphisms with differential forms using the metric $g'$). Applying the spin lift of this operator to $\psi\equiv u$ gives 
	\[
	\widetilde{\Uplambda'}(\xi_i') \cdot u = 0+ 2\alpha' \Phi_i'\cdot u -2\alpha' \xi_j' \cdot \xi_k' \cdot u , 
	\]
	as desired.
\end{proof}
\begin{remark}
	The preceding theorem parallels Kath's result \cite[Thm.\@ 7.2]{kath_Tduals}, which showed that real Killing spinors on compact homogeneous Riemannian spin manifolds $(M=G/H,g)$ correspond to real Killing spinors (for the same Killing number) on pseudo-Riemannian homogeneous spaces strongly $T$-dual to $M$. This suggests that our $\mathcal{H}$-Killing equation (\ref{deformedKillingspinorsbundle}) is a natural generalization of Killing spinors to the $\tad$ setting (a priori, it is possible for the same spinor to satisfy many different equations). Note also that it doesn't make sense to consider spinors dual to the deformed Killing spinors (\ref{KS_with_torsion}) due to the problem with the coefficients $\sqrt{\alpha\delta}$ which we have already encountered.
\end{remark}
\section{The Special Case of Dimension 7}\label{section:dim7}
In this section we examine more closely the situation in dimension 7, for both homogeneous and non-homogeneous $\tad$ manifolds. This dimension is special due to the well-known correspondence between unit spinors (up to sign) and $\G_2$-structures described in \cite{dim67}, allowing the matter of special spinors to be approached using well-known results from $\G_2$-geometry. In particular, one can exploit the fact that the canonical connection of the $\tad$ structure is the characteristic connection of the $\G_2$-structure induced by a certain spinor with desirable geometric properties (the so-called \emph{canonical spinor}) \cite[\S4.5]{3str}. Concretely, for any 7-dimensional $\tad$ manifold $(M^7,g,\varphi_i, \xi_i,\eta_i )$ (not necessarily homogeneous), it is shown in \cite[Thm.\@ 4.5.1]{3str} that the canonical connection $\nabla$ arises as the characteristic connection of the co-calibrated $\G_2$-structure 
\begin{align}\label{G2form}
\omega := \eta_{1,2,3} + \sum_{i=1}^3 \eta_i\wedge \Phi_i^{\mathcal{H}}
\end{align}
defined naturally in terms of the $\tad$ structure tensors. The corresponding spinor $\psi_0$ is called the \emph{canonical spinor}, and can be characterized, as in \cite[Def.\@ 4.5.1]{3str}, as the unique spinor (up to sign) such that
\[ \nabla \psi_0=0,\qquad \omega\cdot \psi_0=-7\psi_0, \qquad |\psi_0|=1.   \]
The three \emph{auxiliary spinors} $\psi_i$, $i=1,2,3$ are defined as the Clifford products of the Reeb vector fields with $\psi_0$,
\[
\psi_1:= \xi_1\cdot \psi_0,\qquad \psi_2:=\xi_2\cdot \psi_0,\qquad \psi_3:=\xi_3\cdot \psi_0.
\]
In the $\ts$ setting ($\alpha=\delta=1$) the canonical spinor is a generalized Killing spinor, and the auxiliary spinors are the three linearly independent Killing spinors known to be carried by a $\ts$ manifold in dimension $7$ \cite{3Sasdim7}. For arbitrary $\alpha,\delta$, we recall that the canonical and auxiliary spinors are generalized Killing spinors for the Levi-Civita connection:  
\begin{theorem}\label{3adGKSTheorem}
	\emph{(\cite[Thm.\@ 4.5.2]{3str}).} The canonical and auxiliary spinors are Riemannian generalized Killing spinors,
	\begin{align} \label{dim7spinorialeqn3ad}
	\nabla^g_X\psi_0 &=  \begin{cases} 
		\frac{2\alpha-\delta}{2}X\cdot \psi_0 & X\in\mathcal{V} \\
	-\frac{3\alpha}{2}X\cdot \psi_0  &  X\in\mathcal{H}, \\
	\end{cases} \qquad 
	\nabla^g_X\psi_i = \begin{cases} 
	\frac{2\alpha-\delta}{2} \xi_i\cdot \psi_i &  X=\xi_i \\
	\frac{3\delta-2\alpha}{2}\xi_j\cdot \psi_i  & X=\xi_j \ (j\neq i) \\
	\frac{\alpha}{2}X\cdot \psi_i & X\in\mathcal{H}
	\end{cases}
	\end{align}
	for $i,j=1,2,3$.
\end{theorem}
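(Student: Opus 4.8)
The plan is to reduce the whole statement to the single fact $\nabla\psi_0=0$ and then transfer to the Levi-Civita connection using the standard relation between a metric connection with totally skew-symmetric torsion and $\nabla^g$. Since the canonical connection $\nabla$ has skew torsion $T$, its spinorial lift satisfies $\nabla_X\psi=\nabla^g_X\psi+\tfrac14(X\lrcorner T)\cdot\psi$, so that $\nabla\psi_0=0$ immediately yields
\[
\nabla^g_X\psi_0=-\tfrac14(X\lrcorner T)\cdot\psi_0 .
\]
Using \eqref{canonical_torsion_formula} together with \eqref{G2form}, I would first rewrite the torsion as $T=2\alpha\,\omega+2(\delta-5\alpha)\,\eta_{1,2,3}$, which isolates the $\G_2$-form and reduces the computation to understanding the Clifford action of $\omega$ and of $\eta_{1,2,3}$ on the canonical spinor.

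The heart of the argument is then a short list of algebraic identities for $\psi_0$, all consequences of $\omega\cdot\psi_0=-7\psi_0$ and the $\G_2$-structure it defines. First, the map $X\mapsto X\lrcorner\omega$ identifies $TM$ with the summand $\Lambda^2_7$ of two-forms acting on $\psi_0$ as scalar multiples of Clifford multiplication, so $(X\lrcorner\omega)\cdot\psi_0=3\,X\cdot\psi_0$; the constant $3$ is pinned down by contracting the operator identity $\sum_s e_s\cdot(e_s\lrcorner\omega)=3\,\omega$ against $e_s\cdot e_s=-1$, giving $-7c\,\psi_0=3\,\omega\cdot\psi_0=-21\,\psi_0$. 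Second, I need $\eta_{1,2,3}\cdot\psi_0=-\psi_0$, equivalently the quaternionic relation $\xi_j\cdot\xi_k\cdot\psi_0=\xi_i\cdot\psi_0$ for even permutations; and third, $\varphi_i(X)\cdot\psi_0=X\cdot\xi_i\cdot\psi_0$ for horizontal $X$. These last two encode that the Reeb directions act on $\psi_0$ like an imaginary-quaternion triple and that the $\varphi_i$ act horizontally through Clifford multiplication by $\xi_i$; I would establish them by passing to an adapted frame and evaluating in the explicit spin representation of \S2.3, choosing the frame so that $\psi_0$ is the standard unit spinor (possible since $\Spin(7)$ acts transitively on the unit sphere of $\Sigma$ with stabilizer $\G_2$). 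Granting these, the $\psi_0$-equations follow: for $X\in\mathcal{H}$ the $\eta_{1,2,3}$-term drops and $\nabla^g_X\psi_0=-\tfrac{\alpha}{2}\cdot3\,X\cdot\psi_0=-\tfrac{3\alpha}{2}X\cdot\psi_0$, while for $X=\xi_i$ one combines $(\xi_i\lrcorner\omega)\cdot\psi_0=3\,\xi_i\cdot\psi_0$ with $\xi_i\lrcorner\eta_{1,2,3}=\eta_{j,k}$ and $\eta_{j,k}\cdot\psi_0=\xi_i\cdot\psi_0$ to obtain the coefficient $\tfrac{2\alpha-\delta}{2}$.

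For the auxiliary spinors $\psi_i=\xi_i\cdot\psi_0$ I would differentiate by the Leibniz rule, $\nabla^g_X\psi_i=(\nabla^g_X\xi_i)\cdot\psi_0+\xi_i\cdot\nabla^g_X\psi_0$, insert the Levi-Civita derivative $\nabla^g_X\xi_i=-\alpha\varphi_i(X)-(\alpha-\delta)[\eta_k(X)\xi_j-\eta_j(X)\xi_k]$ from Proposition \ref{LCder} and the $\psi_0$-formula just obtained, and simplify. In the horizontal case this uses $\varphi_i(X)\cdot\psi_0=X\cdot\xi_i\cdot\psi_0$ together with $\xi_i\cdot X=-X\cdot\xi_i$ to collapse the two contributions into $\tfrac{\alpha}{2}X\cdot\psi_i$; in the vertical cases $X=\xi_i$ and $X=\xi_j$ ($j\neq i$) it uses $\varphi_i(\xi_j)=\xi_k$ and the relation $\xi_j\cdot\xi_k\cdot\psi_0=\xi_i\cdot\psi_0$ to produce the coefficients $\tfrac{2\alpha-\delta}{2}$ and $\tfrac{3\delta-2\alpha}{2}$ respectively.

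I expect the main obstacle to be establishing the two genuinely $\psi_0$-specific identities $\eta_{1,2,3}\cdot\psi_0=-\psi_0$ and $\varphi_i(X)\cdot\psi_0=X\cdot\xi_i\cdot\psi_0$, which (unlike the constant $3$) do not follow from the trace argument alone, and, to a lesser extent, the permutation-and-sign bookkeeping in the vertical auxiliary computations, where one must track carefully which triples $(i,j,k)$ are even and that, for instance, $\xi_1\cdot\xi_3\cdot\psi_0=-\xi_2\cdot\psi_0$. The route through the explicit spin representation makes these identities mechanical but opaque; a more conceptual alternative is to derive them from the representation theory of $\G_2$ on $\Sigma\cong\R^8$, decomposing $\Lambda^2=\Lambda^2_7\oplus\Lambda^2_{14}$ and using that $\Lambda^2_{14}\cong\mathfrak{g}_2$ annihilates $\psi_0$.
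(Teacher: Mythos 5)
Your proposal is correct, but note first that this paper contains no proof of the statement to compare against: Theorem \ref{3adGKSTheorem} is quoted from \cite[Thm.\@ 4.5.2]{3str}. Judged on its own, your argument is sound and in fact reconstructs the method of the cited source: reduce everything to $\nabla\psi_0=0$ via the standard lift $\nabla_X\psi=\nabla^g_X\psi+\frac14(X\lrcorner T)\cdot\psi$ (the factor $\frac14$ is the right one for the convention $\nabla_XY=\nabla^g_XY+\frac12 T(X,Y,\cdot)$ used here), then evaluate $X\lrcorner T$ on $\psi_0$ using $\G_2$-algebra. Your rewriting $T=2\alpha\,\omega+2(\delta-5\alpha)\,\eta_{1,2,3}$ follows correctly from (\ref{canonical_torsion_formula}) and (\ref{G2form}), and all the coefficients check: the Schur argument for $(X\lrcorner\omega)\cdot\psi_0=3X\cdot\psi_0$ is valid (the map $X\mapsto(X\lrcorner\omega)\cdot\psi_0$ is $\G_2$-equivariant and $\Hom_{\G_2}(\R^7,\R)=0$, so it lands in the $\R^7$-summand of $\Sigma$, and the trace pins $c=3$); for $X=\xi_i$ one gets $\xi_i\lrcorner T=2\alpha\Phi_i^{\mathcal H}+2(\delta-4\alpha)\eta_{j,k}$, and with $\eta_{j,k}\cdot\psi_0=\xi_i\cdot\psi_0$ and $\Phi_i^{\mathcal H}\cdot\psi_0=2\xi_i\cdot\psi_0$ this yields exactly $\frac{2\alpha-\delta}{2}\xi_i\cdot\psi_0$; and the Leibniz computations for $\psi_i=\xi_i\cdot\psi_0$, including the odd-permutation cases, reproduce $\frac{\alpha}{2}$, $\frac{2\alpha-\delta}{2}$, and $\frac{3\delta-2\alpha}{2}$. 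The two ``$\psi_0$-specific'' identities you flag as the real work are precisely the content of the paper's Lemma \ref{Chap_dual:Cliffproductslemma}: your $\xi_j\cdot\xi_k\cdot\psi_0=\xi_i\cdot\psi_0$ (equivalently $\eta_{1,2,3}\cdot\psi_0=-\psi_0$) is (\ref{Chap_dual:canonicalCliffrelation}) combined with $\Phi_i\cdot\psi_0=\psi_i$, and $\varphi_i(X)\cdot\psi_0=X\cdot\xi_i\cdot\psi_0$ for horizontal $X$ is the same kind of ``straightforward calculation in the spin representation'' the paper invokes for that lemma (I verified instances in the explicit model of \S2.3, e.g.\@ $e_5\cdot\psi_0=e_4\cdot\xi_1\cdot\psi_0$ with $\psi_0=\frac{1}{\sqrt2}(y_2\wedge y_3+iy_1)$); your representation-theoretic alternative via $\Lambda^2=\Lambda^2_7\oplus\Lambda^2_{14}$ with $\Lambda^2_{14}\cong\mathfrak{g}_2$ annihilating $\psi_0$ is equally legitimate and arguably cleaner. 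The only caveat is organizational, not mathematical: since the constant in identity (a) and the two identities (b), (c) all depend on the orientation convention fixing $\omega\cdot\psi_0=-7\psi_0$, you should state once that all three are derived under that single normalization, as a sign flip there propagates through every coefficient.
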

We refer the reader to \cite[Ex.\@ 4.18]{AHLspheres} for a detailed discussion of the homogeneous example $S^{7}=\Sp(2)/\Sp(1)$, including explicit calculations of the invariant spinors and their spinorial field equations. In the following theorem we show that the canonical and auxiliary spinors on a $7$-dimensional homogeneous $\tad$ space correspond in a $1$-to-$1$ manner with their dual counterparts:
\begin{theorem}\label{Chap:duality:dual_equations_theorem_hom3ad}
	Suppose that $(M=G/H,g)$ is a compact simply-connected 7-dimensional homogeneous $\tad$ space, and $(M'=G'/H, g')$ its non-compact dual. Under the identification of spinor bundles $\Sigma\cong \Sigma'$ as in Theorem \ref{Chap_dual:spinorialdualitytheorem}, the canonical and auxiliary spinors $\psi_i$, $i=0,1,2,3$ on $(M,g)$ are given by constant $H$-equivariant maps $G\to \Sigma $, and the corresponding spinors $\psi_i' \in \Sigma'$, $i=0,1,2,3$ are the canonical and auxiliary spinors of the dual $3$-$(\alpha',\delta')$-Sasaki structure. In particular they are Riemannian generalized Killing spinors, satisfying:
	\begin{align} \label{Chap_dual:dim7dualspinorialeqn3ad}
		\nabla^{g'}_X\psi_0' &=  \begin{cases} 
			\frac{2\alpha'-\delta'}{2} X\cdot \psi_0' & X\in\mathcal{V}', \\
			-\frac{3\alpha'}{2}X\cdot \psi_0'  &  X\in\mathcal{H}', \\
		\end{cases} \qquad 
		\nabla^{g'}_X\psi_i' = \begin{cases} 
			\frac{2\alpha'-\delta'}{2}\xi_i'\cdot \psi_i' &  X=\xi_i' , \\
			\frac{3\delta'-2\alpha'}{2} \xi_j'\cdot \psi_i'  & X=\xi_j'  \ (j\neq i), \\
			\frac{\alpha'}{2} X\cdot \psi_i' & X\in\mathcal{H}',
		\end{cases}
	\end{align}
	for $i,j=1,2,3$, where $\mathcal{V}'$, $\mathcal{H}'$ denote the vertical and horizontal bundles respectively of $M'$.
\end{theorem}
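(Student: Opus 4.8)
The plan is to identify the dual spinors $\psi_i'$ with the canonical and auxiliary spinors of the dual $3$-$(\alpha',\delta')$-Sasaki structure, and then to read off the field equations (\ref{Chap_dual:dim7dualspinorialeqn3ad}) by invoking Theorem \ref{3adGKSTheorem} directly on $M'$. This reduces the whole statement to a matter of pinning down which elements of $\Sigma'$ the dual spinors are.

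First I would establish that the canonical spinor $\psi_0$ is $G$-invariant. Since the $3$-$(\alpha,\delta)$-Sasaki structure is $G$-invariant, so are the metric, the structure tensors, the $\G_2$-form $\omega$ of (\ref{G2form}), and the canonical connection $\nabla$; hence $G$ preserves the two-element set of unit spinors characterized by $\nabla\psi_0=0$ and $\omega\cdot\psi_0=-7\psi_0$, and connectedness of $G$ forces $\psi_0$ to be fixed. Thus $\psi_0$ is a constant $H$-equivariant map $G\to\Sigma$, $\psi_0\equiv u_0$, and the auxiliary spinors $\psi_i=\xi_i\cdot\psi_0\equiv \xi_i\cdot u_0$ are invariant as well, being Clifford products of invariant fields with $\psi_0$. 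By Theorem \ref{Chap_dual:spinorialdualitytheorem} the same constants then define invariant spinors $\psi_i'$ on $M'$.

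The heart of the argument is to recognize $\psi_0'$ as the canonical spinor of $M'$. I would use the orientation-preserving isometry $\theta=-\Id\rvert_{\mathfrak{sp}(1)}\oplus i\Id\rvert_{\mathfrak{m}_1}$ from the proof of Theorem \ref{Chap_dual:spinorialdualitytheorem}, under which every $V\in\mathfrak{m}$ and its image $\theta(V)\in\mathfrak{m}'$ act by identical Clifford operators and which satisfies $\theta(\xi_i)=\xi_i'$ and $\theta(\Phi_i)=\Phi_i'$, hence $\theta(\omega)=\omega'$. From this I compute $\omega'\cdot\psi_0'=\theta(\omega)\cdot u_0=\omega\cdot u_0=-7u_0=-7\psi_0'$ together with $|\psi_0'|=|u_0|=1$. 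In dimension seven the $(-7)$-eigenspace of Clifford multiplication by a $\G_2$-form is one-dimensional, spanned pointwise by the canonical spinor $\chi_0'$ of $M'$; therefore $\psi_0'(p)=\pm\chi_0'(p)$ at every point, and smoothness together with connectedness of $M'$ upgrades this to a global equality of sign. So $\psi_0'$ is the canonical spinor of $M'$, and consequently $\psi_i'\equiv\xi_i\cdot u_0=\theta(\xi_i)\cdot u_0=\xi_i'\cdot\psi_0'$ is (up to the common sign) the $i$-th auxiliary spinor of the dual structure. The equations (\ref{Chap_dual:dim7dualspinorialeqn3ad}) then follow at once: $M'$ is itself a $7$-dimensional $3$-$(\alpha',\delta')$-Sasaki manifold, so Theorem \ref{3adGKSTheorem} applies verbatim to its canonical and auxiliary spinors.

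The main delicacy here is bookkeeping rather than hard computation: one must check carefully that, under the orientation-fixing $\theta$ and the induced identification $\Sigma\cong\Sigma'$, Clifford multiplication by $\omega'$ genuinely coincides with that by $\omega$ — this is precisely why the naive identification $x\mapsto x$, $y\mapsto iy$ is discarded in favour of $\theta$ — and one must justify the invariance of $\psi_0$ rigorously. A more computational alternative would instead start from Theorem \ref{Chap_dual:spinorialdualitytheorem}(ii), which gives $\nabla'_{\xi_i'}\psi_0'=2\alpha'(\Phi_i'-\xi_j'\cdot\xi_k')\cdot\psi_0'$ and $\nabla'_X\psi_0'=0$ for horizontal $X$; there the obstacle would be to prove that the vertical correction term vanishes on $\psi_0'$, i.e.\ the dimension-seven identity $\Phi_i\cdot\psi_0=\xi_j\cdot\xi_k\cdot\psi_0$ for the canonical spinor, after which $\nabla'\psi_0'=0$ and the same conclusion follows. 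The eigenvalue argument above has the advantage of making that identity a consequence rather than a prerequisite.
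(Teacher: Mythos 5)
Your proposal follows the paper's proof in all essentials: you establish $G$-invariance of $\psi_0$ from its characterization via the $(-7)$-eigenspace of $\omega$, deduce invariance of the auxiliary spinors $\psi_i=\xi_i\cdot\psi_0$, transfer everything through the orientation-fixing isometry $\theta=-\Id\rvert_{\mathfrak{sp}(1)}\oplus i\Id\rvert_{\mathfrak{m}_1}$ to get $\theta(\omega)=\omega'$, identify $\psi_0'$ as the canonical spinor of $(M',g')$ by the eigenvalue computation $\omega'\cdot\psi_0'=-7\psi_0'$, and then read off (\ref{Chap_dual:dim7dualspinorialeqn3ad}) by applying Theorem \ref{3adGKSTheorem} verbatim to $M'$. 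This is exactly the architecture of the paper's argument, and your remark that the identity $(\Phi_i-\xi_j\cdot\xi_k)\cdot\psi_0=0$ becomes a consequence rather than a prerequisite matches the paper's own observation in the remark following Theorem \ref{Chap_duality:dim7_equivalence_of_two_eqns}.

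The one place you deviate is the mechanism for invariance of $\psi_0$, and there your argument as written has a small gap. In the complex spinor bundle used throughout this paper (with Hermitian product $\langle\ ,\ \rangle$), the $(-7)$-eigenspace of $\omega$ is a \emph{complex} line, so the unit spinors satisfying your two defining conditions form a circle $\{e^{i\vartheta}\psi_0\}$, not a two-element set, and connectedness of $G$ then only yields that $G$ acts on $\C\psi_0$ by a unitary character --- it does not by itself force $\psi_0$ to be fixed. This is precisely why the paper's proof instead shows that $\C\psi_0$ is a one-dimensional $G$-subrepresentation of $\Sigma$ and then kills the character: it first invokes the classification of $7$-dimensional compact homogeneous $\ts$ spaces to conclude $G=\Sp(2)$ or $\SU(3)$, whence every one-dimensional representation is trivial. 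Your argument is repaired by one line --- the Lie algebra $\mathfrak{g}$ is simple by the definition of generalized $\ts$ data, so the compact connected group $G$ admits no nontrivial characters --- and with that patch your route is actually slightly cleaner than the paper's, since it bypasses the classification entirely. The remaining steps (that $\theta(\xi_i)=\xi_i'$, $\theta(\Phi_i)=\Phi_i'$ give $\theta(\omega)=\omega'$, that the identification preserves Clifford multiplication and norms, and that the sign ambiguity is immaterial because the canonical spinor of $M'$ is itself only defined up to sign) are all correctly handled.
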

\begin{proof}
	Since $M$ is compact ($\alpha\delta>0$), the generalized 3-Sasakian data determining it also determines a 7-dimensional compact homogeneous 3-Sasakian space. From the classification of homogeneous 3-Sasakian spaces in \cite[Thm.\@ C]{BG3Sas} it then follows that $M$ is either
	\[
	S^{7} =\frac{\Sp(2)}{\Sp(1)} \quad \text{ or } \quad \frac{\SU(3)}{S(\U(1)\times \U(1))}
	\]
	(the case $\RP^7$ is excluded by the assumption that $M$ is simply-connected), so in particular $G=\Sp(2)$ or $\SU(3)$. To see that $\psi_0$ is $G$-invariant, we note that it is determined up to sign as a unit length element of the 1-dimensional $(-7)$-eigenspace for the action of $\omega$ on $\Sigma$; then, using $G$-invariance of $\omega$, we calculate at the origin
	\[  
	(\omega\cdot (g_0\psi_0) )(g)= (\omega\cdot \psi_0) (g_0^{-1}g) = -7\psi_0(g_0^{-1}g) = -7(g_0\psi_0)(g), \qquad \text{ for all } g_0,g\in G.
	\]
	This shows that the 1-dimensional space $\C\psi_0 $ is a $G$-subrepresentation of $\Sigma$, and this subrepresentation must be trivial since $G=\Sp(2)$ or $\SU(3)$ (i.e. $\psi_0$ corresponds to a constant map $G\to \Sigma$). As the Reeb vector fields $\xi_i$ are invariant, we then have that the auxiliary spinors $\psi_i=\xi_i\cdot \psi_0$, $i=1,2,3$ are also invariant.
	%
	Next, we observe that the canonical $\G_2$-form $\omega'$ of $(M',g')$, defined using the $3$-$(\alpha',\delta')$-Sasaki structure tensors analogously to (\ref{G2form}), coincides with $\theta(\omega)$, and hence its $(-7)$-eigenspace inside $\Sigma'$ coincides with the span of $\psi_0$. The auxiliary spinors of $(M',g')$ therefore coincide with $\psi_i$, $i=1,2,3$, and the spinorial equations (\ref{Chap_dual:dim7dualspinorialeqn3ad}) follow from Theorem \ref{3adGKSTheorem} applied to $M'$.
	%
	%
	%
	%
	%
\end{proof}
For any $7$-dimensional $\tad$ manifold (not necessarily homogeneous), we can also compare the spinorial equation satisfied by the auxiliary spinors (the second equation in (\ref{dim7spinorialeqn3ad})) to the $\mathcal{H}$-Killing equation (\ref{deformedKillingspinorsbundle}). In order to accomplish this we have the following lemma, which may be proved by a straightforward calculation in the spin representation:
\begin{lemma}\label{Chap_dual:Cliffproductslemma}
	If $(M^7,g,\xi_i,\eta_i,\varphi_i)$ is a 7-dimensional $\tad$ manifold, the canonical and auxiliary spinors satisfy
	%
	%
	%
	%
	\[
	\Phi_i\cdot \psi_0 = \psi_i, \quad \Phi_i\cdot \psi_i = \xi_i\cdot \psi_i,\quad \Phi_i \cdot \psi_j = -3 \xi_i\cdot \psi_j,
	\]	
	for all $i,j=1,2,3$ with $i\neq j$. Furthermore, for any even permutation $(i,j,k)$ of $(1,2,3)$, the canonical spinor satisfies 
	\begin{align}\label{Chap_dual:canonicalCliffrelation}
		(\Phi_i - \xi_j\cdot \xi_k)\cdot \psi_0 = 0.
	\end{align}
\end{lemma}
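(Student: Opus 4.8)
The plan is to reduce all four relations to two statements about the canonical spinor alone—namely $\Phi_i\cdot\psi_0 = \psi_i$ and $(\Phi_i - \xi_j\cdot\xi_k)\cdot\psi_0 = 0$—and then obtain the remaining two identities from these by elementary Clifford algebra. The reduction for the middle two relations is purely formal: writing $\psi_i = \xi_i\cdot\psi_0$ and $\psi_j = \xi_j\cdot\psi_0$, I would commute $\Phi_i$ past the Reeb vectors using $\Phi_i\cdot X = X\cdot\Phi_i + 2(X\lrcorner\Phi_i)$ and evaluate the interior products directly from the adapted-frame expressions (\ref{Phi1})--(\ref{Phi3}), which give $\xi_i\lrcorner\Phi_i = 0$ and $\xi_j\lrcorner\Phi_i = -\xi_k$ for an even permutation $(i,j,k)$. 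Thus $\Phi_i\cdot\psi_i = \xi_i\cdot\Phi_i\cdot\psi_0 = \xi_i\cdot\psi_i$, while $\Phi_i\cdot\psi_j = \xi_j\cdot\Phi_i\cdot\psi_0 - 2\,\xi_k\cdot\psi_0 = \xi_j\cdot\psi_i - 2\psi_k$; using the anticommutation of the $\xi$'s together with the consistency relation $\xi_k\cdot\psi_0 = \xi_i\cdot\xi_j\cdot\psi_0$ (obtained by combining the first and fourth relations and cycling the indices) collapses this to $-3\,\xi_i\cdot\psi_j$, as claimed. None of this step requires more than the Clifford relations and the explicit two-forms $\Phi_i$.

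The real content is therefore the two $\psi_0$-level identities, and here I would exploit the $\G_2$-structure (\ref{G2form}) whose characteristic spinor is $\psi_0$, normalized by $\omega\cdot\psi_0 = -7\psi_0$. First I would record the purely algebraic decomposition, valid in an adapted frame, $\Phi_i = \xi_i\lrcorner\omega - 2\,\xi_j\wedge\xi_k$; this follows from (\ref{G2form}) by computing $\xi_i\lrcorner\omega = \xi_j\wedge\xi_k + \Phi_i^{\mathcal{H}}$ and recalling $\Phi_i^{\mathcal{V}} = -\xi_j\wedge\xi_k$ from (\ref{Phi1})--(\ref{Phi3}). The two $\G_2$ Clifford relations I then need are $(\xi_i\lrcorner\omega)\cdot\psi_0 = 3\,\xi_i\cdot\psi_0$ and $(\xi_j\wedge\xi_k)\cdot\psi_0 = \xi_i\cdot\psi_0$. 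Granting these, both remaining relations drop out at once: $\Phi_i\cdot\psi_0 = 3\,\xi_i\cdot\psi_0 - 2\,\xi_i\cdot\psi_0 = \xi_i\cdot\psi_0 = \psi_i$, and since $\xi_j\cdot\xi_k = \xi_j\wedge\xi_k$ on spinors, $(\Phi_i - \xi_j\cdot\xi_k)\cdot\psi_0 = \psi_i - \xi_i\cdot\psi_0 = 0$.

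The main obstacle is thus establishing those two $\G_2$ Clifford identities, and there are two equivalent routes. The representation-theoretic route uses the splitting $\Lambda^2\R^7 = \Lambda^2_7\oplus\Lambda^2_{14}$ induced by $\omega$, in which $\Lambda^2_{14}\cong\mathfrak{g}_2$ annihilates $\psi_0$ while $\Lambda^2_7 = \{X\lrcorner\omega : X\in\R^7\}$ acts by $X\lrcorner\omega\mapsto c\,X\cdot\psi_0$; comparing $\sum_s e_s\cdot(e_s\lrcorner\omega)\cdot\psi_0 = c\sum_s e_s\cdot e_s\cdot\psi_0 = -7c\,\psi_0$ with $\sum_s e_s\cdot(e_s\lrcorner\omega) = 3\omega$ and $\omega\cdot\psi_0 = -7\psi_0$ pins down $c=3$, and projecting $\xi_j\wedge\xi_k$ onto $\Lambda^2_7$ (using $\omega(\xi_i,\xi_j,\xi_k)=1$ and $|\xi_i\lrcorner\omega|^2 = 3$) gives $(\xi_j\wedge\xi_k)\cdot\psi_0 = \tfrac13(\xi_i\lrcorner\omega)\cdot\psi_0 = \xi_i\cdot\psi_0$. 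Alternatively---and this is presumably the straightforward calculation in the spin representation intended here---one fixes the explicit model $\Sigma = \Lambda^\bullet L'$ with Clifford action (\ref{cliffordmultONB}), identifies $\psi_0$ as the explicit unit $(-7)$-eigenvector of $\omega\cdot$, and evaluates $\Phi_i\cdot\psi_0$ and $(\xi_j\wedge\xi_k)\cdot\psi_0$ componentwise. I would carry out the latter as a verification but present the $\G_2$-identity argument as the conceptual backbone, since it explains the otherwise mysterious coefficient $-3$ in the third relation as the eigenvalue bookkeeping of the $\Lambda^2_7$-projection.
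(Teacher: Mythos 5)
Your proposal is correct, but it takes a genuinely different route from the paper. The paper's entire proof of this lemma is a one-line appeal to ``a straightforward calculation in the spin representation'': one fixes the explicit model spinors (recalled later in the paper from \cite[Rmk.\@ 4.19]{AHLspheres}, namely $\psi_0=\tfrac{1}{\sqrt 2}(\omega+iy_1)$, $\psi_1=\tfrac{1}{\sqrt 2}(i\omega+y_1)$, $\psi_2=\tfrac{1}{\sqrt 2}(-1+iy_1\wedge\omega)$, $\psi_3=\tfrac{1}{\sqrt 2}(-i+y_1\wedge\omega)$ with $\omega=y_2\wedge y_3$), the Clifford action (\ref{cliffordmultONB}), and the adapted-frame expressions for the $\Phi_i$, and verifies all four relations componentwise. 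You instead organize the lemma around two $\G_2$-identities, $(\xi_i\lrcorner\omega)\cdot\psi_0=3\,\xi_i\cdot\psi_0$ and $(\xi_j\wedge\xi_k)\cdot\psi_0=\xi_i\cdot\psi_0$, combined with the decomposition $\Phi_i=\xi_i\lrcorner\omega-2\,\xi_j\wedge\xi_k$, and derive the second and third relations formally from the first and fourth via the commutator identity $\Phi_i\cdot X=X\cdot\Phi_i+2(X\lrcorner\Phi_i)$ together with $\xi_i\lrcorner\Phi_i=0$, $\xi_j\lrcorner\Phi_i=-\xi_k$. I checked the details: your interior products and signs match the paper's conventions ($\Phi_i(X,Y)=g(X,\varphi_i Y)$, $\Phi_i^{\mathcal V}=-\xi_j\wedge\xi_k$); the trace computation $\sum_s e_s\cdot(e_s\lrcorner\omega)=3\omega$ together with $\omega\cdot\psi_0=-7\psi_0$ does pin down $c=3$; the projection onto $\Lambda^2_7$ using $\omega(\xi_i,\xi_j,\xi_k)=1$ and $\langle X\lrcorner\omega,Y\lrcorner\omega\rangle=3g(X,Y)$ gives $\pi_7(\xi_j\wedge\xi_k)=\tfrac13\xi_i\lrcorner\omega$ as claimed; and your use of the consistency relation $\xi_k\cdot\psi_0=\xi_i\cdot\xi_j\cdot\psi_0$ in the third relation is not circular, since you establish the first and fourth relations independently. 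Two remarks on the trade-offs: your argument imports standard but unproved $\G_2$ facts (annihilation of $\psi_0$ by $\Lambda^2_{14}\cong\mathfrak{g}_2$ and the normalization of the $\Lambda^2_7$ inner product), which the paper's brute-force route avoids, whereas your route buys a conceptual explanation of the coefficient $-3$ (as $\Lambda^2_7$-projection bookkeeping) and makes the verification independent of the explicit spinor model; your offer to run the explicit check in the model (\ref{cliffordmultONB}) as well would in effect reproduce the paper's intended proof as a corollary.
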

As an immediate consequence of Lemma \ref{Chap_dual:Cliffproductslemma} we obtain following theorem, which asserts that the $\mathcal{H}$-Killing equation is equivalent to the equation satisfied by the auxiliary spinors in dimension $7$. This is further evidence that $\mathcal{H}$-Killing spinors are the natural generalization of Killing spinors to the $\tad$ setting.
\begin{theorem}\label{Chap_duality:dim7_equivalence_of_two_eqns}
	If $(M^7,g,\xi_i,\eta_i,\varphi_i)$ is a 7-dimensional $\tad$ manifold, then a spinor $\psi\in\Gamma(E)$ satisfies the $\mathcal{H}$-Killing equation (\ref{deformedKillingspinorsbundle}) if and only if it satisfies the second equation in (\ref{dim7spinorialeqn3ad}).
\end{theorem}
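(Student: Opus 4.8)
The plan is to show that, once both sides are restricted to sections of $E$, the right-hand side of the $\mathcal{H}$-Killing equation (\ref{deformedKillingspinorsbundle}) and the right-hand side of the second line of (\ref{dim7spinorialeqn3ad}) define the \emph{same} bundle endomorphism in each coordinate direction; the equivalence of the two equations is then immediate, and it is symmetric in both directions because every identity used is reversible. In the horizontal directions there is nothing to do: for $X\in\mathcal{H}$ the term $\frac{\alpha-\delta}{2}\sum_p \eta_p(X)\Phi_p\cdot\psi$ in (\ref{deformedKillingspinorsbundle}) vanishes identically, leaving $\frac{\alpha}{2}X\cdot\psi$, which is precisely the third case of the auxiliary equation. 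All of the content therefore lives in the three vertical directions $\xi_1,\xi_2,\xi_3$.

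In the direction $\xi_i$ the $\mathcal{H}$-Killing equation prescribes the operator $\frac{\alpha}{2}\,\xi_i\cdot(-)+\frac{\alpha-\delta}{2}\,\Phi_i\cdot(-)$, and I would evaluate this on a spanning set of $E$ using the Clifford identities of Lemma \ref{Chap_dual:Cliffproductslemma}. Since in dimension $7$ the bundle $E$ is spanned by the auxiliary spinors $\psi_1,\psi_2,\psi_3$, it suffices to compute on these. For the matching index, $\Phi_i\cdot\psi_i=\xi_i\cdot\psi_i$ yields $(\frac{\alpha}{2}+\frac{\alpha-\delta}{2})\,\xi_i\cdot\psi_i=\frac{2\alpha-\delta}{2}\,\xi_i\cdot\psi_i$, which is exactly the first case of the auxiliary equation. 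For $m\neq i$, the identity $\Phi_i\cdot\psi_m=-3\,\xi_i\cdot\psi_m$ yields $(\frac{\alpha}{2}-\frac{3(\alpha-\delta)}{2})\,\xi_i\cdot\psi_m=\frac{3\delta-2\alpha}{2}\,\xi_i\cdot\psi_m$, which is exactly the second case (with $\xi_i$ in the role of the $\xi_j$, $j\neq m$). Hence the two prescribed endomorphisms agree on all of $E$ in every direction, so (\ref{deformedKillingspinorsbundle}) and the second line of (\ref{dim7spinorialeqn3ad}) are literally the same first-order equation for sections of $E$, and in particular have the same solution space.

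Given Lemma \ref{Chap_dual:Cliffproductslemma}, the only genuinely non-routine ingredient is the structural fact that $E$ has rank $3$ in dimension $7$ and is spanned by the auxiliary spinors, so that the Clifford products recorded in that lemma determine the action of each $\Phi_i$ on \emph{all} of $E$ and not merely on the distinguished spinors $\psi_i$. This is consistent with B\"{a}r's count of $n+1=3$ Killing spinors in the $\ts$ case and with Theorem \ref{deformedKillingspinorstheorem} (which identifies $E$ as the span of the $\mathcal{H}$-Killing spinors). Once it is in hand, the matching of coefficients above is purely arithmetic, and I expect no further obstacle.
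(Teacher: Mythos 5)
Your proposal is correct and takes essentially the same route as the paper's proof: both rest on the Clifford identities $\Phi_i\cdot \psi_i = \xi_i\cdot\psi_i$ and $\Phi_i\cdot\psi_m = -3\,\xi_i\cdot\psi_m$ ($m\neq i$) from Lemma \ref{Chap_dual:Cliffproductslemma}, together with the identical coefficient matching $\frac{\alpha}{2}+\frac{\alpha-\delta}{2}=\frac{2\alpha-\delta}{2}$ and $\frac{\alpha}{2}-\frac{3(\alpha-\delta)}{2}=\frac{3\delta-2\alpha}{2}$. The one ingredient you rightly flag---that in dimension $7$ the bundle $E$ has rank $3$ and is spanned by the auxiliary spinors, so the identities determine the operators on all of $E$---is left implicit in the paper's proof and only verified in the remark following the theorem (via Table \ref{Tab:spinors_PsiEi_low_dim} and the explicit adapted-frame formulas), so making it explicit is a harmless sharpening rather than a genuinely different argument.
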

\begin{proof}
	Using Lemma \ref{Chap_dual:Cliffproductslemma}, we calculate{\small
		\begin{align*}
			\frac{\alpha}{2}X\cdot \psi_i  + \frac{\alpha-\delta}{2} \sum_{p=1}^3\eta_p(X)\Phi_p\cdot \psi_i &= \frac{\alpha}{2} X\cdot \psi_i + \frac{\alpha-\delta}{2}\left( \eta_i(X) \xi_i -3\eta_{i+1}(X) \xi_{i+1} -3 \eta_{i+2}(X) \xi_{i+2}  \right)\cdot \psi_i ,
		\end{align*}
	}where the indices $i,i+1,i+2$ are taken modulo 3. The result then follows by substituting the various cases for $X$ from (\ref{dim7spinorialeqn3ad}). 
\end{proof}

\begin{remark}
	Substituting (\ref{Chap_dual:canonicalCliffrelation}) into (\ref{Chap_dual:canonicaldualequation}), one immediately sees that the dual of the canonical spinor on a $7$-dimensional homogeneous $\tad$ spaces is again $\nabla'$-parallel. This is a manifestation of the correspondence, proved in Theorem \ref{Chap:duality:dual_equations_theorem_hom3ad}, of the canonical spinors on dual pairs of $7$-dimensoinal $\tad$ homogeneous spaces.
\end{remark}

\begin{remark}
Lemma \ref{Chap_dual:Cliffproductslemma} also permits us to simplify the Riemannian Dirac equation satisfied by $\mathcal{H}$-Killing spinors in dimension $7$. By Theorem \ref{Chap_duality:dim7_equivalence_of_two_eqns}, the space of $\mathcal{H}$-Killing spinors in this dimension contains the $\R$-span of the auxiliary spinors $\psi_i$, $i=1,2,3$. As an aside, we recall from \cite[Rmk.\@ 4.19]{AHLspheres} that the canonical and auxiliary spinors are given, with respect to an adapted local frame, by
\[ \psi_0 =\frac{1}{\sqrt{2}} (\omega+iy_1),\quad \psi_1 = \frac{1}{\sqrt{2}} (i\omega+y_1) ,\quad \psi_2 = \frac{1}{\sqrt{2}}(-1+iy_1\wedge \omega) ,\quad \psi_3 = \frac{1}{\sqrt{2}}(-i + y_1\wedge \omega),   \]
where $\omega:= y_2\wedge y_3$, and comparing with Table \ref{Tab:spinors_PsiEi_low_dim} shows that the auxiliary spinors give a basis of $E=E_1+E_2+E_3$ consisting of $\mathcal{H}$-Killing spinors\footnote{A word of caution: the auxiliary spinor $\psi_i$ is not necessarily a section of $E_i$, for example $\psi_1$ is a section of $E_2$ and $E_3$ but not $E_1$.}. Now, letting $\psi$ be an $\mathcal{H}$-Killing spinor, it follows from Remark \ref{Dirac_remark} that $\psi$ is a Dirac eigenspinor for all values of $\alpha,\delta$:
\[
 D\psi   = \frac{-2\alpha -5\delta}{2} \psi .
\]
Similarly, the canonical spinor is a Dirac eigenspinor for all $\alpha,\delta$:
\[
D\psi_0 = \frac{6\alpha +3\delta}{2}\psi_0 .
\]
By \cite[Prop.\@ 2.3.3]{3str}, the Riemannian Ricci curvature of a $7$-dimensional $\tad$ manifold is given by
\[ \text{Ric}^g =  2\alpha( 6\delta - 3\alpha) g + 2(\alpha-\delta) (5\alpha-\delta) \sum_{=1}^3 \eta_i\otimes \eta_i ,  \]
hence the scalar curvature is (constant and) equal to $R_0 := 6(\delta^2+8\alpha\delta-2\alpha^2)$. One checks that the eigenspinor $\psi$ (resp.\@ $\psi_0$) realizes Friedrich's bound $\pm \frac{1}{2} \sqrt{\frac{7R_0}{6}}$ for the smallest eigenvalue of the Dirac operator (see e.g.\@ \cite[Chap.\@ 1.5]{BFGK}) if and only if $\delta=\alpha$ (resp.\@ $\delta=5\alpha$). Note that these are precisely the values of $\alpha,\delta$ for which the auxiliary spinors $\psi_i$ (resp.\@ the canonical spinor $\psi_0$) are Killing spinors \cite[Thm.\@ 4.5.2]{3str}.  
\end{remark}

\begin{remark}
Finally, we comment on $\nabla$-parallel spinors in dimension $7$. There is always at least one such spinor, namely the canonical spinor, which is $\nabla$-parallel independent of the values of $\alpha,\delta$. In the case of a parallel $\tad$ structure ($\beta=0$), the holonomy algebra of $\nabla$ is contained in $\mathfrak{sp}(1)\subset \mathfrak{so}(4)\oplus \mathfrak{so}(3)$ (see \cite[Cor.\@ 4.1.2]{3str}), and it follows that the spin lift of the infinitesimal holonomy representation annihilates at least $4$ vectors in the spin representation (cf.\@ \cite[\S4]{AHLspheres}). On a simply-connected manifold, this implies that the holonomy group $\text{Hol}(\nabla)$ stabilizes at least $4$ vectors in the spin representation, hence the manifold carries at least $4$ linearly independent (globally-defined) $\nabla$-parallel spinors. Since the Reeb vector fields are parallel when $\beta=0$ (see Proposition \ref{prelims:canonical_connection}), these four $\nabla$-parallel spinor fields are explicitly given by the canonical and auxiliary spinors. The holonomy algebra $\mathfrak{hol}(\nabla)$ is unstable under perturbations of $\beta$, in the sense that for $\beta\neq 0$ the auxiliary spinors are no longer parallel but rather satisfy the equation
\[ \nabla_X \psi_i = \nabla_X (\xi_i \cdot \psi_0)=  \beta (\eta_k(X)\xi_j - \eta_j(X)\xi_k)\cdot \psi_0 \]
for all $X \in TM$. Said differently, when $\beta\neq 0$ the holonomy algebra is no longer contained in $\mathfrak{sp}(1)$:
\[  \mathfrak{sp}(1)\subsetneq \mathfrak{hol}(\nabla) \subseteq \mathfrak{g}_2 \subseteq \mathfrak{so}(7) \]
(it remains a subalgebra of $\mathfrak{g}_2$ since it still stabilizes $\psi_0$).
\end{remark}

\bibliography{bibliodatabase}

\begin{thebibliography}{BFGK91}

\bibitem[ACFH15]{dim67}
Ilka Agricola, Simon~G. Chiossi, Thomas Friedrich, and Jos Höll.
\newblock Spinorial description of $\text{SU}(3)$- and $\text{G}_2$-manifolds.
\newblock {\em Journal of Geometry and Physics}, 98:535–555, Dec 2015.

\bibitem[AD20]{3str}
Ilka Agricola and Giulia Dileo.
\newblock Generalizations of 3-{S}asakian manifolds and skew torsion.
\newblock {\em Advances in Geometry}, 20(3):331--374, 2020.

\bibitem[ADS21]{hom3alphadelta}
Ilka Agricola, Giulia Dileo, and Leander Stecker.
\newblock Homogeneous non-degenerate 3-$(\alpha,\delta)$-{S}asaki manifolds and
  submersions over quaternionic {K}ähler spaces.
\newblock {\em Annals of Global Analysis and Geometry}, 60(1):111–141, Apr
  2021.

\bibitem[ADS23]{ADScurv}
Ilka Agricola, Giulia Dileo, and Leander Stecker.
\newblock Curvature properties of 3-$(\alpha ,\delta )$-{Sasaki} manifolds.
\newblock {\em Annali di Matematica Pura ed Applicata (1923 -)}, Mar 2023.

\bibitem[AF10]{3Sasdim7}
Ilka Agricola and Thomas Friedrich.
\newblock 3-{S}asakian manifolds in dimension seven, their spinors and
  ${G}_2$-structures.
\newblock {\em Journal of Geometry and Physics}, 60(2):326–332, Feb 2010.

\bibitem[Agr06]{SRNI}
Ilka Agricola.
\newblock The {Srn{\'{\i}}} lectures on non-integrable geometries with torsion.
\newblock {\em Arch. Math., Brno}, 42(5):5--84, 2006.

\bibitem[AHL23]{AHLspheres}
Ilka Agricola, Jordan Hofmann, and Marie-Amélie Lawn.
\newblock Invariant spinors on homogeneous spheres.
\newblock {\em Differential Geometry and its Applications}, 89:102014, 2023.

\bibitem[ANT23]{ANT_book_principal_fibre_bundles}
Ilka Agricola, Henrik Naujoks, and Marvin Theiss.
\newblock {\em Geometry of Principal Fibre Bundles (to appear)}.
\newblock 2023.

\bibitem[B{\"a}r93]{Bar}
Christian B{\"a}r.
\newblock Real {K}illing spinors and holonomy.
\newblock {\em Communications in Mathematical Physics}, 154:509--521, 1993.

\bibitem[BB12]{Julia_BB_PhD_thesis}
Julia Becker-Bender.
\newblock {\em Dirac-Operatoren und Killing-Spinoren mit Torsion}.
\newblock PhD thesis, Philipps-Universit\"{a}t Marburg, 2012.

\bibitem[Ber55]{Berger_holonomy}
Marcel Berger.
\newblock Sur les groupes d'holonomie homog{\`e}ne des vari{\'e}t{\'e}s {\`a}
  connexion affine et des vari{\'e}t{\'e}s riemanniennes.
\newblock {\em Bull. Soc. Math. Fr.}, 83:279--330, 1955.

\bibitem[Bes87]{Besse_Einstein_manifolds}
Arthur~L. Besse.
\newblock {\em Einstein manifolds}, volume~10 of {\em Ergeb. Math. Grenzgeb.,
  3. Folge}.
\newblock Springer, Cham, 1987.

\bibitem[BFGK91]{BFGK}
Helga Baum, Thomas Friedrich, Ralf Grunewald, and Ines Kath.
\newblock {\em Twistors and {K}illing Spinors on {R}iemannian Manifolds}.
\newblock B. G. Teubner Verlagsgesellschaft, 1991.

\bibitem[BG99]{BG_3Sas_paper}
Charles Boyer and Krzysztof Galicki.
\newblock 3-{Sasakian} manifolds.
\newblock In {\em Surveys in differential geometry. Vol. VI: Essays on Einstein
  manifolds. Lectures on geometry and topology, sponsored by Lehigh
  University's Journal of Differential Geometry}, pages 123--184. Cambridge,
  MA: International Press, 1999.

\bibitem[BGM94]{BG3Sas}
Charles~P. Boyer, Krzysztof Galicki, and Benjamin~M. Mann.
\newblock The geometry and topology of 3-{S}asakian manifolds.
\newblock {\em Journal f\"ur die reine und angewandte Mathematik},
  455:183--220, 1994.

\bibitem[CS07]{hypo}
Diego Conti and Simon Salamon.
\newblock Generalized {K}illing spinors in dimension 5.
\newblock {\em Transactions of the American Mathematical Society},
  359(11):5319--5343, May 2007.

\bibitem[DKL22]{invariantspinstructures}
Jordi {Daura Serrano}, Michael Kohn, and Marie-Am{\'e}lie Lawn.
\newblock G-invariant spin structures on spheres.
\newblock {\em Ann. Global Anal. Geom.}, 62(2):437--455, 2022.

\bibitem[DOP20]{homdata}
Cristina Draper, Miguel Ortega, and Francisco~J. Palomo.
\newblock Affine connections on 3-{Sasakian} homogeneous manifolds.
\newblock {\em Math. Z.}, 294(1-2):817--868, 2020.

\bibitem[FK90]{Fried90}
Thomas Friedrich and Ines Kath.
\newblock 7-dimensional compact {R}iemannian manifolds with {K}illing spinors.
\newblock {\em Communications in Mathematical Physics}, 133:543--561, 1990.

\bibitem[FKMS97]{nearly_parallel_g2}
Thomas Friedrich, Ines Kath, Andrei Moroianu, and Uwe Semmelmann.
\newblock On nearly parallel {{\(G_2\)}}-structures.
\newblock {\em J. Geom. Phys.}, 23(3-4):259--286, 1997.

\bibitem[{Fri}00]{FriedrichBook}
Thomas {Friedrich}.
\newblock {\em {D}irac operators in {R}iemannian geometry}, volume~25.
\newblock Providence, RI: American Mathematical Society (AMS), 2000.

\bibitem[GRS23]{Leander_roots}
Oliver Goertsches, Leon Roschig, and Leander Stecker.
\newblock Revisiting the classification of homogeneous 3-{Sasakian} and
  quaternionic {K{\"a}hler} manifolds.
\newblock {\em Eur. J. Math.}, 9(1):28, 2023.
\newblock Id/No 11.

\bibitem[Hof22]{Hof22}
Jordan Hofmann.
\newblock Homogeneous {S}asakian and 3-{S}asakian structures from the spinorial
  viewpoint, 2022.
\newblock Preprint. \url{https://arxiv.org/abs/2208.09301}.

\bibitem[Iva04]{Spin7}
Stefan Ivanov.
\newblock Connections with torsion, parallel spinors and geometry of {S}pin(7)
  manifolds.
\newblock {\em Mathematical {R}esearch {L}etters}, 11(2):171--186, 2004.

\bibitem[Kat00]{kath_Tduals}
Ines Kath.
\newblock Pseudo-{Riemannian} {{\(T\)}}-duals of compact {Riemannian}
  homogeneous spaces.
\newblock {\em Transform. Groups}, 5(2):157--179, 2000.

\bibitem[KF00]{FrKi00}
Eui~Chul Kim and Thomas Friedrich.
\newblock The {Einstein}-{Dirac} equation on {Riemannian} spin manifolds.
\newblock {\em J. Geom. Phys.}, 33(1-2):128--172, 2000.

\bibitem[KN63]{KN1}
Shoshichi Kobayashi and Katsumi Nomizu.
\newblock {\em Foundations of differential geometry. {I}}, volume~15 of {\em
  Intersci. Tracts Pure Appl. Math.}
\newblock Interscience Publishers, New York, NY, 1963.

\bibitem[KN69]{KN2}
Shoshichi Kobayashi and Katsumi Nomizu.
\newblock {\em Foundations of Differential Geometry, Volume II}.
\newblock Interscience Publishers, 1969.

\bibitem[Kuo70]{3Sas_structure_reduction}
{Ying-yan} Kuo.
\newblock {On almost contact $3$-structure}.
\newblock {\em Tohoku Mathematical Journal}, 22(3):325 -- 332, 1970.

\bibitem[LM89]{LM}
Herbert~Blaine {Lawson, Jr.} and Marie{-}Louise Michelsohn.
\newblock {\em Spin Geometry}.
\newblock Princeton University Press, 1989.

\bibitem[Nom54]{Nomizumap}
Katsumi Nomizu.
\newblock Invariant affine connections on homogeneous spaces.
\newblock {\em American Journal of Mathematics}, 76(1):33--65, 1954.

\bibitem[Sim62]{Simons_holonomy}
James Simons.
\newblock On the transitivity of holonomy systems.
\newblock {\em Ann. Math. (2)}, 76:213--234, 1962.

\bibitem[Wan58]{Wangconnections}
{Hsien{-}chung} Wang.
\newblock On invariant connections over a principal fibre bundle.
\newblock {\em Nagoya Mathematical Journal}, 13:1--19, 1958.

\bibitem[Wan89]{Wang}
Mckenzie Y.~K. Wang.
\newblock Parallel spinors and parallel forms.
\newblock {\em Annals of Global Analysis and Geometry}, 7:59--68, 01 1989.

\end{thebibliography}
\bibliographystyle{alpha}


\end{document}